\documentclass{amsart}  
\oddsidemargin 0mm
\evensidemargin 0mm  
\topmargin 0mm
\textwidth 160mm
\textheight 230mm
\tolerance=9999       
\usepackage{amssymb,amstext,amsmath,amscd,amsthm,amsfonts,enumerate,graphicx,latexsym,stmaryrd,multicol}   
\usepackage{scalerel}
\newcommand\reallywidehat[1]{\arraycolsep=0pt\relax%
\begin{array}{c}
\stretchto{
  \scaleto{
    \scalerel*[\widthof{\ensuremath{#1}}]{\kern-.5pt\bigwedge\kern-.5pt}
    {\rule[-\textheight/2]{1ex}{\textheight}} 
  }{\textheight} %
}{0.5ex}\\           
#1\\                 
\rule{-1ex}{0ex}
\end{array}
}
\usepackage{tikz-cd}
\usepackage{color}
\usepackage{hyperref} 
\usepackage{comment} 

\usepackage{amsmath, mathtools}
\hypersetup{colorlinks=true} 

\usepackage[all]{xy}
\newcommand{\com}[1]{{\color{magenta} #1}}
\newcommand{\old}[1]{{\color{red} #1}}
\newcommand{\new}[1]{{\color{blue} #1}}
\newtheorem{thm}{Theorem}[section]
\newtheorem{lem}[thm]{Lemma}
\newtheorem{prop}[thm]{Proposition}
\newtheorem{cor}[thm]{Corollary}
\newtheorem{chunk}[thm]{\hspace*{-1.065ex}\bf}

\theoremstyle{definition}
\newtheorem{dfn}[thm]{Definition}
\newtheorem{ques}[thm]{Question}
\newtheorem{rem}[thm]{Remark}

\theoremstyle{remark}

 
\numberwithin{equation}{thm}

\def \c {\mathfrak c} 
\def\depth{\operatorname{depth}} 
\def \n {\mathfrak n}

\def\tr{\operatorname{tr}}
\def\Ext{\operatorname{Ext}}

\def\hom{\operatorname{Hom}} 
\def\add{\operatorname{add}} 
\def\m{\mathfrak{m}}
\def \ind {\operatorname{ind}}

\def\ann{\operatorname{ann}}    
\def \C {\mathcal C} 

\def\mod{\operatorname{mod}} 
\def\X{\mathcal{X}}
\def\Y{\mathcal{Y}} 
\def\supp{\operatorname{Supp}}
\def\p{\mathfrak{p}} 
\def\Hom{\operatorname{Hom}} 
\def \Ul {\operatorname{Ul}}

\def \cm {\operatorname{CM}} 
\def\syz{\Omega} 
\def\spec{\operatorname{Spec}}
\def\q{\mathfrak{q}}
\def \Ass {\operatorname{Ass}} 
\def \Min {\operatorname{Min}}
\def \Mod {\operatorname{Mod}} 
\def \End {\operatorname{End}} 
\def \us {\mathcal{US}} 
\begin{document}  
\title{Ulrich split rings}  

\author{Hailong Dao}
\address{H. Dao, Department of Mathematics, University of Kansas, KS 66045-7523, Lawrence, USA} 
\email{ hdao@ku.edu}  

\author{Souvik Dey}
\address{S. Dey, Department of Algebra, Charles University, Faculty of Mathematics and Physics, Sokolovska
83, 186 75, Praha, Czech Republic}
\email{ souvik.dey@matfyz.cuni.cz} 

\author{Monalisa Dutta}
\address{M. Dutta, Department of Mathematics, University of Kansas, KS 66045-7523, Lawrence, USA} \email{ m819d903@ku.edu}  


\thanks{2020 {\em Mathematics Subject Classification.}  13C60, 13C13, 13C14, 13D07, 13D15, 13H10, 13C20, 14B05}  
\thanks{{\em Key words and phrases.} Cohen--Macaulay, Ulrich module, minimal multiplicity, trace ideal, fibre product}
\thanks{Souvik Dey was partly supported by the Charles University Research Center program No.UNCE/SCI/022 and a grant GA \v{C}R 23-05148S from the Czech Science Foundation.}   
 
\begin{abstract} 
A local Cohen--Macaulay ring is called Ulrich split if any short exact sequence of Ulrich modules split. In this paper we initiate the study of Ulrich split rings. We prove several necessary or sufficient criteria for this property, linking it to syzygies of the residue field and cohomology annihilator. We characterize Ulrich split rings of small dimensions. Over complex numbers, $2$-dimensional Ulrich split rings, which are normal and have minimal multiplicity, are precisely cyclic quotient singularities with at most two indecomposable Ulrich modules up to isomorphism.  We give several ways to construct Ulrich split rings, and give a range of applications, from test ideal of the family of maximal Cohen--Macaulay modules, to detecting projective/injective modules via vanishing of $\Ext$.      
\end{abstract}       
\maketitle 

\section{Introduction}  

Let $(R,\m,k)$ be a local  Cohen--Macaulay ring, let $\mod R$ denote the category of all finitely generated $R$-modules. Recall that, a module over $R$ is called Ulrich if it is maximal Cohen--Macaulay and its Hilbert-Samuel multiplicity $e(M)$ and minimal number of generators $\mu(M)$ are equal. Despite the simple definition, Ulrich modules (as well as Ulrich sheaves and vector bundles) have become rather well-studied topics in commutative algebra and algebraic geometry. The literature is rather vast and fast-changing, so let us just mention a sample of papers from commutative algebra that are close in spirit to this present work \cite{ul, umm, cyc}, some modern surveys on the geometric point of view \cite{surveyB, coskun} and the references therein. There have also been quite a few very recent extensions of the concept - Ulrich modules with respect to an ideal, or lim Ulrich sequences - with rather wide applications, see \cite{dms, gotww1, gotww2, imk, MA}.     

Let $\Ul(R)$ denote the subcategory of $\mod{R}$ that consists of all Ulrich $R$-modules. In this paper, we study local rings $R$, whose $\Ul(R)$ is simplest from the point of view of "relative homological algebra". It can be observed that Ulrich modules, together with the short exact sequences of them, form an exact category, see \cite[Corollary 4.7]{ddd}. We say that a ring $R$ is Ulrich split, abbreviated by $\us$, if all short exact sequences of Ulrich $R$-modules split. In other words, the exact structure of the exact category $\Ul(R)$ is trivial (here we abuse notations slightly and use $\Ul(R)$ also for the exact category).  

Our motivation comes partly from the recent surge of interest in Ulrich modules, and from the long-observed phenomenon that there are tight and still mysterious connections between homological/categorical and algebro-geometric properties of local singularities. For instance, from the point of view of this paper, the famous Auslander-Buchsbaum-Serre Theorem implies that, a local Cohen--Macaulay ring $R$ is regular if and only if all short exact sequences of maximal Cohen--Macaulay $R$-modules split. Thus, the analogous property for Ulrich modules seems to deserve some attention.  

If $(R,\m,k)$ is Artinian, then an $R$-module is Ulrich if and only if it is a direct sum of copies of $k$. Thus, all Artinian local rings are $\us$. The property becomes much more subtle in higher dimensions, and there are surprising connections to other homological or geometric properties of $R$. As is usually the case, the story is far from complete, and we hope our work will lead to further inquests.  

Our first main result Theorem \ref{thm1} establishes several useful necessary and sufficient conditions for being $\us$. This is contained in Proposition \ref{syzind}, Proposition \ref{ulind}, Theorem \ref{uladdsplit}, Theorem \ref{minuss}. In the following,  for a subcategory $\X$ of $\mod R$, $\add_R \X$ will denote the smallest subcategory of $\mod R$ containing $\X$, closed under direct summands of finite direct sums. It is clear that, $M\in \add_R \X$ if and only if $M\oplus N \cong \oplus_{i=1}^n X_i$, for some $N \in \mod R$, and $X_i \in \X$. Moreover, $\syz^i_R(-)$ denotes the $i$-th syzygy in a minimal $R$-free resolution.  

\begin{thm}\label{thm1} Let $(R,\m,k)$ be a local Cohen--Macaulay ring of dimension $d$. Consider the following conditions:

\begin{enumerate}[\rm(1)]
    \item  $\Ul(R)\subseteq \add_R\{R, \syz^i_R k: i\ge 0\}$.   
    
    \item $\Ul(R)\subseteq \add_R\{\syz^i_R k: i\ge d\}$.   
    
    
    \item $R$ is $\us$.  
    
    \item $\m\Ext^1_R(\Ul(R), \Ul(R))=0$. 
\end{enumerate} 

Then it holds that $(1)\iff (2) \implies (3) \implies (4)$. If $R$, moreover, has minimal multiplicity, then the above conditions are all equivalent and are in turn equivalent to $(5)$ $\Ul(R)=\add_R(\syz^d_R k)$. If, along with minimal multiplicity, $R$ moreover admits a canonical module, then the above conditions are all equivalent to $(6)$ $\m\Ext^{>0}_R(\cm(R),\mod R)=0$. 
\end{thm}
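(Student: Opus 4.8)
The plan is to prove the cycle of implications $(1)\iff(2)\implies(3)\implies(4)$ first, then handle the minimal multiplicity equivalences, and finally the canonical module statement.

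For $(1)\iff(2)$: The key fact is that for a Cohen--Macaulay local ring of dimension $d$, the modules $\syz^i_R k$ for $i \geq d$ are maximal Cohen--Macaulay, and in fact for $i > d$ (or using that $\syz^d_R k$ already captures enough) the higher syzygies stabilize in the relevant sense. I would argue that $R$ itself, as a summand allowed in $(1)$, is never Ulrich when $d > 0$ (since $e(R) = e(R) \neq 1 = \mu(R)$ in general — more precisely $R$ is Ulrich iff $R$ is regular-ish with $e(R)=1$, which forces low-dimensional degenerate cases; in any event $R \notin \Ul(R)$ under the standing hypotheses, or can be absorbed). So the content is that an Ulrich module, being MCM, when written using syzygies of $k$, only needs the syzygies of index $\geq d$. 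This should follow from the structure of $\add$ and the fact that $\syz^i_R k$ for $0 \leq i < d$ are not MCM (they have depth $d - i < d$), hence cannot appear as summands of an MCM module; combined with $\syz^d_R(-)$ being a functor up to free summands, one converts any expression into one using only $i \geq d$.

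For $(2)\implies(3)$: Given a short exact sequence $0 \to A \to B \to C \to 0$ of Ulrich modules, condition $(2)$ puts $A, B, C$ into $\add_R\{\syz^i_R k : i \geq d\}$. The point is that $\Ext^1_R(C, A)$ should vanish: $C$ is a summand of a sum of high syzygies of $k$, and $A$ likewise, and $\Ext^1_R(\syz^i_R k, \syz^j_R k)$ relates (by dimension shifting) to $\Ext^{1+i}_R(k, \syz^j_R k)$; one needs this to vanish, which should come from $\syz^j_R k$ being MCM so that $\Ext^{>0}_R(k, \syz^j_R k)$... wait — that is not zero in general. The real mechanism must instead be that any such sequence of MCM modules that are all syzygies of $k$ is split because $\syz^d_R k$ is (up to free summands) the unique relevant building block, or because sequences among syzygies of $k$ split for depth/rigidity reasons. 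I expect this step to invoke one of the cited earlier propositions (\ref{syzind}), so I would reduce $(2)\implies(3)$ to that. For $(3)\implies(4)$: if every short exact sequence of Ulrich modules splits, then $\Ext^1_R(M,N) = 0$ for all Ulrich $M, N$ (every extension class is represented by a split sequence), so certainly $\m\Ext^1_R(\Ul(R),\Ul(R)) = 0$; this is essentially immediate once one checks that $\Ext^1$ in $\mod R$ agrees with Yoneda $\Ext^1$ and that split means zero class.

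For the minimal multiplicity case: here one uses that when $R$ has minimal multiplicity and $d > 0$, the Ulrich modules are precisely (up to the issue of free summands, which are excluded) the MCM modules with no free summand, equivalently $\add_R(\syz^d_R k)$ — this is a known characterization (an MCM module $M$ over a ring of minimal multiplicity is Ulrich iff $\m M \subseteq$ the submodule generated by a system of parameters, which for minimal multiplicity $\m$ forces $M \cong \syz^d$ of a vector space, i.e. a summand of a power of $\syz^d_R k$). So $(5)$ holds unconditionally under minimal multiplicity, giving $(5)\implies(1)$ directly since $\syz^d_R k$ is among the listed generators, and $(4)\implies(5)$ or $(4)\implies(1)$ should follow because $\m\Ext^1_R(\syz^d_R k, \syz^d_R k) = 0$ forces, via the standard self-extension/rigidity argument, that short exact sequences split — again likely citing \ref{minuss}. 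The hard part will be $(4)\implies(2)$ (or $(4)\implies(3)$) under minimal multiplicity: deducing a splitting (a global, structural statement) from the vanishing of $\m\Ext^1$ of the category with itself. I would handle this by taking $M = \syz^d_R k$, noting $\Ul(R) = \add_R(M)$, and showing $\m\Ext^1_R(M,M) = 0$ implies $\Ext^1_R(M,M) = 0$: one builds a non-split self-extension if $\Ext^1_R(M,M) \neq 0$ and shows its class is not killed by $\m$, using that $M$ is Ulrich so $\m M$ behaves like it does over the associated graded / the fiber cone, making $\Ext^1_R(M,M)$ a $k$-vector space (annihilated by $\m$ automatically is the wrong direction — rather, one shows any nonzero element generates a copy of $k$, impossible if $\m$ kills it only when it is zero). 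This rigidity-of-Ulrich-$\Ext$ statement is the crux and is presumably exactly the content of Theorem \ref{minuss}.

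For $(6)$: assuming minimal multiplicity and a canonical module $\omega$, the equivalence with $\m\Ext^{>0}_R(\cm(R), \mod R) = 0$ should go by: $(6)\implies(5)$ is trivial since $\Ul(R) \subseteq \cm(R)$ and $\Ext^1 \subseteq \Ext^{>0}$. For the converse, use that over a ring of minimal multiplicity with canonical module, every MCM module $N$ fits in a short exact sequence $0 \to \syz^d_R k^{\,a} \to (\text{free}) \to N \to 0$ or $N$ is built from $R$ and $\syz^d_R k$ via extensions and syzygies — more precisely $\add\cm(R)$ is generated under syzygies and extensions by $R$ and $\syz^d_R k = \Ul(R)$-generator; then $\Ext^{>0}_R(\cm(R), \mod R)$ reduces by dimension-shifting to $\Ext^{>0}_R(R \oplus \syz^d_R k, \mod R)$, the $R$ part vanishes, and the $\syz^d_R k$ part: $\Ext^{i}_R(\syz^d_R k, N) \cong \Ext^{i+d}_R(k, N)$, and one shows $\m$ kills all these for $i > 0$ precisely when $\m$ kills $\Ext^1_R(\syz^d_R k, \syz^d_R k)$, using that $\mod R$ is controlled (via $\Ext$-vanishing and Matlis/canonical duality) by the MCM approximation and the syzygies of $k$. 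The main obstacle overall remains this reduction and the rigidity statement; everything else is bookkeeping with $\add$, depth, and dimension shifting, which I would not spell out in detail, deferring instead to Propositions \ref{syzind}, \ref{ulind} and Theorems \ref{uladdsplit}, \ref{minuss} as the excerpt indicates.
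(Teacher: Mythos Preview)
There are two genuine gaps in your proposal.

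First, your argument for $(3)\implies(4)$ is wrong. You write that if every short exact sequence of Ulrich modules splits then $\Ext^1_R(M,N)=0$ for all Ulrich $M,N$. This does not follow: a class in $\Ext^1_R(M,N)$ is represented by an extension $0\to N\to X\to M\to 0$ with $X$ maximal Cohen--Macaulay, but $X$ need not be Ulrich, so condition $(3)$ says nothing about it. What $(3)$ gives is only that the \emph{sub}module $\Ext^1_{\Ul(R)}(M,N)\subseteq \Ext^1_R(M,N)$ of extensions with Ulrich middle term vanishes. The actual implication requires the nontrivial containment $\m\Ext^1_R(M,N)\subseteq \Ext^1_{\Ul(R)}(M,N)$ (any class killed by $\m$ has Ulrich middle term), which is \cite[Lemma~5.2.2]{ddd} and is the real content of this step.

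Second, under minimal multiplicity you assert that $(5)$, i.e.\ $\Ul(R)=\add_R(\syz^d_R k)$, ``holds unconditionally.'' This is false: minimal multiplicity only guarantees $\syz^d_R k\in\Ul(R)$ and that syzygies of MCM modules are Ulrich; it does \emph{not} force every Ulrich module into $\add_R(\syz^d_R k)$. The implication $(4)\implies(5)$ is the substantive step, and your attempt (arguing that $\m\Ext^1_R(M,M)=0$ forces $\Ext^1_R(M,M)=0$) goes nowhere, as you yourself notice. The paper's route is different: under minimal multiplicity $\syz_R M$ is again Ulrich, so $(4)$ gives $\m\Ext^1_R(M,\syz_R M)=0$; by \cite[Lemma~2.14]{iy} this means $\m$ annihilates $\Ext^{\ge1}_R(M,-)$, and then Takahashi's \cite[Proposition~2.2]{stable} (applied after passing to infinite residue field and reducing modulo a parameter reduction of $\m$) yields $M\in\add_R(\syz^d_R k)$. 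This is Proposition~\ref{addresi}, and it is the engine you are missing.

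For $(2)\implies(3)$ you correctly abandon the direct $\Ext$-vanishing attempt, but you never find the real mechanism. The paper argues by induction on $d$: reduce modulo a parameter $x$ that is part of a reduction of $\m$; the hypothesis descends (using that $\syz^i_{R/(x)}k$ appears as a summand of $\overline{\syz^j_R k}$), so by induction the sequence splits modulo $x$; hence by Striuli the class lies in $x\Ext^1_R(M,N)\subseteq\m\Ext^1_R(M,N)$. The final point, which you nearly stumbled on elsewhere, is that $M\in\add_R\{R,\syz^i_R k\}$ forces $\m\Ext^1_R(M,N)=0$ because $\Ext^1_R(\syz^i_R k,N)\cong\Ext^{i+1}_R(k,N)$ is a $k$-vector space. (Minor: $\depth\syz^i_R k=i$, not $d-i$.)
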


Proposition \ref{syzind} and Proposition \ref{ulind} together proves Theorem \ref{thm1}($(1)\implies (2)$).   
Since the $\us$ property of $R$ is equivalent to saying $\Ext^1_{\Ul(R)}(M,N)=0$ for all $M,N\in \Ul(R)$ (in the sense of \cite[4.9, Corollary 4.7]{ddd}), we note that $(3)\implies (4)$ in Theorem \ref{thm1} is nothing but \cite[Lemma 5.2.2]{ddd} which says that $\m \Ext^1_R(M,N)\subseteq \Ext^1_{\Ul(R)}(M,N)$ for all $M,N\in \Ul(R)$. 

We next turn our attention to rings of small dimensions. For dimension $1$ local Cohen--Macaulay rings, we can give a complete characterization of Ulrich split rings, under mild conditions. They are precisely the rings, for whom the blow-up ring with respect to the maximal ideal is regular. As will be seen below, the result holds much more generally for $I$-Ulrich modules in the sense of \cite[Definition 4.1]{dms}, i.e., $I$-Ulrich modules are maximal Cohen--Macaulay modules $M$ satisfying $e(I,M)=\ell(M/IM)$. Here, we note that Ulrich modules are nothing but $\m$-Ulrich modules. In the result below, for an ideal $I$ of $R$, $R \bowtie I$ will denote the fibre product of $R$ with itself along $R/I$. The equivalence (1) $\iff$ (5) of Theorem \ref{1.2} is \cite[Proposition 5.2.6]{ddd}, and the equivalence of the rest is contained in Theorem \ref{dim1uss}, Corollary \ref{complete}, Corollary \ref{dimsyzann}, Proposition \ref{fibbl}.  In the following, for an $R$-module $M$, $\tr_R(M)$ will stand for $\sum_{f\in \Hom_R(M,R)}f(M)$. When the ring in question is clear we will drop the suffix $R$ and may simply write $\tr_R(M)$. Moreover, $\syz\cm^{\times}(R)$  denotes the collection of modules which is a syzygy in a minimal free resolution of some MCM $R$-module (see \cite[Notation 3.3]{umm}, \cite[Lemma 2.2]{del}).  

\begin{thm}\label{1.2} Let $(R,\m,k)$ be a local Cohen--Macaulay ring of dimension $1$. 
Let $I\subseteq \m$ be an $\m$-primary ideal, and $\Ul_I(R)$ denote the subcategory of $I$-Ulrich modules. Let $B(I):=\cup_{n\geq 1}(I^n:_{Q(R)}I^n)$ denote the blow-up algebra of $I$, let $\c$ be the conductor of the integral closure of $R$. Then the following are equivalent:  

\begin{enumerate}[\rm(1)]   
    \item Every short exact sequence of modules in $\Ul_I(R)$ splits.

    \item Every short exact sequence of modules in $\Ul_{I\times I}(R \bowtie I)$ splits.  
    
    \item The blow-up $B(I)$ is a regular ring. 
    
    \item For all $n\gg 0$, $\tr_R(I^n)=\c$ and $I^n$ is a reflexive $R$-module.  
\vspace{1mm}

When $I=\m$, then the above equivalent conditions are also equivalent to 
\vspace{1mm}

\item $\m \Ext^1_R(\Ul(R),\Ul(R))=0$.  

\item The completion $\widehat R$ is $\us$.   

\vspace{1mm} 

When, moreover, $R$ is singular and has minimal multiplicity, then the above conditions are also equivalent to

\vspace{1mm}  

\item $\m = \c$.

\item $R$ is analytically unramified and $\syz \cm^{\times}(R)=\add_R(\m)$.  

\item $R$ is analytically unramified and $\m \Ext^1_R(\c, \syz_R \c)=0$.  
\end{enumerate}  
\end{thm}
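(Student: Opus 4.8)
The plan is to route every condition through the blow-up ring $B(I)$. Enlarging the residue field harmlessly if needed (e.g.\ replacing $R$ by $R(t)$, which alters none of the relevant data), fix a minimal reduction $z\in I$ that is a nonzerodivisor. Then $B(I)=\bigcup_{n\ge 0}I^nz^{-n}$ inside the total quotient ring $Q$ of $R$, and since $I^{n+1}=zI^n$ for $n\gg 0$ this union stabilizes: $B(I)=I^nz^{-n}$ for $n\gg 0$. Thus $B(I)$ is a module-finite birational extension with $R\subseteq B(I)\subseteq\overline R$, in particular a one-dimensional Cohen--Macaulay ring, and $I^n\cong B(I)$ as $R$-modules for $n\gg 0$. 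The first step is the dictionary, recalled from \cite{dms,ddd}: in dimension one an $R$-module $M$ is $I$-Ulrich if and only if it is maximal Cohen--Macaulay with $IM=zM$, equivalently a $B(I)$-module that is maximal Cohen--Macaulay over $R$; and for such modules every $R$-linear map is automatically $B(I)$-linear, so the forgetful functor is an exact equivalence $\cm(B(I))\simeq\Ul_I(R)$ preserving short exact sequences. Hence $(1)$ asserts precisely that every short exact sequence of maximal Cohen--Macaulay $B(I)$-modules splits, which by the Auslander--Buchsbaum--Serre characterization recalled in the introduction (applied at each maximal ideal of $B(I)$) is exactly the regularity of $B(I)$. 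This yields $(1)\Leftrightarrow(3)$.

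For $(1)\Leftrightarrow(2)$ I would verify that the blow-up is compatible with this fiber product: choosing $(z,z)$ as a minimal reduction of $I\times I$ in $R\bowtie I$ and using $(I\times I)^n=I^n\times I^n$ gives $B(I\times I)=B(I)\times B(I)$. Since $R\bowtie I$ is again a one-dimensional Cohen--Macaulay local ring (it sits in $0\to R\bowtie I\to R\times R\to R/I\to 0$ with $R/I$ of finite length) and $I\times I$ is a regular ideal, the already-proved $(1)\Leftrightarrow(3)$ applied to $R\bowtie I$ shows that $(2)$ holds iff $B(I)\times B(I)$ is regular iff $B(I)$ is regular iff $(1)$. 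When $I=\m$, the equivalence with $(6)$ is of the same flavour: completion commutes with the module-finite blow-up, so $B(\widehat{\m})=\widehat{B(\m)}$, and a Noetherian (semi)local ring is regular iff its completion is; hence $\widehat R$ is $\us$ iff $B(\widehat{\m})$ is regular iff $B(\m)$ is regular iff $R$ is $\us$.

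For $(3)\Leftrightarrow(4)$ the crux is that $B(I)$ is regular if and only if $B(I)=\overline R$: if $B(I)$ is regular it is normal, and a module-finite normal birational extension of $R$ contained in $\overline R$ must equal $\overline R$; conversely if $B(I)=\overline R$ then $\overline R$ is module-finite and one-dimensional normal, hence regular. It then remains to see that $(4)$ detects the equality $B(I)=\overline R$. Using $I^n\cong B(I)$ for $n\gg 0$, condition $(4)$ reads: $\tr(B(I))=\c$ and $B(I)$ is a reflexive $R$-module. For any module-finite birational extension $R\subseteq S\subseteq Q$ one has $\Hom_R(S,R)=(R:_Q S)$, which is an ideal of $S$ contained in $R$; hence $\tr(S)=(R:_Q S)=:\c_S$ and $S^{**}=(R:_Q\c_S)$. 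In particular $\tr(\overline R)=\c$, and since $\c$ is a faithful finitely generated ideal of $\overline R$ one gets $(R:_Q\c)=(\c:_Q\c)=\overline R$, so $\overline R$ is reflexive over $R$. Conversely, if $\tr(I^n)=\c$ and $I^n$ is reflexive for $n\gg 0$, then $\c_{B(I)}=\tr(B(I))=\c$ and $B(I)=(R:_Q\c_{B(I)})=(R:_Q\c)=\overline R$. This establishes $(3)\Leftrightarrow(4)$.

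Finally, assume $R$ is singular and has minimal multiplicity, so $\m^2=z\m$ and therefore $B(\m)=z^{-1}\m=(\m:_Q\m)=\End_R(\m)$; likewise $\overline R=(\c:_Q\c)=\End_R(\c)$ whenever $\c$ is a regular ideal. If $\m\cong\c$ as $R$-modules, then $\End_R(\c)=\overline R$, being contained in $c_0^{-1}\c\cong\c$ for a nonzerodivisor $c_0\in\c$, is module-finite over $R$ and hence (being one-dimensional Noetherian normal) a regular ring, and the induced ring isomorphism $B(\m)=\End_R(\m)\cong\End_R(\c)=\overline R$ shows $B(\m)$ regular, i.e.\ $(3)$. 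Conversely $(3)$ forces $B(\m)=\overline R$, i.e.\ $\m=z\overline R$, an ideal of $\overline R$ contained in $R$, so $\m\subseteq\c$; since $R$ is singular this gives $\c=\m$ and in particular $\m\cong\c$, and moreover $\overline R=B(\m)$ is module-finite, so $R$ is analytically unramified. In that case $\m=z\overline R\cong\overline R$ as $R$-modules and $\m(\overline R/R)=\c(\overline R/R)=0$, so $\Ext^1_R(\c,\syz_R\c)\cong\Ext^1_R(\overline R,\syz_R\c)$ is a quotient of $\Ext^1_R(\overline R/R,\syz_R\c)$ and is therefore annihilated by $\m$, which yields $(8)$. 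The remaining implication $(8)\Rightarrow(3)$ asks us to extract, from ``$R$ analytically unramified and $\m\Ext^1_R(\c,\syz_R\c)=0$'' under minimal multiplicity, the equality $\c=\m$ (equivalently, regularity of $B(\m)=\End_R(\m)$); this is the step where the homological input developed in the earlier sections---the cohomology-annihilator and residue-field-syzygy analysis behind Corollary~\ref{dimsyzann}---is genuinely needed, and it is the main obstacle I anticipate. All the rest is a fairly formal reduction to the regularity of $B(I)$, together with elementary commutative algebra of birational extensions and conductor ideals.
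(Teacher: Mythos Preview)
Your overall strategy matches the paper's: route everything through regularity of $B(I)$ via the equivalence $\Ul_I(R)\simeq\cm(B(I))$, identify ``$B(I)$ regular'' with ``$B(I)=\overline R$'' and then with the trace/reflexivity condition $(4)$, and handle the fiber product by computing its blow-up componentwise (this is exactly Proposition~\ref{dim1uss} and Proposition~\ref{fibbl}). Your argument for $(1)\Leftrightarrow(6)$ via $\widehat{B(\m)}\cong B(\widehat\m)$ is in fact more direct than the paper's Corollary~\ref{complete}, which instead descends the Ext-annihilation condition through \cite[Corollary~3.6]{stable} and a lemma on lifting Ulrich modules from the completion.

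Two gaps. First, you have entirely omitted condition $(5)$. The implication $(1)\Rightarrow(5)$ is immediate from \cite[Lemma~5.2.2]{ddd}, but $(5)\Rightarrow(1)$ is not automatic: it is \cite[Proposition~5.2.6]{ddd}, special to dimension one, and uses that for a principal reduction $x$ of $\m$ one has $\Ext^1_{\Ul(R)}(M,N)=x\Ext^1_R(M,N)$, so that $\m\Ext^1_R=0$ forces $\Ext^1_{\Ul(R)}=0$.

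Second, your diagnosis of $(8)\Rightarrow(3)$ points in the wrong direction. No residue-field-syzygy analysis is used; the single input is Proposition~\ref{traceann}(2) (equivalently \cite[Theorem~1.1]{sde}): for a regular fractional ideal $J$ one has $\ann_R\Ext^1_R(J,\syz_R J)\subseteq\tr(J)$. Applied with $J=\c$ (a trace ideal, so $\tr(\c)=\c$), the hypothesis $\m\subseteq\ann_R\Ext^1_R(\c,\syz_R\c)$ gives $\m\subseteq\c$; since $R$ is singular $\c\ne R$, whence $\c=\m$. That is the entire argument---once you know this annihilator/trace inequality, the step is two lines, and minimal multiplicity is not even used for this implication.
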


In dimension two, under the assumptions of minimal multiplicity and complex residue field, we can characterize $\us$ rings as cyclic quotient singularities with at most two indecomposable Ulrich modules up to isomorphism.      

\begin{thm}[Theorem \ref{surface}]\label{surface2} Let $(R,\m,k)$ be a complete local Cohen--Macaulay ring of minimal multiplicity, and have dimension $2$. Consider the following two statements:
 
 \begin{enumerate}[\rm(1)]  
  \item $R$ is a cyclic quotient singularity and $\Ul(R)$ has at most two indecomposable objects.
     \item $R$ is $\us$. 
 \end{enumerate}
 
Then $(1)\implies (2)$. If moreover $k=\mathbb C$, then $(2)\implies (1)$.
\end{thm}   

As a consequence, we are also able to characterize the cyclic surface quotient singularities for which the test ideal $\tau_{\cm(R)}(R)$ in the sense of \cite[Definition 4.1, Corollary 4.4]{pr} is the maximal ideal. For basic notations involving cyclic surface quotient singularities as used in Theorem \ref{thm4}, we refer the reader to \cite[Section 2 Preliminaries
]{cyc}.    

\begin{thm}[Lemma \ref{trcl}, Proposition  \ref{trquot}]\label{thm4} Let $R=\mathbb C[[x,y]]^G$ be a cyclic quotient surface singularity with maximal ideal $\m$. Then the following are equivalent.
 
 \begin{enumerate}[\rm(1)]

\item $G=\dfrac 1 n(1,a)$, where $a$ divides $n+1$. 
 


     \item $\tau_{\cm(R)}(R)=\m$.
     
     \item $\tr_R(M)=\m$ for every Ulrich $R$-module $M$.
 \end{enumerate}  
 \end{thm}  

 
We briefly describe the organization of the paper. Section 2 takes care of preliminary results. While some of them are familiar, the latter ones dealing with categorical matters, from \ref{syzind} onwards, are more technical. For instance, we need to use results from our previous paper on subfunctors of $\Ext$ (\cite{ddd}) to establish splitting from a certain condition on the Grothendieck group,  see \ref{Ggroup} and \ref{remark2.15}. The short section 3 establishes several results on the relationship between Ulrich modules and syzygies of the residue field. Section 4 explores rings of minimal multiplicity more deeply, and this is where the main results regarding these rings and the $\us$ property are completed, in particular Theorems \ref{thm1} and \ref{surface2}. Section 5 focuses on dimension one, where very comprehensive statements are available, collected in Theorem \ref{1.2} above. Section 6 gives some applications for detecting projective and injective modules via the vanishing of $\Ext$, where the ring being $\us$ often means one can  require less vanishing than usual.   

\section{Preliminaries} 

 In this section, we record many preliminary results in a very general set-up, which will be used throughout the various later sections in this paper.  Throughout this paper, all rings are commutative and Noetherian. Given a ring $R$, $\Mod R$ (resp. $\mod R$) denotes the category of all $R$-modules (resp. finitely generated $R$-modules). All subcategories are assumed to be strict and full (i.e., subcategories are only determined by their objects, and moreover, are closed under isomorphism).
  $Q(R)$ will denote the total ring of fractions of $R$ and $\overline R$ will stand for the integral closure of $R$ in $Q(R)$. When $R$ is local, we denote the first syzygy in a minimal free resolution of a finitely generated $R$-module $M$ by $\syz_R M$. By a regular ideal $I$ of a ring $R$, we will mean an ideal containing a non-zero-divisor of $R$.  
 
 For $R$-submodules $M,N$ of $Q(R)$, we denote $(M:N):=\{r\in Q(R): rN\subseteq M\}$. An extension of rings $R\subseteq S$ will be called birational if $S\subseteq Q(R)$, i.e., if $Q(S)=Q(R)$.

\begin{chunk}\label{conductor} Given a birational extension $R\subseteq S$, we define the conductor ideal of $S$ in $R$, denoted by $c_R(S)$, to be $(R:S)=\{r\in Q(R):rS\subseteq R\}$. Since $1\in S$, so $c_R(S)=(R:_R S)$. Notice that, $c_R(S)$ is an ideal of $R$. Moreover, since $S\cdot c_R(S) \cdot S\subseteq S\cdot c_R(S)\subseteq R$, so $S\cdot c_R(S)\subseteq (R:S)=c_R(S)$. Hence $c_R(S)$ is always an ideal of $S$ as well. We write $\mathfrak c$ to denote $c_R(\overline R)$. Since $c_R(S)=(R:S)=(R:S)S=\tr_R(S)$ (\cite[Proposition 2.4(2)]{trace}), hence $c_R(S)$ is always a trace ideal of $R$.  

Now, $S$ is module finite over $R$ if and only if the conductor ideal $ c_R(S)$ contains a non-zero-divisor of $R$ (i.e., $ c_R(S)\in \mathcal F_R$ in the notation of \cite{goto}). So, if $S$ is a finite birational extension of $R$, then $(R:c_R(S))=(c_R(S):c_R(S))$ by \cite[Proposition 2.4(3)]{trace}.   
\end{chunk}    

In the following, $M^*$ denotes $\Hom_R(M,R)$ for an $R$-module $M$.

\begin{lem}\label{normal} Let $R$ be an analytically unramified local Cohen--Macaulay ring (hence $\overline R$ is module finite over $R$). Then $\overline R=(\mathfrak c : \mathfrak c)=(R:\mathfrak c)\cong \hom_R(\mathfrak c, R)\cong \operatorname{End}_R(\mathfrak c)$ is a reflexive $R$-module.
\end{lem}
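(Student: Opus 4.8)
The plan is to work inside $Q(R)$ throughout and exploit that $\mathfrak c = c_R(\overline R)$ is simultaneously an ideal of $R$ and of $\overline R$, together with the fact that $\overline R$ is module finite over $R$ (which holds since $R$ is analytically unramified). First I would record the string of inclusions $\overline{R} \subseteq (\mathfrak c : \mathfrak c) \subseteq (R : \mathfrak c)$: the left inclusion is Remark \ref{conductor} (that $\mathfrak c$ is an ideal of $\overline R$ means $\overline R \cdot \mathfrak c \subseteq \mathfrak c$), and the right inclusion holds because $\mathfrak c \subseteq R$ forces $(\mathfrak c : \mathfrak c) \subseteq (R : \mathfrak c)$. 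It thus suffices to prove the reverse containment $(R : \mathfrak c) \subseteq \overline{R}$, which will collapse the whole chain.

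For that containment I would argue that every $x \in (R : \mathfrak c)$ is integral over $R$. Since $\mathfrak c$ contains a non-zerodivisor of $R$ (again by Remark \ref{conductor}, as $\overline R$ is a finite birational extension, so $\mathfrak c \in \mathcal F_R$), the module $\mathfrak c$ is a faithful finitely generated $R$-module, and $x \mathfrak c \subseteq R$ gives $x \mathfrak c \subseteq R \subseteq \overline R$; but $\mathfrak c$ is also an ideal of the ring $\overline{R}$, so one gets $x\,\mathfrak c \cdot \overline R \subseteq \mathfrak c$ once we know $x \in \overline R \cdot Q$-stuff — more cleanly: $\mathfrak c$ is a faithful $\overline{R}$-module that is finitely generated over $R$, hence over $\overline R$, and $x \mathfrak c \subseteq R \subseteq \overline R$ together with $\overline R \mathfrak c = \mathfrak c$ yields $x \mathfrak c \subseteq \mathfrak c$ after multiplying by $\overline R$; the determinant trick (Cayley–Hamilton applied to multiplication by $x$ on the faithful finite $\overline R$-module $\mathfrak c$) then shows $x$ is integral over $\overline R$, hence over $R$, so $x \in \overline R$. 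This proves $\overline R = (\mathfrak c : \mathfrak c) = (R : \mathfrak c)$. Alternatively one can cite Remark \ref{conductor} directly for $(R : \mathfrak c) = (\mathfrak c : \mathfrak c)$ and only prove $(\mathfrak c:\mathfrak c) \subseteq \overline R$.

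Next I would identify these with Hom-modules. The evaluation-at-$1$ map $\hom_R(\mathfrak c, R) \to Q(R)$, $\varphi \mapsto \varphi(1)$ — more precisely, using that $\mathfrak c$ contains a non-zerodivisor $a$, the map $\varphi \mapsto \varphi(a)/a$ — is an injective $R$-homomorphism whose image is exactly $(R : \mathfrak c)$, since any $R$-linear map $\mathfrak c \to R$ is multiplication by a fixed element of $Q(R)$ sending $\mathfrak c$ into $R$. The same argument with $R$ replaced by $\mathfrak c$ in the target identifies $\End_R(\mathfrak c) = \hom_R(\mathfrak c, \mathfrak c)$ with $(\mathfrak c : \mathfrak c)$. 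Combined with the previous paragraph this gives all the claimed isomorphisms $\overline R \cong \hom_R(\mathfrak c, R) \cong \End_R(\mathfrak c)$.

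Finally, reflexivity: I want $\overline R$ to be a reflexive $R$-module, i.e., the biduality map $\overline R \to \hom_R(\hom_R(\overline R, R), R)$ is an isomorphism. The cleanest route is to use that $\hom_R(-, R)$ of a finitely generated module is always reflexive when $R$ is Cohen--Macaulay (more precisely, satisfies $(S_2)$), so $\hom_R(\mathfrak c, R)$ is reflexive; since $\overline R \cong \hom_R(\mathfrak c, R)$ we are done. The main obstacle — really the only subtle point — is pinning down the integrality argument for $(R : \mathfrak c) \subseteq \overline R$ cleanly, making sure one uses finiteness of $\overline R$ over $R$ and the non-zerodivisor in $\mathfrak c$ in the right places; everything else is formal manipulation of fractionary ideals and standard Hom-identifications, and the reflexivity follows from the general $(S_2)$ fact about duals over Cohen--Macaulay rings.
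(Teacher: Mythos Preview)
Your approach coincides with the paper's: the chain $\overline R \subseteq (\mathfrak c:\mathfrak c) \subseteq (R:\mathfrak c)$ is collapsed via integrality (the paper phrases your determinant trick as ``$(\mathfrak c:\mathfrak c)$ is module-finite over $R$, hence contained in $\overline R$''), the equality $(R:\mathfrak c)=(\mathfrak c:\mathfrak c)$ is exactly Remark~\ref{conductor}, and the Hom-identifications are the standard fractionary-ideal ones (the paper cites \cite[Proposition 2.4(1)]{trace}). One point needs tightening: it is \emph{not} true that $M^*$ is reflexive merely because $R$ is Cohen--Macaulay or satisfies $(S_2)$; for instance, over the Artinian local ring $R=k[x,y]/(x,y)^2$ one has $k^*\cong k^2$ but $k^{**}\cong k^4$. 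What the paper invokes is that $R$ is reduced (being analytically unramified), hence generically Gorenstein, and then \cite[Theorem 2.3]{matsui} gives that $M^*\approx\syz^2_R\Tr M$ is reflexive for every finitely generated $M$. Your conclusion is correct in this context, but the hypothesis you cite is insufficient on its own.
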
    

\begin{proof} Since $R$ is reduced, so $R$ is generically Gorenstein. Since for any $M\in \mod R$, there is an exact sequence of $R$-modules $0\to M^*\to F_1 \to F_0$, where $F_0, F_1$ are free, so $M^*$ is reflexive for any finite $R$-module $M$ by \cite[Theorem 2.3]{matsui}. Hence the reflexivity of $\overline{R}$ follows if we can show the required isomorphisms. Let $x\in \overline{R}$. Then $x(R:\overline{R})\subseteq R$, so $x\in R:(R:\overline{R})=R:\mathfrak{c}$. Thus $\overline{R}\subseteq R:\mathfrak{c}$. Now $\overline R$ is module finite over $R$, so $\mathfrak c$ contains a non-zero-divisor of $R$. Hence \cite[Proposition 2.4(1)]{trace} implies $R:\mathfrak{c}$ is isomorphic to $\Hom_R(\mathfrak{c},R)$ as $R$-modules. Since $\mathfrak c$ is a trace ideal (by  \ref{conductor}) and $\c$ contains a  non-zero-divisor of $R$, so by \cite[Proposition 2.4(3)]{trace} we get, $\mathfrak c: \mathfrak c=R:\mathfrak c$. Thus $R:\mathfrak{c}=\mathfrak{c}:\mathfrak{c}$ is a finite birational extension of $R$ (since $\mathfrak c$ is an ideal of $R$, hence $\mathfrak c$ is a finite $R$-module), so $R:\mathfrak{c}\subseteq\overline{R}$. Hence $\overline{R}=R:\mathfrak c=\c : \c$. Thus by \cite[Proposition 2.4(1)]{trace} we have, $\text{End}_R(\mathfrak c) \cong \overline R$.   
\end{proof}    

\begin{lem}\label{algmor} Let $f:R \to S$ be an $R$-algebra. Then for any $M,N\in \Mod(S)$, it holds that $M,N\in \Mod(R)$ (via $f$), and moreover, $\Hom_R(M,N)=\Hom_S(M,N)$ holds in the following cases:
\begin{enumerate}[\rm(1)]
\item $f$ is surjective. 

\item $f$ is injective, $S\subseteq Q(R)$, and $N$ is a torsion-free $S$-module.   
\end{enumerate}
\end{lem}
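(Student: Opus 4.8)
The plan is to observe first that the statement $M,N\in\Mod(R)$ is automatic: any $S$-module becomes an $R$-module by restriction of scalars along $f$. Likewise, the inclusion $\Hom_S(M,N)\subseteq\Hom_R(M,N)$ is automatic, since any $S$-linear map is $R$-linear through $f$. So in both cases the whole content is the reverse inclusion, i.e.\ to show that an arbitrary $R$-linear map $\phi\colon M\to N$ (where $R$ acts via $f$ on both sides) is in fact $S$-linear; this reduces to checking $\phi(sm)=s\phi(m)$ for every $s\in S$ and $m\in M$.

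For case (1), I would simply use surjectivity: write $s=f(r)$ with $r\in R$, and then $\phi(sm)=\phi(f(r)m)=\phi(rm)=r\phi(m)=f(r)\phi(m)=s\phi(m)$, where the middle equality is $R$-linearity of $\phi$ and the others just record that the $R$-actions on $M$ and $N$ are the ones induced by $f$. This finishes (1) with no hypotheses on $M$ or $N$.

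For case (2), the idea is to clear denominators. Identify $R$ with its image in $S\subseteq Q(R)$, so that $Q(S)=Q(R)$. Given $s\in S$, write $s=a/b$ with $a,b\in R$ and $b$ a non-zerodivisor of $R$, so that $bs=a$ in $S$. For $m\in M$, $R$-linearity of $\phi$ gives $b\,\phi(sm)=\phi(bsm)=\phi(am)=a\,\phi(m)=b\,s\,\phi(m)$, hence $b\bigl(\phi(sm)-s\phi(m)\bigr)=0$ in $N$. Now $b$, being a non-zerodivisor of $R$, is a unit of $Q(R)=Q(S)$ and therefore a non-zerodivisor of $S$; since $N$ is torsion-free over $S$, multiplication by $b$ is injective on $N$, so $\phi(sm)-s\phi(m)=0$. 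Hence $\phi$ is $S$-linear.

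The only step that is not pure bookkeeping — and hence the point I would single out as the crux of (2) — is the passage ``non-zerodivisor of $R$ $\Rightarrow$ non-zerodivisor of $S$'', which holds precisely because the extension $R\subseteq S$ is birational, and it is exactly this that licenses cancelling $b$ against the torsion-freeness of $N$.
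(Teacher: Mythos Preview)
Your proof is correct and, for part (1), essentially identical to the paper's. For part (2) the paper does not give an argument at all but simply cites \cite[Proposition 4.14(i)]{lw}; your explicit clearing-denominators argument is exactly the standard one behind that reference, so there is no substantive difference in approach.
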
     

\begin{proof} If $M$ is an $S$-module, then the $R$-module structure on $M$ is given by $r\cdot x:=f(r)x$ for all $r\in R$, $x\in M$. Now to show $\Hom_R(M,N)=\Hom_S(M,N)$, first notice that $\Hom_S(M,N)\subseteq \Hom_R(M,N)$ is obvious. So, we only need to verify the other inclusion. 

(1) Let $\psi \in \Hom_R(M,N)$. Now to show $\psi$ is $S$-linear, let $s\in S$ and $x\in M$. As $f$ is surjective, so $s=f(r)$ for some $r\in R$. So, $s\psi(x)=f(r)\psi(x)=r\cdot \psi(x)=\psi(r\cdot x)=\psi(f(r)x)=\psi(sx)$.  

(2) This is \cite[Lemma 5.2.9]{ddd}.    
\end{proof}   

As a quick consequence, we get the following:

\begin{cor} \label{homalgebra} Let $f:R \to S$ be an $R$-algebra such that $f$  factors as $R \to A \to S$, where $R\to A$ is a surjective ring homomorphism and $A\to S$ is a birational embedding. If $M,N$ are $S$-modules and $N$ is torsion-free over $S$, then  $\Hom_R(M,N)=\Hom_S(M,N)$. So, in particular, a torsion-free $S$-module $M$ is indecomposable as an $S$-module if and only if it is indecomposable as an $R$-module.   
\end{cor}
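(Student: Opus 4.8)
The plan is to deduce Corollary \ref{homalgebra} directly from Lemma \ref{algmor} by factoring the $\Hom$ identification through the intermediate algebra $A$. Write $f$ as the composite $R \xrightarrow{g} A \xrightarrow{h} S$, where $g$ is surjective and $h$ is a birational embedding (so $A \subseteq S \subseteq Q(A)$, since $h$ birational means $Q(S) = Q(A)$). Given $S$-modules $M, N$ with $N$ torsion-free over $S$, I first want to apply Lemma \ref{algmor}(2) to $h : A \to S$: this requires that $N$ be torsion-free as an $S$-module, which is our hypothesis, and that $S \subseteq Q(A)$, which holds since $h$ is birational. This yields $\Hom_A(M, N) = \Hom_S(M, N)$, where $M, N$ are regarded as $A$-modules via $h$.

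Next I would apply Lemma \ref{algmor}(1) to the surjection $g : R \to A$. Here $M, N$ are $A$-modules (via the restriction along $h$ just used), and surjectivity of $g$ gives $\Hom_R(M, N) = \Hom_A(M, N)$, where now the $R$-module structures are those obtained by further restricting along $g$ — i.e. exactly the $R$-structures coming from $f = h \circ g$. Composing the two identities gives $\Hom_R(M, N) = \Hom_A(M, N) = \Hom_S(M, N)$, which is the first assertion.

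For the ``in particular'' statement, let $M$ be a torsion-free $S$-module. Idempotents in $\End_S(M)$ correspond to $S$-module direct sum decompositions of $M$, and likewise idempotents in $\End_R(M)$ correspond to $R$-module decompositions. Applying the first part with $N = M$ (which is torsion-free over $S$ by assumption) gives $\End_R(M) = \Hom_R(M, M) = \Hom_S(M, M) = \End_S(M)$ as rings — the identification is plainly multiplicative since it is just the statement that an $R$-linear endomorphism is automatically $S$-linear and composition is the same on both sides. Hence $\End_R(M)$ and $\End_S(M)$ have the same idempotents, so $M$ is indecomposable over $R$ if and only if it is indecomposable over $S$. (One should note for the forward direction that a nontrivial $S$-decomposition is a fortiori a nontrivial $R$-decomposition, and conversely any nontrivial $R$-decomposition of $M$ arises from a nontrivial idempotent in $\End_R(M) = \End_S(M)$, hence is an $S$-decomposition.)

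There is essentially no obstacle here; the only point requiring a moment's care is bookkeeping of which module structure is in play at each stage — making sure that when Lemma \ref{algmor}(1) is invoked for $g$, the ambient ``$S$'' of that lemma is our $A$ and the $A$-structure on $M, N$ is precisely the one inherited from $S$ via $h$, so that the two applications of Lemma \ref{algmor} compose cleanly to recover the $R$-structure via $f$. I would state this explicitly but not belabor it.
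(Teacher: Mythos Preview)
Your proposal is correct and follows exactly the intended route: the paper presents Corollary \ref{homalgebra} as an immediate consequence of Lemma \ref{algmor}, and your two-step application of parts (2) and (1) through the intermediate algebra $A$, followed by the idempotent argument for indecomposability, is precisely the argument implicit in that claim.
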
  

\begin{rem} Some assumptions on the map $f: R\to S$ in the Corollary \ref{homalgebra} are absolutely necessary. To see how things can go wrong even in very simple situations, consider the map from $n$-th Veronese $R=k[[x,y]]^{(n)}\to k[[x,y]]=S$, where $k[[x,y]]^{(n)}:=k[[x^iy^{n-i}: 0\leq i\leq n]]$. Then $S$ is obviously indecomposable as an $S$-module but is decomposable as an $R$-module as soon as $n>1$.   
\end{rem}  

\begin{lem}\label{indec} Let $R$ be analytically unramified. Then we can write the integral closure $\overline R$ as a finite product   $\overline R\cong \prod_{\p \in \Min(R)} \overline{R/\p}$, where each $\overline{R/\p}$ is an indecomposable finitely generated $R$-module.
\end{lem}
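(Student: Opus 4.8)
The plan is to reduce the computation of $\overline R$ to the minimal primes of $R$ via the structure of $Q(R)$, and then to deduce indecomposability of each factor from Corollary \ref{homalgebra}. First I would use that an analytically unramified local ring embeds in its (reduced) completion and is therefore reduced; hence $R$ has finitely many minimal primes $\p_1,\dots,\p_n$, $\bigcap_i\p_i=0$, and $Q(R)\cong\prod_{i=1}^n K_i$, where $K_i$ denotes the fraction field of the domain $R/\p_i$. Under this identification the map $R\hookrightarrow Q(R)$ becomes the diagonal composite $R\to\prod_i R/\p_i\hookrightarrow\prod_i K_i$.

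Next I would compute the integral closure coordinatewise. The intermediate ring $\prod_i R/\p_i$ is a module-finite extension of $R$ (it is the finite direct sum $\bigoplus_i R/\p_i$ of cyclic $R$-modules), hence integral over $R$, so $\overline R$ --- the integral closure of $R$ in $Q(R)$ --- equals the integral closure of $\prod_i R/\p_i$ in $\prod_i K_i$. Since integral dependence over a finite product ring can be tested in each factor, this integral closure is $\prod_i \overline{R/\p_i}$, where $\overline{R/\p_i}$ is the integral closure of the domain $R/\p_i$ in $K_i$. This yields the asserted ring isomorphism $\overline R\cong\prod_{\p\in\Min(R)}\overline{R/\p}$, hence the $R$-module isomorphism. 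Because $R$ is analytically unramified, $\overline R$ is module finite over $R$ (as already invoked for Lemma \ref{normal}), so each direct factor $\overline{R/\p}$ is in particular a finitely generated $R$-module.

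It remains to see that each $\overline{R/\p}$ is indecomposable as an $R$-module. Its $R$-module structure is induced by the ring map $R\to\overline{R/\p}$, which factors as $R\twoheadrightarrow R/\p\hookrightarrow\overline{R/\p}\subseteq Q(R/\p)$, i.e. as a surjection followed by a birational embedding of the domain $R/\p$. Since $\overline{R/\p}$ is a domain it is torsion-free over itself, so Corollary \ref{homalgebra} applies and gives that $\overline{R/\p}$ is indecomposable over $R$ if and only if it is indecomposable over itself; and a domain, having no nontrivial idempotents, is indecomposable as a module over itself. I do not expect a serious obstacle here: the only points requiring a little care are making the coordinatewise computation of the integral closure precise (that $\prod_i R/\p_i$ is module finite over $R$, and that integral dependence in a finite product is witnessed in each coordinate), and checking that the factorization $R\to R/\p\to\overline{R/\p}$ literally meets the hypotheses of Corollary \ref{homalgebra}.
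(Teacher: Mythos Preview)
Your proof is correct and follows essentially the same approach as the paper's: both obtain the product decomposition of $\overline R$ from the structure of $Q(R)$ for a reduced ring (the paper cites \cite[Corollary 2.1.13]{sh} while you spell this out directly), and both deduce indecomposability of each $\overline{R/\p}$ via Corollary \ref{homalgebra} applied to the factorization $R\twoheadrightarrow R/\p\hookrightarrow\overline{R/\p}$. The only minor difference is that the paper argues each $\overline{R/\p}$ is module finite over $R$ by first noting $R/\p$ is analytically unramified, whereas you obtain this as a direct summand of the module-finite $\overline R$; both routes are valid.
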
    

\begin{proof} Since $R$ is reduced, so the decomposition follows by \cite[Corollary 2.1.13]{sh}. For each $\p\in \Min(R)$, $R/\p$ is analytically unramified by \cite[Tag 032Y]{st}. Hence $\overline{R/\p}$ is module finite over $R/\p$, so $\overline{R/\p}$ is module finite over $R$ as well. Now fix a $\p \in \Min(R)$ and let, $S=\overline{R/\p}$. Now the algebra $R\to S=\overline{R/\p}$ factors via $R\to R/\p \to \overline{R/\p}$, where the first map is surjective and the second map is a birational embedding. Since $S$ is a torsion-free $S$-module, so by Corollary \ref{homalgebra}, $S$ is indecomposable as an $S$-module if and only if  $S$ is indecomposable as an $R$-module. Now $S$ is an integral domain, so $S$ is indecomposable as an $S$-module. Hence the claim.   
\end{proof}   

We recall the following definition: 

\begin{dfn} (c.f. \cite{lindo}) For finitely generated modules $M$ over a commutative Noetherian ring $R$,
let us denote by $\tr_R (M)$ the ideal of $R$ defined as $\sum_{f\in \Hom_R(M,R)} f(M)$. We will refer to this as the trace ideal of $M$. When the ring in question is clear we will drop the suffix $R$ and may simply write $\tr_R(M)$. It is easy to see that, if $I$ is an ideal of $R$, then $I\subseteq \tr_R(I)$. 
\end{dfn}

\if0 
\begin{prop}\label{tracemap}  Let $M,X$ be finitely generated modules over a commutative Noetherian ring $R$. Then the following holds:

\begin{enumerate}[\rm(1)]
    \item There is a surjection $M^{\oplus n} \to \tau_M(X)\to 0$, where $n=\mu(\Hom_R(M,X))$. 
    
    \item Assume $R$ is local. If $\tau_M(X)$ is free, then $M$ has $R$ as a direct summand. 
\end{enumerate}
\end{prop} 

\begin{proof} (1) Let $n=\mu(\Hom_R(M,X))$, so there is a surjection $R^{\oplus n}\to \Hom_R(M,X)\to 0$. Then applying $-\otimes_R M$ to this sequence we get a surjection $M^{\oplus n}\to \Hom_R(M,X) \otimes_R M\to 0$. Now by definition, $\Hom_R(M,X) \otimes_R M$ surjects onto $\tau_M(X)$. Hence $M^{\oplus n}$ surjects onto $\tau_M(X)$.  

(2) If $\tau_M(X)$ is free, then (1) implies that $\tau_M(X)$ is a direct summand of $M^{\oplus n}$ for some $n$, i.e., $M^{\oplus n}$ has a free summand. Since $R$ is local, so \cite[Lemma 1.2(i)]{lw} implies that $M$ has $R$ as a summand.  

\end{proof}  
\fi

\begin{prop}\label{traceann}  Let $M$ be a finitely generated module over a Noetherian local ring $R$. Then the following holds:  

\begin{enumerate}[\rm(1)]
    \item  $\ann_R   \Ext_R^1(M,\syz_R M)=\ann_R  \Ext_R^{>0}(M,\Mod R)$ 
    \item  If $M$ is a finitely generated $R$-module such that $M$ has a summand, which is isomorphic to a fractional ideal containing a non-zero-divisor of $R$, then $\ann_R\Ext^1_R(M,\syz_R M)\subseteq \tr_R(M)$.
\end{enumerate}   
\end{prop}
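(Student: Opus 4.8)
The plan is to combine part (1) of the same Proposition with the structure assumption on $M$. Write $M \cong N \oplus J$ where $J \subseteq Q(R)$ is a fractional ideal containing a non-zerodivisor $x$ of $R$. Set $\a := \ann_R \Ext^1_R(M, \syz_R M)$; by part (1) this equals $\ann_R \Ext^{>0}_R(M, \Mod R)$, so in particular $\a$ annihilates $\Ext^1_R(M, L)$ for \emph{every} $R$-module $L$ — this flexibility in the second argument is what makes the argument go through. The goal is to show $\a \subseteq \tr(M)$, and since $\tr(J) \subseteq \tr(M)$ (a trace ideal of a summand is contained in the trace ideal of the whole module), it suffices to prove $\a \subseteq \tr(J)$.

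First I would record that $\a$ annihilates $\Ext^1_R(J, L)$ for all $L$, because $J$ is a direct summand of $M$ and $\Ext$ commutes with finite direct sums in the first variable. Next, the key point is that $J$ contains a non-zerodivisor, so there is a short exact sequence $0 \to R \xrightarrow{\cdot x} J \to J/xR \to 0$ for a suitable non-zerodivisor $x \in J$ (equivalently, $0 \to xR \to J \to J/xR \to 0$ with $xR \cong R$). Since $J \subseteq Q(R)$ and $x$ is a non-zerodivisor, the quotient $J/xR$ is a finitely generated torsion module. Now for any $a \in \a$ I want to show $a \in \tr(J)$, i.e. $a = f(j)$ for some $f \in \Hom_R(J,R)$, $j \in J$ — more precisely $a \in \sum_{f} f(J)$. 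Apply $\Hom_R(-, R)$... rather, the cleaner route: apply $\Hom_R(J, -)$ to $0 \to R \xrightarrow{\cdot x} J \to J/xR \to 0$ to get
\[
\Hom_R(J, R) \xrightarrow{\cdot x} \Hom_R(J, J) \to \Hom_R(J, J/xR) \to \Ext^1_R(J, R) \to \cdots
\]
Since $a \in \a$ kills $\Ext^1_R(J,R)$, multiplication by $a$ sends $\Hom_R(J,J)$ into the image of $\Hom_R(J,R) \xrightarrow{\cdot x} \Hom_R(J,J)$, where we view $\Hom_R(J,R) \subseteq \Hom_R(J,J) \subseteq \Hom_R(Q(R),Q(R))$ compatibly (using that $x$ is a non-zerodivisor, multiplication by $x$ is injective on these Hom modules). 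In particular the identity map $\mathrm{id}_J \in \Hom_R(J,J)$ satisfies $a \cdot \mathrm{id}_J = x \cdot g$ for some $g \in \Hom_R(J,R)$, i.e. $a j = x\, g(j)$ for all $j \in J$. Evaluating the $R$-linear map $\tfrac{1}{x} g : J \to Q(R)$ — wait, I instead directly note $a \cdot j = x g(j)$; pick $j \in J$ with $xj^{-1}\cdot$... Cleaner: from $a\,\mathrm{id}_J = x g$ we get that the homomorphism $\phi := \tfrac{a}{x}\,\mathrm{id}_J = g \colon J \to R$ is an honest map $J \to R$, and then $a J = x\,\phi(J) \subseteq$, so for the non-zerodivisor $x \in J$ we have $a = \phi(x) \cdot \tfrac{x}{x}$... let me just say: $g \in \Hom_R(J,R)$ and $x g(j) = a j$ for all $j$, so taking $j = x$ gives $x g(x) = a x$, hence $g(x) = a$ since $x$ is a non-zerodivisor; therefore $a = g(x) \in g(J) \subseteq \tr(J) \subseteq \tr(M)$, as desired.

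The step I expect to be the main obstacle is setting up the identification of $\Hom_R(J,R)$, $\Hom_R(J,J)$, and the map $\cdot x$ between them coherently inside $Q(R)$ so that ``$a\,\mathrm{id}_J$ lies in the image of $\cdot x$'' can be upgraded to an equality $a\,\mathrm{id}_J = x g$ with $g \in \Hom_R(J,R)$ — this requires knowing that $\cdot x \colon \Hom_R(J,R) \to \Hom_R(J,J)$ is injective with the expected image, which follows from $x$ being a non-zerodivisor and $J$ being torsion-free (it embeds in $Q(R)$), but it is worth stating carefully. Once that is in place, the cohomology sequence argument and the evaluation at the non-zerodivisor $x \in J$ are routine. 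Part (1) is quoted directly from \cite[Lemma 2.14]{iy} and needs no separate argument.
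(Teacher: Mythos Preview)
Your overall strategy---pass to the fractional-ideal summand $J$, use part~(1) to know that $\a$ kills $\Ext^1_R(J,L)$ for every $L$, and then use a long exact sequence to force $a\cdot\mathrm{id}_J$ to factor through a free module---is exactly the right idea, and it is essentially what the cited result \cite[Theorem~1.1]{sde} does (the paper's own proof simply invokes that theorem as a black box). However, the particular short exact sequence you chose does not deliver the key step. From $0\to R\xrightarrow{\cdot x} J\to C\to 0$ and $\Hom_R(J,-)$ one gets
\[
\Hom_R(J,R)\xrightarrow{x_*}\Hom_R(J,J)\xrightarrow{\pi_*}\Hom_R(J,C)\xrightarrow{\delta}\Ext^1_R(J,R),
\]
so $\mathrm{im}(x_*)=\ker(\pi_*)$, and to get $a\cdot\mathrm{id}_J\in\mathrm{im}(x_*)$ you would need $\pi_*(a\cdot\mathrm{id}_J)=0$ in $\Hom_R(J,C)$, i.e.\ $aJ\subseteq xR$. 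Killing $\Ext^1_R(J,R)$ only tells you $a\cdot\Hom_R(J,C)$ lands in $\ker\delta$, which is a different statement. Concretely, take $R$ regular local of dimension~$2$, $J=\m$, and $x$ a regular parameter: then $\Ext^1_R(\m,R)\cong\Ext^2_R(k,R)\cong k$ is killed by every $a\in\m$, yet for $a=y$ (the other parameter) one has $y\m\not\subseteq xR$, so $y\cdot\mathrm{id}_\m\notin\mathrm{im}(x_*)$.

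The fix is immediate: replace your sequence by the syzygy sequence $0\to\syz_R J\to F\to J\to 0$ with $F$ free. Applying $\Hom_R(J,-)$ gives
\[
\Hom_R(J,F)\longrightarrow\Hom_R(J,J)\longrightarrow\Ext^1_R(J,\syz_R J),
\]
and now $a$ \emph{does} kill the last term, so $a\cdot\mathrm{id}_J$ lies in the image of the first map. Under the identifications $\Hom_R(J,R)\cong(R:J)$ and $\Hom_R(J,J)\cong(J:J)$ (valid since $J$ contains a non-zerodivisor), that image is $J\cdot(R:J)=\tr(J)$, and $a\cdot\mathrm{id}_J$ corresponds to $a\in(J:J)$, so $a\in\tr(J)\subseteq\tr(M)$. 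With this one change your argument goes through and in fact re-proves the inclusion from \cite[Theorem~1.1]{sde} that the paper merely cites.
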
   

\begin{proof} (1) This is \cite[Lemma 2.14]{iy}.
\\
(2) If $M$ is a finitely generated $R$-module such that $M\cong N\oplus X$, where $N$ is an $R$-submodule of $Q(R)$ containing a non-zero-divisor of $R$, then $\Ext^1_R(N,\syz_R N)$ is a direct summand of $\Ext^1_R(M,\syz_R M)$. So, in view of \cite[Theorem 1.1]{sde}, we get $\ann_R \Ext^1_R(M,\syz_R M)\subseteq \ann_R \Ext^1_R(N,\syz_R N) =\tr_R(N)\subseteq \tr_R(N)+\tr_R(X)=\tr_R(M)$. 
\end{proof}  

We end this section by recalling the notion of Ulrich modules over a local Cohen--Macaulay ring. Let $(R,\m,k)$ be a local Cohen--Macaulay ring. An $R$-module $M$ is called maximal Cohen--Macaulay (which we will also abbreviate as MCM hence forth) if $\depth_R M\ge \text{ dim } R$. So, in particular, we also consider $0$ to be MCM. We denote the full subcategory of MCM $R$-modules by $\cm(R)$.
By $\syz \cm(R)$, we will denote the subcategory consisting of all $R$-modules $M$, which fits into a short exact sequence $0\to M \to F \to N \to 0$, where $F$ is a free $R$-module and $N$ is an MCM $R$-module. By $\syz \cm^{\times}(R)$, we denote all modules in $\syz \cm(R)$, which does not have $R$ as a summand.    
An MCM $R$-module $M$ is called Ulrich if $e(M)=\mu(M)$. We denote the full subcategory of Ulrich $R$-modules by $\Ul(R)$. The following proposition is well known, and easily follows from \cite[Proposition A.19, A.23]{lw}.  

\begin{prop}\label{uldef} Let $(R,\m,k)$ be a local Cohen--Macaulay ring of dimension $d$. Let $M\in \cm(R)$. Then the following holds:  

\begin{enumerate}[\rm(1)]
\item  If $M$ is Ulrich, then $\m M=(x_1,...,x_d)M$ for every ideal $(x_1,...,x_d)\subseteq \m$, which forms a reduction of $\m$.

\item  If there exists an ideal $(x_1,...,x_d)\subseteq \m$, which forms a reduction of $\m$ and satisfies $\m M=(x_1,...,x_d)M$, then $M$ is Ulrich.  
\end{enumerate}  
\end{prop}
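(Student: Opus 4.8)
The goal is to prove Proposition \ref{uldef}, which characterizes Ulrich modules via the equality $\m M = (x_1,\dots,x_d)M$ for parameter ideals that are reductions of $\m$.

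\textbf{Plan of proof.} The plan is to reduce everything to the well-known properties of the Hilbert--Samuel multiplicity recorded in \cite[Proposition A.19, A.23]{lw}, using a system of parameters $\aa=(x_1,\dots,x_d)\subseteq\m$ that forms a reduction of $\m$. The starting point is the inequality $\mu(M)\le e_\m(M)$, valid for all MCM modules, which holds because if $\aa M=\m M$ for some such $\aa$ then $\mu(M)=\ell(M/\m M)=\ell(M/\aa M)$, while $\ell(M/\aa M)=e_\aa(M)=e_\m(M)$ precisely because $M$ is MCM (so $\aa$ is an $M$-regular sequence and $\aa$ is a reduction of $\m$); and in general $\mu(M)=\ell(M/\m M)\le\ell(M/\aa M)$ for any such reduction. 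So the whole statement is really about tracking when these inequalities become equalities.

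\textbf{Part (2).} Here I would argue directly. Suppose $\aa=(x_1,\dots,x_d)\subseteq\m$ is a reduction of $\m$ with $\m M=\aa M$. Since $M$ is MCM and $\depth_R M\ge d=\dim R$, after noting $\aa$ is a system of parameters we get that $\aa$ is an $M$-regular sequence, hence $e_\aa(M)=\ell_R(M/\aa M)$ by the standard length formula for regular sequences (this is the content of the relevant part of \cite[Proposition A.19]{lw}). Because $\aa$ is a reduction of $\m$, $e_\aa(M)=e_\m(M)$ (associativity/reduction invariance of multiplicity, \cite[Proposition A.21/A.23]{lw}). Finally $\ell_R(M/\aa M)=\ell_R(M/\m M)=\mu(M)$ using the hypothesis $\m M=\aa M$. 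Stringing these together gives $e_\m(M)=\mu(M)$, so $M$ is Ulrich.

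\textbf{Part (1).} Conversely assume $M$ is Ulrich, i.e. $e_\m(M)=\mu(M)$, and let $\aa=(x_1,\dots,x_d)\subseteq\m$ be any reduction of $\m$. As above $e_\m(M)=e_\aa(M)=\ell_R(M/\aa M)$ (again using MCM so that $\aa$ is $M$-regular, plus reduction invariance). On the other hand $\mu(M)=\ell_R(M/\m M)$, and since $\aa M\subseteq\m M$ there is a surjection $M/\aa M\twoheadrightarrow M/\m M$, whence $\ell_R(M/\aa M)\ge\ell_R(M/\m M)$. Combining, $\mu(M)=e_\m(M)=\ell_R(M/\aa M)\ge\ell_R(M/\m M)=\mu(M)$, forcing $\ell_R(M/\aa M)=\ell_R(M/\m M)$. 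Since the surjection $M/\aa M\twoheadrightarrow M/\m M$ is then a length-preserving surjection of finite-length modules, it is an isomorphism, so $\aa M=\m M$. The one point requiring a small remark is that an Ulrich module can be assumed nonzero (the zero module case is vacuous), and that $R/\m$-infinite residue field subtleties do not arise because reductions of $\m$ minimally generated by $d$ elements exist under the stated hypothesis — but the statement is phrased conditionally ("for every ideal $(x_1,\dots,x_d)$ which forms a reduction") so no existence is needed.

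\textbf{Main obstacle.} There is essentially no serious obstacle: the entire argument is a bookkeeping exercise with the length formula for regular sequences and the invariance of multiplicity under reductions, both quoted from \cite{lw}. The only place to be a little careful is ensuring that MCM-ness is genuinely used to pass from $e_\aa(M)$ to $\ell_R(M/\aa M)$ — this is where $\depth_R M\ge d$ enters, guaranteeing that a system of parameters is automatically an $M$-regular sequence — and in part (1) to note that the surjection of equal-length finite modules must be an isomorphism.
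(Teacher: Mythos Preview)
Your proposal is correct and follows exactly the approach the paper indicates: the paper does not give a detailed proof but simply states that the proposition ``is well known, and easily follows from \cite[Proposition A.19, A.23]{lw},'' and your argument is precisely the standard unpacking of those two facts (the length formula $e_{\aa}(M)=\ell(M/\aa M)$ for an $M$-regular parameter sequence, and the reduction invariance $e_{\aa}(M)=e_{\m}(M)$). There is nothing to add.
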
  

We further record the following proposition, which will be used throughout the entire paper without further reference. 

\begin{prop} \label{reduction} Let $(R,\m,k)$ be a local Cohen--Macaulay ring, and let $M$ be an Ulrich $R$-module. If $x_1\in \m$ is part of a system of parameters, which forms a reduction of $\m$, then $M/x_1M$ is an Ulrich $R/x_1R$-module.  
\end{prop}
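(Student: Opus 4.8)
The plan is to reduce the $d$-dimensional Ulrich condition to the $(d-1)$-dimensional one via Proposition \ref{uldef}, i.e.\ to check that $M/x_1M$ satisfies the reduction-characterization of Ulrich modules over $\bar R:=R/x_1R$. Set $\bar{\m}=\m/x_1R$. First I would observe that since $x_1$ is part of a system of parameters forming a reduction of $\m$, we may extend $x_1$ to a full system of parameters $x_1,\dots,x_d$ generating a reduction of $\m$; then the images $\bar x_2,\dots,\bar x_d$ in $\bar R$ form a system of parameters of $\bar R$ that is a reduction of $\bar{\m}$ (reductions descend modulo a superficial/parameter element, and $\dim \bar R = d-1$ since $x_1$ is part of an s.o.p.\ in the Cohen--Macaulay ring $R$).

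Next I would use that $M$ is Ulrich over $R$ to get, by Proposition \ref{uldef}(1), the equality $\m M = (x_1,\dots,x_d)M$. Since $M$ is MCM over the Cohen--Macaulay ring $R$ and $x_1$ is part of an s.o.p., $x_1$ is a nonzerodivisor on $M$, so $\bar M:=M/x_1M$ is MCM over $\bar R$ of dimension $d-1$ (depth drops by exactly one). Reducing the displayed equality modulo $x_1M$ gives $\bar{\m}\,\bar M = (\bar x_2,\dots,\bar x_d)\bar M$ inside $\bar M$. By Proposition \ref{uldef}(2) applied over $\bar R$, with the reduction $(\bar x_2,\dots,\bar x_d)$ of $\bar{\m}$, this forces $\bar M$ to be Ulrich over $\bar R$, which is the claim.

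The main point requiring care is the bookkeeping that $(\bar x_2,\dots,\bar x_d)$ is genuinely a \emph{reduction} of $\bar{\m}$ and a system of parameters of $\bar R$, not merely a parameter ideal contained in $\bar{\m}$: one needs $\bar{\m}^{n+1} = (\bar x_2,\dots,\bar x_d)\bar{\m}^n$ for $n\gg 0$, which follows from $\m^{n+1}=(x_1,\dots,x_d)\m^n$ together with the fact that, $M$ being Ulrich, $x_1 M \cap \m^{n}M$ behaves well — but actually for the reduction statement about $\bar{\m}$ itself we only need $\m^{n+1}=(x_1,\dots,x_d)\m^n$ and that $x_1\in (x_2,\dots,x_d)\m^{n}+\cdots$; more cleanly, since $x_1,\dots,x_d$ is a reduction of $\m$ and $R$ is Cohen--Macaulay, $x_1$ may be chosen to be a superficial element, and then $(\bar x_2,\dots,\bar x_d)$ is automatically a reduction of $\bar{\m}$. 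I expect this superficiality/reduction descent to be the only nontrivial step; the module-theoretic part is a direct quotient of the defining equality $\m M=(x_1,\dots,x_d)M$.

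Alternatively, and perhaps more in keeping with the paper's style, one can bypass reductions entirely by computing multiplicities: $e_{\bar{\m}}(\bar M) = e_{\m}(M)$ since $x_1$ is a superficial (parameter) element for $M$, and $\mu_{\bar R}(\bar M) = \mu_R(M)$ since $x_1\in\m$; then $e_{\bar{\m}}(\bar M)=e_{\m}(M)=\mu_R(M)=\mu_{\bar R}(\bar M)$ shows $\bar M$ is Ulrich over $\bar R$ directly from the definition, the only inputs being that $x_1$ is $M$-regular (so $\bar M$ is MCM over $\bar R$) and a superficial parameter (so the multiplicity is preserved). I would present this second route as the main argument, since it makes the invariance of both $e$ and $\mu$ transparent.
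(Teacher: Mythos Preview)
Your first route is essentially the paper's proof: apply Proposition~\ref{uldef}(1) to get $\m M=(x_1,\dots,x_d)M$, reduce modulo $x_1$, and invoke Proposition~\ref{uldef}(2) over $\bar R$. The paper does exactly this in four lines.

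You overcomplicate the reduction descent. No superficiality is needed: if $\m^{n+1}=(x_1,\dots,x_d)\m^n$ in $R$, then passing to $\bar R=R/x_1R$ gives $\bar\m^{\,n+1}=(\bar x_1,\bar x_2,\dots,\bar x_d)\bar\m^{\,n}=(\bar x_2,\dots,\bar x_d)\bar\m^{\,n}$ simply because $\bar x_1=0$. So $(\bar x_2,\dots,\bar x_d)$ is automatically a reduction of $\bar\m$, and it is a system of parameters since $\dim\bar R=d-1$. The paper treats this as obvious and does not comment on it; your detour through superficial elements and the remark about ``$x_1M\cap\m^nM$ behaving well'' are unnecessary.

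Your second route via $e_{\bar\m}(\bar M)=e_\m(M)$ and $\mu_{\bar R}(\bar M)=\mu_R(M)$ is a valid alternative not used in the paper. It trades the reduction bookkeeping for the fact that multiplicity is preserved modulo a parameter that is regular on $M$ (which does hold here, e.g.\ via the associativity formula or Lech's formula), but since the first route is already this short, there is no real gain.
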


\begin{proof} Let $d=\dim R$, and let $x_1,...,x_d\in \m$ be a system of parameters, which forms a reduction of $\m$. Then $\m M=(x_1,...,x_d)M$ by Proposition \ref{uldef}(1). Let us denote $\overline{(-)}:=(-)\otimes_R R/x_1R$. Then $(\bar x_2, ..., \bar x_d)$ form a reduction of the maximal ideal $\overline \m=\m/(x_1)$ of $\overline R$ and $\overline \m \overline M=(\bar x_2, ..., \bar x_d) \overline M$. Since $\dim \bar R=d-1$, so by Proposition \ref{uldef}(2) we get, $\overline M$ is $\overline R$-Ulrich.  
\end{proof}

\begin{rem}\label{faithflat} 
Let $(R,\m)\to (S,\n)$ be a faithfully flat map such that $\n=\m S$. 

Let $M\in \mod (R)$. By \cite[Theorem 2.1.7]{bh} (and the sentences following it), we have $M\in \cm(R)$ if and only if $M\otimes_R S \in \cm(S)$. Let $I$ be an $\m$-primary ideal of $R$. As $\m S$ is the maximal ideal of $S$, so $IS=I\otimes_R S$ is also primary to the maximal ideal of $S$. Now $S\otimes_R \dfrac {M}{IM}\cong \dfrac{S\otimes_R M}{S\otimes_R (IM)}=\dfrac{S\otimes_R M}{(IS)(S\otimes_R M)}$ by \cite[Lemma 5.2.5]{ddd}. Hence $\lambda_S\left(\dfrac{S\otimes_R M}{(IS)(S\otimes_R M)}\right)=\lambda_S\left(S\otimes_R \dfrac {M}{IM} \right)=\lambda_R\left(\dfrac{M}{IM}\right)$, where the last equality is by \cite[Tag 02M1]{st} remembering that $S/\m S$ is the residue field of $S$. Also, since $e_R(I,M)=\lim_{n\to \infty}\dfrac{(\dim M)!}{n^{\dim M}}\lambda_R\left(\dfrac{M}{I^{n+1}M}\right)$, and $\dim_S(M\otimes_R S)=\dim_R(M)$ (see \cite[Theorem A.11(b)]{bh}), so a similar argument as the previous one shows $e_R(I,M)=e_S(IS, S\otimes_R M)$. Thus $M\in \Ul(R)$ if and only if $M\otimes_R S\in \Ul(S)$.   
\end{rem} 

We will freely use these above facts about Ulrich modules throughout the remainder of the paper without any further reference.

For a subcategory $\X \mod R$, let $\add_R \X$ denote the smallest subcategory of $\mod R$ containing $\X$, closed under direct summands of finite direct sums. It is clear that, $M\in \add_R \X$ if and only if $M\oplus N \cong \oplus_{i=1}^n X_i$, for some $N \in \mod R$, and $X_i \in \X$. 
Proposition \ref{syzind} and Proposition \ref{ulind} together proves Theorem \ref{thm1}($(1)\implies (2)$).   

\begin{prop}\label{syzind} Let $(R,\m,k)$ be a local ring of depth $t\ge 1$. Let $M$ be an $R$-module with $\depth M\ge t$. If $M\in \add_R\{R,\syz^i_R k: i\ge 0\}$, then $M\in \add_R\{R, \syz^i_R k: i\ge t\}$. 
\end{prop}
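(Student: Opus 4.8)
The plan is to reduce the general membership $M\in\add_R\{R,\syz^i_R k:i\ge 0\}$ to membership in $\add_R\{R,\syz^i_R k:i\ge t\}$ by showing that the only low syzygies that can occur as summands of a module of depth $\ge t$ are the ones that are already accounted for. Write $M\oplus N\cong R^{\oplus a}\oplus\bigl(\bigoplus_{i=0}^{t-1}(\syz^i_R k)^{\oplus b_i}\bigr)\oplus\bigl(\bigoplus_{i\ge t}(\syz^i_R k)^{\oplus c_i}\bigr)$ for suitable nonnegative integers. The idea is that each $\syz^i_R k$ for $0\le i\le t-1$ has depth exactly $i<t$ (by the depth lemma applied to the short exact sequences $0\to\syz^{i+1}_R k\to R^{\oplus\beta_i}\to\syz^i_R k\to 0$, together with $\depth k=0$), whereas $R$ and $\syz^i_R k$ for $i\ge t$ have depth $\ge t$. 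So the low-syzygy summands must "cancel" against corresponding summands of $N$.

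Concretely, first I would establish the depth computation: $\depth_R\syz^i_R k = \min\{i,t\}$ for all $i\ge 0$. This follows by induction on $i$ from the depth lemma: $\depth\syz^0_R k=\depth k=0$ when $t\ge1$, and from $0\to\syz^{i+1}_R k\to R^{\oplus\beta_i}\to\syz^i_R k\to 0$ one gets $\depth\syz^{i+1}_R k=\min\{t,\depth\syz^i_R k+1\}$ (using $\depth R=t$ and $\syz^i_R k$ has no free summand so isn't free of positive depth unless $i\ge t$... actually one just needs the standard depth lemma inequality, which gives $\min\{i+1,t\}$). Next, the key structural input is Krull–Schmidt: since $R$ is local, $\mod R$ has the Krull–Schmidt property, so the indecomposable summands of $M$ (with multiplicity) are a sub-multiset of the indecomposable summands of $M\oplus N$, which are among $R$ and the indecomposable summands of the $\syz^i_R k$. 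Now $M$ has depth $\ge t$, so every indecomposable summand of $M$ has depth $\ge t$; hence no indecomposable summand of $M$ can be an indecomposable summand of $\syz^i_R k$ for $i<t$ \emph{unless that summand itself has depth $\ge t$} — but the point is subtler, since $\syz^i_R k$ for $i<t$ could a priori have an indecomposable summand of depth $\ge t$. To rule this out I would argue: if $X\mid\syz^i_R k$ with $\depth X\ge t>i=\depth\syz^i_R k$, that's impossible because depth of a direct sum is the min of the depths, so $\depth\syz^i_R k\ge\min$ over summands, contradiction. Therefore every indecomposable summand of $M$ is either $R$ or an indecomposable summand of some $\syz^i_R k$ with $i\ge t$, i.e. $M\in\add_R\{R,\syz^i_R k:i\ge t\}$.

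I would present the argument cleanly as: (i) $\depth_R\syz^i_R k=\min\{i,t\}$ by the depth lemma and induction; (ii) for a finitely generated module $L$ over a local ring, $\depth L=\min\{\depth X: X \text{ an indecomposable summand of }L\}$, hence every indecomposable summand of $M$ has depth $\ge t$; (iii) by Krull–Schmidt each such summand is among the indecomposable summands of $R$ or of the various $\syz^i_R k$, and by (i)–(ii) it cannot come from $\syz^i_R k$ with $i<t$; conclude.

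The main obstacle I anticipate is the careful handling of direct summands rather than the modules $\syz^i_R k$ themselves — i.e. making rigorous that an indecomposable summand of $M$ that happens to be a summand of $\syz^i_R k$ for some small $i$ forces $\depth\syz^i_R k\ge t$, which is where the "depth of a sum is the min" observation does the work, combined with Krull–Schmidt to control which indecomposables appear. One should also be mildly careful that the statement only assumes $R$ is local (not necessarily Cohen–Macaulay), so all depth arguments must use only $\depth R=t$ and the depth lemma, which they do.
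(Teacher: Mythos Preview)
There are two gaps, one fatal and one fixable.

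\textbf{The fatal gap.} Your step (iii) asserts that ``since $R$ is local, $\mod R$ has the Krull--Schmidt property.'' This is false: Krull--Schmidt holds over Henselian (in particular complete) local rings, but can fail over an arbitrary local ring. Without it, knowing that an indecomposable summand $X$ of $M$ is a summand of $R^{\oplus a}\oplus\bigl(\bigoplus_i(\syz^i_R k)^{\oplus b_i}\bigr)$ does \emph{not} let you conclude that $X$ is isomorphic to $R$ or to an indecomposable summand of some $\syz^i_R k$. The paper deals with exactly this point by passing to the completion $\widehat R$, running the cancellation argument there (where KRS is available), and then descending back to $R$ via a faithfully-flat-descent statement for $\add$.

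\textbf{The fixable gap.} Even granting KRS, your argument that $\syz^i_R k$ for $i<t$ cannot have an indecomposable summand of depth $\ge t$ is incorrect. You write: ``if $X\mid\syz^i_R k$ with $\depth X\ge t>i=\depth\syz^i_R k$, that's impossible because depth of a direct sum is the min of the depths.'' But $\depth(\syz^i_R k)=\min$ over summands only forces \emph{some} summand to have depth $i$; it does not prevent \emph{another} summand $X$ from having depth $\ge t$. The correct input here, which the paper uses, is that $\syz^j k$ is \emph{indecomposable} for $0\le j\le t-1$ (a result of Takahashi); then its only indecomposable summand is itself, of depth $j<t$, and the desired exclusion follows immediately. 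You should invoke this indecomposability rather than the depth-of-summand argument.
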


\begin{proof} There exists non-negative integers $b,a_0,...,a_n$ and $R$-module $N$ such that $M\oplus N\cong R^{\oplus b} \oplus (\oplus_{i=0}^n(\syz^i_Rk)^{\oplus a_i}$). Taking completion we get that, $\widehat M\oplus \widehat N \cong \widehat R^{\oplus b} \oplus (\oplus_{i=0}^n(\syz^i_{\widehat R}k)^{\oplus a_i})$. Now write $\widehat M\cong \oplus_{i=1}^l X_i, \widehat N\cong \oplus_{i=1}^s Y_i$, where $X_i, Y_i$ s are indecomposable $\widehat R$-modules. Since $\depth_{\widehat R}\widehat M=\depth_R M\ge t=\depth \widehat R$, so $\depth_{\widehat R} X_i\ge t$ for all $1\leq i\leq l$. Since for every $0\le j \le t-1$, we have $\depth_{\widehat R} \syz^j_{\widehat R}k=j<t$, so none of the $X_i$ s can be isomorphic to $\syz^j_{\widehat R}k$ for $0\le j \le t-1$. Since $\syz^j_{\widehat R}k$ is indecomposable for $0\le j\le t-1$ (by \cite[Coorllary 3.3]{TaG}), so by uniqueness of indecomposable decomposition in $\mod(\widehat R)$, we get that all the copies of $\syz^j_{\widehat R}k$ for $0\le j \le t-1$ must be isomorphic to one of the $Y_i$ s. Hence by \cite[Corollary 1.16]{lw}, we can cancel all the copies of $\syz^j_{\widehat R}k$ for $0\le j \le t-1$ from both sides of $\widehat M\oplus \widehat N \cong \widehat R^{\oplus b} \oplus (\oplus_{i=0}^n(\syz^i_{\widehat R}k)^{\oplus a_i})$. Thus we get that, $\widehat M \in \add_{\widehat R}\{\widehat R,\syz^j_{\widehat R}k: n\geq j\ge t\}\subseteq \add_{\widehat R}(\reallywidehat{R\oplus(\oplus_{i=t}^n\syz^i_{R}k}))$. Hence $M\in \add_{ R}(R\oplus(\oplus_{i=t}^n\syz^i_{R}k))$ by \cite[Proposition 2.18]{lw}. Thus, $M\in \add_R\{R, \syz^i_R k: i\ge t\}$. 
\end{proof} 

\begin{prop}\label{ulind} Let $(R,\m)$ be a local Cohen--Macaulay ring. Let $\X \subseteq \cm(R)$ be a subcategory containing some non-zero module.  Then $\Ul(R)\subseteq \add_R \X$ if and only if $\Ul(R) \subseteq \add_R\{R,\X\}$. 
\end{prop}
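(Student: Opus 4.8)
The forward implication is trivial: since $\X$ is contained in the collection $\{R\}\cup\X$, we have $\add_R\X\subseteq\add_R\{R,\X\}$, so $\Ul(R)\subseteq\add_R\X$ forces $\Ul(R)\subseteq\add_R\{R,\X\}$. The content is the converse, so I would assume $\Ul(R)\subseteq\add_R\{R,\X\}$ and fix $M\in\Ul(R)$; the task is to show that the copies of $R$ occurring when we write $M$ as a direct summand of a module built from $R$ and objects of $\X$ can always be discarded. Two elementary facts drive the argument: (a) a direct summand of an Ulrich module is again Ulrich --- immediate from Proposition \ref{uldef}, since summands of MCM modules are MCM and the defining equality $\m M=(x_1,\dots,x_d)M$ passes to summands; and (b) $R$ itself is Ulrich if and only if $R$ is regular --- indeed, $R$ Ulrich means $e(R)=\mu(R)=1$, and multiplicity one forces regularity (Nagata), while regular local rings plainly satisfy $e(R)=1$.

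First I would dispose of the case $R$ regular. Then every MCM module is free, and since $e(R)=1$ every nonzero free module $R^{n}$ has $e(R^{n})=n=\mu(R^{n})$, so $\Ul(R)=\add_R R$. Because $\X$ contains a nonzero module, which over a regular ring is a nonzero free module and hence has $R$ as a direct summand, we get $R\in\add_R\X$ and therefore $\Ul(R)=\add_R R\subseteq\add_R\X$. (This is the only place the hypothesis ``$\X$ contains a nonzero module'' is used.)

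Now assume $R$ is not regular. Write $M\oplus N\cong R^{\oplus b}\oplus L$ with $L=X_{1}\oplus\cdots\oplus X_{m}$ and $X_{i}\in\X$, and pass to completions to get $\widehat M\oplus\widehat N\cong\widehat R^{\oplus b}\oplus\widehat L$ over the complete local ring $\widehat R$, where the Krull--Schmidt property holds. Here $\widehat M$ is $\widehat R$-Ulrich (completion preserves $e$, $\mu$, and $\depth$), while $\widehat R$ is again not regular, hence not $\widehat R$-Ulrich by (b). Decomposing $\widehat M$ and $\widehat L$ into indecomposables and using uniqueness of such decompositions, each indecomposable summand of $\widehat M$ is isomorphic either to $\widehat R$ or to an indecomposable summand of $\widehat L$; the first possibility is excluded because such a summand would be $\widehat R$-Ulrich by (a), whereas $\widehat R$ is not. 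Hence $\widehat M\in\add_{\widehat R}\widehat L$, and \cite[Proposition 2.18]{lw} (descent of $\add$ along completion, exactly as used in the proof of Proposition \ref{syzind}) yields $M\in\add_R L\subseteq\add_R\X$.

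The only genuinely conceptual step --- and the main obstacle --- is fact (b): once one observes that $R$ fails to be Ulrich in the non-regular case, the free summand $R^{\oplus b}$ is automatically irrelevant, and what remains is the routine completion-and-Krull--Schmidt bookkeeping already employed elsewhere in the paper. The regular case is an easy but genuinely necessary exception, since there $R$ \emph{is} Ulrich.
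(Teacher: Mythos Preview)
Your proof is correct, and the overall structure---separating the regular and singular cases, and in the singular case using that $R\notin\Ul(R)$ to rule out free summands of $M$---matches the paper's. The difference is purely in the mechanics of the singular case. You pass to the completion to invoke full Krull--Schmidt uniqueness, conclude $\widehat M\in\add_{\widehat R}\widehat L$, and then descend via \cite[Proposition 2.18]{lw}. The paper instead works directly over $R$: writing $N\cong N'\oplus R^{\oplus a}$ with $N'$ having no free summand, it uses cancellation of free modules (\cite[Corollary 1.16]{lw}, valid over any local ring) and the free-summand lemma (\cite[Lemma 1.2(i)]{lw}) to show $b\le a$ and hence $M\oplus N'\oplus R^{\oplus a-b}\cong\bigoplus_i X_i$, giving $M\in\add_R\X$ immediately. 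The paper's route is slightly more elementary in that it avoids completion and descent altogether, but your approach is entirely sound and arguably more transparent if one is already thinking in terms of Krull--Schmidt.
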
   

\begin{proof} We only need to prove that, if $\Ul(R) \subseteq \add_R\{R,\X\}$, then $\Ul(R)\subseteq \add_R \X$. Firstly, if $R$ is regular, then $\Ul(R)=\add_R(R)=\cm(R)=\add_R\X$ (where the last equality holds since $\X$ has a non-zero module, so $R\in \add_R \X$). Hence there is nothing to prove. So, now we assume $R$ is singular, which implies $R\notin \Ul(R)$. Now suppose $\Ul(R) \subseteq \add_R\{R,\X\}$. Let $M\in \Ul(R)$ (so, $R$ is not a direct summand of $M$). Then $M\oplus N\cong R^{\oplus b} \oplus (\oplus_i X_i)$, where $N$ is an $R$-module and $X_i\in \X$, and $b\ge 0$ is an integer. Now write $N \cong N'\oplus R^{\oplus a}$, where $N'$ does not have $R$ as a summand. So, we have $M\oplus N' \oplus R^{\oplus a}\cong R^{\oplus b} \oplus (\oplus_i X_i)$. If $b>a$, then by \cite[Corollary 1.16]{lw} we get, $M\oplus N' \cong R^{\oplus b-a}  \oplus (\oplus_i X_i)$. Hence $R$ is a direct summand of either $M$, or $N'$ by \cite[Lemma 1.2(i)]{lw}. But this contradicts that, neither $M$ nor $N'$ has $R$ as a direct summand. Thus $b\le a$. Hence \cite[Corollary 1.16]{lw} again yields $M\oplus N' \oplus R^{\oplus a-b}\cong \oplus_i X_i$. Hence $M\in \add_R \X$.      
\end{proof}


\if0
The main purpose of this section is to prove general results about short exact sequences of Ulrich modules in $\Ext^1$, to prove that they form a submodule, and to answer the following question 

\begin{ques}\label{ulind} If $R$ is a local Cohen--Macaulay ring such that every short exact sequence of Ulrich modules ending with indecomposable modules is split exact, then is $R$ $\us$? 
\end{ques} 

For Ulrich modules $M,N$, let $\Ext^1_{\Ul(R)}(M,N)$ denote the collection of all short exact sequences with middle term Ulrich. In this notation, Question \ref{ulind} can be rephrased as follows: 

\begin{ques}\label{ulextind} If $\Ext^1_{\Ul(R)}(X,Y)=0$ for all $X,Y\in \ind \Ul(R)$, then is it true that $\Ext^1_{\Ul(R)}(M,N)=0$, then is it true that $\Ext^1_{\Ul(R)}(M,N)=0$ for all $M,N\in \Ul(R)$?
\end{ques}   

For the purpose of answering this question, we first recall some general properties of the co-variant and contravariant $\Ext^1$ functors, linear functors and subfunctors.  

Throughout, let $R$ be a commutative Noetherian ring. For $R$-modules $M,N,X$, $f\in\Hom_R(M,N)$ and short exact sequences $\sigma \in \Ext^1_R(X,M), \alpha \in \Ext^1_R(N,X)$, denote by $f\sigma$ and $\alpha f$ to be the pushout and pullback respectively of the short exact sequences by $f$, so that $f\sigma \in \Ext^1_R(X,N), \alpha f\in \Ext^1_R(M,X)$. So given a module $X$, this induces functors $\Ext^1_R(X,-),\Ext^1_R(-,X):\Mod R \to \Mod R$, where each $f\in \Hom_R(M,N)$ gets send to $\Ext^1_R(X,M)\xrightarrow{\sigma \mapsto f\sigma} \Ext^1_R(X,N)$ (resp.  $\Ext^1_R(N,X)\xrightarrow{\alpha \mapsto \alpha f} \Ext^1_R(M,X)$ ). For details of why these are functors, see \cite[Theorem 2.1(2.5, 2.6), page 68]{mac}, and \cite[Remark 7.2]{lw}.   

Next we quickly recall the definition of additive functors.

\begin{dfn}\label{additive} Let $\C$ be an additive subcategory of $\Mod R$. A covariant (resp. contravariant) functor $F:\C \to \Mod R$ is called additive if for each $M,N\in \C$, the function $F: \Hom_R(M,N)\to \Hom_R(F(M),F(N))$ (resp. $F: \Hom_R(M,N)\to \Hom_R(F(N),F(M))$) is additive, i.e., an abelian Group homomorphism.   
\end{dfn}

Since subfunctors of additive functors are additive, so using 

\cite[Theorem 2.1(2.5, 2.6), page 68]{mac} shows that given a fixed module $X$, both the functors $\Ext^1_R(X,-),\Ext^1_R(-,X):\Mod R \to \Mod R$ described as above are additive. (In fact, they are moreover linear functors as can be proven using \cite[Remark 7.2]{lw}, but we won't need that). A very useful property of additive functors is the following:  

\begin{prop}\label{dirsum} Let $M_1,M_2$ be $R$-modules. Let $\C$ be an additive subcategory of $\Mod R$. Let $F:\C \to \Mod R$, $G:\C\to \Mod R$ be covariant and contravariant additive functors respectively. Consider the direct sum $M_1\oplus M_2$ with injections and projections $i_1,i_2$, and $\pi_1,\pi_2$ respectively. Then,  

\begin{enumerate}[\rm(1)]
    \item $F(M_1\oplus M_2)$ is isomorphic to the direct sum $F(M_1)\oplus F(M_2)$ with injections and projections $F(i_1),F(i_2)$, and $F(\pi_1),F(\pi_2)$ respectively. 
    
    \item $G(M_1\oplus M_2)$ is isomorphic to the direct sum  $G(M_1)\oplus G(M_2)$ with projections and injections $G(i_1),G(i_2)$, and $G(\pi_1),G(\pi_2)$ respectively.   
\end{enumerate}
\end{prop}

\begin{proof} We only prove (1), the proof of (2) being similar. The $R$-linear maps $F(i_l):F(M_l)\to F(M_1\oplus M_2)$ and $F(\pi_i):F(M_1\oplus M_2)\to F(M_i)$ gives us $R$-linear maps $ \begin{tikzcd} F(M_1 \oplus M_2) \arrow[yshift=0.7ex]{r}{F(\pi_1)\oplus F(\pi_2)} & \begin{array}{c}
F(M_1)\oplus F(M_2) 
\end{array} \arrow[yshift=-0.7ex]{l}{F(i_1)\oplus F(i_2)}  
\end{tikzcd} $, and that this are mutual inverses follows by the proof of \cite[Chapter 5, Proposition 16.3(1)]{fuller}.    
\end{proof}  

So we see that if like $\Ext^1_R$, we could prove $\Ext^1_{\Ul(R)}$ also distributes over finite direct sums, then Question \ref{ulextind} would have a positive answer, since any Ulrich module is a direct sum of indecomposable Ulrich modules. But at the moment, this still does not make sense, and for it to make sense, we need to introduce still one more notion 

\begin{dfn}\label{subfucntor} Let $\C$ be a subcategory of $\Mod R$. Let $F: \C \to \Mod R$ be a covariant (resp. contravariant) functor. A covariant (resp. contravariant) functor $G: \C \to \Mod R$ is called a subfunctor of $F$ if for every $M\in \C$, $G(M)$ is a submodule of $F(M)$, and moreover, for every $M,N\in \C$, and $f\in \Hom_R(M,N)$, it holds that $G(f)\in \Hom_R(G(M),G(N))$ (resp. $G(f)\in \Hom_R(G(N),G(M))$) is the restriction of  $F(f)\in \Hom_R(F(M),F(N))$ (resp. $F(f)\in \Hom_R(F(N),F(M))$ ) to $G(M)$ (resp. $G(N)$). This last condition about the morphisms can also be depicted by the following commutative diagrams, where the left one stands for the covariant case and the right one for the contravariant case, and where the vertical arrows are just inclusions    
\\
$$\begin{tikzcd}
F(M) \arrow[r, "F(f)"]                 & F(N)                 & F(N) \arrow[r, "F(f)"]                 & F(M)                 \\
G(M) \arrow[r, "G(f)"] \arrow[u, hook] & G(N) \arrow[u, hook] & G(N) \arrow[r, "G(f)"] \arrow[u, hook] & G(M) \arrow[u, hook]
\end{tikzcd}$$
\end{dfn}

It follows from Definition \ref{subfucntor} and Definition \ref{additive}, that subfunctor of an additive functor is again additive.  

The following observation shows that one can actually remove the assumption of $G$ being a functor a priori from Definition \ref{subfucntor} 

\begin{lem}\label{sublem} Let $\C$ be a subcategory of $\Mod R$. Let $F: \C \to \Mod R$ be a covariant (resp. contravariant) functor. Let $G:\C \to \Mod R$ be an assignment of objects only such that for every $M\in \C$, $G(M)$ is a submodule of $F(M)$ and moreover, for every $M,N\in \C$, and for every $f\in \Hom_R(M,N)$, it holds that $F(f)(G(M))\subseteq G(N)$ (resp. $F(f)(G(N))\subseteq G(M)$). Then, defining $G(f):=F(f)|_{G(M)}$ (resp. $G(f):=F(f)|_{G(N)}$) for every $f\in \Hom_R(M,N)$, it follows that $G$ is a subfunctor of $F$.  
\end{lem}  

\begin{proof}   
We will only prove this for covariant functor, the proof for contravariant functor being similar. First, let $M\in \C$. Then $G(id_M)=F(id_M)|_{G(M)}=id_{F(M)}|_{G(M)}=id_{G(M)}$. Next, let $M,N,L\in\C$ and $f\in \Hom_R(M,N)$, $h\in\Hom_R(N,L)$. Then $G(h)\circ G(f)=(F(h)|_{G(N)})\circ (F(f)|_{G(M)})=(F(h)\circ F(f))|_{G(M)}=(F(h\circ f))|_{G(M)}=G(h\circ f)$. Thus $G$ is a functor. Hence by definition $G$ is a subfunctor of $F$.
\end{proof}

Now our objective is to prove that given a fixed Ulrich module $X$ over a local Cohen--Macaulay ring $R$, $\Ext^1_{\Ul(R)}(X,-), \Ext^1_{\Ul(R)}(-,X)$ define subfunctors of $\Ext^1_R(X,-)$, and $\Ext^1_R(-,X)$ respectively. In fact we do this in much more generality, by fleshing out the very particular properties of $\Ul(R)$ that we actually need. 

\begin{dfn}\label{specat} Let $R$ be a commutative Noetherian ring. An additive subcategory (closed under direct summands of finite direct sum) $\X \subseteq \Mod (R)$ is called  \textit{special} if there exists an extension closed subcategory $\Y \subseteq \Mod(R)$ such that $\X \subseteq \Y$ and whenever $0\to Y_1 \to X  \to Y_2 \to 0$ is an exact sequence with $Y_1,Y_2\in \Y$ and $X\in \X$, then $Y_1\in \X \iff Y_2\in \X$. 
\end{dfn}   

Note that if $R$ is a local Cohen--Macaulay ring, then taking $\Y=\cm(R)$ and $\X=\Ul(R)$, we see that $\Ul(R)$ is a \textit{special} subcategory (\cite[ Proposition (1.4)]{ul}). 

Now for $M,N\in \X$, we define $\X\Ext^1_R(M,N)$ to be the subset of all short exact sequences in $\Ext^1_R(M,N)$ whose middle term is in $\X$. The first step towards proving $\X\Ext^1_R(Y,-), \X\Ext^1_R(-,Y)$ define subfunctors of $\Ext^1_R(Y,-)$, and $\Ext^1_R(-,Y)$ respectively is the following which takes care of the submodule part of Lemma \ref{sublem}  

\begin{lem}\label{submod} Let $\X \subseteq \Mod(R)$ be a special subcategory. Then for every $M,N\in \X$, it holds that $\X\Ext^1_R(M,N)$ is a submodule of $\Ext^1_R(M,N)$.  
\end{lem}  

\begin{proof} Let $\X \subseteq \Y$ be as in Definition \ref{specat}.  

We first prove that $\X\Ext^1_R(M,N)$ is closed under multiplication by elements of $R$. Indeed let $\eta: 0\to N \to X \xrightarrow{f} M \to 0$ be an element in $\X\Ext^1_R(M,N)$. Then for $r\in R$, $r\eta: 0\to N \to Q \to M \to 0$, where $Q$ can be given by the pullback $0\to Q\to M\oplus X \xrightarrow{(m,x)\mapsto rm-f(x)} M\to 0$. Since $\Y$ is closed under extension and $M,N\in \X$, so $Q\in \Y$.  Since $M,X\in \X$ and $\X$ is special, thus $Q\in \X$, hence $r\eta\in \X\Ext^1_R(M,N)$.  Next, let $\eta_1: 0\to N \xrightarrow{f} X_1 \xrightarrow{g} M\to 0$ and $\eta_2: 0\to N \xrightarrow{f'} X_2 \xrightarrow{g'} M\to 0$ be elements in $\X\Ext^1_R(M,N)$. Then the Baer sum is $\eta_1+\eta_2: 0\to N \to Q/E \to M \to 0$, where $Q$ is given by the pullback $0\to Q\to X_1 \oplus X_2 \xrightarrow{g-g'} M\to 0$ and $E=\{(f(n),-f'(n)): n\in N\}$.
Now since $f,f'$ are injective, so $E\cong N \in \X$. Since $M,N\in \Y$ and $\Y$ is closed under extension, so $Q/E\in \Y$. Now we have an exact sequence $0\to E \to Q \to Q/E \to 0$, where $E\in \X \subseteq \Y, Q/E \in \Y$, and hence $Q\in \Y$ since $\Y$ is closed under extension. Then the pullback sequence  $0\to Q\to X_1 \oplus X_2 \xrightarrow{g-g'} M\to 0$ and the fact that $\X$ is special, implies $Q\in \X$. Then again the sequence $0\to E \to Q \to Q/E \to 0$, and $Q,E\in \X$  and $\X$ is special gives $Q/E\in \X$. Thus $\eta_1+\eta_2\in \X\Ext^1_R(M,N)$. 
\end{proof}   

Finally the following lemma proves that $\X\Ext^1_R(Y,-), \X\Ext^1_R(-,Y)$ have the correct restriction properties as described in  Lemma \ref{sublem} 

\begin{lem}\label{funct}
Let $\X$ be a special subcategory. Fix $R$-module $Y\in \X$. Also, let $M,N\in\X$ and $\phi:M\to N$ be a morphism. Then, the morphisms $\Ext^1_R(Y,M)\xrightarrow{\alpha\mapsto \phi\alpha}\Ext^1_R(Y,N)$ and $\Ext^1_R(N,Y)\xrightarrow{\alpha\mapsto \alpha \phi}\Ext^1_R(M,Y)$ restricted to $\X\Ext^1_R(Y,M)$ and $\X\Ext^1_R(N,Y)$ respectively, induces morphisms  $\X\Ext^1_R(Y,M)\to\X\Ext^1_R(Y,N)$ and $\X\Ext^1_R(N,Y)\to\X\Ext^1_R(M,Y)$ respectively.
\end{lem}   
\begin{proof}
Let $\X \subseteq \Y$ be as in Definition \ref{specat}.
\\
First we do the pullback case. Let $\alpha: 0\to Y \to X \xrightarrow{g} N \to 0$ be an element in $\X\Ext^1_R(N,Y)$. Then we have $\alpha \phi: 0\to Y \to Q \to M \to 0$, where $Q$ can be given by the pullback $0\to Q\to X\oplus M \xrightarrow{g-\phi} N\to 0$. Since $\Y$ is closed under extension and $M,Y\in \X$, so $Q\in \Y$.  Since $M,X\in \X$ and $\X$ is special, thus $Q\in \X$. Hence $\alpha \phi\in\X\Ext^1_R(M,Y)$.
\\
Next, we do the pushout case. Let $\alpha: 0\to M \xrightarrow{f} X \to Y \to 0$ be an element in $\X\Ext^1_R(Y,M)$. Then we have $\phi\alpha: 0\to N \to P \to Y \to 0$, where $P$ can be given by the pushout $0\to M\xrightarrow{(\phi,-f)} N\oplus X \to P\to 0$. Since $\Y$ is closed under extension and $N,Y\in \X$, so $P\in \Y$.  Since $N,X\in \X$ and $\X$ is special, thus $P\in \X$. Hence $\phi\alpha\in\X\Ext^1_R(Y,N)$.
\end{proof}  

The upshot of Lemma \ref{sublem} , Lemma \ref{submod}, and Lemma \ref{funct} is the following: 

\begin{prop}  Let $\X$ be a special subcategory. Fix $R$-module $Y\in \X$. Then, $\X\Ext^1_R(Y,-), \X\Ext^1_R(-,Y): \X \to \Mod R$ are subfunctors of $\Ext^1_R(Y,-), \Ext^1_R(-,Y): \X \to \Mod R$ respectively.  
\end{prop}

Due to this and Proposition \ref{dirsum} we get 

\begin{prop}\label{splitind} Let $\X$ be a special subcategory. Then, for every $M_1,M_2,N\in \X$, we have isomorphisms $\X\Ext^1_R(M_1\oplus M_2, N)\cong\X\Ext^1_R(M_1, N)\oplus\X\Ext^1_R(M_2, N)$ and $\X\Ext^1_R(N,M_1\oplus M_2)\cong\X\Ext^1_R(N,M_1)\oplus\X\Ext^1_R(N,M_2)$.

So in particular, if every short exact sequence of modules in $\X$, ending with indecomposable modules, is split exact, then every short exact sequence of modules in $\X$ is also split exact.  
\end{prop}
\fi  

\begin{chunk}\label{1split} Let $\X\subseteq \mod R$ be an additively closed subcategory (closed under direct summands of finite direct sums), where $R$ is a commutative Noetherian ring. We denote by $\ind \X$ the collection of indecomposable objects (up to isomorphism) of $\X$. If $R$ is local and $\X$ has exactly one indecomposable object, then we observe that every short exact sequence of modules in $\X$ splits. Indeed, let $X$ be the unique indecomposable object in $\X$. Since $\X$ is additively closed, so for every $M\in \X$, every direct summand of $M$ is in $\X$. Hence  we can write $M\cong X^{\oplus a}$ for some $a\ge 0$. So, every short exact sequence of modules in $\X$ looks like $0\to X^{\oplus a} \to X^{\oplus b} \to X^{\oplus c} \to 0$ for some integers $a,b,c\ge 0$. Localizing this sequence at a minimal prime of $X$, and then computing length, we see that $b=a+c$. Hence $X^{\oplus b}\cong X^{\oplus a}\oplus X^{\oplus c}$. Thus by Miyata's Theorem (see \cite[Theorem 7.1]{lw} for example), the sequence splits.
\end{chunk} 

In view of this, the next non-trivial case is when $\X$ has exactly two indecomposable objects. Interestingly, even this next case, when the number of indecomposable objects in $\X$ is very small, is quite non-trivial, and we can only say something worthwhile under some additional hypothesis. In what follows, $G(R)$ denotes the Grothendieck group of the category of finitely generated $R$-modules. 

\begin{prop}\label{Ggroup} Let $R$ be a Henselian local ring, and $\X\subseteq \mod R$ be an additively closed subcategory such that $\X$ along with all short exact sequences of objects in $\X$ is an exact subcategory of $\mod R$ (e.g., $\X=\Ul(R)$, see \cite[Corollary 4.7]{ddd}). Assume $\operatorname{ind} \X=\{A,B\}$. If $A,B$ both have same constant positive rank and $2[A]\ne 2[B]$ in $G(R)$, then every short exact sequence of modules in $\X$ splits.  
\end{prop}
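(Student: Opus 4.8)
The plan is to analyze an arbitrary short exact sequence
\[
\eta\colon 0\to M_1\to M_2\to M_3\to 0
\]
of modules in $\X$ and show it must split. Since $\X$ is additively closed and $\ind\X=\{A,B\}$, write $M_i\cong A^{\oplus a_i}\oplus B^{\oplus b_i}$ for $i=1,2,3$. As observed in the paragraph preceding the statement, localizing at a minimal prime of $A$ (equivalently $B$, since both have the same constant positive rank $r>0$) and counting lengths forces a rank equality; concretely, $r(a_1+b_1) + r(a_3+b_3) = r(a_2+b_2)$, hence $a_1+b_1+a_3+b_3 = a_2+b_2$. The goal is to upgrade this single numerical relation, with the help of the Grothendieck-group hypothesis, to the two separate equalities $a_1+a_3=a_2$ and $b_1+b_3=b_2$, which by Miyata's theorem (\cite[Theorem 7.1]{lw}) will give $M_2\cong M_1\oplus M_3$ and hence splitting of $\eta$.

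First I would pass to the Grothendieck group: $\eta$ yields $[M_2]=[M_1]+[M_3]$ in $G(R)$, i.e.
\[
a_2[A]+b_2[B] = (a_1+a_3)[A] + (b_1+b_3)[B].
\]
Rearranging, $(a_2-a_1-a_3)[A] = (b_1+b_3-b_2)[B]$. Let $n:=a_2-a_1-a_3$; from the rank relation above we get $b_1+b_3-b_2 = -n$ as well (subtract the two equations), so the identity becomes $n[A] = -n[B]$, i.e. $n([A]+[B])=0$ in $G(R)$. If $n=0$ we are done immediately: $a_2=a_1+a_3$ and $b_2=b_1+b_3$, so $M_2\cong M_1\oplus M_3$ and Miyata applies. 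So it remains to rule out $n\ne 0$.

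The key step is to derive a contradiction from $n\neq 0$ using $2[A]\ne 2[B]$. From $n([A]+[B])=0$ we cannot directly conclude $[A]=[B]$, but we can combine it with the rank function. Here is where I expect the main obstacle: $G(R)$ need not be torsion-free, so $n([A]+[B])=0$ with $n\neq0$ does not force $[A]+[B]=0$. The way around it is to notice that the constructed sequence only gives one relation, but we may build more. If $n>0$, then $a_2 \ge a_1+a_3+1$ and correspondingly $b_2 = b_1+b_3-n \le b_1+b_3-1$; in particular $b_1+b_3\ge 1$, so $B$ is a summand of $M_1$ or $M_3$, and $A$ is a summand of $M_2$ with multiplicity exceeding that forced by $M_1\oplus M_3$. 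The plan is to use the exact structure on $\X$: split off common summands (an exact category is closed under the relevant summand cancellations when it sits inside $\mod R$ — use that $\X$ is additively closed) to reduce to the case where $M_1,M_3$ have no common indecomposable summand with each other in a controlled way, and then manufacture from $\eta$ a non-split sequence $0\to B\to E\to A\to 0$ (or with $A,B$ swapped) with $E\in\X$; since $\ind\X=\{A,B\}$, $E\cong A^{\oplus s}\oplus B^{\oplus t}$ with $s+t=2$ by ranks, and the three possibilities $E\cong A^{\oplus 2}$, $E\cong A\oplus B$, $E\cong B^{\oplus 2}$ each give, via $[E]=[A]+[B]$ in $G(R)$, either $2[A]=[A]+[B]$, a tautology, or $2[B]=[A]+[B]$ — and the genuinely non-split option is precisely the one yielding $2[A]=2[B]$, contradicting the hypothesis. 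So the real content is the reduction: producing, from a non-split sequence of $A$'s and $B$'s with $n\neq0$, a non-split length-two extension of one of $A,B$ by the other lying in $\X$. I would carry this out by pulling back and pushing out $\eta$ along coordinate inclusions/projections $A\hookrightarrow M_3$, $M_1\twoheadrightarrow B$ etc., using that subfunctors of $\Ext^1$ restricted to $\X$ distribute over direct sums (the machinery recalled in \cite{ddd}, invoked via \cite[Corollary 4.6]{ddd}) so that $\eta$ being non-split forces one of these rank-one-by-rank-one "coordinate" extensions to be non-split; that non-split coordinate extension has middle term in $\X$ of rank $2r$, hence is the desired $0\to B\to E\to A\to 0$ up to swapping $A$ and $B$, and we reach the contradiction $2[A]=2[B]$.
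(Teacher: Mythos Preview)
Your eventual strategy is the paper's: use that $\Ext^1_{\X}(-,-)$ is a subfunctor of $\Ext^1_R(-,-)$, hence additive in each variable, so it suffices to show $\Ext^1_{\X}(X,Y)=0$ for $X,Y\in\{A,B\}$; then for each such short exact sequence the middle term has rank $2r$, hence is $A^{\oplus 2}$, $A\oplus B$, or $B^{\oplus 2}$, and one eliminates the non-split options via $G(R)$ and concludes by Miyata. The detour through a general sequence is unnecessary (and contains a sign slip: the rank relation gives $b_1+b_3-b_2=n$, not $-n$, so one gets $n([A]-[B])=0$, not $n([A]+[B])=0$), but harmless since you abandon it.

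The genuine gap is that you only treat the \emph{mixed} coordinate extensions $0\to B\to E\to A\to 0$ (and ``$A,B$ swapped''), omitting the diagonal ones $0\to A\to E\to A\to 0$ and $0\to B\to E\to B\to 0$. The subfunctor decomposition forces you to check all four pairs, and a non-split $\eta$ could land entirely in a diagonal component (e.g.\ if $M_1=M_3=A$). In the diagonal case $0\to A\to E\to A\to 0$ the Grothendieck identity is $[E]=2[A]$, not $[E]=[A]+[B]$, so your three-case analysis reads: $E\cong A^{\oplus 2}$ splits by Miyata; $E\cong A\oplus B$ gives $[A]=[B]$; and $E\cong B^{\oplus 2}$ gives $2[A]=2[B]$. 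This last possibility is exactly where the hypothesis $2[A]\ne 2[B]$ (rather than merely $[A]\ne[B]$) is required, and it is the heart of the paper's argument. Once you add this case, your proof coincides with the paper's.
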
  

\begin{proof} In the notation of \cite[4.9]{ddd} we see that, $\Ext^1_{\X}(-,-):\X^{op}\times \X\to \mod R$ is a subfunctor of $\Ext^1_R(-,-)|_{\X^{op}\times \X}:\X^{op}\times \X \to \mod R$ by \cite[Proposition 3.8]{ddd}. Since by Krull–Schmidt theorem, every module in $\X$ can be written as a finite direct sum of indecomposable modules in $\X$, hence due to additivity of the subfunctor in each component, it is enough to prove that $\Ext^1_{\X}(M,N)=0$ for each indecomposable object $M,N$ in $\X$, i.e., $\Ext^1_{\X}(A,B)=\Ext^1_{\X}(B,A)=\Ext^1_{\X}(A,A)=\Ext^1_{\X}(B,B)=0$.
Due to symmetric nature of $A,B$, it is enough to prove $\Ext^1_{\X}(A,B)=\Ext^1_{\X}(A,A)=0$. Let $r>0$ denote the common rank of $A,B$.  

First we prove that, $\Ext^1_{\X}(A,B)=0$. Indeed, let $0\to B \to A^{\oplus a}\oplus B^{\oplus b} \to A \to 0$ be a short exact sequence in $\Ext^1_{\X}(A,B)$. Comparing ranks, we have $ra+rb=r+r$, so $a+b=2$. Both the possibilities $a=0, b=2$ and $a=2, b=0$ lead to $[A]=[B]$ in $G(R)$, contradiction! Thus $a=b=1$, and then the sequence splits by Miyata's Theorem (see \cite[Theorem 7.1]{lw}). 

Next we prove that, $\Ext^1_{\X}(A,A)=0$. Indeed, let $0\to A \to A^{\oplus a'}\oplus B^{\oplus b'} \to A \to 0$ be a short exact sequence in $\Ext^1_{\X}(A,A)$. Again, by comparing ranks we get, $a'+b'=2$. Now $a'=b'=1$ leads to $[A]=[B]$ in $G(R)$, impossible! Also, $a'=0, b'=2$ leads to $2[B]=2[A]$ in $G(R)$, impossible! Thus the only remaining choice is $a'=2, b'=0$, in which case the sequence splits by Miyata's Theorem (see \cite[Theorem 7.1]{lw}), as we wanted.  
\end{proof}  

\begin{rem}\label{remark2.15} In the above proposition, if $\X$ were not an exact subcategory of $\mod R$, then given modules $M,N\in \X$, the collection of all short exact sequences in $\Ext^1_R(M,N)$ with objects in $\X$ might not have been a submodule of $\Ext^1_R(M,N)$. Hence we would not get a subfunctor of $\Ext^1_R(-,-)$. Then we would have to deal with general short exact sequences of the form $0\to A^{\oplus a}\oplus B^{\oplus b} \to A^{\oplus a'}\oplus B^{\oplus b'}\to A^{\oplus a''}\oplus B^{\oplus b''}\to 0$, instead of only those whose end terms are the indecomposable objects $A,B$. Then in the hypothesis, instead of only requiring $2[A]\ne 2[B]$ in $G(R)$, we would have to assume the condition that \textit{if $m[A]+n[B]=0$ in $G(R)$, then $m=n=0$ (i.e., $[A],[B]$ are $\mathbb Z$-linearly independent in $G(R)$)}, and this could be harder to verify in practice.  
\end{rem}   
  
\section{Short exact sequences of Ulrich modules}

In this section, we focus on splitting of short exact sequences of Ulrich modules for any local Cohen--Macaulay ring. 

\if0
\new{

\com{Things up to Corollary 3.4 here has been included in the first paper. Need to remove these and cite appropriate results.}  

For this, we first record some short observations which will facilitate passing to infinite residue field, so that we can apply Proposition\ref{uldef} 

\begin{lem}\label{tenfaith} Let $R\to S$ be a flat extension of  rings. Let $M$ be an $R$-module and $I$ an ideal of $R$. Then, the following holds:  

\begin{enumerate}[\rm(1)]
    \item $S \otimes_R (IM)\cong (IS)(S \otimes_R M)$.
    
    \item If $N\subseteq M$ is an $R$-submodule, $S$ is faithfully flat extension of $R$ and $S \otimes_R M=S \otimes_R N$.  
\end{enumerate}

\end{lem}

\begin{proof} (1) Consider the exact sequence $0\to IM \to M \to M/IM\to 0$ which after tensoring with $S$ gives $0\to S \otimes_R (IM) \to S\otimes_R M \to S\otimes_R M/IM \to 0$. Now $S \otimes_R M/IM \cong S \otimes_R (M\otimes_R R/I)\cong (S\otimes_R M)\otimes_R R/I\cong (S\otimes_R M)/I(S \otimes_R M)$. Now by the natural $S$-module structure on $S\otimes_R M$, we see that $I(S \otimes_R M)=(IS)(S \otimes_R M)$. Thus we get $0\to S \otimes_R (IM) \to S\otimes_R M \to (S \otimes_R M)/(IS)(S \otimes_R M)\to 0$, and by naturality of the isomorphisms, we see that the map $S\otimes_R M \to (S \otimes_R M)/(IS)(S \otimes_R M)$ in the exact sequence has kernel $(IS)(S \otimes_R M)$, hence $S \otimes_R (IM)\cong (IS)(S \otimes_R M).$  

(2)  Tensoring the exact sequence $0\to N \to M \to M/N \to 0$ with $S$ and remembering $S \otimes_R M=S \otimes_R N$, we get $(M/N)\otimes_R S=0$. Since $S$ is faithfully flat, we get $M/N=0$, hence $M\subseteq N$. Thus $M=N$. 
\end{proof}   

\begin{prop}\label{1} Let $(R,\m,k)$ be a local Cohen--Macaulay ring. Let $M,N\in \Ul(R)$. Then the following holds

\begin{enumerate}[\rm(1)]
    \item  $\m\Ext^1_R(M,N)\subseteq \Ext^1_{\Ul(R)}(M,N)$. 
    
    \item If $\dim R=1$ and $x\in \m$ is such that $xR$ is a reduction of $\m$, then $\Ext^1_{\Ul(R)}(M,N)=x\Ext^1_R(M,N)$.     
    
    \item If $\dim R=1$, then $\m\Ext^1_R(M,N)=\Ext^1_{\Ul(R)}(M,N)$. 
\end{enumerate}
\end{prop}  

\begin{proof}  (1)  Let $\alpha: 0\to N\to X\to M\to 0$ be a short exact sequence such that $\alpha\in \m \Ext^1_R(M,N)$. By using \cite[Theorem 1.1]{jan} we get that $\alpha\otimes R/\m:0\to N/\m N\to X/\m X\to M/\m M\to 0$ is a split exact sequence. Hence $X/\m X\cong N/\m N\oplus M/\m M$, which implies $\mu(X) =\mu (M)+\mu(N)$. But $M,N$ are Ulrich, hence $\mu(M)=e(M), \mu(N)=e(N)$, hence we get $\mu(X)=e(M)+e(N)=e(X)$ (where the last equality follows from exactness of $\alpha$). Thus $X$ is Ulrich, hence $\alpha \in \Ext^1_{\Ul(R)}(M,N)$.  

(2) Due to (1), we only need to prove $\Ext^1_{\Ul(R)}(M,N) \subseteq x\Ext^1_R(M,N)$.  So let $\sigma: 0\to N\to X\to M\to 0$ be a short exact sequence such that $X$ is Ulrich $R$-module. Since $xR$ is a reduction of $\m$, so $xR$ is $\m$-primary, hence $x$ is $R$-regular, so $M$-regular, so $\sigma \otimes_R R/xR: 0\to N/xN \to X/xX \to M/xM\to 0$ is exact and all the modules here are Ulrich modules (Proposition \ref{reduction}) over the Artinian local ring $R/xR$, hence all are $k$-vector spaces. Hence, $\sigma \otimes_R R/xR$ is plit exact, and since $x$ is $R,M,N,X$-regular, so  \cite[Proposition 2.8]{jan} now implies $\sigma \in x\Ext^1_R(M,N)$.  Thus we get $\Ext^1_{\Ul(R)}(M,N) \subseteq x\Ext^1_R(M,N)$.   

(3) This part will use extensively that $\Ext^1_{\Ul(R)}(M,N)$ is a submodule of $\Ext^1_R(M,N)$ when $M,N$ are Ulrich (see Lemma \ref{submod}).    

Due to (1), it is enough to prove $\Ext^1_{\Ul(R)}(M,N) \subseteq \m\Ext^1_R(M,N)$. 
Consider the faithfully flat extension $S=R[X]_{\m[X]}$ with maximal ideal $\m S$, and infinite residue field. Since we already know $\m\Ext^1_R(M,N)\subseteq \Ext^1_{\Ul(R)}(M,N)$ by (1) and since $S$ is faithfully flat, so it is enough to prove $S\otimes_R \m\Ext^1_R(M,N)=S \otimes_R \Ext^1_{\Ul(R)}(M,N)$ (Lemma \ref{tenfaith}(2)). Now since $\m\Ext^1_R(M,N)\subseteq \Ext^1_{\Ul(R)}(M,N)$ with flatness of $S$ already implies $S \otimes_R \m\Ext^1_R(M,N)\subseteq S \otimes_R \Ext^1_{\Ul(R)}(M,N)$, so to prove equality, it is enough to prove $S \otimes_R \Ext^1_{\Ul(R)}(M,N) \subseteq S\otimes_R \m\Ext^1_R(M,N)$. Due to Lemma \ref{tenfaith}(1), it is enough to prove $S \otimes_R \Ext^1_{\Ul(R)}(M,N) \subseteq (\m S)(S \otimes_R \Ext^1_R(M,N))$, and the later object here is naturally identified with $(\m S)\Ext^1_S(S \otimes_R M, S \otimes_R N)$.  Since the $S$-module $S \otimes_R \Ext^1_{\Ul(R)}(M,N)$ is generated by $1\otimes_R \sigma $, as $\sigma$ runs over all elements of $\Ext^1_{\Ul(R)}(M,N)$ so we need to prove $1 \otimes_R \sigma \in (\m S)\Ext^1_S(S \otimes_R M, S \otimes_R N)$ for all $\sigma \in \Ext^1_{\Ul(R)}(M,N)$. So let $\sigma: 0\to N\to X\to M\to 0$ be a short exact sequence such that $X$ is Ulrich $R$-module. Then, $1\otimes_R \sigma \in S \otimes_R \Ext^1_{\Ul(R)}(M,N)\subseteq S \otimes_R \Ext^1_R(M,N)\cong \Ext^1_S(S \otimes_R M, S \otimes_R N)$ is naturally identified with the exact sequence $S \otimes_R \sigma: 0\to S \otimes_R N  \to S \otimes_R X \to S \otimes_R M \to 0$ in $\Ext^1_S(S \otimes_R M, S \otimes_R N)$, which is a short exact sequence of Ulrich $S$-modules. Hence, $1\otimes_R \sigma \in \Ul\Ext^1_S(S \otimes_R M, S \otimes_R N)$. Since $S$ is one-dimensional Cohen--Macaulay and has infinite residue field, so there exists $x\in \m S$ such that $xS$ is a reduction of the maximal ideal $\m S$ of $S$, and thus by (2) we get $1\otimes_R \sigma \in \Ul\Ext^1_S(S \otimes_R M, S \otimes_R N)= (xS)\Ext^1_S(S \otimes_R M, S \otimes_R N)\subseteq (\m S)\Ext^1_S(S \otimes_R M, S \otimes_R N)$, which is what we wanted to prove. 
\end{proof}   

\rem Proposition \ref{1}(1) extends  \cite[Proposition 2.3]{umm} hugely since $Q\Ext^1_R(M,N)$ is always contained in $\m \Ext^1_R(M,N)$. 

We record a quick interesting consequence of Proposition \ref{1}(3) 

\begin{cor}
If $(R,\m)$ be a local $1$-dimensional Cohen--Macaulay ring. If $\Ul(R)$ is closed under taking extensions, then $R$ is regular.
\end{cor}   

\begin{proof}
First we will show that, $\Ext_R^1(M,N)=\m \Ext_R^1(M,N)$ for all $M,N\in\Ul(R)$. Let $\alpha\in\Ext_R^1(M,N)$. Since $M,N\in\Ul(R)$ and $\Ul(R)$ is closed under taking extensions, so the middle term of $\alpha$ is Ulrich, i.e., $\alpha \in \Ext^1_{\Ul(R)}(M,N)$. Hence by Proposition \ref{1}(3) we get, $\alpha\in\m\Ext_R^1(M,N)$. Thus $\Ext_R^1(M,N)=\m \Ext_R^1(M,N)$ for all $M,N\in\Ul(R)$, so by NAK we get $\Ext_R^1(M,N)=0$ for all $M,N\in\Ul(R)$. Next, note that $\m^n\in\Ul(R)$ for large enough $n$. So, by taking $M=N=\m^n$ for large enough $n$, it follows from \cite[Corollary 1.4]{power} that $R$ is regular. 
\end{proof} }
\fi  

We begin with a small remark, which will be needed for our next results.  

\begin{rem}\label{syzflat} If $(R,\m) \to (S,\n)$ is a flat homomorphism of local rings, then for every finite $R$-module $M$, it holds that $S\otimes_R \syz_R M\cong \syz_S(S\otimes_R M)$. Indeed, tensoring the beginning of a minimal $R$-free resolution $R^{\oplus b} \xrightarrow{f} R^{\oplus a}\to M \to 0$ with $S$, we get an exact sequence $S^{\oplus b} \xrightarrow{f\otimes_R \text{id}_S} S^{\oplus a}\to S\otimes_R M \to 0$. Since $Im(f)\subseteq \m^{\oplus a}$, so $Im(f\otimes_R \text{id}_S)\subseteq (\m \otimes_R S)^{\oplus a}=(\m S)^{\oplus a}\subseteq \n^{\oplus a}$ (where $\m \otimes_R S=\m S$, since $S$ is flat). Thus $S^{\oplus b} \xrightarrow{f\otimes_R \text{id}_S} S^{\oplus a}\to S\otimes_R M \to 0$ is the beginning of a minimal $S$-free resolution of $S\otimes_R M$. Hence $S\otimes_R \syz_R M\cong \syz_S(S\otimes_R M)$.  
\end{rem}

\if0
Note that, \cite[Lemma 5.2.2]{ddd} says that, if $R$ is $\us$, then $\m\Ext^1_R(\Ul(R),\Ul(R))=0$. We do not know whether the converse is true in general or not. 

\old{However, if we assume some more Ext vanishing, then we can get the converse. The following preparatory lemma will be needed for this. 

\begin{lem}\label{deform} Let $M,N$ be modules over a local ring $(R,\m)$. Let $x\in \m$ be regular on $R,M$, and $N$. If $n\ge 0$ is an integer such that $x\cdot \Ext^{n,n+1}_R(M,N)=0$, then $\Ext^n_{R/(x)}(M/xM,N/xN)\cong \Ext^{n+1}_R(M,N)$. 
\end{lem}

\begin{proof} Applying $\Hom_R(-,N)$ to the short exact sequence $0\to M \xrightarrow{\cdot x} M \to M/xM\to 0$ gives  

$\Ext^n_R(M,N)\xrightarrow{\cdot x} \Ext^n_R(M,N)\to \Ext^{n+1}_R(M/xM,N)\to \Ext^{n+1}_R(M,N)\xrightarrow{\cdot x} \Ext^{n+1}_R(M,N).$ Using the given conditions we get that, the multiplication by $x$ maps are zero, so $\Ext^{n+1}_R(M/xM,N)\cong \Ext^{n+1}_R(M,N)$. Since $x$ is also $R$ and $N$-regular, so we also have $\Ext^{n+1}_R(M/xM,N)\cong \Ext^{n}_{R/(x)}(M/xM,N/xN)$ by \cite[Lemma 3.1.16]{bh}. Thus we get the required isomorphism. 
\end{proof}    

Now we can prove the promised converse. 

\begin{thm}\label{extus}  Let $M,N$ be Ulrich modules over local Cohen--Macaulay ring $(R,\m,k)$ of dimension $d$. If $\m\Ext^{1\le i \le d}_R(M,N)=0$, then any short exact sequence of Ulrich modules $0\to N \to L \to M\to 0$ splits. So, in particular, if $\m \Ext^{1\le i \le d}_R(\Ul(R),\Ul(R))=0$, then $R$ is $\us$. 
\end{thm}  

\begin{proof} Consider the faithfully flat extension $S=R[X]_{\m[X]}$ with unique maximal ideal $\n=\m S$ and infinite residue field $S/\m S=k\otimes_R S$. Then any Ulrich $R$-module remains Ulrich over $S$ after tensoring with $S$ by Remark \ref{faithflat}. Also, any short exact sequence of $R$-modules remains short exact after tensoring with $S$ and it splits if and only if it splits after tensoring with $S$. Moreover, we also have $(\m S)\Ext^{1\le i \le d}_S(S\otimes_R M, S\otimes_R N)=0$ by \cite[Lemma 5.2.3(1)]{ddd}. So, w.l.o.g. we can assume that $R$ has infinite residue field. We prove the claim by induction on $\dim R=d$. The claim is obvious if $d=0$. For $d=1$, it follows from \cite[Proposition 5.2.6]{ddd}. Now assume the claim holds for any local Cohen--Macaulay ring of dimension up to $d-1$. Let $\sigma:0\to N\to L\to M\to 0$ be a short exact sequence of Ulrich modules. Since $R$ has infinite residue field, so we can choose $x\in \m$, which is a part of a system of parameters, which forms a reduction of $\m$. Then $\sigma \otimes_R R/(x)$ is a short exact sequence of Ulrich modules over the local Cohen--Macaulay ring $R/(x)$ of dimension $d-1$. If $\m\Ext^{1\le i \le d}_R(M,N)=0$ holds, then Lemma \ref{deform} implies $\Ext^n_{R/(x)}(M/xM,N/xN)\cong \Ext^{n+1}_R (M,N)$ for all $1\le n \le d-1$. So, $\m/(x)$ kills $\Ext^{1\le n \le d-1}_{R/(x)}(M/xM, N/xN)$. Hence by induction hypothesis, $\sigma \otimes_R R/(x)$ splits. Therefore, \cite[Proposition 2.8]{jan} implies that $\sigma \in x\Ext^1_R(M,N)\subseteq \m \Ext^1_R(M,N)=0$. Thus $\sigma$ splits, finishing the inductive step. 
\end{proof}   

As an immediate consequence, we get the following result.}
\fi  

The result below proves Theorem \ref{thm1}((2)$\implies$(3)).

\begin{thm}\label{uladdsplit} Let $(R,\m,k)$ be a local Cohen--Macaulay ring of dimension $d$. If $0\to N \to L \to M\to 0$ is a short exact sequence of Ulrich modules, where $M\in \add_R \{R,\syz^i_Rk: i\ge 0\}$, then the exact sequence splits. So, in particular, if $\Ul(R)\subseteq \add_R \{R,\syz^i_Rk: i\ge 0\}$, then $R$ is $\us$. 
\end{thm}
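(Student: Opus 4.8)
The plan is to combine one soft observation on $\ann_R\Ext^1_R(M,N)$ with an induction on $d=\dim R$ carried out by killing a single general minimal-reduction element. I would first reduce to the case of infinite residue field: replacing $R$ by $S=R[X]_{\m[X]}$ and $\sigma$ by $\sigma\otimes_RS$, Remark \ref{faithflat} keeps all three modules Ulrich, Remark \ref{syzflat} applied iteratively gives $S\otimes_R\syz^i_Rk\cong\syz^i_S(S\otimes_Rk)$ so the hypothesis on $M$ is retained, and faithful flatness lets one test splitting after $\otimes_RS$. Granting this, here is the key soft step: since $M$ is a direct summand of $R^{\oplus b}\oplus\bigoplus_i(\syz^i_Rk)^{\oplus a_i}$ and $\Ext^1_R(-,N)$ is additive, $\Ext^1_R(M,N)$ is a direct summand of $\Ext^1_R(R,N)^{\oplus b}\oplus\bigoplus_i\Ext^1_R(\syz^i_Rk,N)^{\oplus a_i}$; but $\Ext^1_R(R,N)=0$, while dimension shifting gives $\Ext^1_R(\syz^i_Rk,N)\cong\Ext^{i+1}_R(k,N)$, which is annihilated by $\m$. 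Hence $\m\Ext^1_R(M,N)=0$, and in particular $x\Ext^1_R(M,N)=0$ for every $x\in\m$ — precisely what is needed to lift a splitting along a reduction.

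Next I would induct on $d$. For $d=0$ the ring is Artinian, every Ulrich module is a $k$-vector space, and $\sigma$ splits. For $d\ge1$, Proposition \ref{syzind} lets me assume $M$ is a direct summand of $R^{\oplus b}\oplus\bigoplus_{i\ge d}(\syz^i_Rk)^{\oplus a_i}$. Since $k$ is infinite I would pick $x\in\m\setminus\m^2$ lying in a minimal reduction of $\m$ and, when $d\ge2$, additionally a non-zerodivisor on $\syz^{d-1}_Rk$ (legitimate because $\depth\syz^{d-1}_Rk\ge d-1\ge1$, so $\m\notin\Ass_R(\syz^{d-1}_Rk)$; for $i>d$ the module $\syz^{i-1}_Rk$ is already maximal Cohen--Macaulay, so $x$ is automatically a non-zerodivisor on it). Writing $\bar R=R/xR$, $\bar M=M/xM$, etc., $x$ is regular on the maximal Cohen--Macaulay modules $M,N,L$ and on $R$, so $\bar\sigma\colon 0\to\bar N\to\bar L\to\bar M\to0$ is exact and, by Proposition \ref{reduction}, a short exact sequence of Ulrich $\bar R$-modules.

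The crux is then that $\bar M\in\add_{\bar R}\{\bar R,\syz^j_{\bar R}\bar k:j\ge0\}$. For $d=1$ this is automatic ($\bar R$ Artinian, $\bar M$ a $\bar k$-vector space), and for $d\ge2$ it reduces, via the displayed decomposition of $M$, to checking that $\syz^i_Rk\otimes_R\bar R$ lies in that subcategory for each $i\ge d$. Here I would invoke the change-of-rings fact that for a non-zerodivisor $x\in\m\setminus\m^2$ the minimal $R$-free resolution $F_\bullet$ of $k$ is the total complex of the Koszul complex on $x$ tensored with a lift of the minimal $\bar R$-free resolution of $\bar k$; consequently $F_\bullet\otimes_R\bar R$ is the direct sum of that resolution and its shift, and reading off syzygies (using that $x$ is a non-zerodivisor on $\syz^{i-1}_Rk$ for $i\ge d$) yields $\syz^i_Rk\otimes_R\bar R\cong\syz^i_{\bar R}\bar k\oplus\syz^{i-1}_{\bar R}\bar k$, both indices being $\ge\dim\bar R$. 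The inductive hypothesis then says $\bar\sigma$ splits. Finally, the change-of-rings exact sequence attached to $0\to M\xrightarrow{x}M\to\bar M\to0$, with $x$ regular on $R,M,N$, identifies $\Ext^1_R(M,N)/x\Ext^1_R(M,N)$ with a submodule of $\Ext^1_{\bar R}(\bar M,\bar N)$ and carries $[\sigma]$ to $\pm[\bar\sigma]=0$; hence $[\sigma]\in x\Ext^1_R(M,N)\subseteq\m\Ext^1_R(M,N)=0$, so $\sigma$ splits. The last assertion is then immediate: in any short exact sequence of Ulrich modules the right-hand term lies in $\Ul(R)\subseteq\add_R\{R,\syz^i_Rk:i\ge0\}$, so the sequence splits, i.e.\ $R$ is $\us$.

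The hard part — the only step that is not bookkeeping with additive functors, dimension shifting, and the now-routine reduction-modulo-a-parameter comparison of $\Ext^1$ — is the descent of the hypothesis on $M$ to $\bar R$, that is, the change-of-rings isomorphism $\syz^i_Rk\otimes_R\bar R\cong\syz^i_{\bar R}\bar k\oplus\syz^{i-1}_{\bar R}\bar k$. The point requiring care is the choice of $x$: outside $\m^2$, so that the Koszul splitting of the resolution of $k$ is available, and regular on $\syz^{d-1}_Rk$, so that this isomorphism is valid down to $i=d$ rather than merely $i\ge d+1$; both are generic conditions, available because the residue field has been made infinite.
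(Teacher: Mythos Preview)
Your proof is correct and follows essentially the same strategy as the paper's: reduce to infinite residue field via $R[X]_{\m[X]}$, note that $\m$ kills $\Ext^1_R(M,N)$ by additivity and dimension-shifting, induct on $d$ by passing to $R/(x)$ for a suitable $x\in\m\setminus\m^2$ in a minimal reduction, verify that the hypothesis on $M$ descends (syzygies of $k$ over $R$ reduce mod $x$ to direct sums of consecutive syzygies of $k$ over $\bar R$), split $\bar\sigma$ by induction, and lift the splitting using $[\sigma]\in x\Ext^1_R(M,N)\subseteq\m\Ext^1_R(M,N)=0$.

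The one organizational difference is that the paper simply invokes \cite[Corollary 5.3]{TaG} for the isomorphism $\overline{\syz^j_Rk}\cong\syz^j_{\bar R}k\oplus\syz^{j-1}_{\bar R}k$, valid for all $j\ge1$ whenever $x\in\m\setminus\m^2$ is $R$-regular, and correspondingly never needs Proposition~\ref{syzind} or the extra genericity condition that $x$ be a non-zerodivisor on $\syz^{d-1}_Rk$. Your detour through Proposition~\ref{syzind} (forcing $i\ge d$) and the Koszul-type argument for the syzygy decomposition is a legitimate substitute that makes the proof self-contained at the cost of a bit more bookkeeping; the paper's route is shorter but relies on the cited black box. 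The lifting step you describe via the change-of-rings exact sequence is exactly what the paper cites as \cite[Proposition 2.8]{jan}.
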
       
    
\begin{proof} 


Consider the faithfully flat extension $S=R[X]_{\m[X]}$ with unique maximal ideal $\n=\m S$ and infinite residue field $S/\m S=k\otimes_R S$. Since $M\in \add_R \{R,\syz^i_Rk: i\ge 0\}$, so $S \otimes_R M \in \add_S\{S,S\otimes_R\syz^i_Rk: i\ge 0\}=\add_S\{S,\syz_S^i(k\otimes_R S): i\ge 0\}$ (using Remark \ref{syzflat}). Moreover, since $S$ is faithfully flat, so the short exact sequence  $0\to N \to L \to M\to 0$ splits if and only if the short exact sequence $0 \to N \otimes_R S \to L\otimes_R S \to M \otimes_R S\to 0$ splits.  Thus, without loss of generality, we can now assume that $R$ has infinite residue field $k$.   

Now we prove the claim by induction on $d$. The claim is obvious if $d=0$, since everything in $\Ul(R)$ is a $k$-vector space. Now for the inductive step, assume that the claim is true for all rings of dimension $d-1$. Let $\dim R=d\ge 1$. Since $k$ is infinite, we can choose a parameter ideal $(x_1,\cdots,x_d)$, which is a reduction of $\m$, such that $x_1\notin \m^2$. Then $R/(x_1)$ has dimension $d-1$. Let us denote $\overline {(-)}:=(-)\otimes_R R/(x_1)$. Let $\sigma: 0\to N \to L \to M\to 0$ be a short exact sequence, where $M,N,L\in \Ul(R)$ and $M\in \add_R \{R,\syz^i_Rk: i\ge 0\}$. Then $M/x_1M, N/x_1N,L/x_1L\in \Ul(\overline R)$. Since $M\in \add_R \{R,\syz^i_Rk: i\ge 0\}$, so $M\oplus X \cong R^{\oplus b} \oplus \left( \oplus_{j=0}^{n} (\syz^j_R k)^{\oplus q_j} \right)$ for some $R$-module $X$ and integers $b,n,q_0,...,q_{n}\ge 0$.  Applying $\overline{(-)}$ and using \cite[Corollary 5.3]{TaG}, we have $\overline M \oplus \overline X \cong (\overline R)^{\oplus b} \oplus k^{\oplus q_0}\oplus \left(\oplus_{j=1}^{n}(\syz^j_{\overline R}k \oplus \syz^{j-1}_{\overline R} k)^{\oplus q_j} \right)$. 
Hence $\overline M\in \add_{\overline R}\{\overline R, \syz^{j}_{\overline R} k, j\ge 0\}$. So, by induction hypothesis, the short exact sequence $\sigma \otimes_R R/(x_1)$ (this is short exact as $x_1$ is $M$-regular) splits. Since $x_1$ is $R,M,N,L$-regular, so $\sigma \in x_1\cdot \Ext^1_R(M,N)$ by \cite[Proposition 2.8]{jan}. Now as $M\in \add_R \{R,\syz^i_Rk: i\ge 0\}$, so $\Ext^1_R(M,N)$ is killed by $\m$. Thus $\sigma \in  x_1\cdot \Ext^1_R(M,N) \subseteq \m \cdot \Ext^1_R(M,N)=0$, so $\sigma$ splits.  This finishes the proof by induction.   
\end{proof}

We end this section with the following proposition, which will play a crucial role in the next section for characterizing rings of minimal multiplicity that are $\us$.  

\begin{prop}\label{addresi} Let $(R,\m,k)$ be a local Cohen--Macaulay ring of dimension $d$. Let $M$ be an Ulrich $R$-module such that $\m\Ext^1_R(M,\syz_R M)=0$. Then $M\in \add_R(\syz^d_R k)$.    
\end{prop}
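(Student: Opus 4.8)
The plan is to leverage Proposition \ref{traceann}(1), which tells us that $\ann_R \Ext^1_R(M,\syz_R M) = \ann_R \Ext^{>0}_R(M,\Mod R)$, so the hypothesis $\m\Ext^1_R(M,\syz_R M)=0$ upgrades to $\m\Ext^{>0}_R(M,\Mod R)=0$; in particular $\m\Ext^i_R(M,k)=0$ for all $i>0$. The strategy is then to show that this cohomological vanishing, combined with $M$ being Ulrich (hence MCM with $\m M$ contained in a parameter ideal that is a reduction of $\m$), forces $M$ to be a direct sum of copies of $\syz^d_R k$. A natural first reduction, as in the proofs of Theorems \ref{uladdsplit} and \ref{thm1}, is to pass to the faithfully flat extension $S=R[X]_{\m[X]}$: by Remark \ref{faithflat} $S\otimes_R M$ is Ulrich over $S$, by Remark \ref{syzflat} syzygies commute with this base change, and the Ext vanishing is preserved; so one may assume $k$ is infinite. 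This lets us pick a parameter ideal $(x_1,\dots,x_d)\subseteq\m$ that is a reduction of $\m$, with $x_1\notin\m^2$.

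The main engine is an induction on $d$. For $d=0$: an Ulrich module over an Artinian local ring is a $k$-vector space, i.e. a direct sum of copies of $k=\syz^0_R k$, so we are done (and here the Ext hypothesis is not even needed). For the inductive step, reduce modulo $x_1$: by Proposition \ref{reduction}, $\overline M := M/x_1M$ is Ulrich over $\overline R := R/(x_1)$, which has dimension $d-1$. I would check that the Ext hypothesis descends: since $x_1$ is regular on $R$ and on $M$, and $\m\Ext^{>0}_R(M,-)=0$, a standard change-of-rings / dimension-shifting argument (applying $\Hom_R(M,-)$ to $0\to \syz_R M \xrightarrow{x_1}\syz_R M\to \syz_R M/x_1\to 0$ and using $\Ext^{\ge 1}_R(M,\syz_R M)$ is killed by $x_1$) gives $\Ext^1_{\overline R}(\overline M, \syz_{\overline R}\overline M)$ as a subquotient of modules killed by $\m$, hence killed by $\overline\m$. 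Alternatively, and perhaps more cleanly, show directly that $\overline\m\,\Ext^1_{\overline R}(\overline M,\syz_{\overline R}\overline M)=0$ via Proposition \ref{traceann}(1) applied over $\overline R$ together with the fact that $\overline\m$ annihilates $\Ext^{>0}_{\overline R}(\overline M,\overline k)=\Ext^{>0}_{\overline R}(\overline M, k)$, which in turn follows from $\m\Ext^{>0}_R(M,k)=0$ and the change-of-rings spectral sequence (since $x_1\notin\m^2$, we have $\syz_{\overline R}\overline k$ relating to $\syz_R k$ nicely, cf. \cite[Corollary 5.3]{TaG}). By the induction hypothesis $\overline M\in\add_{\overline R}(\syz^{d-1}_{\overline R}k)$.

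Now I need to lift this back to $R$. By \cite[Corollary 5.3]{TaG}, $\syz^{d-1}_{\overline R}k$ is a direct summand of $\overline{\syz^d_R k}=\syz^d_R k/x_1\syz^d_R k$ (indeed $\syz^d_{\overline R}\overline k\cong \syz^d_R k/x_1\syz^d_R k$ and $\syz^{d-1}_{\overline R}\overline k$ is a summand of $\syz^d_{\overline R}\overline k$), so $\overline M$ is a summand of $(\syz^d_R k/x_1\syz^d_R k)^{\oplus n}$ for some $n$. The hoped-for conclusion is that $M$ itself is a summand of $(\syz^d_R k)^{\oplus n}$; this is where I would use that $x_1$ is regular on everything in sight together with the Ext-vanishing to deform a splitting idempotent. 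Concretely: write $L:=\syz^d_R k$; then $\overline M$ a summand of $\overline L^{\oplus n}$ gives an idempotent $\bar e\in\End_{\overline R}(\overline L^{\oplus n})=\End_R(L^{\oplus n})/x_1\End_R(L^{\oplus n})$ with image $\overline M$; I want to show $M$ is a summand of $L^{\oplus n}$. The cleanest route: $\m\Ext^1_R(M,-)=0$ forces $\Ext^1_R(M,\syz_R M)$ killed by $x_1$, and since $x_1$ is $M$-regular, the pushforward of the canonical sequence $0\to\syz_R M\to F\to M\to 0$ shows every extension of $M$ by $\syz_R M$ splits after $\otimes R/x_1$ is "liftable"; more directly, use that $M$ has a lift property — any MCM $\overline R$-module that is a summand of $\overline L$ and that lifts to an $R$-module $M$ with $\Ext^1_R(M,\syz_R M)$ killed by $x_1$ must be a summand of $L$, by \cite[Proposition 2.8]{jan} (applied to the pullback classifying whether the summand projection lifts). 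The main obstacle I anticipate is exactly this lifting step: transferring "$\overline M$ is a summand of $\overline L^{\oplus n}$" to "$M$ is a summand of $L^{\oplus n}$". The Ext hypothesis is precisely what is needed to kill the obstruction class in $\Ext^1_R(M,\text{(kernel)})$, because that kernel is a syzygy-type module and $\m$ annihilates all positive Ext out of $M$; I expect the argument to mirror the end of the proof of Theorem \ref{uladdsplit}, where $\sigma\in x_1\Ext^1_R(M,N)\subseteq\m\Ext^1_R(M,N)=0$.
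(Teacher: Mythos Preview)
Your reduction to an infinite residue field is fine, and the verification that the Ext hypothesis descends modulo $x_1$ can indeed be made to work (since $x_1$ is $R$- and $M$-regular, $\Ext^i_{\overline R}(\overline M, N)\cong \Ext^i_R(M,N)$ for any $\overline R$-module $N$). The real problem is the lifting step, and it is not just a technicality that \cite[Proposition 2.8]{jan} will handle.

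Suppose you know $\overline M$ is a summand of $\overline L^{\oplus n}$ with $L=\syz^d_R k$. To produce a split injection $M\hookrightarrow L^{\oplus n}$, you would at least need to lift a map $\overline M\to \overline L^{\oplus n}$ to $M\to L^{\oplus n}$. From the sequence $0\to L\xrightarrow{x_1}L\to\overline L\to 0$ one gets that the cokernel of $\Hom_R(M,L)\to\Hom_{\overline R}(\overline M,\overline L)$ is the $x_1$-torsion of $\Ext^1_R(M,L)$. Your hypothesis says $\m\Ext^1_R(M,L)=0$, so this torsion is all of $\Ext^1_R(M,L)$; you have no reason to believe it vanishes, and hence no reason the lift exists. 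The dual lifting (of a projection $\overline L^{\oplus n}\to\overline M$) is obstructed by $\Ext^1_R(L,M)$, which you also do not control. The analogy with Theorem \ref{uladdsplit} breaks down: there one is showing a \emph{given} short exact sequence $\sigma$ satisfies $\sigma\in x_1\Ext^1_R(M,N)\subseteq\m\Ext^1_R(M,N)=0$, which is the forward direction of Striuli's result; here you need to lift a \emph{splitting}, which is a genuinely different (and harder) deformation problem.

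The paper sidesteps the induction entirely. After passing to $S=R[X]_{\m[X]}$ and choosing a parameter reduction $(x_1,\dots,x_d)$ of $\m S$, it observes that (i) $N:=S\otimes_R M$ is Ulrich, so $N/(x_1,\dots,x_d)N$ is a vector space $l^{\oplus a}$, and (ii) $(x_1,\dots,x_d)\subseteq\m S\subseteq\ann_S\Ext^{\ge 1}_S(N,\Mod S)$ by Proposition \ref{traceann}(1). These are exactly the hypotheses of Takahashi's \cite[Proposition 2.2]{stable}, which yields directly an isomorphism $(\syz^d_S l)^{\oplus a}\cong\bigoplus_{i=0}^d(\syz^i_S N)^{\oplus b_i}$; in particular $N\in\add_S(\syz^d_S l)$, and one descends to $R$ by faithful flatness. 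The entire lifting problem never arises because Takahashi's result produces the decomposition over $R$ (well, $S$) in one shot.
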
 

\begin{proof} Consider the faithfully flat extension $S=R[X]_{\m[X]}$ with infinite residue field $S\otimes_Rk$ and unique maximal ideal $\m S$. Applying Remark \ref{syzflat} and \cite[Lemma 5.2.5(1)]{ddd}, we get the following chain of isomorphisms:

\begin{align*}
(\m S)\Ext^1_S(S\otimes_R M,\syz_S(S\otimes_R M))&\cong (\m S)\Ext^1_S(S\otimes_R M,S\otimes_R \syz_R M)\\
&\cong (\m S)(S\otimes_R\Ext^1_R(M,\syz_R M))\cong S\otimes_R (\m \Ext^1_R(M,\syz_R M))=0
\end{align*} 

Now let, $N:=S\otimes_R M$ and $l:=S\otimes_R k$ (this is the residue field of $S$). Then $N$ is an Ulrich $S$-module by Remark \ref{faithflat}. Since $S$ has infinite residue field, so we can choose $(x_1,...,x_d)\subseteq \m S$ such that $(x_1,...,x_d)$ is a reduction of $\m S$. Then $N/(x_1,...,x_d)N$ is a $l$-vector space, i.e., $N/(x_1,...,x_d)N\cong l^{\oplus a}$ for some $a\ge 0$. By \cite[Lemma 2.14]{iy} we get, $(x_1,...,x_d)\subseteq (\m S) \subseteq \ann_S \Ext^1_S(N, \syz_S N)=\ann_S \Ext^{\ge 1}_S (N, \Mod S)$. Thus from \cite[Proposition 2.2]{stable} we get, $(\syz^d_S l)^{\oplus a}\cong \bigoplus_{i=0}^d (\syz_S^i N)^{\oplus b_i}$, where  $b_i=\binom{d}{i}$ for all $0\leq i\leq d$. Remembering $l=S\otimes_R k$ and $N=S\otimes_R M$ we get that, $S\otimes_R(\syz_R^d k)^{\oplus a}\cong S\otimes_R \bigoplus_{i=0}^d (\syz_R^i M)^{\oplus b_i}$ by using Remark \ref{syzflat}. So, by \cite[Proposition 2.5.8]{Gro} we get, $(\syz_R^d k)^{\oplus a}\cong \bigoplus_{i=0}^d (\syz_R^i M)^{\oplus b_i}$. Since $b_0=1$, hence $M\in \add_R(\syz_R^d k)$. 
\end{proof}

\section{Rings of minimal multiplicity}

Let $R$ be a local Cohen--Macaulay ring and $M\in \cm(R)$. It follows from \cite[Lemma 2.2]{del} that, $\syz_R M \in \syz \cm^{\times}(R)$. So, if moreover $R$ has minimal multiplicity, then from \cite[Proposition 3.6]{umm} it follows that $\syz_R M\in \Ul(R)$. It is also known that $\syz^{\dim R}_R k \in \syz \cm^{\times}(R)$ when $R$ is singular, Cohen--Macaulay and has minimal multiplicity (see \cite[Lemma 5.7]{umm}). We will use these facts throughout the rest of the paper without possibly referencing further. In the following, we will also denote by $\cm_0(R)$ the subcategory of $\cm(R)$ consisting of modules locally free at all non-maximal prime ideals of $R$.   

Combining the results from the previous section, we get the following characterization of local Cohen--Macaulay rings of minimal multiplicity that are $\us$.

\begin{thm} \label{minuss} Let $(R,\m,k)$ be a local Cohen--Macaulay ring of dimension $d$. Then the following are equivalent:  

\begin{enumerate}[\rm(1)]
    \item  $\Ul(R)=\add_R(\syz_R^d k)$.
    
    \item  $R$ is $\us$ and has minimal multiplicity. 
    
    \item  $R$ has minimal multiplicity and $\m\Ext^1_R(M,N)=0$ for all $M,N\in \Ul(R)$. 
    
    \item $R$ has minimal multiplicity and $\m \Ext^i_R(M,\mod R)=0$ for all $M\in \Ul(R)$, and for all $i\ge 1$. 
    
If, moreover, $R$ has a canonical module, then the above are also equivalent to
    
    \item $R$ has minimal multiplicity and $\m \Ext^{>0}_R(\cm(R),\mod R)=0$
\end{enumerate}

If one (hence all) of the conditions (1) through (4) holds, then  $\widehat R$ is an isolated singularity.   
\end{thm}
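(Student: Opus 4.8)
The plan is to prove the cycle of equivalences and then derive the isolated singularity statement for $\widehat R$. First I would establish $(2)\Rightarrow (1)$: if $R$ is $\us$ and has minimal multiplicity, then by Theorem \ref{uladdsplit}-type reasoning (or rather by the fact that $\us$ implies $\m\Ext^1_R(\Ul(R),\Ul(R))=0$, which is the already-cited \cite[Lemma 5.2.2]{ddd}), in particular $\m\Ext^1_R(M,\syz_R M)=0$ for every $M\in \Ul(R)$; note $\syz_R M\in \Ul(R)$ since $R$ has minimal multiplicity, by the facts recalled at the start of this section. Then Proposition \ref{addresi} gives $M\in \add_R(\syz^d_R k)$, so $\Ul(R)\subseteq \add_R(\syz^d_R k)$. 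The reverse inclusion $\add_R(\syz^d_R k)\subseteq \Ul(R)$ holds because $R$ has minimal multiplicity and is singular (if $R$ is regular, $d$-th syzygy of $k$ is free and $\Ul(R)=\add_R(R)$; but we should check the regular case separately — actually if $R$ is regular then $\Ul(R)=0=\add_R(\syz^d_R k)$ since $\syz^d_R k$ is free, wait, $\Ul(R)$ for regular $R$ is just free modules of multiplicity... one must be careful, but since $d>0$ and regular, the only Ulrich module is $0$, and $\syz^d_R k = 0$ in the reduced sense, consistent). This gives $(1)$.

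Next, $(1)\Rightarrow (2)$: if $\Ul(R)=\add_R(\syz^d_R k)\subseteq \add_R\{R,\syz^i_R k: i\ge 0\}$, then Theorem \ref{uladdsplit} gives $R$ is $\us$; and $R$ has minimal multiplicity because, assuming $R$ singular, $\syz^d_R k\in \syz\cm^\times(R)$ is a nonzero Ulrich module (this needs $R$ singular; if $R$ regular then again the statement is vacuous/trivial), which already forces existence of Ulrich modules — but minimal multiplicity itself needs a separate argument: the cleanest route is $(1)\Rightarrow (3)$ first via the submodule/additivity structure, then show $(3)$ forces minimal multiplicity only if it is built in — actually minimal multiplicity is hypothesized in $(2),(3),(4),(5)$, so for $(1)\Rightarrow (2)$ I need to deduce minimal multiplicity from $(1)$ alone. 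I would argue: if $R$ is singular, one can produce an Ulrich module not in $\add_R(\syz^d_R k)$ unless $R$ has minimal multiplicity — concretely, $\syz^d_R M$ for $M\in\cm(R)$ is Ulrich precisely when $R$ has minimal multiplicity, and the high syzygies of $k$ do not generate all of $\Ul(R)$ otherwise; alternatively, use that $\m \syz^d_R k = \underline{x}\,\syz^d_R k$ (Ulrich property) combined with the structure of $\syz^d_R k$ to read off $\m^2 = \underline{x}\,\m$, i.e. minimal multiplicity.

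For $(2)\Leftrightarrow(3)\Leftrightarrow(4)$: $(3)\Rightarrow(2)$ is immediate since $\m\Ext^1_R(M,N)=0$ for all Ulrich $M,N$ forces every short exact sequence of Ulrich modules to split — this is again \cite[Lemma 5.2.2]{ddd} (the containment $\m\Ext^1_R(M,N)\subseteq \Ext^1_{\Ul(R)}(M,N)$) together with... no wait, that containment goes the wrong way; instead I'd use that $(4)\Rightarrow(3)$ trivially (take $N\in\Ul(R)\subseteq\mod R$), $(1)\Rightarrow(4)$ by Proposition \ref{traceann}(1) applied to $M=\syz^d_R k$ — if $M\in\add_R(\syz^d_R k)$, then $\ann_R\Ext^{>0}_R(M,\mod R)=\ann_R\Ext^1_R(M,\syz_R M)\supseteq \m$ provided we know $\m\subseteq\ann_R\Ext^1_R(\syz^d_R k,\syz^{d+1}_R k)$; this last follows since $\syz^d_R k$ and $\syz^{d+1}_R k$ are Ulrich and over a minimal-multiplicity ring the relevant $\Ext$ is $\m$-torsion — which is exactly what needs the $\us$/minimal multiplicity input — so the honest order is $(1)\Rightarrow(3)$ using the exact-category subfunctor additivity from Section 2 plus Proposition \ref{Ggroup}-style bookkeeping, then $(3)\Rightarrow(4)$ via Proposition \ref{traceann}(1), then $(4)\Rightarrow(2)$, then $(2)\Rightarrow(1)$ as above. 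The main obstacle I anticipate is threading these implications so that minimal multiplicity is correctly used as hypothesis versus conclusion, and handling the regular case degenerately; and for the canonical module equivalence $(5)$, the work is to pass from $\cm(R)$ to $\Ul(R)$ using $\syz_R(-)$ landing in $\Ul(R)$ (minimal multiplicity) together with duality via the canonical module to control $\Ext$ in all positive degrees, not just degree one.

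Finally, for the isolated singularity claim: assuming $(1)$, I would pass to the completion $\widehat R$ (which still satisfies $(1)$ by faithfully flat descent/ascent as in Remark \ref{syzflat} and \cite[Proposition 2.5.8]{Gro}, and still has minimal multiplicity), so $\widehat R$ is $\us$ with minimal multiplicity. Then for every nonfree $M\in\cm(\widehat R)$, $\syz_{\widehat R} M\in\Ul(\widehat R)=\add_{\widehat R}(\syz^d_{\widehat R}k)$, hence $\m\,\Ext^{>0}_{\widehat R}(\syz_{\widehat R}M,\mod\widehat R)=0$ by condition $(4)$; shifting, $\m\,\Ext^{>1}_{\widehat R}(M,\mod\widehat R)=0$, and combined with the degree-one case this shows $\m$ annihilates $\Ext^{>0}_{\widehat R}(\cm(\widehat R),\mod\widehat R)$, i.e. $\widehat R$ has a cohomology annihilator containing $\m$. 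A ring whose cohomology annihilator (equivalently, by Iyengar–Takahashi the noncommutative/ $\Ext$-annihilator of MCM modules) is $\m$-primary is an isolated singularity on the punctured spectrum — localizing at any nonmaximal prime $\p$ makes $\Ext^{>0}_{\widehat R_\p}(\cm(\widehat R_\p),-)=0$, forcing $\widehat R_\p$ regular; since this holds for all $\p\ne\m$, $\widehat R$ is an isolated singularity. I expect the genuinely delicate point to be verifying condition $(4)$ (or $(5)$) descends to all of $\cm$, i.e. that $\m$-annihilation of $\Ext$ from Ulrich modules propagates to arbitrary MCM modules via syzygy shifts without losing the bound — this is where Proposition \ref{traceann}(1) and the minimal-multiplicity fact $\syz_R\cm(R)\subseteq\Ul(R)$ do the heavy lifting.
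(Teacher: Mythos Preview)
Your proposal has the right ingredients but is tangled in a way that creates genuine gaps.

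\textbf{The cycle of implications.} The paper runs the clean cycle $(1)\Rightarrow(2)\Rightarrow(3)\Rightarrow(4)\Rightarrow(1)$. You instead try $(2)\Rightarrow(1)$ first, then $(1)\Rightarrow(2)$, then struggle with $(3)\Rightarrow(2)$. Your instinct that $(3)\Rightarrow(2)$ is \emph{not} immediate is correct: the inclusion $\m\Ext^1_R(M,N)\subseteq\Ext^1_{\Ul(R)}(M,N)$ from \cite[Lemma 5.2.2]{ddd} goes the wrong way for this, and in fact no direct argument $(3)\Rightarrow(2)$ is given in the paper; one closes the loop via $(3)\Rightarrow(4)\Rightarrow(1)\Rightarrow(2)$. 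Your rerouting through ``$(1)\Rightarrow(4)$ via Proposition \ref{traceann}(1)'' is circular as written, since you invoke $\m\subseteq\ann_R\Ext^1_R(\syz^d_R k,\syz^{d+1}_R k)$ by appealing to minimal multiplicity and $\us$, which is what you are trying to prove. The paper's $(3)\Rightarrow(4)$ is a single line: minimal multiplicity gives $\syz_R M\in\Ul(R)$, so $\m\Ext^1_R(M,\syz_R M)=0$, and \cite[Lemma 2.14]{iy} propagates this to all $\Ext^{\ge 1}_R(M,\mod R)$.

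\textbf{$(1)\Rightarrow$ minimal multiplicity.} This is where you have an actual hole. Your suggestions (``high syzygies of $k$ do not generate all of $\Ul(R)$ otherwise'', or reading off $\m^2=\underline{x}\,\m$ from the Ulrich property of $\syz^d_R k$) are not arguments. The paper simply cites \cite[Proposition 2.5]{ul}: since $(1)$ forces $\syz^d_R k\in\Ul(R)$, that result yields minimal multiplicity directly. Your second idea can be made to work, but you would need to spell it out, and you do not.

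\textbf{Isolated singularity of $\widehat R$.} You propose to first pass $(1)$ up to $\widehat R$ by ``faithfully flat descent/ascent''. This is not justified: knowing $\Ul(R)=\add_R(\syz^d_R k)$ does not automatically give $\Ul(\widehat R)=\add_{\widehat R}(\syz^d_{\widehat R}k)$, because an Ulrich $\widehat R$-module need not be extended from $R$. (Indeed, the paper proves this ascent only later, in Corollary \ref{hens}, and under a Henselian hypothesis, using the isolated singularity conclusion as input.) The paper instead works over $R$: from $(3)$ and minimal multiplicity one gets $\m\Ext^1_R(\syz_R M,\syz_R^2 M)=0$ for every $M\in\cm(R)$, hence $\m\Ext^{\ge 2}_R(\cm(R),\mod R)=0$ by \cite[Lemma 2.14]{iy}, and then invokes the argument of \cite[Proposition 4.9(2)]{deyT} to conclude $\widehat R$ is an isolated singularity. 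Your final paragraph essentially rediscovers this once you drop the unneeded passage to $\widehat R$.
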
 

\begin{proof}  (1) $\implies (2)$: From Theorem \ref{uladdsplit} it follows that, $R$ is $\us$. Also, from \cite[Proposition 2.5]{ul} it follows that, $R$ has minimal multiplicity. 

(2) $\implies$ (3): This follows from \cite[Lemma 5.2.2]{ddd}. 

(3) $\implies$ (4): Since $R$ has minimal multiplicity, so $\syz_R M\in \Ul(R)$ for all $M\in \Ul(R)$ by \cite[Proposition 3.6]{umm}. Hence $\m \Ext^1_R(M,\syz_R M)=0$ for all $M\in \Ul(R)$. So, $\m \Ext^i_R(M,\mod R)=0$ for all $M\in \Ul(R)$, and for all $i\ge 1$ by \cite[Lemma 2.14]{iy}.

$(4) \implies (1)$: From Proposition \ref{addresi} we have, $\Ul(R)\subseteq\add_R(\syz_R^d k)$. Since $R$ has minimal multiplicity, so by \cite[Lemma 5.7(1)]{umm} we get that $\syz_R^d k\in \Ul (R)$. Thus $\Ul(R)=\add_R(\syz_R^d k)$.

This shows the equivalence of (1) through (4). Now assume $R$ has a canonical module $\omega$, and let $(-)^{\dagger}$ denote canonical dual. 

$(4)\implies (5)$: By \cite[Lemma 2.14]{iy}, it is enough to show that, $\m\Ext^1_R(M,\syz_R M)=0$ for all $M\in \cm(R)$. We know that, $\Ext^1_R(M,\syz_R M)\cong \Ext^1_R((\syz_R M)^{\dagger},M^{\dagger})$. Also, $\syz_R M$ is Ulrich since $R$ has minimal multiplicity. Hence $(\syz_R M)^{\dagger}$ is also Ulrich by \cite[Corollary 4.2]{umm}. Thus the assumption of (4) implies $0=\m \Ext^1_R((\syz_R M)^{\dagger},M^{\dagger})=\m\Ext^1_R(M,\syz_R M)$. 

(5) $\implies (1)$:  From Proposition \ref{addresi} we have, $\Ul(R)\subseteq\add_R(\syz_R^d k)$. Since $R$ has minimal multiplicity, so by \cite[Lemma 5.7(1)]{umm} we get that $\syz_R^d k\in \Ul (R)$. Thus $\Ul(R)=\add_R(\syz_R^d k)$.

This proves the equivalence of (1) through (5) when $R$ has a canonical module.  

Now we will prove that, $\widehat R$ is an isolated singularity when $R$ satisfies (1) through (4). Since $R$ having minimal multiplicity implies $\syz \cm^{\times}(R)\subseteq \Ul(R)$, so it is enough to prove that, if $\m\Ext^1_R(\syz \cm^{\times}(R),\syz \cm^{\times}(R))=0$, then $\widehat R$ is an isolated singularity. Now to prove this, note that if $M\in \cm(R)$, then $\syz_R M, \syz^2_R M \in \syz \cm^{\times}(R)$ by \cite[Lemma 2.2]{del}. Hence the hypothesis implies that $\m\Ext^1_R(\syz_R M,\syz^2_R M)=0$, so \cite[Lemma 2.14]{iy}  implies $\m \Ext^{\ge 1}_R(\syz_R M,\mod R)=0$, so $\m\Ext_R^{\ge 2}(M,\mod R)=0$. Hence we get $\m\Ext^{\ge 2}_R(\cm(R),\cm(R))=0$. So, in particular, $\m\Ext_R^{\ge 2}(\cm_0(R),\cm_0(R))=0$.  A same argument as done in the proof of \cite[Proposition 4.9(2)]{deyT} now shows that $\widehat R$ has isolated singularity. 
\end{proof}   

\if
Next we observe that local Cohen--Macaulay rings that are $\us$ have completion to be isolated singularity.  

\begin{prop}\label{compliso} Let $(R,\m)$ be a local Cohen--Macaulay ring (not necessarily of minimal multiplicity). If $\m\Ext^1_R(\syz \cm^{\times}(R),\syz \cm^{\times}(R))=0$, then $\widehat R$ is an isolated singularity. So, in particular, if $R$ is $\us$ and has minimal multiplicity, then $\widehat R$ is an isolated singularity.  
\end{prop}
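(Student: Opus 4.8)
Proposition \ref{compliso} is essentially the content of the last paragraph of the proof of Theorem \ref{minuss}, so the plan is to isolate that argument. The idea is to leverage the hypothesis, which only controls $\m\Ext^1$ between modules in $\syz\cm^{\times}(R)$, to obtain the much stronger statement that $\m$ annihilates $\Ext^{\ge 2}_R(\cm(R),\cm(R))$, and then to feed this into the cohomology-annihilator criterion for isolated singularity of the completion.

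For the core step, I would fix $M\in\cm(R)$ and observe via \cite[Lemma 2.2]{del} that both $\syz_R M$ and $\syz^2_R M$ lie in $\syz\cm^{\times}(R)$ (the second because $\syz_R M$ is again maximal Cohen--Macaulay over the Cohen--Macaulay ring $R$, so its first syzygy is once more in $\syz\cm^{\times}(R)$). Hence the hypothesis applies to the pair $(\syz_R M,\syz^2_R M)$ and yields $\m\Ext^1_R(\syz_R M,\syz^2_R M)=0$. Writing $L:=\syz_R M$, so that $\syz_R L=\syz^2_R M$, I would then invoke \cite[Lemma 2.14]{iy} with $L$ in the role of the module there: it gives $\ann_R\Ext^1_R(L,\syz_R L)=\ann_R\Ext^{>0}_R(L,\Mod R)$, and since $\m$ lies in the left side we get $\m\Ext^{\ge 1}_R(\syz_R M,\mod R)=0$. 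A single dimension shift along the minimal presentation $0\to\syz_R M\to F\to M\to0$ identifies $\Ext^{i+1}_R(M,N)$ with $\Ext^i_R(\syz_R M,N)$ for all $i\ge1$ and all $N$, so $\m\Ext^{\ge2}_R(M,N)=0$ for every $N\in\mod R$. Thus $\m\Ext^{\ge2}_R(\cm(R),\cm(R))=0$, and a fortiori $\m\Ext^{\ge2}_R(\cm_0(R),\cm_0(R))=0$.

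To conclude, I would run the same argument as in \cite[Proposition 4.9(2)]{deyT}: the vanishing $\m\Ext^{\ge2}_R(\cm_0(R),\cm_0(R))=0$ forces the cohomology annihilator attached to the maximal Cohen--Macaulay modules that are locally free on the punctured spectrum to be $\m$-primary, and this is known to imply that $\widehat R$ is an isolated singularity. For the ``in particular'' assertion: if $R$ has minimal multiplicity then $\syz\cm^{\times}(R)\subseteq\Ul(R)$ by \cite[Proposition 1.6]{umm}, and if moreover $R$ is $\us$ then $\m\Ext^1_R(\Ul(R),\Ul(R))=0$ by \cite[Lemma 5.2.2]{ddd}; these two facts together verify the hypothesis of the proposition, and the conclusion follows.

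The one genuinely nonelementary point — and the step I expect to be the main obstacle — is the passage to the completion at the very end: everything up to and including $\m\Ext^{\ge2}_R(\cm_0(R),\cm_0(R))=0$ happens over $R$ itself, but deducing isolated singularity of $\widehat R$ (rather than merely of $R$) from an $\m$-primary cohomology annihilator requires the cohomology-annihilator/completion machinery, which is why I would simply cite \cite[Proposition 4.9(2)]{deyT} for it rather than reprove it.
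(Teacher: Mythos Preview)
Your proposal is correct and follows essentially the same route as the paper: use \cite[Lemma 2.2]{del} to place $\syz_R M,\syz^2_R M$ in $\syz\cm^{\times}(R)$, apply \cite[Lemma 2.14]{iy} to upgrade the single $\m\Ext^1$-vanishing to $\m\Ext^{\ge2}_R(\cm(R),\cm(R))=0$, restrict to $\cm_0(R)$, and then cite \cite[Proposition 4.9(2)]{deyT} for the completion step; the ``in particular'' clause is handled identically via \cite[Proposition 1.6]{umm} and \cite[Lemma 5.2.2]{ddd}.
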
 

\begin{proof}  The second statement follows from the first since $R$ is $\us$ implies $\m\Ext_R^1(\Ul(R),\Ul(R))=0$ (\cite[Lemma 5.2.2]{ddd}), and $R$ minimal multiplicity implies $\syz \cm^{\times}(R)\subseteq \Ul(R)$ (\cite[Proposition 3.6]{umm}).   

To prove the first statement, note that by hypothesis, for every $M\in \cm(R)$, we have $\m\Ext^1_R(\syz_R M, \syz_R^2 M)=0$, hence \cite[Lemma 2.14]{iy}  implies $\m \Ext_R^{i\ge 1}(\syz_R M,\mod R)=0$, so $\m\Ext_R^{i\ge 2}(M,\mod R)=0$. So we get $\m\Ext_R^{i\ge 2}(\cm(R),\cm(R))=0$. A same argument as done in the proof of \cite[Proposition 4.9(2)(ii)]{deyT} using \cite[Lemma 3.11(3)]{deyT} now shows $\widehat R$ is an isolated singularity.  
\end{proof} 
\fi 


Theorem \ref{minuss}, in particular, also implies that $\us$ property ascends to completion for Henselian local Cohen--Macaulay rings of minimal multiplicity.  

\begin{cor}\label{hens} Let $(R,\m)$ be a Henselian local Cohen--Macaulay ring of minimal multiplicity. If $R$ is $\us$, then so is $\widehat R$.  
\end{cor}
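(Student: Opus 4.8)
The plan is to leverage Theorem \ref{minuss} in both directions. Since $R$ is $\us$ of minimal multiplicity, that theorem gives $\Ul(R)=\add_R(\syz^d_R k)$, where $d=\dim R$, and it also tells us that $\widehat R$ is an isolated singularity. Now $\widehat R$ again has minimal multiplicity (embedding dimension, dimension and multiplicity are unchanged under completion) and it admits a canonical module, so by Theorem \ref{minuss} applied to $\widehat R$ it suffices to establish the single equality $\Ul(\widehat R)=\add_{\widehat R}(\syz^d_{\widehat R} k)$, as this forces $\widehat R$ to be $\us$. If $R$ is regular there is nothing to prove, so I may assume $R$, hence $\widehat R$, is singular.

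The inclusion $\add_{\widehat R}(\syz^d_{\widehat R}k)\subseteq\Ul(\widehat R)$ is immediate from the facts recalled at the start of this section, since $\syz^d_{\widehat R}k\in\syz\cm^{\times}(\widehat R)\subseteq\Ul(\widehat R)$. For the reverse inclusion, take $X\in\Ul(\widehat R)$. The key input is that, because $R$ is Henselian and $\widehat R$ is an isolated singularity, every maximal Cohen--Macaulay $\widehat R$-module is extended from $R$; thus I can write $X\cong\widehat R\otimes_R X_0$ for some $X_0\in\cm(R)$. Since the completion map is faithfully flat and local with $\widehat R\otimes_R k=k$, one has $e_{\m}(X_0)=e_{\widehat\m}(X)$ and $\mu_R(X_0)=\mu_{\widehat R}(X)$, so $X_0$ is Ulrich over $R$. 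Then $X_0\in\Ul(R)=\add_R(\syz^d_R k)$ by Theorem \ref{minuss}, and applying $\widehat R\otimes_R-$, which commutes with $\syz$ by Remark \ref{syzflat} (and trivially with taking summands of finite direct sums), gives $X\cong\widehat R\otimes_R X_0\in\add_{\widehat R}(\widehat R\otimes_R\syz^d_R k)=\add_{\widehat R}(\syz^d_{\widehat R}k)$. Hence $\Ul(\widehat R)=\add_{\widehat R}(\syz^d_{\widehat R}k)$, and Theorem \ref{minuss} yields that $\widehat R$ is $\us$.

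The step demanding real care — and the one place where the Henselian hypothesis is genuinely used — is the claim that every maximal Cohen--Macaulay $\widehat R$-module descends to $R$. This is false for arbitrary Henselian (non-excellent) rings, so it is essential here that $\widehat R$ is an isolated singularity, which we get for free from Theorem \ref{minuss}: for such rings the deformations of a maximal Cohen--Macaulay module are governed by finite-length $\Ext$-groups, and an Elkik/Artin-type approximation over the Henselian pair $(R,\m)$ then algebraizes $X$. An alternative route that sidesteps a black-box descent statement would be to transfer the cohomology-annihilator identity $\m\Ext^{\ge 2}_R(\cm(R),\mod R)=0$ coming out of the proof of Theorem \ref{minuss}, via flat base change, upgrade it through one syzygy to $\widehat\m\Ext^{\ge 1}_{\widehat R}(\syz\cm^{\times}(\widehat R),\Mod\widehat R)=0$ (using \cite[Lemma 2.14]{iy}), and then read off condition (5) of Theorem \ref{minuss} for $\widehat R$; but controlling $\syz\cm^{\times}(\widehat R)$ in terms of modules extended from $R$ reintroduces essentially the same descent problem, so I expect the cleanest write-up to go through the MCM-descent statement above.
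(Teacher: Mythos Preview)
Your proof is correct and follows essentially the same strategy as the paper: both use Theorem \ref{minuss} to get that $\widehat R$ is an isolated singularity, invoke the Henselian hypothesis to descend every MCM $\widehat R$-module to $R$, observe that the descended module is Ulrich, and then transport a characterization from Theorem \ref{minuss} back up to $\widehat R$. The only cosmetic difference is which equivalent condition of Theorem \ref{minuss} is carried across: you use condition (1), the equality $\Ul=\add(\syz^d k)$, whereas the paper uses condition (3), the vanishing $\m\Ext^1_R(\Ul,\Ul)=0$, which it then base-changes to $\widehat R$. For the descent step you discuss informally via Elkik/Artin approximation, the paper simply cites \cite[Proposition 4.7]{auac}; that reference gives exactly the statement you need and would tighten your write-up.
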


\begin{proof} By Theorem \ref{minuss}, $\widehat R$ has isolated singularity. By \cite[Proposition 4.7]{auac}, we have that for every $M, N \in \Ul(\widehat R)$, there exist $X,Y\in \cm(R)$ such that $M\cong \widehat X, N\cong \widehat Y$, and consequently
$X,Y\in \Ul(R)$. Since $R$ is $\us$, so $\m \Ext^1_R(X,Y)=0$ by \cite[Lemma 5.2.2]{ddd}. Hence by taking completion we get that, $0=\widehat \m \reallywidehat{\Ext^1_{ R}( X, Y)}\cong \widehat \m \Ext^1_{\widehat R}(\widehat X, \widehat Y)\cong \widehat \m \Ext^1_{\widehat R}(M,N)$. Since this is true for any $M,N\in \Ul(\widehat R)$, and since $\widehat R$ also has minimal multiplicity, we get that $\widehat R$ is $\us$ by Theorem \ref{minuss}.   
\end{proof}

Next, we use Theorem \ref{minuss} to deduce information about Gorenstein local rings of minimal multiplicity (i.e., abstract hypersurfaces of minimal multiplicity), which are $\us$.

First, we record the following lemma.

\begin{lem}\label{hypsur} Let $R$ be a singular local Gorenstein ring of minimal multiplicity. Then the following holds:
    
    \begin{enumerate}[\rm(1)]
        \item Every maximal Cohen--Macaulay module without a non-zero free summand is Ulrich. \item Every maximal Cohen--Macaulay module is a direct sum of an Ulrich module and a free module.  
    \end{enumerate}
    \end{lem}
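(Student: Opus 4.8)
The plan is to reduce both statements to two facts. The first, recalled at the beginning of this section, is that over a local Cohen--Macaulay ring of minimal multiplicity the first syzygy $\syz_R N$ of every $N\in\cm(R)$ is Ulrich (combining \cite[Lemma 2.2]{del} with \cite[Proposition 1.6]{umm}). The second is Eisenbud's periodicity theorem: since $R$ is Gorenstein of minimal multiplicity it is an abstract hypersurface, and as $R$ is singular this means $\widehat R\cong S/(f)$ with $(S,\m_S)$ regular local and $0\ne f\in\m_S^2$ (indeed $f\in\m_S^2\setminus\m_S^3$, since minimal multiplicity forces $e(R)=2$); consequently every $M\in\cm(R)$ with no nonzero free direct summand has a $2$-periodic minimal free resolution and satisfies $\syz^2_R M\cong M$. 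This last isomorphism is most transparent over the complete hypersurface via reduced matrix factorizations, and it descends to $R$ itself by \cite[Proposition 2.5.8]{Gro} together with Remark \ref{syzflat}.

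For part (1), let $M\in\cm(R)$ have no nonzero free summand. Then $\syz_R M$ is again maximal Cohen--Macaulay, so $\syz^2_R M=\syz_R(\syz_R M)$ is the first syzygy of a maximal Cohen--Macaulay module and hence lies in $\Ul(R)$ by the first fact above. Since $\syz^2_R M\cong M$ by the second fact, we conclude $M\in\Ul(R)$.

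For part (2), let $M\in\cm(R)$ be arbitrary. Peeling off copies of $R$ one at a time --- a process that terminates because $\mu(-)$ strictly decreases at each step --- we may write $M\cong M_0\oplus R^{\oplus n}$ with $M_0$ having no nonzero free direct summand. Being a direct summand of $M$, the module $M_0$ is maximal Cohen--Macaulay; if $M_0=0$ there is nothing more to say, and otherwise $M_0\in\Ul(R)$ by part (1). This exhibits $M$ as a direct sum of an Ulrich module and a free module.

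The only step carrying genuine content --- and the one to watch --- is the periodicity isomorphism $\syz^2_R M\cong M$: it is exactly here that both hypotheses are used (Gorenstein together with minimal multiplicity forces $R$ to be a hypersurface), and one must be careful that it is an honest module isomorphism rather than merely a stable one, which is why it is cleanest to argue over the completion with reduced matrix factorizations. An alternative that bypasses the hypersurface structure is to use, over the Gorenstein ring $R$, the cosyzygy $\syz^{-1}_R M:=(\syz_R M^{*})^{*}$, which is maximal Cohen--Macaulay and for which $M\cong\syz_R(\syz^{-1}_R M)$ when $M$ has no free summand, so that the first fact applies directly; everything else is routine manipulation of free summands and faithfully flat descent.
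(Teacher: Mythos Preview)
Your proof is correct, but your main route is more circuitous than the paper's. The paper avoids both the hypersurface structure and the passage to completion entirely: since $R$ is Gorenstein, every $M\in\cm(R)$ is totally reflexive and hence an infinite syzygy, so one may write $M\cong F\oplus\syz_R^{d+1}N$ for some finitely generated $N$ and free $F$; when $M$ has no free summand this gives $M\cong\syz_R(\syz_R^{d}N)\in\syz\cm^{\times}(R)\subseteq\Ul(R)$, using only that $\syz_R^{d}N$ is MCM. Your ``alternative'' via the cosyzygy $\syz_R^{-1}M$ is exactly this idea rephrased. What your main argument buys is nothing extra: it invokes the strictly stronger hypersurface property (which you must first deduce from Gorenstein plus minimal multiplicity), requires Eisenbud periodicity over the completion, and then needs faithfully flat descent together with the (unstated but true) fact that $M$ having no free summand forces the same for $\widehat M$. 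The paper's approach is shorter, uses only the Gorenstein hypothesis directly, and works uniformly in all dimensions without touching $\widehat R$.
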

    
    \begin{proof} Let $d=\dim R$.
    
    (1) Let $M$ be a maximal Cohen--Macaulay module without a non-zero free summand.  Since $M$ is totally reflexive, so it is a $(d+1)$-th syzygy module. Hence $M\cong F\oplus \syz_R^{d+1}N$ for some $R$-module $N$ and some free $R$-module $F$. Since $M$ has no free summand, so $M\cong \syz_R(\syz_R^d N)\in \syz \cm^{\times}(R)\subseteq\Ul(R)$ by \cite[Proposition 3.6]{umm}. Hence $M$ is Ulrich. 
    
    (2)  This follows from an isomorphism $M\cong F\oplus \syz_R^{d+1}N$ as in (1).  
    \end{proof}
    
As a consequence, we get the following characterization of local Gorenstein rings of minimal multiplicity, which are $\us$. 
    
\begin{prop}\label{hyp} Let $R$ be a local Gorenstein ring of minimal multiplicity. Let $\dim R=d$ and $k$ be the residue field of $R$. Then the following are equivalent:  
    
    \begin{enumerate}[\rm(1)]
        \item  $R$ is $\us$. 
        \item $\cm(R)=\add_R (R \oplus \syz^d_R k)$. 
    \end{enumerate}   
    
    So, in particular, if either of the above equivalent conditions holds, then $R$ has finite CM type.  
    \end{prop}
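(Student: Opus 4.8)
The proof is a bookkeeping argument that threads together Theorem~\ref{minuss}, Lemma~\ref{hypsur}, and Proposition~\ref{ulind}. First I would dispose of the regular case: if $R$ is regular then every maximal Cohen--Macaulay module is free, so $\cm(R)=\add_R(R)$ and every short exact sequence of Ulrich (indeed of maximal Cohen--Macaulay) modules splits; moreover $\syz^d_Rk$ is free, so $\add_R(R\oplus\syz^d_Rk)=\add_R(R)=\cm(R)$. Thus both $(1)$ and $(2)$ hold, and $R$ has finite CM type. Hence from now on I may assume $R$ is singular; in particular $\syz^d_Rk\neq 0$, and since $R$ has minimal multiplicity, $\syz^d_Rk\in\syz\cm^{\times}(R)\subseteq\Ul(R)$ as recorded in the text preceding the statement.

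For $(1)\Rightarrow(2)$: since $R$ has minimal multiplicity and $d>0$, Theorem~\ref{minuss} applies and being $\us$ forces $\Ul(R)=\add_R(\syz^d_Rk)$. Given $M\in\cm(R)$, Lemma~\ref{hypsur}(2) writes $M\cong U\oplus F$ with $U$ Ulrich and $F$ free; then $U\in\add_R(\syz^d_Rk)$ and $F\in\add_R(R)$, so $M\in\add_R(R\oplus\syz^d_Rk)$. The reverse inclusion $\add_R(R\oplus\syz^d_Rk)\subseteq\cm(R)$ is automatic because $R$ and $\syz^d_Rk$ are maximal Cohen--Macaulay, giving $(2)$.

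For $(2)\Rightarrow(1)$: from $(2)$ we have $\Ul(R)\subseteq\cm(R)=\add_R\{R,\syz^d_Rk\}$, so Proposition~\ref{ulind}, applied with $\X=\add_R(\syz^d_Rk)\subseteq\cm(R)$ (which contains the nonzero module $\syz^d_Rk$), yields $\Ul(R)\subseteq\add_R(\syz^d_Rk)$. Combined with $\syz^d_Rk\in\Ul(R)$ this gives $\Ul(R)=\add_R(\syz^d_Rk)$, whence $R$ is $\us$ by Theorem~\ref{minuss}. Finally, under either condition $\cm(R)=\add_R(N)$ for the single finitely generated module $N=R\oplus\syz^d_Rk$, so every indecomposable maximal Cohen--Macaulay module is a direct summand of a power of $N$; hence there are only finitely many of them up to isomorphism, i.e.\ $R$ has finite CM type. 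I do not anticipate a genuine obstacle: the only steps needing a moment's care are the reduction to the singular case and the verification that $\syz^d_Rk$ is a nonzero Ulrich module, both of which are already in place.
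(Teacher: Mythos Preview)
Your argument is correct and follows the paper's approach closely: the reduction to the singular case, the use of Lemma~\ref{hypsur}(2) together with Theorem~\ref{minuss} for $(1)\Rightarrow(2)$, and the appeal to \cite[Theorem 2.2]{lw} (which is exactly the fact you spell out) for finite CM type are all as in the paper. The only cosmetic difference is in $(2)\Rightarrow(1)$: the paper invokes Theorem~\ref{uladdsplit} directly (since $\Ul(R)\subseteq\add_R\{R,\syz^d_Rk\}\subseteq\add_R\{R,\syz^i_Rk:i\ge 0\}$ already suffices), whereas you take the slightly longer detour through Proposition~\ref{ulind} and Theorem~\ref{minuss}; both routes are valid and rely on results already in hand.
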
 
    
    \begin{proof}  There is nothing to prove if $R$ is regular, so assume $R$ is singular.
    
    $(1)\implies (2)$: From Lemma \ref{hypsur}(2), it follows that every $M\in \cm(R)$ can be written as $M\cong N\oplus F$ for some $N\in \Ul(R)$ and free $R$-module $F$. Hence the claim follows from Theorem \ref{minuss}.  
    
    $(2) \implies (1)$: This follows from  Theorem \ref{uladdsplit}.  
    \newline 
 That $R$ has finite CM type follows from \cite[Theorem 2.2]{lw}.  
    \end{proof}  
    
Similar to Corollary \ref{hens}, we also get the following corollary. 

\begin{cor}\label{gorcompl} Let $R$ be a local Gorenstein ring of minimal multiplicity. If $R$ is $\us$, then so is $\widehat R$. 
\end{cor}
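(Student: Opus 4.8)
The plan is to reduce Corollary \ref{gorcompl} to the minimal multiplicity version of the $\us$-to-completion ascent, namely Corollary \ref{hens}, by observing that a local Gorenstein ring of minimal multiplicity is automatically Henselian-like for our purposes once we pass through $\cm$. First I would note that being Gorenstein, having minimal multiplicity, and having dimension $d>0$ are all properties that are preserved under completion (completion of a Gorenstein local ring is Gorenstein, and $e(\widehat R)=e(R)$, $\dim \widehat R=\dim R$, $\operatorname{depth}\widehat R=\operatorname{depth} R$, so minimal multiplicity is preserved). So $\widehat R$ is again a local Gorenstein ring of minimal multiplicity, and by Proposition \ref{hyp} it suffices to show $\cm(\widehat R)=\add_{\widehat R}(\widehat R\oplus \syz^d_{\widehat R}k)$.

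Next I would use Proposition \ref{hyp} in the other direction: since $R$ is $\us$, we have $\cm(R)=\add_R(R\oplus \syz^d_R k)$. The key point is then that completion takes $\add_R(R\oplus \syz^d_R k)$ into $\add_{\widehat R}(\widehat R\oplus \syz^d_{\widehat R}k)$ (using $\widehat{\syz^d_R k}\cong \syz^d_{\widehat R}k$, which follows from Remark \ref{syzflat} applied to the flat map $R\to\widehat R$), and conversely every MCM $\widehat R$-module is, up to free summands, the completion of an MCM $R$-module — this is the same descent input used in the proof of Corollary \ref{hens} via \cite[Proposition 4.7]{auac}, which applies once we know $\widehat R$ is an isolated singularity. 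And $\widehat R$ being an isolated singularity is exactly the last assertion of Theorem \ref{minuss}, since $R$ is $\us$ with minimal multiplicity (indeed Gorenstein of minimal multiplicity admits a canonical module, so all of Theorem \ref{minuss}(1)--(5) apply). Thus for $X\in\cm(\widehat R)$ we may write $X\cong \widehat Y\oplus(\text{free})$ with $Y\in\cm(R)$, hence $Y\in\add_R(R\oplus\syz^d_R k)$, hence $X\in\add_{\widehat R}(\widehat R\oplus\syz^d_{\widehat R}k)$.

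An even cleaner route, which I would actually write, is to not reprove this from scratch but simply invoke Corollary \ref{hens}: a complete local ring is Henselian, but more to the point the proof of Corollary \ref{hens} only uses that $\widehat R$ has isolated singularity (guaranteed here by Theorem \ref{minuss}) together with \cite[Proposition 4.7]{auac} and \cite[Lemma 5.2.2]{ddd}; none of that genuinely needs $R$ Henselian, only that MCM modules over $\widehat R$ come from MCM modules over $R$ after stripping free summands, which \cite[Proposition 4.7]{auac} supplies. So the proof is: by Proposition \ref{hyp}, $R$ being $\us$ means $R$ has minimal multiplicity (and is Gorenstein, hence has a canonical module), so Theorem \ref{minuss} shows $\widehat R$ has isolated singularity; then the argument of Corollary \ref{hens} goes through verbatim to conclude $\widehat R$ is $\us$.

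The main obstacle is the descent step: ensuring that every MCM $\widehat R$-module, modulo free summands, really is the completion of an MCM $R$-module (i.e., that \cite[Proposition 4.7]{auac} is applicable here), since without the isolated singularity hypothesis on $\widehat R$ this can fail. Everything else — stability of the Gorenstein, minimal multiplicity, and dimension hypotheses under completion, commutation of syzygies with completion, and the $\m$-annihilation of the relevant $\Ext$ passing to $\widehat\m$ — is routine. So I would spend the bulk of the (short) proof pinning down that $\widehat R$ is an isolated singularity via Theorem \ref{minuss} and then citing Corollary \ref{hens} or replaying its one-paragraph argument.
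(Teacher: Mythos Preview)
Your first approach is essentially the paper's proof: use Theorem \ref{minuss} to get that $\widehat R$ is an isolated singularity, descend MCM $\widehat R$-modules to (summands of completions of) MCM $R$-modules, apply Proposition \ref{hyp} to $R$ to get $\cm(R)=\add_R(R\oplus\syz^d_R k)$, and complete to obtain $\cm(\widehat R)\subseteq\add_{\widehat R}(\widehat R\oplus\syz^d_{\widehat R}k)$. The only cosmetic differences are that the paper cites \cite[Corollary 3.3]{stable} (rather than \cite[Proposition 4.7]{auac}) for the descent step, and closes with Theorem \ref{uladdsplit} directly rather than reinvoking Proposition \ref{hyp} for $\widehat R$.

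Your ``cleaner route'' via replaying the $\m\Ext$-annihilation argument of Corollary \ref{hens} is a genuine alternative, but note that it is not what the paper does here, and it depends on your claim that the Henselian hypothesis in Corollary \ref{hens} is dispensable. That claim hinges on whether \cite[Proposition 4.7]{auac} (or a substitute) gives you, for an arbitrary local $R$ with $\widehat R$ of isolated singularity, that every $M\in\cm(\widehat R)$ is isomorphic to some $\widehat X$ with $X\in\cm(R)$ --- not merely a summand of such. The paper sidesteps this subtlety by working at the level of $\add$ (where ``summand of $\widehat M$'' is enough) and concluding via Theorem \ref{uladdsplit}, which is why your first approach is the safer and more faithful one.
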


\begin{proof}  Let $d=\dim R$. By Theorem \ref{minuss}, $\widehat R$ has isolated singularity. Hence for every $N\in \cm(\widehat R)=\cm_0(\widehat R)$, there exists $M\in \cm(R)$ such that $N$ is a direct summand of $\widehat M$ (\cite[Corollary 3.3]{stable}). Since $\cm(R)=\add_R (R \oplus \syz^d_R k)$ by Proposition \ref{hyp}, so we get that $\cm(\widehat R)\subseteq \add_{\widehat R}(\widehat R \oplus \syz^d_{\widehat R}k)$. Hence $\widehat R$ is $\us$ by Theorem \ref{uladdsplit}.    
\end{proof}  

In the next proposition, we record how $\us$ property behaves under passing to double branched cover of hypersurfaces.  
    
    \begin{prop}\label{branch} Let $R=S/(f)$ be a singular hypersurface (where $S$ is a regular local ring) of dimension $d>0$, and denote $R^{\#}:=S[[z]]/(f+z^2)$ (note that, $R^{\#}$ always has minimal multiplicity). Then the following hold:   
    \begin{enumerate}[\rm(1)] 
        \item Assume $2$ is a unit in $S$. If $R$ is $\us$ and has minimal multiplicity, then $R^{\#}$ is also $\us$.  
        
        \item  If $R^{\#}$ is $\us$, then $R$ is $\us$. 
    \end{enumerate} 
    \end{prop}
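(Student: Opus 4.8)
The plan is to derive both implications from Proposition~\ref{hyp} and Theorem~\ref{uladdsplit}, by transporting $\add$-containments of syzygies of the residue field across the double branched cover via matrix factorizations. First I would record the setup. Since $R$ is singular we have $f\in\m_S^2$, hence $f+z^2\in\m_{S[[z]]}^2$, so $R^{\#}$ is a singular Gorenstein hypersurface of minimal multiplicity and Proposition~\ref{hyp} applies to it: $R^{\#}$ is $\us$ if and only if $\cm(R^{\#})=\add_{R^{\#}}(R^{\#}\oplus\syz^{d+1}_{R^{\#}}k)$. For $M\in\cm(R)$ with matrix factorization $(\phi,\psi)$ of $f$ over $S$ (so $M\cong\operatorname{coker}\phi$), the pair $\left(\left(\begin{smallmatrix}\phi & zI\\ zI & -\psi\end{smallmatrix}\right),\left(\begin{smallmatrix}\psi & zI\\ zI & -\phi\end{smallmatrix}\right)\right)$ is a matrix factorization of $f+z^2$ over $S[[z]]$; let $M^{\#}\in\cm(R^{\#})$ be the cokernel of the first matrix (well defined up to free summands). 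The assignment $\cm(R)\to\cm(R^{\#})$, $M\mapsto M^{\#}$, is additive, and setting $z=0$ block-diagonalizes, giving $M^{\#}/zM^{\#}\cong M\oplus\syz_R M$ (up to free summands). Finally, $z$ is a minimal generator of $\m_{R^{\#}}$ and a parameter of $R^{\#}$, so \cite[Corollary~5.3]{TaG} gives $\syz^j_{R^{\#}}(k)/z\,\syz^j_{R^{\#}}(k)\cong\syz^j_R(k)\oplus\syz^{j-1}_R(k)$ for all $j\ge 1$.

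For part~(2): assume $R^{\#}$ is $\us$, so $\cm(R^{\#})=\add_{R^{\#}}(R^{\#}\oplus\syz^{d+1}_{R^{\#}}k)$. Let $M\in\Ul(R)$. Since $M^{\#}\in\cm(R^{\#})$, we get $M^{\#}\oplus Z\cong (R^{\#})^{a}\oplus(\syz^{d+1}_{R^{\#}}k)^{b}$ for some $Z,a,b$; applying $-\otimes_{R^{\#}}R$ and the two facts recorded above yields $M\oplus\syz_R M\in\add_R\{R,\syz^{d}_R k,\syz^{d+1}_R k\}$, hence $M\in\add_R\{R,\syz^i_R k:i\ge 0\}$. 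As $M\in\Ul(R)$ was arbitrary, Theorem~\ref{uladdsplit} shows $R$ is $\us$. This half uses only the elementary direction $M\rightsquigarrow M^{\#}$ of the correspondence and imposes no condition on the residue characteristic.

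For part~(1): assume $R$ is $\us$ with minimal multiplicity, so $\cm(R)=\add_R(R\oplus\syz^d_R k)$ by Proposition~\ref{hyp}. Here I would invoke the theory of the double branched cover (valid since $2$ is a unit): the compositions of $Y\mapsto Y/zY$ (from $\cm(R^{\#})$ to $\cm(R)$) and $(-)^{\#}$ (from $\cm(R)$ to $\cm(R^{\#})$) are isomorphic, up to free summands, to $\operatorname{id}\oplus\syz_R$ and $\operatorname{id}\oplus\syz_{R^{\#}}$ respectively; in particular $(Y/zY)^{\#}\cong Y\oplus\syz_{R^{\#}}Y$, so every $Y\in\cm(R^{\#})$ is a direct summand of $(Y/zY)^{\#}$ (see, e.g., \cite{lw}). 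Now let $Y\in\Ul(R^{\#})$; as $R^{\#}$ is singular, $Y$ has no free summand, and as $z$ is a regular parameter, $Y/zY$ is MCM over $R$, hence $Y/zY\in\cm(R)=\add_R(R\oplus\syz^d_R k)$. By additivity of $(-)^{\#}$, $(Y/zY)^{\#}\in\add_{R^{\#}}(R^{\#}\oplus(\syz^d_R k)^{\#})$, so $Y\in\add_{R^{\#}}(R^{\#}\oplus(\syz^d_R k)^{\#})$. It remains to put $(\syz^d_R k)^{\#}$ into $\add_{R^{\#}}\{R^{\#},\syz^i_{R^{\#}}k:i\ge 0\}$: applying $(Y/zY)^{\#}\cong Y\oplus\syz_{R^{\#}}Y$ with $Y=\syz^{d+1}_{R^{\#}}k$, together with \cite[Corollary~5.3]{TaG} and additivity, gives $(\syz^d_R k)^{\#}\oplus(\syz^{d+1}_R k)^{\#}\cong\syz^{d+1}_{R^{\#}}k\oplus\syz^{d+2}_{R^{\#}}k$ up to free summands, whence $(\syz^d_R k)^{\#}\in\add_{R^{\#}}\{R^{\#},\syz^{d+1}_{R^{\#}}k,\syz^{d+2}_{R^{\#}}k\}$ by the Krull--Remak--Schmidt property (passing to $\widehat{R^{\#}}$ if $S$ is not complete and descending via \cite[Proposition~2.18]{lw}). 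Therefore $\Ul(R^{\#})\subseteq\add_{R^{\#}}\{R^{\#},\syz^i_{R^{\#}}k:i\ge 0\}$, and Theorem~\ref{uladdsplit} shows $R^{\#}$ is $\us$.

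The main obstacle I expect is the input used in part~(1): establishing that $Y$ is a direct summand of $(Y/zY)^{\#}$ for an arbitrary $Y\in\cm(R^{\#})$, over a general (not necessarily complete) regular local ring $S$ in which $2$ is invertible. This is precisely where the hypothesis on $2$ enters, and where one must be careful about the exact form of the double-cover functors; everything else (the matrix-factorization construction, the reduction modulo $z$, \cite[Corollary~5.3]{TaG}, and the final appeal to Theorem~\ref{uladdsplit}) is routine.
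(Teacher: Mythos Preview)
Your proof is correct and follows essentially the same strategy as the paper's: use the double-branched-cover functors $(-)^{\#}$ and $(-)/z(-)$ together with \cite[Corollary~5.3]{TaG} to transport $\add$-containments of syzygies of $k$ between $R$ and $R^{\#}$, then invoke Theorem~\ref{uladdsplit}. Part~(2) is virtually identical to the paper's argument.

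The only noteworthy difference is in part~(1), in how you place $(\syz^d_R k)^{\#}$ inside $\add_{R^{\#}}\{R^{\#},\syz^i_{R^{\#}}k\}$. You do this indirectly: apply $(Y/zY)^{\#}\cong Y\oplus\syz_{R^{\#}}Y$ with $Y=\syz^{d+1}_{R^{\#}}k$, obtain $(\syz^d_R k)^{\#}\oplus(\syz^{d+1}_R k)^{\#}\cong\syz^{d+1}_{R^{\#}}k\oplus\syz^{d+2}_{R^{\#}}k$ up to free summands, and then extract $(\syz^d_R k)^{\#}$ via KRS after completion. The paper instead cites \cite[Lemma~4.3]{branched} directly, which gives (up to free summands) $\syz_{R^{\#}}(\syz^d_R k)\cong\syz^{d+1}_{R^{\#}}k$; this avoids the KRS/completion detour and yields the sharper conclusion $\cm(R^{\#})=\add_{R^{\#}}(R^{\#}\oplus\syz^{d+1}_{R^{\#}}k)$, rather than just $\Ul(R^{\#})\subseteq\add_{R^{\#}}\{R^{\#},\syz^i_{R^{\#}}k\}$. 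Your identification of where the hypothesis ``$2$ is a unit'' enters is correct and matches the paper's use of \cite[Proposition~2.5]{branched}.
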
    
    
    \begin{proof} Let $\dim R=d$, so $\dim R^{\#}=d+1$. For an $R^{\#}$-module $M$, let us denote $\overline {M}:=M \otimes_{R^{\#}} R^{\#}/(z)$. Note that, $\overline{R^{\#}}=R$ and $\syz_{R^{\#}} R\cong R^{\#}$. Let $k$ be the common residue field of $R$ and $R^{\#}$.          
    
    (1) Let $M \in \cm (R^{\#})$, so $\overline M\in \cm(R)$. 
    Since $R$ is $\us$ and has minimal multiplicity, so by Proposition \ref{hyp} we have $\overline M\oplus X\cong R^{\oplus a} \oplus (\syz^d_R k)^{\oplus a}$ for some $X\in\mod(R)$ and integer $a\ge 0$. So, $\syz_{R^{\#}} (\overline M) \oplus Y \cong (R^{\#})^{\oplus a}\oplus (\syz_{R^{\#}}\syz^d_R k)^{\oplus a}$ for some $R^{\#}$-module $Y$. By \cite[Proposition 2.5]{branched} we have, $\syz_{R^{\#}} (\overline M)\cong M \oplus \syz_{R^{\#}}  M$, and by \cite[Lemma 4.3]{branched} we have, $F_0\oplus \syz_{R^{\#}}\syz^d_R k\cong \syz^{d+1}_{R^{\#}} k \oplus F_1$ for some free $R^{\#}$-modules $F_0,F_1$.  Hence $M\oplus Z \cong (R^{\#})^{\oplus b} \oplus (\syz^{d+1}_{R^{\#}} k)^{\oplus a'}$ for some $R^{\#}$-module $Z$ and integers $a',b\ge 0$. Hence $\cm(R^{\#})=\add_{R^{\#}}\{R^{\#}, \syz_{R^{\#}}^{d+1}k\}$, so $R^{\#}$ is $\us$ by Theorem \ref{uladdsplit}.   
    
    (2) By Corollary \ref{gorcompl} we can assume that $\widehat {R^{\#}}$ is $\us$, so we can now work with $S$ complete. For an $R$-module $M$, let $M^{\#}:=\syz_{R^{\#}} M$. Let $M\in \Ul(R)$. Then $M^{\#}\in \cm(R^{\#})$. Since $R^{\#}$ is $\us$ and has minimal multiplicity, so by Proposition \ref{hyp} we have, $M^{\#}\oplus X \cong (R^{\#})^{\oplus a} \oplus (\syz^{d+1}_{R^{\#}}k)^{\oplus a}$ for some $R^{\#}$-module $X$ and integer $a\ge 0$. So, $\overline{M^{\#}} \oplus \overline X \cong (\overline{R^{\#}})^{\oplus a} \oplus (\overline{\syz^{d+1}_{R^{\#}}k})^{\oplus a}$. By \cite[Proposition 8.15]{lw} we have  $\overline{M^{\#}}\cong M \oplus \syz_R M$, and by  \cite[Corollary 5.3]{TaG} we have $\overline{\syz^{d+1}_{R^{\#}}k}\cong \syz^d_{\overline{R^{\#}}} k \oplus \syz^{d+1}_{\overline{R^{\#}}} k \cong \syz^d_R k \oplus \syz^{d+1}_R k$. Hence we get $M \oplus Y \cong R^{\oplus a} \oplus (\syz^d_R k \oplus \syz^{d+1}_R k)^{\oplus a}$ for some $R$-module $Y$.  Hence we have shown $\Ul(R) \subseteq \add_R\{R,\syz^d_R k,\syz_R^{d+1}k\}$. So, by Theorem \ref{uladdsplit} we get $R$ is $\us$.         
    \end{proof}

We finish this section by fully describing complete local Cohen--Macaulay rings $(R,\m,\mathbb C)$ of dimension $2$ and minimal multiplicity, which are $\us$, namely, they are only the cyclic quotient singularities with $|\text{ind}\Ul(R)|\le 2$.  Here, by cyclic quotient singularity, we mean a local ring of the form $k[[x,y]]^G$, where $k$ is algebraically closed, and $G=\left\langle\sigma=\begin{pmatrix}
\zeta_n & 0\\ 0 & \zeta^a_n
\end{pmatrix} \right\rangle \subseteq GL(2,k)$, where $n$ is invertible in $k$, $\zeta_n$ is a primitive $n$-th root of unity, and $1\le a\le n-1$ and gcd$(a,n)=1$, and we denote this cyclic group by $\dfrac 1 n(1,a)$ following the notation of \cite[Section 2]{cyc}.  In the following, we freely use the notations and basic preliminaries as in \cite[Section 2]{cyc}. 

First, we show that cyclic quotient surface singularities, which have exactly two indecomposable Ulrich modules, must be $\us$. Note that, for a cyclic  quotient surface singularity $R=S^G$, where $S=k[[x,y]]$, MCM $R$-modules are same as reflexive $R$-modules, and every indecomposable MCM $R$-module is of the form $M :=(S\otimes_k V)^G$ for some irreducible representation $V$ of $G$. First, we need some preparatory results.  We also use the description of divisor class group as in \cite[Section 2.10]{sg}.   
 
 \begin{lem}\label{quotcl} Let $R=S^G$ be a cyclic  quotient surface singularity with divisor class group $Cl(R)$. Let $|G|=n$. Then $\mathbb Z/(n) \xrightarrow{t \mapsto [M_t]} Cl(R)$  (where $0\le t\le n-1$) gives an isomorphism. 
 \end{lem}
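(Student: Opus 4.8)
The plan is to pin down the modules $M_t$ concretely and then read off the three properties needed — that $\phi\colon t\mapsto[M_t]$ is a well-defined group homomorphism, that it is surjective, and that it is injective — from the McKay-type description of $\cm(R)=\Refl(R)$ recalled just before the statement. Write $G=\langle g\rangle$ with $g=\operatorname{diag}(\zeta_n,\zeta_n^a)$, and grade $S=k[[x,y]]$ by the group $\mathbb Z/(n)$ via $\deg x=1$, $\deg y=a$, so that $g$ acts on the degree-$t$ component $S_t=\bigoplus_{i+aj\equiv t}kx^iy^j$ by the scalar $\zeta_n^t$. Then $S_0=S^G=R$, each $S_t$ is a nonzero $R$-submodule of $S$ that is reflexive of rank one (a divisorial ideal), one has $S=\bigoplus_{t\in\mathbb Z/(n)}S_t$, and $S_t$ is — up to the normalization used in \cite[Section 2]{cyc} — exactly $M_t$.

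First I would check that $\phi$ is a homomorphism. Multiplication of power series gives $S_s\cdot S_t\subseteq S_{s+t}$ (indices mod $n$), and this inclusion of divisorial ideals is an equality in codimension one: at every height-one prime of $R$ both $S_sS_t$ and $S_{s+t}$ are free of rank one, and the cokernel of $S_sS_t\hookrightarrow S_{s+t}$ is supported only at the maximal ideal. Passing to reflexive hulls therefore yields $[M_s]+[M_t]=[M_s\cdot M_t]=[M_{s+t}]$ in $Cl(R)$, while $M_0=R$ gives $\phi(0)=0$; hence $\phi\colon\mathbb Z/(n)\to Cl(R)$ is a group homomorphism.

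Surjectivity and injectivity both come from the description of reflexive $R$-modules. Since $R$ is a local normal domain, a rank-one reflexive module $I$ is indecomposable: $\operatorname{End}_R(I)\cong(I:_{Q(R)}I)$ is a module-finite $R$-subalgebra of $Q(R)$, hence integral over $R$, hence equal to $\overline R=R$, which is local. So the isomorphism classes of rank-one reflexive $R$-modules are exactly the isomorphism classes of the rank-one indecomposable objects of $\Refl(R)$. By the description recalled before the lemma, every indecomposable object of $\Refl(R)=\cm(R)$ has the form $(S\otimes_kV)^G$ for an irreducible representation $V$ of $G$ and has rank $\dim_kV$; as $G$ is cyclic of order $n$, its irreducibles are the $n$ one-dimensional characters $\chi_0,\dots,\chi_{n-1}$. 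Thus, up to isomorphism, the rank-one reflexive $R$-modules are precisely $M_0,\dots,M_{n-1}$, and by Krull--Schmidt together with the bijectivity of $V\mapsto(S\otimes_kV)^G$ they are pairwise non-isomorphic. Since isomorphism classes of rank-one reflexive modules are in bijection with $Cl(R)$, this shows simultaneously that $\phi$ is surjective (every class is some $[M_t]$) and injective (distinct $t$ give non-isomorphic, hence distinct, classes). Therefore $\phi$ is an isomorphism. (Alternatively, once injectivity is established, surjectivity also follows from the classical equality $|Cl(R)|=n$ for the singularity $\frac1n(1,a)$, via \cite[Section 2.10]{sg}.)

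The step I expect to be the main obstacle is the homomorphism verification: matching the normalization of the $M_t$ from \cite[Section 2]{cyc} with the graded-piece description $S_t$, and confirming that $S_sS_t$ and $S_{s+t}$ genuinely agree away from the maximal ideal — equivalently that the canonical map $(M_s\otimes_RM_t)^{**}\to M_{s+t}$ is an isomorphism. This is a short but slightly fiddly local computation; everything after it is a direct appeal to the structure of $\cm(R)$ and of $Cl(R)$ already in hand.
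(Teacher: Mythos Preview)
Your proposal is correct and covers the same three checks (homomorphism, injectivity, surjectivity) as the paper, but the homomorphism step is handled by a genuinely different mechanism. The paper reduces $(M_i\otimes_R M_j)^{**}\cong M_{i+j}$ to the representation-theoretic statement $V_i\otimes_k V_j\cong V_{i+j}$ via \cite[Theorem 3.4(4)]{bc}, and then verifies the latter by an explicit $G$-equivariant map on one-dimensional representations. You instead identify $M_t$ with the eigenspace $S_t$ in the $\mathbb Z/(n)$-grading of $S$, use that multiplication of power series respects the grading ($S_sS_t\subseteq S_{s+t}$), and argue that this inclusion is an equality in codimension one because the $G$-action is free away from the origin (so each $S_t$ is locally free of rank one there). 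Your route is more self-contained but, as you rightly flag, the codimension-one check is where the work hides; the paper's route is shorter on the page because the local computation is absorbed into the cited result from \cite{bc}. For injectivity and surjectivity your argument is essentially the paper's, with a bit more detail supplied (the $\End_R(I)\cong R$ computation for indecomposability, and the explicit appeal to Krull--Schmidt).
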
   
 
 \begin{proof} In this proof, we use the notations and preliminaries as in \cite[Section 2]{cyc}. Clearly, the map is injective, as for distinct values of $0\le t\le n-1$, $M_t$ s are non-isomorphic. Now we first show that the map is a group homomorphism, i.e., $[M_{i+j}]=[M_i]+[M_j]$, i.e., $M_{i+j}\cong (M_i\otimes_R M_j)^{**}$.  By \cite[Theorem 3.4(4)]{bc}, it is then enough to show that $V_i \otimes_k V_j\cong V_{i+j}$, where the isomorphism is as $k[G]$-modules (i.e., a $G$-equivariant isomorphism of the corresponding representations). To prove this later isomorphism, first note that since $G$ is cyclic, so irreducible representations of $G$ are one-dimensional. Hence for all $i$ we have, $V_i=k$ as $k$-vector spaces. Hence we have a $k$-linear map $T: V_i\otimes_k V_j \xrightarrow{b \otimes c \mapsto bc} V_{i+j}$. Now using the definition of tensor product of representations, we easily check that $T(\sigma \cdot (b\otimes c))=T((\sigma.b)\otimes (\sigma.c))=T(\zeta_n^{-i}b\otimes\zeta_n^{-j}c)=T(\zeta_n^{-(i+j)}(b\otimes c))=\zeta_n^{-(i+j)}T(b\otimes c)=\zeta_n^{-(i+j)}bc=\sigma \cdot T(b\otimes c)$ for all elementary tensors $b\otimes c\in V_i\otimes_k V_j$. Since $G$ is generated by $\sigma$, so this proves that $T: V_i\otimes_k V_j \xrightarrow{b \otimes c \mapsto bc} V_{i+j}$ is $G$-equivariant. Since $V_i\otimes_k V_j, V_{i+j}$ are all one-dimensional vector spaces, so $T$ is an isomorphism. Thus we get a $G$-equivariant isomorphism $V_i \otimes_k V_j\cong V_{i+j}$ as required. Finally, our map $\mathbb Z/(n) \xrightarrow{t \mapsto [M_t]} Cl(R)$ is surjective, since reflexive modules of rank $1$ over $R$ are indecomposable and any such module is isomorphic to some $M_t:=(S\otimes_k V_t)^G$, where $V_t=k$ is the one dimensional $k$-vector space equipped with the representation $\rho_t: G \xrightarrow{\sigma \mapsto \zeta_n^{-t}} k^{\times}$.   
 \end{proof}
 
 \begin{prop}\label{1ul} The cyclic quotient surface singularity $R=S^G$, where $G=\dfrac 1 n (1,a)$, has exactly one indecomposable Ulrich module if and only if $a=1$, i.e., $R$ is the $n$-th Veronese subring of $k[[x,y]]$.
 \end{prop}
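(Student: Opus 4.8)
The plan is to pass to the toric description of $R$ and reduce the statement to a count of Hilbert--basis elements of a numerical monoid. Using the conventions of \cite[Section 2]{cyc}, write $R=S^G$ with $S=k[[x,y]]$, where $\sigma\in G$ acts by $\sigma(x^py^q)=\zeta_n^{p+aq}x^py^q$; call $p+aq\bmod n$ the \emph{weight} of the monomial $x^py^q$. Then the indecomposable maximal Cohen--Macaulay $R$-modules are $M_0=R,M_1,\dots,M_{n-1}$, where $M_t\cong(S\otimes_kV_t)^G$ is, as an $R$-module, the $R$-span inside $S$ of the monomials of weight $t$; these are exactly the rank-one reflexive modules, pairwise non-isomorphic and parametrized by $Cl(R)=\mathbb Z/n$ via $t\mapsto[M_t]$ (Lemma \ref{quotcl}). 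Since $R$ has minimal multiplicity (cyclic quotient surface singularities, being rational, are of minimal multiplicity), $e(R)=\mu(\m)-1$; and since $M_t$ has rank one over the domain $R$, $e(M_t)=e(R)$. So by the definition of Ulrich module, $M_t\in\Ul(R)$ if and only if $\mu(M_t)=e(R)$.

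Next I would record the two numerical facts I need in monoid language. Put $\Gamma_0=\{(p,q)\in\mathbb Z_{\ge0}^2:p+aq\equiv0\pmod n\}$ and let $H$ be its Hilbert basis; the minimal monomial generators of $\m$ are precisely $\{x^{g_1}y^{g_2}:(g_1,g_2)\in H\}$, so $\mu(\m)=|H|$, and $(n,0),(0,n)$ are the only elements of $H$ on the two coordinate axes. On the other hand, a monomial $x^py^q$ of weight $t$ is a minimal generator of $M_t$ exactly when it is divisible by no minimal monomial generator of $\m$, i.e.\ when no element of $H$ lies $\le(p,q)$ coordinatewise --- equivalently, when the box $[0,p]\times[0,q]$ meets $\Gamma_0$ only at the origin.

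The crux is to compute $\mu(M_{n-1})$ and $\mu(M_{n-a})$. For $t=n-1$: if $(g_1,g_2)\in H$ with $g_1\ge1$, then $(g_1-1,g_2)$ has weight $n-1$ and its box meets $\Gamma_0$ only at the origin (a strictly smaller nonzero point of $\Gamma_0$ would contradict $(g_1,g_2)\in H$), so $(g_1-1,g_2)$ is a minimal generator of $M_{n-1}$; conversely, if $(p,q)$ is a minimal generator of $M_{n-1}$ then $(p+1,q)\in\Gamma_0\setminus\{0\}$ admits no strictly smaller nonzero point of $\Gamma_0$ below it (here one uses $\gcd(a,n)=1$ and $0\le q\le n-1$), so $(p+1,q)\in H$. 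Thus $(p,q)\mapsto(p+1,q)$ is a bijection from the minimal generators of $M_{n-1}$ onto $\{g\in H:g_1\ge1\}$, whence $\mu(M_{n-1})=|H|-1=\mu(\m)-1=e(R)$, so $M_{n-1}\in\Ul(R)$. The identical argument with the roles of $x$ and $y$ exchanged (dividing by $y$ instead of $x$) gives $\mu(M_{n-a})=\#\{g\in H:g_2\ge1\}=|H|-1=e(R)$, so $M_{n-a}\in\Ul(R)$ as well. Finally $M_{n-1}\cong M_{n-a}$ iff $n-1\equiv n-a\pmod n$, i.e.\ iff $a=1$ (using $1\le a\le n-1$). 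Hence if $a\ne1$ then $R$ has at least two non-isomorphic indecomposable Ulrich modules; contrapositively, if $R$ has exactly one indecomposable Ulrich module then $a=1$.

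For the converse, suppose $a=1$, so $R=S^{(n)}$ is the $n$-th Veronese subring, graded so that $M_t=\bigoplus_{i\ge0}S_{t+ni}$ with $S_j$ the space of degree-$j$ forms of $S$. Since $S_n\cdot S_j=S_{n+j}$ for all $j\ge0$, we get $\m M_t=\bigoplus_{i\ge1}S_{t+ni}$, so $\mu(M_t)=\dim_kS_t=t+1$; also $e(R)=n$, as one reads off from the Hilbert series $\frac{1+(n-1)z}{(1-z)^2}$ of the Veronese. Therefore $\mu(M_t)=e(R)$ exactly when $t=n-1$, so $M_{n-1}$ is the unique indecomposable Ulrich module, completing the argument. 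The main obstacle here is the staircase bookkeeping of the third paragraph --- establishing the two bijections between minimal generators of $M_{n-1}$ (respectively $M_{n-a}$) and the Hilbert-basis elements of $\Gamma_0$ lying off one coordinate axis; everything else is routine, and if the Ulrich modules of $\tfrac1n(1,a)$ are already listed in \cite{cyc} this step may simply be quoted from there.
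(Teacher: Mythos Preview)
Your argument is correct, but it takes a genuinely different route from the paper's proof. The paper dispatches the statement in two lines by quoting \cite[Theorem 1.5]{cyc} and \cite[Theorem 2.4]{cyc}: the first says that the number $N$ of indecomposable Ulrich modules satisfies $r\le N\le 2^{r-1}$ where $r$ is the length of the Hirzebruch--Jung continued fraction of $n/a$, so $N=1\iff r=1$; the second identifies $r=1$ with $n/a$ being an integer, and then $\gcd(a,n)=1$ forces $a=1$.

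Your approach, by contrast, is self-contained: you exhibit $M_{n-1}$ and $M_{n-a}$ as Ulrich via an explicit bijection between their minimal monomial generators and the Hilbert-basis elements of $\Gamma_0$ off one coordinate axis, and you compute $\mu(M_t)=t+1$ directly for the Veronese. The staircase argument is sound (the key point you flag---that a minimal generator $(p,q)$ of $M_{n-1}$ has $q\le n-1$ because $(0,n)\in\Gamma_0$, which together with $\gcd(a,n)=1$ rules out a second lattice point on the column $p'=p+1$---is exactly what is needed). What you gain is an elementary proof that does not depend on the structure theory of special CM modules or continued fractions; what the paper gains is brevity and a direct link to the combinatorics already set up in \cite{cyc}, which it reuses in the companion Proposition~\ref{twoul}. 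Your closing remark is apt: the fact that $M_{n-1}$ and $M_{n-a}$ are Ulrich is recorded in \cite[Corollary 3.11]{cyc}, so the forward direction could indeed be shortened to a citation.
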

 
 \begin{proof} By \cite[Theorem 1.5]{cyc}, $R$ has exactly one indecomposable Ulrich module if and only if $r=1$ (where $r$ denotes the number of irreducible exceptional curve, see \cite[Theorem 1.5, Theorem 2.4]{cyc}). This by \cite[Theorem 2.4]{cyc} exactly means the Hirzebruch-Jung continued fraction (see discussion preceeding \cite[Definition 2.3]{cyc}) of $n/a$ has exactly one term, i.e., $n=a\alpha_1$. But gcd$(n,a)=1$, hence this is equivalent to $a=1$.  
 \end{proof} 
 
 \begin{prop}\label{twoul} The cyclic quotient surface singularity corresponding to the group $\dfrac 1 n (1,a)$, where $a\ge 2$, has exactly two indecomposable Ulrich modules if and only if $n=ab-1$ for some $b\ge 2$.  
\end{prop}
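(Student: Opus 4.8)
\emph{Proof proposal.} The plan is to reduce the statement, via the structure theory of cyclic quotient surface singularities, to an elementary fact about Hirzebruch--Jung continued fractions. Exactly as in the proof of Proposition \ref{1ul}, I would invoke \cite[Theorem 1.5]{cyc} together with \cite[Theorem 2.4]{cyc}: the number of indecomposable Ulrich $R$-modules (up to isomorphism) equals $r$, the number of terms in the Hirzebruch--Jung continued fraction expansion of $n/a$, written $n/a = [\alpha_1, \alpha_2, \dots, \alpha_r]$ with all $\alpha_i \ge 2$ (meaning $n/a = \alpha_1 - 1/(\alpha_2 - 1/(\cdots - 1/\alpha_r))$). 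Thus ``$R$ has exactly two indecomposable Ulrich modules'' is equivalent to ``$r = 2$'', i.e.\ $n/a = \alpha_1 - 1/\alpha_2$ for integers $\alpha_1, \alpha_2 \ge 2$, and the whole proof reduces to the arithmetic claim that, for $2 \le a \le n-1$ with $\gcd(a,n) = 1$, one has $r = 2$ if and only if $n = ab - 1$ for some $b \ge 2$.

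For the forward direction I would argue as follows: from $n/a = \alpha_1 - 1/\alpha_2 = (\alpha_1\alpha_2 - 1)/\alpha_2$, and since $\gcd(\alpha_1\alpha_2 - 1,\, \alpha_2) = 1$ while also $\gcd(n,a) = 1$, the two fractions coincide in lowest terms, forcing $n = \alpha_1\alpha_2 - 1$ and $a = \alpha_2$; setting $b := \alpha_1 \ge 2$ gives $n = ab - 1$. For the converse, suppose $n = ab - 1$ with $a \ge 2$ and $b \ge 2$. First $\gcd(n,a) = \gcd(ab - 1, a) = 1$, so $\frac{1}{n}(1,a)$ is a legitimate cyclic quotient singularity of the type under consideration; moreover $n = ab - 1 \ge 2a - 1 > a$, so the hypothesis $a \le n-1$ holds. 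Then I would display the Hirzebruch--Jung expansion directly: since $0 < 1/a < 1$ we have $\lceil n/a \rceil = b$ and $n/a = b - 1/a$, whose tail fraction is $1/(b - n/a) = a \ge 2$; this is already an integer, so the expansion terminates with $n/a = [b, a]$, both entries $\ge 2$. Hence $r = 2$, and \cite[Theorem 1.5]{cyc} yields exactly two indecomposable Ulrich modules.

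The computations are entirely routine, so I expect no serious obstacle; the one thing to be careful about is the bookkeeping of the continued-fraction formalism --- in particular that the Hirzebruch--Jung expansion of a rational number greater than $1$, with all partial quotients $\ge 2$, is unique, so that ``$r = 2$'' unambiguously means ``$n/a = \alpha_1 - 1/\alpha_2$ with $\alpha_1,\alpha_2 \ge 2$'' --- and making sure the results cited from \cite{cyc} are normalized exactly as in Proposition \ref{1ul}, so that the count of indecomposable Ulrich modules is literally $r$. Once that normalization is fixed, both implications are one-line verifications.
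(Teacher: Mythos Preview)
Your overall strategy matches the paper's, but there is one genuine inaccuracy in how you invoke \cite[Theorem 1.5]{cyc}. That theorem does \emph{not} say that the number $N$ of indecomposable Ulrich modules equals the length $r$ of the Hirzebruch--Jung expansion of $n/a$; it gives the double inequality $r \le N \le 2^{r-1}$ (with $r$ the number of non-free indecomposable special CM modules, identified with the continued-fraction length via \cite[Theorem 2.4]{cyc}). Your final caveat ``so that the count of indecomposable Ulrich modules is literally $r$'' is therefore something you cannot verify in general, and your reduction ``$N=2 \iff r=2$'' needs the inequality, not an equality: from $N=2$ one gets $r\le 2$ and $2\le 2^{r-1}$, forcing $r=2$; conversely $r=2$ gives $2\le N\le 2$. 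This is exactly how the paper argues. Once this is fixed your proof is complete.

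On the arithmetic side your argument is slightly cleaner than the paper's: you observe that $n/a=(\alpha_1\alpha_2-1)/\alpha_2$ with both sides in lowest terms, hence $a=\alpha_2$ and $n=\alpha_1\alpha_2-1$ immediately. The paper instead derives $\beta\mid a$ from $n=a\alpha-a/\beta$ and $a\mid\beta$ from $n\beta=a(\alpha\beta-1)$ together with $\gcd(n,a)=1$, reaching the same conclusion $\beta=a$. Both are fine; yours is more direct.
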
 

\begin{proof} First, assume that $n=ab-1$, where $b\ge 2$, so $gcd(n,a)=1$. Also, we know $1\le a\le n-1$. Since $a\ge 2$, so the Hirzebruch-Jung continued fraction (see discussion preceeding \cite[Definition 2.3]{cyc}) of $n/a$ is $\dfrac na=b-\dfrac 1a=[b, a]$. Thus $r=2$ (where $r$ denotes the number of irreducible exceptional curve, see \cite[Theorem 2.4]{cyc}), and then by \cite[Theorem 1.5]{cyc}, the number $N$ of indecomposable Ulrich modules  satisfy $2\le N\le 2^{2-1}=2$. Hence $N=2$.   

Conversely, let $N=2$ be the number of indecomposable Ulrich modules. Then $r\le N=2 \le 2^{r-1}$ by \cite[Theorem 1.5]{cyc}. So, $r=2$.
Hence by \cite[Theorem 2.4]{cyc} the Hirzebruch-Jung continued fraction of $n/a$ is of the form $\dfrac n a=\alpha -\dfrac 1 \beta$, where $\beta\ge 2$. Then $n=a\alpha -\dfrac {a}{\beta}$, so $\beta|a$. Moreover, $n\beta=a(\alpha\beta-1)$. Since $a,n$ are coprime, so $a|\beta$. Thus $\beta=a$.  Hence $\dfrac n a=\alpha -\dfrac 1 a$ gives $n=\alpha a-1$.  
\end{proof}  

\begin{prop}\label{cyclic} Let $R=S^G$ be a cyclic  quotient surface singularity, where $G=\dfrac 1 n (1,a)$. If $\Ul(R)$ has exactly two indecomposable objects, then $R$ is $\us$.   
\end{prop}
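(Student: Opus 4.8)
The plan is to apply Proposition \ref{Ggroup} to $\X=\Ul(R)$, for which the conclusion ``every short exact sequence of modules in $\Ul(R)$ splits'' is exactly the statement that $R$ is $\us$.

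First I would check the standing hypotheses of Proposition \ref{Ggroup}. Since $R=S^G$ with $S=k[[x,y]]$, the ring $R$ is complete local, hence Henselian, and it is Cohen--Macaulay, so $\Ul(R)$ together with its short exact sequences is an exact subcategory of $\mod R$ by \cite[Corollary 4.6]{ddd}. By hypothesis $\ind\Ul(R)=\{A,B\}$ has exactly two elements. Because $R$ is a cyclic quotient surface singularity, every indecomposable module in $\cm(R)=\Refl(R)$ has the form $(S\otimes_k V)^G$ with $V$ an irreducible representation of $G$; as $G$ is cyclic, each such $V$ is one--dimensional, so $(S\otimes_k V)^G$ is reflexive of rank $\dim_k V=1$. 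Thus $A$ and $B$ are reflexive of rank $1$, and in particular (as $R$ is a domain) they have the same constant positive rank. So it only remains to verify that $2[A]\ne 2[B]$ in $G(R)$.

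To see this I would pass to the divisor class group. Since $A\not\cong B$ are indecomposable rank--one reflexive modules, Lemma \ref{quotcl} shows $[A]\ne[B]$ in $Cl(R)\cong\mathbb Z/(n)$, where $n=|G|$. There is a natural homomorphism $c_1\colon G(R)\to Cl(R)$ (for a two--dimensional normal domain it is the codimension--one graded piece of the filtration of $G(R)$ by codimension of support: on a rank--one reflexive module it returns the divisor class, it kills $[R]$ and every finite--length class, and it is additive on short exact sequences). Hence it suffices to prove $c_1\big(2[A]-2[B]\big)=2\big([A]-[B]\big)\ne 0$ in $Cl(R)\cong\mathbb Z/(n)$, i.e.\ that $[A]-[B]$ is not annihilated by $2$. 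Now Propositions \ref{1ul} and \ref{twoul} force $a\ge 2$ and $n=ab-1$ with $b\ge 2$, the Hirzebruch--Jung fraction being $n/a=[b,a]$; by the classification of Ulrich modules over cyclic quotient surface singularities (\cite[Theorem 1.5, Theorem 2.4]{cyc}), $A$ and $B$ are, up to order, the two non--free special Cohen--Macaulay modules, and under the identification of $Cl(R)$ with the cokernel of the intersection matrix $\big(\begin{smallmatrix}-b&1\\1&-a\end{smallmatrix}\big)$ one finds $[A]-[B]=\pm(a-1)$ in $\mathbb Z/(n)$. Since $a,b\ge 2$ give $0<2(a-1)=2a-2<2a-1\le ab-1=n$, we get $2(a-1)\not\equiv 0\pmod n$, so $[A]-[B]$ is not $2$--torsion; hence $2[A]\ne 2[B]$ in $G(R)$ and Proposition \ref{Ggroup} applies.

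The main obstacle is the explicit identification of the two indecomposable Ulrich modules and of their divisor classes in $Cl(R)\cong\mathbb Z/(n)$: concretely, matching the representation--theoretic labelling $M_t=(S\otimes_k V_t)^G$ of Lemma \ref{quotcl} with the continued--fraction/special--CM labelling of \cite{cyc} and reading off the class difference (the facts about $c_1$ and the intersection--matrix presentation of $Cl(R)$ are standard and just need to be recorded). A variant avoiding $G(R)$ uses Theorem \ref{minuss}: $R$ is a rational surface singularity, hence of minimal multiplicity, so $R$ is $\us$ iff $\Ul(R)=\add_R(\syz^2_R k)$; writing the Ulrich module $\syz^2_R k\cong A^{\oplus\alpha}\oplus B^{\oplus\beta}$ (no free summand, $\alpha+\beta=e(R)$) and applying $c_1$ to the relation $[k]=\alpha([A]-[R])+\beta([B]-[R])$ in $G(R)$ coming from a minimal free resolution of $k$ forces $\alpha,\beta\ge 1$, whence $\{A,B\}\subseteq\add_R(\syz^2_R k)$ --- but this again rests on the same explicit knowledge of $[A],[B]\in\mathbb Z/(n)$.
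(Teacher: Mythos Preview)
Your overall strategy is exactly the paper's: apply Proposition \ref{Ggroup} to $\X=\Ul(R)$, note that both indecomposable Ulrich modules have rank $1$ because irreducible representations of the cyclic group $G$ are one-dimensional, and then rule out $2[A]=2[B]$ in $G(R)$ by passing to $Cl(R)\cong\mathbb Z/(n)$ and showing that $0<2(a-1)<n$ when $n=ab-1$ with $a,b\ge 2$. The final numerical step is identical to the paper's.

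There is, however, a genuine gap in the middle. You assert that the two indecomposable Ulrich modules are ``the two non-free special Cohen--Macaulay modules,'' citing \cite[Theorem 1.5, Theorem 2.4]{cyc}. Those results do not say this; they only relate the \emph{number} of indecomposable Ulrich modules to the length $r$ of the Hirzebruch--Jung expansion (equivalently, the number of non-free special CM modules). Ulrich modules and special CM modules are different sets of modules in general. The paper avoids this by citing \cite[Corollary 3.11]{cyc}, which identifies the two indecomposable Ulrich modules explicitly as $M_{n-1}$ and $M_{n-a}$ in the representation-theoretic labelling of Lemma \ref{quotcl}. With that in hand, Lemma \ref{quotcl} gives $[A]-[B]=(n-1)-(n-a)=a-1$ in $\mathbb Z/(n)$ immediately, and your detour through the intersection-matrix presentation of $Cl(R)$ is unnecessary. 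Once you replace your (incorrect) identification with this citation, your proof becomes exactly the paper's.
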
  
 \begin{proof} If $\Ul(R)$ has exactly two indecomposable objects, then $a\geq 2$ by Proposition \ref{1ul}. Hence the indecomposable Ulrich modules are $M_{n-1}$ and $M_{n-a}$ (see \cite[Corollary 3.11]{cyc}), both of which have rank $1$. Applying Proposition \ref{Ggroup} to $\X=\Ul(R)$ (see \cite[Corollary 4.7]{ddd}), it is enough to prove that $2[M_{n-1}]\ne 2[M_{n-a}]$ in $G(R)$. If possible, let $2[M_{n-1}]=2[M_{n-a}]$ in $G(R)$. Then $2\text{cl}(M_{n-1})=2\text{cl}(M_{n-a})$ in $Cl(R)$. Since these modules are of rank $1$ and reflexive, so this implies $2[M_{n-1}]=2[M_{n-a}]$ in $Cl(R)$. Then Lemma \ref{quotcl} implies $2(n-1)\equiv 2(n-a) (\mod n)$. Hence $n|2a-2$. By Proposition \ref{twoul} we have, $n=ab-1$ for some $b\ge 2$, hence $n\ge 2a-1$. Thus $n|2a-2$ implies $2a-2=0$, so $a=1$, contradiction!  
 \end{proof}     
 
\begin{chunk}\label{chu} Finally, we quickly remark that if $M$ is a torsion-free module over a complete local domain $R$, and if $M$ has rank $r$, then $M$ has at most $r$-many indecomposable summands. Indeed, let $M\cong \oplus_{i=1}^{r+1} N_i$, where the first $r$-many modules $N_1,...,N_r$ are indecomposable (hence non-zero by definition). Then each $N_1,...,N_r$ has positive rank (as they are torsion-free). So by rank calculation, $N_{r+1}$ has rank zero. But $N_{r+1}$ is also torsion-free, hence $N_{r+1}=0$.
\end{chunk} 
 
 Now we can prove Theorem \ref{surface2}. For the following proof, we will use the notion of special MCM modules, for the definition and basic properties of which, see \cite[Section 2]{scm}.  
 \begin{thm}\label{surface} Let $(R,\m,k)$ be a singular complete local Cohen--Macaulay ring of minimal multiplicity, and have dimension $2$. Consider the following two statements: 
 
 \begin{enumerate}[\rm(1)]
  \item $R$ is a cyclic quotient singularity and $\Ul(R)$ has at most two indecomposable objects.
     \item $R$ is $\us$. 
 \end{enumerate}
 
 Then $(1)\implies (2)$. If moreover $k=\mathbb C$, then $(2)\implies (1)$.
 \end{thm}
 
 \begin{proof} $(1)\implies (2)$: If $\text{ind}\Ul(R)$ has exactly one object, then we are done by \ref{1split}. When $\Ul(R)$ has exactly two indecomposable objects, we are done by Proposition \ref{cyclic}.    
 
 $(2) \implies (1)$: Since $R$ is $\us$ and has minimal multiplicity, so $R$ is an isolated singularity by  Theorem \ref{minuss}. Hence $R$ is a normal as $\dim R=2$. Now since $R$ is local, so $R$ is a domain. Now $R$ has a fundamental module $E$ (see \cite[Remark 6.4]{umm}), which is Ulrich by \cite[Lemma 4.5(1)]{umm}. Since $\syz^2_R k \in \syz \cm^{\times}(R)$, so by  \cite[Lemma 6.5(2)]{umm}  we have a short exact sequence $0\to \syz_R \m \to E^{\oplus n}\to N\to 0$ for some $N\in \cm(R)$. Since $E$ is Ulrich, so $N$ is Ulrich. Since $R$ is $\us$, so the sequence splits. So, $\syz_R \m \in \add_R(E)$. So, $\add_R(\syz^2_R k)\subseteq \add_R(E)\subseteq \Ul(R)$. By Theorem \ref{minuss} we also have, $\Ul(R)=\add_R(\syz^2_R k)$. Hence we conclude $\Ul(R)=\add_R(E)$. Since $R$ is an integral domain, and $E$ has rank $2$ (which can be seen from \cite[Remark 6.4(1)]{umm}), so $E$ has at most two indecomposable summands (by \ref{chu}). Hence $\Ul(R)$ has at most two indecomposable objects. So, it only remains to prove that $R$ is a cyclic quotient singularity, which by \cite[Theorem 11.12]{yoshino} is same as showing $E$ is decomposable. 
 Since $\syz \cm^{\times}(R)\subseteq \Ul(R)$, so $\syz\cm^{\times}(R)$ has finite type. Since $\syz \cm(R)$ is closed under finite direct sums and direct summands \cite[Lemma 2.14]{scm}, so every module in $\syz \cm(R)$ is a direct sum of a free module and a module from $\syz \cm^{\times}(R)$. Hence $\syz \cm(R)$ has finite type, so \cite[Corollary 3.3]{ncr} implies $R$ has rational singularities. Now since $R$ is $2$-dimensional and normal, so a module is MCM if and only if it is reflexive. By \cite[Corollary 2.9]{scm}, $(-)^*$ gives a one-to-one correspondence between non-free modules in $\syz\cm(R)$ and non-free special MCM modules. Moreover, a reflexive module $X$ is indecomposable if and only if $X^*$ is indecomposable, hence \cite[Corollary 2.9]{scm} actually gives a one-to-one correspondence between non-free indecomposable modules in $\syz\cm(R)$ and non-free indecomposable special MCM modules. Now if $E$ is indecomposable, then $\syz \cm^{\times}(R)\subseteq \Ul(R)=\add_R(E)$ shows $\syz \cm^{\times}(R)$ has only one indecomposable object. So, $\syz \cm(R)$ has only one non-free indecomposable object. Hence there is only one non-free indecomposable special MCM module, so there is only one exceptional curve for Spec$(R)$ \cite[Theorem 1.2]{wunram}. So, the resolution of singularity graph of Spec$(R)$ is isomorphic to that of a Veronese ring, which is a cyclic quotient singularity. Hence \cite[Korollar 2.12]{bries} implies $R$ itself is a cyclic quotient singularity, contradicting our assumption that $E$ is indecomposable. Thus $E$ must be decomposable, and we are done. 
\end{proof}

\begin{cor} Let $R\cong \mathbb C[[x_0,x_1,\cdots,x_d]]/(f)$ be a singular hypersurface of minimal multiplicity. Then $R$ is $\us$ if and only if $R\cong \mathbb C[[x_0,x_1,\cdots,x_d]]/(g+x_2^2+\cdots+x_d^2)$, where $g=x_0^2+x_1^2$ or $x_0^2+x_1^3$. 
\end{cor}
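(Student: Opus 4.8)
The plan is to prove both implications by reducing to the two-dimensional case settled in Theorem~\ref{surface}, using the double branched cover of Proposition~\ref{branch} to move between dimensions and the classification of hypersurface singularities of finite Cohen--Macaulay type to supply the necessary geometric input.

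For the forward direction, assume $R=\mathbb C[[x_0,\dots,x_d]]/(f)$ is singular, has minimal multiplicity (so $f$ has order $2$), and is $\us$; we take $d\ge 1$. By Proposition~\ref{hyp}, $R$ has finite CM type, so by the Buchweitz--Greuel--Kn\"orrer--Schreyer theorem it is isomorphic to a standard simple hypersurface singularity $R_{\tau,d}$ in $d+1$ variables, with $\tau\in\{A_n,D_n,E_6,E_7,E_8\}$. Every standard form satisfies $R_{\tau,j}\cong (R_{\tau,j-1})^{\#}$ with $R_{\tau,j-1}$ singular of positive dimension for $2\le j\le d$, so iterating Proposition~\ref{branch}(2) shows $R_{\tau,2}$ is $\us$ when $d\ge 2$; when $d=1$ we instead apply Proposition~\ref{branch}(1) once (legitimate since $R$ is $\us$, singular, of minimal multiplicity, and $2$ is a unit) to see $R_{\tau,2}=R^{\#}$ is $\us$. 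Now $R_{\tau,2}$ is a complete two-dimensional Cohen--Macaulay hypersurface of minimal multiplicity over $\mathbb C$, so Theorem~\ref{surface} forces it to be a cyclic quotient singularity of type $\tfrac1n(1,a)$ with at most two indecomposable Ulrich modules; being Gorenstein it must have $a\equiv-1\pmod n$, i.e. $a=n-1$, so $\tau=A_{n-1}$. By Proposition~\ref{1ul} it has exactly one indecomposable Ulrich module only when $n=2$, and by Proposition~\ref{twoul} (applicable once $a=n-1\ge 2$) exactly two only when $n+1=(n-1)b$ for some $b\ge 2$, which forces $n-1\mid 2$ and hence $n=3$. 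Thus $n\in\{2,3\}$, so $\tau\in\{A_1,A_2\}$ and $R\cong\mathbb C[[x_0,\dots,x_d]]/(g+x_2^2+\dots+x_d^2)$ with $g=x_0^2+x_1^2$ or $g=x_0^2+x_1^3$.

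For the converse, both candidate rings plainly have minimal multiplicity and are singular. In dimension two they are the cyclic quotient (Du~Val) singularities of types $\tfrac12(1,1)$ (the second Veronese) and $\tfrac13(1,2)$: by Proposition~\ref{1ul} the first has a unique indecomposable Ulrich module, so every short exact sequence of Ulrich modules splits, as in the one-object argument used in the proof of Theorem~\ref{surface}; by Proposition~\ref{twoul} the second has exactly two indecomposable Ulrich modules, hence is $\us$ by Proposition~\ref{cyclic}. If $d=1$, the candidate $R'$ has $(R')^{\#}$ equal to the corresponding two-dimensional candidate, which is $\us$, so $R'$ is $\us$ by Proposition~\ref{branch}(2). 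If $d\ge 3$ we induct: the $(d-1)$-dimensional candidate has positive dimension, is singular, has minimal multiplicity and is $\us$ by the inductive hypothesis, so its double branched cover---the $d$-dimensional candidate---is $\us$ by Proposition~\ref{branch}(1).

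The substance of the argument lies not in any single computation but in marshalling the external inputs: the passage to an $ADE$ normal form relies on Proposition~\ref{hyp} together with the Buchweitz--Greuel--Kn\"orrer--Schreyer classification, and the identification of the Gorenstein cyclic quotient surface singularities with those of type $A_n$ is classical. One must also track where minimal multiplicity is genuinely used: only in the \emph{ascending} step via Proposition~\ref{branch}(1) (and it persists, since each $R_{\tau,j}$ with $j\ge 2$ has an order-two defining equation), whereas every descent uses only Proposition~\ref{branch}(2), which carries no multiplicity hypothesis. The degenerate case $d=0$, forcing $R\cong\mathbb C[[x_0]]/(x_0^2)$, is implicitly excluded by the shape of the asserted normal forms.
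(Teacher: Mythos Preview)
Your proof is correct and follows essentially the same strategy as the paper's: reduce to dimension two via Proposition~\ref{branch}, invoke Theorem~\ref{surface} there to land among cyclic quotients with at most two indecomposable Ulrich modules, and then use the Gorenstein constraint $a=n-1$ together with Propositions~\ref{1ul} and~\ref{twoul} to isolate $n\in\{2,3\}$. The only cosmetic difference is that the paper cites \cite[Theorem~6.12, 6.18]{lw} to pass from ``cyclic quotient and hypersurface'' to $A_{n-1}$, whereas you phrase the same step as ``Gorenstein cyclic quotient forces $a\equiv -1\pmod n$''; otherwise the use of Proposition~\ref{branch}(1) to ascend (for $d=1$ forward and $d\ge 3$ converse) and Proposition~\ref{branch}(2) to descend (for $d\ge 2$ forward and $d=1$ converse) matches exactly, and your explicit bookkeeping of the minimal-multiplicity hypothesis in the ascending steps is a welcome clarification.
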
    

\begin{proof} We notice $\dim R=d$. First assume that $R$ is $\us$. By Proposition \ref{hyp}, we know $R$ has finite CM type. Hence by \cite[Theorem 9.8]{lw} we have, $R\cong \mathbb C[[x_0,x_1,\cdots,x_d]]/(g+x_2^2+\cdots+x_d^2)$, where $g$ defines an ADE singularity. First, we assume $d=2.$ In this case by Theorem \ref{surface} we have, $R$ is a cyclic quotient surface singularity with at most two indecomposable Ulrich modules. Since $R$ is a cyclic quotient singularity and hypersurface, so the defining polynomial must be of $A_{n-1}$-type by \cite[Theorem 6.12, Theorem 6.18]{lw}. Write $R=\mathbb C[[X,Y]]^G$, where $G=\dfrac 1n(1,a)$. If $R$ has exactly one indecomposable Ulrich module, then Proposition \ref{1ul} implies $a=1$. This means $R$ is the $n$-th Veronese of $\mathbb C[[X,Y]]$. But $R$ is also a hypersurface, hence $n=2$. Thus in this case, $R\cong \mathbb C[[X,Y]]^{(2)} \cong \mathbb C[[x_0,x_1,x_2]]/(x_0^2+x_1^2+x_2^2)$. If $R$ has exactly two indecomposable objects, then $a \ge 2$ by Proposition \ref{1ul}, and so $n=ab-1$ by Proposition \ref{twoul}. Since $R$ is also $A_{n-1}$-singularity, so $G=\dfrac 1n(1,n-1)$ by \cite[Theorem 6.12]{lw}. Thus $n-1=a$ divides $ab=n+1$. Since $n+1=2+(n-1)$, so $\dfrac{n+1}{n-1}=\dfrac{2}{n-1}+1$. This implies $\dfrac{2}{n-1}$ is an integer. Hence $a=n-1\le 2$. Thus $a=n-1=2$, so $n=3$. Hence $R$ is $A_2$, i.e., $R\cong \mathbb C[[x_0,x_1,x_2]]/(x_0^2+x_1^3+x_2^2)$ by \cite[Theorem 6.18]{lw}. This finishes the $d=2$ case. If $d=1$, then $R\cong \mathbb C[[x_0,x_1]]/(g)$ being $\us$ implies $R^{\#}\cong \mathbb C[[x_0,x_1,z]]/(g+z^2)$ is $\us$ by Proposition \ref{branch}. Hence we are done by the $d=2$ case.  Similarly, if $d\ge 3$, then $R$ is the repeated double branched cover of $\mathbb C[[x_0,x_1,x_2]]/(g+x_2^2)$, which has minimal multiplicity. So, we are again done by Proposition \ref{branch} and the $d=2$ case.  

Now conversely, assume $R$ is one of the forms as described in the statement. First, we assume $d=2$. Then by \cite[Theorem 6.18, Theorem 6.12]{lw} we get that, $R$ is the invariant ring under the action of $G=\dfrac 12 (1,1)$ (in which case $R$ has exactly one indecomposable Ulrich module by Proposition \ref{1ul}) or $\dfrac 13 (1,2)$ (in which case $R$ has exactly two indecomposable Ulrich modules by Proposition \ref{twoul}). So, $R$ is $\us$ by Theorem \ref{surface}. Now if $d=1$, then $R^{\#}$ is exactly the $d=2$ case, and if $d\ge 3$, then $R$ is the repeated double branched cover of the $d=2$ case. Hence we are done by Proposition \ref{branch} and the $d=2$ case.  
\end{proof}  

For our next application, we recall the following definition.

\begin{chunk}{\cite[Definition 4.1, Corollary 4.4]{pr}} Let $(R,\m)$ be a local Cohen--Macaulay ring. The test ideal associated to $\cm(R)$, denoted by $\tau_{\cm(R)}(R)$, is given by $\cap_{M\in \cm(R)} \tr_R(M)$. 
\end{chunk} 

\begin{chunk}\label{trace}
 By definition and \cite[Proposition 2.8(iii)]{lindo} we notice that, if $\tau_{\cm(R)}(R)=\m$, then $\tr_R(M)=\m$ for every $M\in \cm(R)$ without free summand. Conversely, if $R$ is singular, and $\tr_R(M)=\m$ for every $M\in \cm(R)$ without free summand, then $\m \subseteq \tau_{\cm(R)}(R)\neq R$ (if $\tau_{\cm(R)}(R)= R$, then $\tr_R(M)=R$ for every indecomposable $M\in \cm(R)$, and so every indecomposable MCM module would be free by \cite[Proposition 2.8(iii)]{lindo}, so $R$ would be regular). Hence $\m = \tau_{\cm(R)}(R)$. 
\end{chunk}

 \begin{lem}\label{trcl} Let $R=k[[x,y]]^G$ be a cyclic quotient surface singularity with maximal ideal $\m$, where $G=\dfrac 1 n(1,a)$ such that $a$ divides $n+1$. Then $\tr_R(M)=\m$ for every $M\in \cm(R)$ that has no free summand. In particular, it holds that $\tau_{\cm(R)}(R)=\m$.    
 \end{lem}  

 \begin{proof} By Proposition \ref{1ul} and Proposition \ref{twoul}, $\Ul(R)$ has at most two indecomposable objects. By Theorem \ref{surface}, then $R$ is $\us$. So, by Theorem \ref{minuss} we have $\m\subseteq \ann \Ext^1_R(M,\mod R)$. Since $R$ is a cyclic quotient surface singularity, hence every indecomposable MCM module has rank $1$ (\cite[Section 2, Preliminaries]{cyc}). Let $M\in\cm(R)$ be such that it has no free summand. Hence $M$ is a direct sum of regular ideals. Thus by Proposition \ref{traceann}(2) we get, $\m \subseteq \tr_R(M)$. Since $M$ has no free summand, so $\tr_R(M)=\m$ by \cite[Proposition 2.8(iii)]{lindo}. 
 \end{proof} 

 The following characterizes the cyclic  quotient surface singularities for which $\tau_{\cm(R)}(R)=\m$. 
 
\begin{prop}\label{trquot} Let $R=\mathbb C[[x,y]]^G$ be a cyclic quotient surface singularity with maximal ideal $\m$. Then the following are equivalent.
 
 \begin{enumerate}[\rm(1)]

\item $G=\dfrac 1 n(1,a)$, where $a$ divides $n+1$. 
 
     \item $\Ul(R)$ has at most two indecomposable objects.

     \item $\tr_R(M)=\m$ for every maximal Cohen--Macaulay $R$-module $M$ without free summand. 

     \item $\tau_{\cm(R)}(R)=\m$.
     
     \item $\tr_R(M)=\m$ for every Ulrich $R$-module $M$.
 \end{enumerate}
   
 \end{prop}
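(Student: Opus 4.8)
The plan is to establish the cycle of implications $(1)\Rightarrow(2)\Rightarrow(3)\Rightarrow(4)\Rightarrow(5)\Rightarrow(1)$, leaning heavily on the machinery already assembled. First, $(1)\Rightarrow(2)$ is essentially a restatement of Propositions \ref{1ul} and \ref{twoul}: writing $G=\frac1n(1,a)$, if $a=1$ then by Proposition \ref{1ul} there is exactly one indecomposable Ulrich module; if $a\ge 2$, then since $a\mid n+1$ we may write $n+1=a b$ with $b\ge 2$, i.e. $n=ab-1$, and Proposition \ref{twoul} gives exactly two indecomposable Ulrich modules. Either way $\Ul(R)$ has at most two indecomposable objects. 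The implication $(2)\Rightarrow(3)$ is exactly the content of Lemma \ref{trcl} (its proof actually establishes $(3)$ from the hypothesis that $\Ul(R)$ has at most two indecomposables, via Theorem \ref{surface}, Theorem \ref{minuss}, and Proposition \ref{traceann}(2), using that every indecomposable MCM module over a cyclic quotient surface singularity has rank one and is hence a regular fractional ideal).

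Next, $(3)\Rightarrow(4)$ follows from the discussion immediately preceding Lemma \ref{trcl}: since $R$ is singular, if $\tr(M)=\m$ for every MCM module $M$ without free summand, then $\m\subseteq\tau_{\cm(R)}(R)$, and $\tau_{\cm(R)}(R)\ne R$ (otherwise every indecomposable MCM module would be free by \cite[Proposition 2.8(iii)]{lindo}, forcing $R$ regular), so $\tau_{\cm(R)}(R)=\m$. The implication $(4)\Rightarrow(5)$ is immediate since every Ulrich module is in particular MCM and (being nonfree, as $R$ is singular) has $\tr(M)\supseteq\tau_{\cm(R)}(R)=\m$, while $\tr(M)\subseteq\m$ because $M$ is nonfree and indecomposable summands of Ulrich modules are nonfree; more directly, $\tr(M)=\m$ by \cite[Proposition 2.8(iii)]{lindo} together with $\m\subseteq\tr(M)$.

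The one implication requiring genuine work is $(5)\Rightarrow(1)$, and I expect this to be the main obstacle. The strategy is contrapositive: suppose $G=\frac1n(1,a)$ with $a\nmid n+1$; I must produce an Ulrich module $M$ with $\tr(M)\ne\m$. Over a cyclic quotient surface singularity the indecomposable Ulrich modules are the rank-one reflexive modules $M_{n-1}$ and (when $a\ne 1$) $M_{n-a}$ appearing in \cite[Corollary 3.11]{cyc}, whose trace ideals can be computed via the divisor-class-group description of Lemma \ref{quotcl} together with the known formulas for traces of rank-one reflexive modules over such rings (e.g. via \cite[Section 2.10]{sg} or the Hirzebruch--Jung data of \cite[Theorem 2.4]{cyc}). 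The point is that $\tr(M_t)=\m$ forces a precise arithmetic condition on $t$ relative to $n$ and $a$; one shows this condition holds for both $t=n-1$ and $t=n-a$ simultaneously exactly when $a\mid n+1$. Concretely, I would identify $\tr(M_t)$ with the module generated by the monomials corresponding to the representation $V_t$ appearing in $S=k[[x,y]]$, reduce "$\tr(M_t)=\m$" to asking that the two minimal generators $x,y$ both lie in $\sum_f f(M_t)$, and translate this into a statement about which characters occur, which is controlled by the continued fraction expansion of $n/a$. If $\Ul(R)$ has more than two indecomposables this is immediate since then $R$ is not even $\us$ by Theorem \ref{surface}, so some Ulrich module has non-maximal trace (indeed $\us$ is necessary for $\tr(M)=\m$ to hold for all nonfree MCM modules, by the argument in Lemma \ref{trcl}); thus the real case is $a\ge 2$, $n=a\alpha-1$ with $\alpha\ge 2$, $\alpha\ne b$, and there the explicit computation of $\tr(M_{n-a})$ (or $\tr(M_{n-1})$) via Lemma \ref{quotcl} and the class-group formula finishes the argument.
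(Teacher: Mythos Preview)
Your implications $(1)\Rightarrow(2)\Rightarrow(3)\Rightarrow(4)\Rightarrow(5)$ are fine and match the paper's argument essentially verbatim. The issue is entirely in $(5)\Rightarrow(1)$.

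Your proposed contrapositive has a genuine gap. You argue: if $a\nmid n+1$, then $\Ul(R)$ has more than two indecomposables, hence $R$ is not $\us$ by Theorem~\ref{surface}, hence some Ulrich module has non-maximal trace ``by the argument in Lemma~\ref{trcl}.'' But Lemma~\ref{trcl} runs in the \emph{opposite} direction: it shows $\us\Rightarrow(\tr(M)=\m$ for all nonfree MCM $M)$, using only the containment $\ann\Ext^1_R(M,\syz_R M)\subseteq\tr(M)$ from Proposition~\ref{traceann}(2). That containment gives you nothing about the contrapositive. Your fallback plan of explicitly computing $\tr(M_{n-1})$ and $\tr(M_{n-a})$ via the class-group description is not carried out, and your case analysis is muddled (you write ``the real case is $a\ge 2$, $n=a\alpha-1$,'' but that is precisely the case $a\mid n+1$, i.e.\ hypothesis $(1)$, where there is nothing to prove).

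The paper avoids all of this by proving $(5)\Rightarrow(2)$ directly, and the key point you are missing is that for a regular fractional ideal $M$ the containment in Proposition~\ref{traceann}(2) is actually an \emph{equality}: $\tr(M)=\ann_R\Ext^1_R(M,\syz_R M)=\ann_R\Ext^{>0}_R(M,\mod R)$, by \cite[Theorem~1.1]{sde}. Since every indecomposable Ulrich module over a cyclic quotient surface singularity has rank one and is thus isomorphic to a regular ideal, hypothesis $(5)$ immediately gives $\m\Ext^{>0}_R(M,\mod R)=0$ for all $M\in\Ul(R)$. Now Theorem~\ref{minuss} yields that $R$ is $\us$, and Theorem~\ref{surface} then forces $\Ul(R)$ to have at most two indecomposable objects. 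This is a two-line argument once you use the equality rather than the containment; no explicit trace computation is needed.
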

 
 \begin{proof}
 $(1)\iff (2)$: Follows from Proposition \ref{1ul} and \ref{twoul}.  
 
 $(1)\implies (3)$: Follows from Lemma \ref{trcl}.  
 
 $(3)\iff (4)$: Follows from \ref{trace}.

 $(3)\implies (5)$: Obvious, as $R$ is singular.  
 
 $(5)\implies (2)$: Let $M$ be an indecomposable Ulrich $R$-module. Then $\m=\tr_R(M)$. Since $R$ is a cyclic quotient surface singularity, hence $M$ has rank $1$ (\cite[Section 2, Preliminaries]{cyc}). So, $M$ is isomorphic to a regular ideal. Thus $\m=\tr_R(M)=\ann_R \Ext^1_R(M,\mod R)$ by \cite[Theorem 1.1]{sde}. Thus $\m \Ext^1_R(M,\mod R)=0$ for all $M\in\Ul(R)$. Hence $R$ is $\us$ by Theorem \ref{minuss}, so  $\Ul(R)$ has at most two indecomposable objects by Theorem \ref{surface}.   
\end{proof}

\section{dimension one rings}

Throughout this section, $(R,\m,k)$ will denote a local Cohen--Macaulay ring of dimension $1$. For an $\m$-primary (i.e., regular) ideal $I$ of $R$, let $B_R(I):=\cup_{n\ge 1}(I^n:I^n)$ denote the blow-up of $I$ (\cite[Definition 4.3 and Remark 4.4]{dms}). When the ring $R$ in question is clear, we drop the suffix $R$ and simply write $B(I)$.


First, we record some preliminary facts about dimension $1$ Cohen--Macaulay local rings.  

\begin{lem}\label{blow}
Let $(R,\m,k)$ be a local Cohen--Macaulay ring  of dimension $1$. Let $I \subseteq R$ be an $\m$-primary ideal of $R$. Then $B(I)$ is a finite birational extension of $R$, and $B(I)=(I^n:I^n)\cong I^n$ for all $n\gg 0$.  
Moreover, it also holds that $\tr_R(I^n)\cong (I^n)^*$ for all $n\gg 0$.    
\end{lem}  

\begin{proof}
That $B(I)$ is a finite birational extension of $R$ follows from \cite[Proposition 1.1(i)]{lip}. As mentioned in the proof of \cite[Corollary 1.4(i)]{lip}, $B(I)=B(I^s)$ for all $s>0$. From \cite[Definition 1.3, Corollary 1.4(i),(ii)]{lip} we then get $(I^n:I^n)=B(I^n)=B(I)$ for all $n\gg 0$. Since $I^n$ is stable for $n\gg 0$ (\cite[ Corollary 1.4(i),(ii)]{lip}), so by \cite[Proposition 1.1(ii), Definition 1.3]{lip} we get $I^n\cong B(I^n)=B(I)=(I^n:I^n)$. 

Hence by \ref{conductor} and \cite[Proposition 2.4(1)]{trace} we get, $\tr_R(I^n)=(R:B(I))\cong B(I)^*\cong (I^n)^*$ for all $n\gg 0$. 
\end{proof} 

In the following, for an $\m$-primary ideal $I$ of $R$, $\Ul_I(R)$ will denote the subcategory of $I$-Ulrich $R$-modules in the sense of \cite[Definition 4.1]{dms}, which is the collection of all maximal Cohen--Macaulay $R$-modules $M$ such that $e(I,M)=\ell(M/IM)$, i.e.,  $M\cong IM$ (see \cite[Theorem 4.6]{dms}).  

The next lemma says that the conductor of the integral closure is $I$-Ulrich.   

\begin{lem}\label{dim1cond} Let $R$ be a local Cohen--Macaulay ring of dimension $1$. Let $I$ be a regular ideal of $R$. Then $\c\in \Ul_I(R)$.
\end{lem}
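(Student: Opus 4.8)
The plan is to verify the two defining requirements for $M=\c$ to lie in $\Ul_I(R)$, namely that $\c$ is maximal Cohen--Macaulay and that $\c\cong I\c$ as $R$-modules. The first is immediate: if $\c=0$ it is MCM by convention and trivially $\c\cong I\c$, while if $\c\neq 0$ then $\c$ is a nonzero submodule of $R$, so $\Ass_R\c\subseteq\Ass_R R=\Min R$ since $R$ is Cohen--Macaulay; hence $\m\notin\Ass_R\c$ and $\depth_R\c\ge 1=\dim R$, so $\c\in\cm(R)$. Thus the real content of the lemma is the isomorphism $\c\cong I\c$.

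The key observation is that $\c$ is an ideal not merely of $\overline R$ but of the blow-up $B(I)$. Indeed, pick a non-zerodivisor $x\in I$ (possible since $I$ is regular); then $x^n\in I^n$, so each $I^n$ is a finitely generated faithful $R$-module, and the determinant trick shows that every $q\in(I^n:I^n)$ is integral over $R$, hence lies in $\overline R$. Therefore $B(I)=\bigcup_{n\ge 1}(I^n:I^n)\subseteq\overline R$. Since the conductor $\c=(R:\overline R)$ is always an ideal of $\overline R$ (cf. Remark \ref{conductor}), it is in particular an ideal of the subring $B(I)$, so $B(I)\,\c=\c$.

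To cash this in, I would pass to the faithfully flat local extension $S=R[X]_{\m[X]}$, which has infinite residue field. The blow-up is compatible with this flat base change, $B_S(IS)=B(I)S$ inside $Q(S)$ (as in the proof of Lemma \ref{blow}), so tensoring $B(I)\,\c=\c$ up gives that $\c S$ is an ideal of $B_S(IS)$. Choosing a principal reduction $a$ of $IS$, Lemma \ref{blow} gives $B_S(IS)=a^{-n}(IS)^n$ for all $n\gg 0$, hence $a^n\,\c S=(IS)^n\,\c S$ for all $n\gg 0$. Applying this identity for $n$ and for $n-1$ and writing $(IS)^n\,\c S=(IS)\bigl((IS)^{n-1}\,\c S\bigr)$, we obtain $a^{n-1}\bigl(a\,\c S\bigr)=a^{n-1}\bigl((IS)\,\c S\bigr)$ as submodules of $Q(S)$; since $a$ is a non-zerodivisor of $S$, cancelling $a^{n-1}$ yields $(IS)\,\c S=a\,\c S$, which is isomorphic to $\c S$ via multiplication by $a$. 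As $(I\c)S=(IS)(\c S)$ and $\c S=\c\otimes_R S$, faithfully flat descent (\cite[Proposition 2.5.8]{Gro}) upgrades $(I\c)\otimes_R S\cong\c\otimes_R S$ to $I\c\cong\c$, completing the proof.

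The one place demanding a genuine idea — the "main obstacle" — is the second paragraph: recognizing that $\c$ is an ideal of $B(I)$, not just of $\overline R$. Everything afterwards is routine bookkeeping: reducing to an infinite residue field, the standard identity $B(I)=a^{-n}I^n$ for a principal reduction $a$, and a cancellation using that $a$ is a non-zerodivisor.
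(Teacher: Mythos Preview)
Your proof is correct, and it shares the same essential insight as the paper's: since every element of $(I^n:I^n)$ is integral over $R$, one has $B(I)\subseteq\overline R$, and therefore $\c$ is a $B(I)$-module. Where you diverge is in what you do with this observation.

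The paper simply invokes the equivalence $\Ul_I(R)=\cm(B(I))$ from \cite[Proposition 4.5]{dms}: once $\c$ is a $B(I)$-module and is MCM over $R$ (hence over the finite extension $B(I)$), the lemma is immediate in one line. You instead unpack this equivalence by hand, verifying the criterion $I\c\cong\c$ directly: you pass to $S=R[X]_{\m[X]}$ to obtain a principal reduction $a$ of $IS$, use $B_S(IS)=a^{-n}(IS)^n$ together with $B_S(IS)\cdot\c S=\c S$ to deduce $(IS)\c S=a\,\c S\cong\c S$, and then descend via \cite[Proposition 2.5.8]{Gro}. This is longer but more self-contained, since it avoids importing the characterization of $\Ul_I(R)$ as $\cm(B(I))$. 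The trade-off is that you must justify the base-change compatibility $B_S(IS)=B_R(I)S$ inside $Q(S)$ (which you gesture at via Lemma~\ref{blow}); this holds because $(I^n:_{Q(R)}I^n)\otimes_R S\cong((IS)^n:_{Q(S)}(IS)^n)$ by flatness of $S$, but it is worth stating explicitly rather than leaving implicit.
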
 

\begin{proof} By \ref{conductor} we have $\overline R \c\subseteq \c$. Since $B(I)$ is module finite over $R$, so $R\subseteq B(I)\subseteq \overline R$. Hence $B(I)\c \subseteq \overline R\c \subseteq \c$, so $\c \in \cm(B(I))=\Ul_I(R)$ (see \cite[Theorem 4.6]{dms}). 
\end{proof}

\begin{lem}\label{cond} Let $R$ be a local Cohen--Macaulay ring of dimension $1$, which is analytically unramified. Then $\mathfrak c\cong \overline R$. In particular, $\mathfrak c^*\cong \mathfrak c$.
\end{lem}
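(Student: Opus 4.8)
The plan is to identify $\c$ with a free rank-one $\overline R$-module, that is, a principal ideal of $\overline R$ generated by a non-zerodivisor; multiplication by such a generator then furnishes the required $R$-linear isomorphism $\overline R \cong \c$. The ``in particular'' clause follows at once, since Lemma \ref{normal} already gives $\c^{*} = \Hom_R(\c, R) \cong \overline R$, and the first part gives $\overline R \cong \c$.

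First I would pin down the structure of $\overline R$. Because $R$ is analytically unramified of dimension one, $\overline R$ is module finite over $R$, hence semilocal, reduced, one-dimensional and normal; by Lemma \ref{indec} it splits as $\overline R \cong \prod_{\p \in \Min(R)} \overline{R/\p}$, where each factor is a one-dimensional normal Noetherian domain that is module finite over the local ring $R/\p$, hence a semilocal Dedekind domain, hence a principal ideal domain.

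Next, by Remark \ref{conductor}, $\c$ is an ideal of $\overline R$, and by hypothesis it contains a non-zerodivisor of $R$, hence of $\overline R$ (recall $Q(\overline R) = Q(R)$). Under the product decomposition $\c$ corresponds to a product of ideals, one in each PID factor, and the presence of a non-zerodivisor forces each of these to be nonzero, hence principal, hence generated by a regular element of the factor. Assembling these generators gives $\c = t\,\overline R$ for a non-zerodivisor $t$ of $\overline R$, and multiplication by $t$ is an $\overline R$-linear, in particular $R$-linear, isomorphism $\overline R \xrightarrow{\ \sim\ } \c$.

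I do not anticipate a real obstacle here; the only care needed is in invoking the two standard facts cleanly --- that a semilocal Dedekind domain is a PID, and that an ideal of a finite product of rings decomposes componentwise --- and in noting that the resulting isomorphism, although assembled from maps of $\overline R$-modules, is exactly the isomorphism of $R$-modules we want. An essentially equivalent alternative is to argue that $\c$ is a finitely generated torsion-free $\overline R$-module that is generically of rank one on each factor, and that such a module over a PID is free of rank one; this avoids writing $\c$ as an explicit product but rests on the same semilocal-Dedekind input.
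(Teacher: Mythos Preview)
Your proposal is correct and follows essentially the same route as the paper: both arguments recognize $\c$ as an ideal of $\overline R$, use that $\overline R$ is a finite product of PIDs (you via Lemma \ref{indec} and the semilocal-Dedekind observation, the paper via a direct citation), conclude that $\c$ is principal in $\overline R$ with a regular generator, and then invoke Lemma \ref{normal} for the ``in particular'' clause.
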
 

\begin{proof} By \ref{conductor} we have $\overline R \c\subseteq \c$. So, $\mathfrak c$ is an ideal of $\overline R$. Now as $R$ is reduced, so $\overline R$ is a product of PID s (see \cite[Corollary 2.1.13]{sh}). Hence $\overline R$ is a PIR, so $\mathfrak c=a\overline R$ for some $a\in R$. Since $\overline R$ is module finite over $R$, so $\mathfrak c$ contains a non-zero-divisor. Hence $a$ is a non-zero-divisor, so $\mathfrak c \cong \overline R$. Now by Lemma \ref{normal} we have $\overline R\cong \mathfrak c^*$, so  $\mathfrak c\cong \mathfrak c^*$.
\end{proof}    

We need one more general lemma before proving our first main result of this section.

\begin{lem}\label{cmreg} Let $S$ be a Cohen--Macaulay ring. Then $S$ is regular if and only if every short exact sequence in $\cm(S)$ splits.  
\end{lem}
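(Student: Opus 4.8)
The statement is the maximal Cohen--Macaulay analogue of the Auslander--Buchsbaum--Serre theorem mentioned in the introduction, so the plan is to imitate that circle of ideas while working one localization at a time. Since $S$ need not be local, I would first record that a finitely generated $S$-module $M$ lies in $\cm(S)$ precisely when $M_\m$ is MCM over $S_\m$ for every maximal ideal $\m$, and that $S$ is regular precisely when each $S_\m$ is a regular local ring (each $S_\m$ has finite Krull dimension even if $S$ does not). This reduces both sides of the equivalence to finite-dimensional local computations.

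For the forward implication, if $S$ is regular and $M\in\cm(S)$, then for each maximal ideal $\m$ the module $M_\m$ is MCM over the regular local ring $S_\m$, hence free by Auslander--Buchsbaum. Thus $M$ is finitely generated and locally free, so projective; any short exact sequence in $\cm(S)$ then ends in a projective module and therefore splits.

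For the converse I would fix a maximal ideal $\m$, put $d=\dim S_\m<\infty$, and manufacture a short exact sequence of MCM $S$-modules out of a free resolution $\cdots\to F_1\to F_0\to S/\m\to 0$, namely $0\to \syz^{d+1}_S(S/\m)\to F_d\to \syz^{d}_S(S/\m)\to 0$. The point is to verify that the two syzygy modules lie in $\cm(S)$, and this is checked locally: at a maximal ideal $\m'\ne\m$ the complex $(F_\bullet)_{\m'}$ resolves $(S/\m)_{\m'}=0$, so its syzygies localize to projective (hence free) $S_{\m'}$-modules; at $\m$ itself they localize to $\syz^{d}_{S_\m}(k(\m))$ and $\syz^{d+1}_{S_\m}(k(\m))$, which have depth $\ge d=\dim S_\m$ by the depth lemma and so are MCM. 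Granting this, the hypothesis forces the sequence to split, so $\syz^{d}_S(S/\m)$ is a direct summand of the free module $F_d$ and hence projective; localizing at $\m$ shows $\syz^{d}_{S_\m}(k(\m))$ is free, whence $\operatorname{pd}_{S_\m}k(\m)\le d<\infty$ and $S_\m$ is regular by Auslander--Buchsbaum--Serre. As $\m$ is arbitrary, $S$ is regular.

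The main obstacle is the converse, and within it the bookkeeping needed to pass between the non-local ring $S$ and its localizations: one has to be sure the syzygy modules of the single residue field $S/\m$ are genuinely MCM at \emph{every} maximal ideal and not merely at $\m$, which is exactly why resolving the residue field of one chosen maximal ideal---free away from $\m$ and a high syzygy at $\m$---is the right construction. Everything else is a routine transcription of the classical regular-ring argument.
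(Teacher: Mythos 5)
Your proof is correct, and its converse direction takes a genuinely different route from the paper's. The paper fixes an arbitrary prime $\p$ and invokes the finiteness of the restricted flat dimension $\operatorname{Rfd}_S(S/\p)=\sup_{\q}\{\depth S_{\q}-\depth (S/\p)_{\q}\}$, citing Takahashi's work (reference \cite{dom}) both for this finiteness and for the fact that the $r$-th syzygy of $S/\p$ is then globally maximal Cohen--Macaulay; after that, the same splitting argument you use shows this syzygy is projective, so $\operatorname{pd}_S S/\p<\infty$ and $S_\p$ is regular. You replace that cited machinery with an elementary, self-contained verification: choosing a maximal ideal $\m$ and the bound $d=\dim S_\m$, you observe that the syzygies $\syz^d_S(S/\m)$ and $\syz^{d+1}_S(S/\m)$ are automatically free at every maximal ideal $\m'\ne\m$ (the localized augmented resolution is an exact, right-bounded complex of free modules, so all its kernels split off as summands), while at $\m$ itself the depth lemma gives depth at least $d$, hence MCM-ness; this is exactly the point where a uniform bound like $\operatorname{Rfd}$ would otherwise be needed, and resolving the residue field of a single maximal ideal is what makes it unnecessary. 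The trade-off is that your argument is longer but needs no external input beyond Auslander--Buchsbaum--Serre and the depth lemma, whereas the paper's is shorter modulo the results it cites and treats all primes $\p$ (not just maximal ideals) uniformly; both conclude by the identical split-epimorphism-onto-a-syzygy step.
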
 

\begin{proof} If $S$ is a regular ring (not necessarily local), then by Auslander--Buchsbaum formula every maximal Cohen--Macaulay $S$-module is locally free, and hence projective. So, every short exact sequence of modules in $\cm(S)$ splits.  Conversely, assume $S$ is a Cohen--Macaulay ring (not necessarily local), and every short exact sequence of maximal Cohen--Macaulay $S$-modules splits. To show $S$ is regular, we have to show that $S_{\p}$ is regular local ring for every prime ideal $\p$ of $S$. Hence we have to show that $\text{pd}_{S_{\p}} S_{\p}/\p S_{\p}<\infty$ for every prime ideal $\p$ of $S$. So, fix a prime ideal $\p$ of $S$. Let $r:=\text{Rfd}_S (S/\p):=\sup_{\q\in \spec(S)}\{\depth S_{\q}-\depth (S/\p)_{\q}\}$ (which is known to be finite by \cite[Definition 4.5]{dom}). Let $M,N$ be the $r$ and $(r+1)$-th syzygies, respectively, in some $S$-free resolution of $S/\p$. Then $M \in \cm(S)$ (see \cite[proof of Corollary 4.7]{dom}), and hence $N\in \cm(S)$. Moreover, we have a short exact sequence $0\to N \to F \to M \to 0$ for some free $S$-module $F$. Now by assumption this sequence splits, so $M$ is a projective $S$-module. Thus $\text{pd}_S S/\p <\infty$. Hence $\text{pd}_{S_{\p}} S_{\p}/\p S_{\p}<\infty$, and we are done.  
\end{proof}    

Now we can prove our first main result of this section. In the following, we will often use that if $R$ is $1$-dimensional and analytically unramified, then $R$ admits a canonical ideal (\cite[Proposition 2.7]{gotal}). In the following, $(-)^{\dagger}$ denotes the canonical dual. We also notice that if $\dim R=1$, then $\widehat R$ is an isolated singularity if and only if $\widehat R$ is reduced , i.e., $R$ is analytically unramified. 

\begin{thm}\label{dim1uss} Let $(R,\m,k)$ be a local Cohen--Macaulay ring  of dimension $1$. 
Let $I\subseteq R$ be an $\m$-primary ideal of $R$. Consider the following conditions:
\begin{enumerate}[\rm(1)] 
\item Every short exact sequence in $\Ul_I(R)$ splits.
\item $B(I)$ is a regular ring.
\item $B(I)=\overline{R}$.
\item $\overline{R}\cong I^n$ for all $n\gg 0$.
\item $\mathfrak c \cong I^n$ for all $n\gg 0$. 
\item $\tr_R(I^n)=\mathfrak{c}$ and $I^n$ is reflexive for all $n\gg 0$. 
\item $R$ is analytically unramified (i.e., $\widehat R$ is an isolated singularity) and $\Ul_I(R)=\add_R (\overline R)$. 
\item  $\Ul_I(R)=\add_R (\mathfrak c)$. 
\item  $\operatorname {ind}\Ul_I(R)=\{\mathfrak c\}$. 
\item  $| \operatorname {ind}\Ul_I(R)|=1$.  
\end{enumerate}

Then $(1)$ up to $(6)$ are all equivalent and $(1) \implies (7) \implies (8)$. If $R$ is Henselian and $(7)$ holds, then $M\cong M^{\dagger}$ for every $M\in \Ul_I(R)$. If $R$ is Henselian, integral domain and analytically unramified, then $(1)$ up to $(10)$ are all equivalent and these conditions imply $M\cong M^*$ for all $M\in \Ul_I(R)$.   
\end{thm}   

\begin{proof} (1) $\iff $ (2): We know that, $B(I)$ is a Cohen--Macaulay ring (not necessarily local) of dimension $1$, which is a finite birational extension of $R$ (by Lemma \ref{blow}). Since a module over a Cohen--Macaulay ring of dimension $1$ is torsion-free if and only if it is maximal Cohen--Macaulay, hence by Lemma \ref{algmor} and \cite[Theorem 4.6]{dms} we get $\Hom_R(M,N)=\Hom_{B(I)}(M,N)$ for every $M,N\in \cm(B(I))=\Ul_I(R)$. Hence every short exact sequence of modules in $\Ul_I(R)$ splits if and only if every short exact sequence in $\cm(B(I))$ splits. Now this is equivalent to $B(I)$ being a regular ring by Lemma \ref{cmreg}. 
\\
(2)$\implies$(3): Since $B(I)$ is a finite extension of $R$ (by Lemma \ref{blow}), so this is an integral extension of $R$. Hence $B(I)\subseteq Q(R)$ implies that $B(I)\subseteq \overline{R}$. Since $B(I)$ is a regular ring, so $B(I)$ is a normal ring. Hence \cite[Tag 034M]{st} implies that $B(I)$ is integrally closed in $Q(B(I))=Q(R)$. Let $x\in \overline{R}$. So, $x\in Q(R)=Q(B(I))$ and $x$ is integral over $R$. Hence $x$ is integral over $B(I)$ as well, so $x\in B(I)$. Thus $\overline{R}\subseteq B(I)$, so $\overline{R}=B(I)$. 
\\   
(3)$\implies$(2): Since $B(I)$ is a finite extension of $R$ , so (3) implies that $\overline{R}$ is a finite extension of $R$. Hence $R$ is reduced (see \cite[Theorem 4.6(ii)]{lw}). Since $R\subseteq B(I)\subseteq Q(R)$, so $B(I)$ is reduced as well. Since $Q(R)=Q(B(I))$ and $B(I)=\overline{R}$, so $B(I)$ is integrally closed in $Q(B(I))=Q(R)$. Then \cite[Tag 030C]{st} implies that $B(I)$ is a normal ring. So, $B(I)$ satisfies $(R_1)$. Now $B(I)$ is a finite extension of $R$, so $\dim(B(I))=\dim(R)=1$. Hence $B(I)$ is a regular ring.
\\
So far, we have proved $(1)\iff (2) \iff (3)$.
\\
(3)$\implies$(4): Since Lemma \ref{blow} implies that $B(I)\cong I^n$ for all $n\gg 0$, so $\overline{R}\cong I^n$ for all $n\gg 0$.
\\
(4)$\implies (5)$: The assumption of (4) implies that $\overline R$ is a finite $R$-module. Hence $R$ is analytically unramified  (see \cite[Theorem 4.6(ii)]{lw}), so $\overline R \cong \c$ by Lemma \ref{cond}. 
\\
(5) $\implies$ (6): Note that, since $I^n$ contains a regular element, so (5) implies that $\c$ contains a regular element. Hence $\overline{R}$ is a finite extension of $R$, so $R$ is analytically unramified  (see \cite[Theorem 4.6(ii)]{lw}). Then Lemma \ref{cond} and Lemma \ref{normal} implies that $\c$ is a reflexive $R$-module. Thus $I^n$ is reflexive for all $n\gg 0$. Moreover, since $\c$ is a trace ideal (by \ref{conductor}), so $\tr_R(I^n)=\c$ for all $n\gg 0$.   
\\
(6) $\implies$ (3): Note that, since $I^n\subseteq\tr_R(I^n)$ and $I^n$ contains a regular element, so (6) implies that $\c$ contains a regular element. Hence $\overline{R}$ is a finite extension of $R$, so $R$ is analytically unramified  (see \cite[Theorem 4.6(ii)]{lw}). Now $B(I)\cong I^n$ for all $n\gg 0$ by Lemma \ref{blow}, so $B(I)$ is a finite extension of $R$. The assumption of (6) implies that $B(I)$ is reflexive. We also have, $(R:\overline R)=\c=\tr_R(I^n)=\tr_R(B(I))=(R:B(I))$ (the last equality holds by \ref{conductor}). Since $\overline R$ is also reflexive (see Lemma \ref{normal}), so by \cite[Prposition 2.4(4)]{trace} we get that, $\overline R=R:(R:\overline R)=R:(R:B(I))=B(I)$.  

So, now we have proved $(1)\iff(2)\iff(3) $ and $(3) \implies (4)\implies (5)\implies (6)\implies (3)$. Thus (1) up to (6) are all equivalent.  

$(1)\implies (7)$: Note that, $\Ul_I(R)=\cm(B(I))$ by \cite[Theorem 4.6]{dms}. Hence (2) and (3) implies that, $\Ul_I(R)=\cm(B(I))=\cm(\overline R)=\add_R(\overline R)$ and $\overline R$ is a module finite extension over $R$ (as $B(I)$ is a finite extension of $R$ by Lemma \ref{blow}). Hence $R$ is analytically unramified by \cite[Theorem 4.6(ii)]{lw}. This proves $(1)(\iff (2) \iff (3)) \implies (7)$.   

$(7)\implies (8)$: This follows from Lemma \ref{cond}. 

Now suppose $R$ is Henselian and (7) holds. Let $M\in \Ul_I(R)=\add_R(\overline R)$ be an indecomposable $R$-module. Then $M$ is a direct summand of $(\overline R)^{\oplus n}$ for some $n\ge 1$. Since $R$ is Henselian, so Krull-Remak-Schmidt holds for $R$, i.e., indecomposable decomposition is unique. Hence by Lemma \ref{indec} we have, $M\cong \overline{R/\p}$ for some minimal prime $\p$ of $R$. Let $S=\overline{R/\p}$. Then by Lemma \ref{indec} we get that, $S$ is a  finitely generated R-module. Since $R$ is Henselian, so $R/\p$ is also Henselian (see \cite[Tag 04GH]{st}), and it is a local domain. Since $R$ is analytically unramified and $\p$ is a minimal prime of $R$, so $R/\p$ is analytically unramified (see \cite[Tag 032Y]{st}). Hence $\overline{R/\p}$ is a finite extension of $R/\p$ (see \cite[Tag 032Y]{st}). Since $R/p$ is Henselian, so every finite ring extension of $R/\p$ is a product of local rings, so $\overline{R/\p}$ is a product of local rings. But since $\overline{R/\p}$ is a domain, so $\overline{R/\p}$ is itself a local domain. Hence $S$ is a local domain. Moreover, $S$ is an integral domain integrally closed in its ring of fractions, so $S$ is normal by \cite[Tag 030C]{st}. Thus $S$ is a normal local domain of dimension $1$, hence $S$ is regular. Let $\omega_R$ be a canonical ideal of $R$. Then we have an isomorphism $\Hom_R(M,\omega_R) \cong \Hom_R(S, \omega_R)$. Since $S$ is a module finite $R$-algebra with $\dim(R)=\dim(S)=1$ and $S$ is regular local, so by \cite[Theorem 3.3.7]{bh} we have $\Hom_R(S,\omega_R)\cong \omega_S \cong S$. Thus $S \cong \Hom_R(S, \omega_R)\cong \Hom_R(M, \omega_R)$. Hence $M\cong S\cong M^{\dagger}$.    

Now moving forward, we assume $R$ is an integral domain, which is analytically unramified (i.e., $\overline R$ is module finite over $R$) and Henselian (so, Krull-Remak-Schmidt holds for $R$).  

$(8) \implies (9)$: Since $R$ is an integral domain and $\mathfrak c$ is an ideal of $R$, hence $\mathfrak c$ is an indecomposable $R$-module. Now let, $M \in \Ul_I(R)$ be indecomposable, so $M\in \add_R(\mathfrak c)$. Hence $M\oplus N\cong \mathfrak c^{\oplus n}$ , so by Krull-Remak-Schmidt we get $M\cong \mathfrak c$.   

$(9) \implies (10)$: Obvious. 

$(10) \implies (1)$: As in the above argument for $(8) \implies (9)$ we have, $\mathfrak c \in \Ul_I(R)$ is an indecomposable $R$-module. Hence $| \text{ind} \Ul_I(R)|=1$ implies $\text{ind} \Ul_I(R)=\{\mathfrak c\}$. Since $\Ul_I(R)$ is closed under direct summands, so every $M\in \Ul_I(R)$ is a direct sum of indecomposable $I$-Ulrich modules. Hence every non-zero module in $\Ul_I(R)$ is of the form $\mathfrak c^{\oplus n}$, and this has non-zero rank (since $\mathfrak c\ne 0$). Hence, if we have a short exact sequence of modules in $\Ul_I(R)$, say $0\to \mathfrak c^{\oplus a} \to \mathfrak c^{\oplus b}\to \mathfrak c^{\oplus f}\to 0$, then by rank consideration we get $b=a+f$. Thus $\mathfrak c^{\oplus b} \cong \mathfrak c^{\oplus a} \oplus \mathfrak c^{\oplus f}$, so the sequence splits. 

Finally, if (9) holds, then $M\cong M^*$ by Lemma \ref{cond}. 
\end{proof}     

The next result characterizes all numerical semigroup rings which are $\us$.

\begin{prop} Let $k$ be a field and $1<a_1<a_2<\cdots <a_n$ be integers with $\operatorname{gcd}(a_1,a_2,\cdots,a_n)=1$. Consider the numerical semigroup ring $R=k[[t^{a_1},\cdots, t^{a_n}]]$. Then $R$ is $\us$ if and only if $a_2=1+a_1$.   
\end{prop}  

\begin{proof} Since $R$ is a numerical semigroup ring, so $\overline R=k[[t]]$. By Theorem \ref{dim1uss}, $R$ is $\us$ if and only if $B(\m)=\overline R$, i.e., if and only if $B(\m)=k[[t]]$. By \cite[Remark 4.4]{dms}, if $x$ is a principal reduction of $\m$, then $B(\m)=R\left[\dfrac {\m}{x}\right]$. Now $t^{a_1}$ is a principal reduction of $\m$, hence $B(\m)=R\left[\dfrac{\m}{t^{a_1}}\right]=k[[t^{a_1},\cdots , t^{a_n},t^{a_2-a_1},\cdots,t^{a_n-a_1}]]=k[[t^{a_1},t^{a_2-a_1},\cdots,t^{a_n-a_1}]]$. Now note that, $a_1>1$, and $a_2-a_1<\cdots <a_n-a_1$.  So, $k[[t]]=B(\m)$ if and only if $t=t^{a_2-a_1}$ if and only if $a_2=1+a_1$.
\end{proof} 

Next, we prove that the $\us$ property for $1$-dimensional local Cohen--Macaulay rings ascend to completion. First, we need the following lemma.

\begin{lem}\label{addcomplete} Let $(R,\m)$ be a local Cohen--Macaulay ring of dimension $1$. If $X\in \Ul(\widehat R)$ such that $X\in \add_{\widehat R}(\widehat M)$ for some $M\in \cm(R)$, then $X\in \add_{\widehat R} (\widehat N)$ for some $N\in \Ul(R)$.  
\end{lem}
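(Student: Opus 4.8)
The plan is to reduce the problem to the structure of $\Ul(\widehat R)$ established in Proposition \ref{dim1uss}. Since $R$ has dimension $1$, passing to the faithfully flat extension $S=R[X]_{\m[X]}$ does not change the $\us$ property or the blow-up (as in Lemma \ref{blow}), so I may first reduce, if needed, to the case of infinite residue field; but the cleaner route is to argue directly at the level of $\widehat R$. Recall that $\widehat R$ is again a $1$-dimensional local Cohen--Macaulay ring, and that every Ulrich $\widehat R$-module is an $I$-Ulrich module for $I=\widehat{\m}$, hence lies in $\cm(B(\widehat{\m}))$ by \cite[Proposition 4.5]{dms}. The key point is that the blow-up commutes with completion: $B(\widehat{\m})\cong \widehat{B(\m)}$, since $B(\m)=(\m^n:\m^n)$ for $n\gg 0$ and completion is exact and commutes with $\Hom$ over a module-finite extension (using that $\m^n$ contains a nonzerodivisor). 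Thus $X\in\cm(\widehat{B(\m)})$.

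First I would use the hypothesis $X\in\add_{\widehat R}(\widehat M)$ for some $M\in\cm(R)$. Decompose $X$ into indecomposables over $\widehat R$ (Krull--Schmidt holds since $\widehat R$ is complete, hence Henselian); each indecomposable summand $X_i$ of $X$ is a direct summand of $\widehat M$, and also an object of $\cm(\widehat{B(\m)})$, so it is a torsion-free module over the module-finite birational extension $\widehat{B(\m)}$. By Corollary \ref{homalgebra}, since $R\to B(\m)$ factors as a surjection followed by a birational embedding (here the surjection is the identity), $X_i$ is indecomposable as a $\widehat{B(\m)}$-module if and only if it is indecomposable as a $\widehat R$-module; in particular $X_i$ is an indecomposable MCM $\widehat{B(\m)}$-module. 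Now $\widehat{B(\m)}$ is a module-finite birational extension of $\widehat R$, and it is the completion of the $1$-dimensional Cohen--Macaulay ring $B(\m)$; its indecomposable MCM modules are, by \cite[Corollary 4.7]{auac} (or the standard lifting results cited in the proof of Corollary \ref{hens}/Theorem \ref{minuss}), exactly the completions of indecomposable MCM $B(\m)$-modules — provided $B(\m)$ has isolated singularity, or more simply provided we only need that each $X_i$ lifts to a $B(\m)$-module, which follows from $X_i\mid \widehat M$ and the fact that $\widehat M$ is the completion of the $B(\m)$-module $M\otimes_R B(\m)$ (which is MCM over $B(\m)$ since $M$ is MCM over $R$ and $B(\m)$ is module-finite birational). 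So each $X_i\cong \widehat{Y_i}$ for some MCM $B(\m)$-module $Y_i$.

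Next I would set $N_0:=\bigoplus_i Y_i$, an MCM $B(\m)$-module, and regard it as an $R$-module via $R\to B(\m)$: it is then an MCM $R$-module satisfying $N_0\cong B(\m)N_0$, hence $\m^n N_0\cong N_0$ for $n\gg 0$, i.e.\ $N_0\in\Ul_{\m^n}(R)$. If $\m^n$ is a reduction of $\m$ only up to integral closure — actually $\m^n$ has the same blow-up as $\m$, so $\cm(B(\m))=\Ul_{\m}(R)$ by \cite[Proposition 4.5]{dms} once we pass to infinite residue field, giving $N_0\in\Ul(R)$ directly — set $N:=N_0$. Then $\widehat N\cong\bigoplus_i\widehat{Y_i}\cong\bigoplus_i X_i\cong X$, so certainly $X\in\add_{\widehat R}(\widehat N)$, as desired.

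The main obstacle is the lifting step: showing that an indecomposable MCM $\widehat{B(\m)}$-module that is a summand of $\widehat M$ actually descends to an MCM $B(\m)$-module. The cleanest way to avoid invoking isolated singularity of $B(\m)$ is to observe that $X_i$ is a direct summand of $\widehat M=\reallywidehat{M\otimes_R R}$, and $M\otimes_R B(\m)$ is a $B(\m)$-module whose completion over $\widehat{B(\m)}$ agrees with $\widehat M$ (completion over $\widehat R$ and over $\widehat{B(\m)}$ coincide on finite modules, by Corollary \ref{homalgebra} applied to endomorphism rings); since $\widehat{B(\m)}$ is complete, hence Henselian, idempotents lift, so the summand $X_i$ of $\reallywidehat{M\otimes_R B(\m)}$ is the completion of a summand $Y_i$ of $M\otimes_R B(\m)$. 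This makes the argument self-contained. The only other thing to check carefully is that $N$, built over $B(\m)$, is genuinely Ulrich over $R$ — but this is exactly the content of \cite[Proposition 4.5, Theorem 4.6]{dms} identifying $\Ul_I(R)$ with $\cm(B(I))$, combined with the fact that $B(\m^n)=B(\m)$, so I expect that step to be routine once the lifting is in place.
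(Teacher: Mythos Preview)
Your approach tries to route everything through the blow-up $B(\m)$ and then to descend indecomposable summands of $X$ from $\widehat{B(\m)}$ to $B(\m)$. Two steps in the final paragraph fail.

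First, the claim that the completion of $M\otimes_R B(\m)$ ``agrees with $\widehat M$'' is false: that completion is $\widehat M\otimes_{\widehat R}\widehat{B(\m)}$, and since $B(\m)\cong\m^n$ as $R$-modules for $n\gg0$, this is $\widehat M\otimes_{\widehat R}\widehat{\m}^n$, which in general is neither $\widehat M$ nor $\widehat\m^n\widehat M$ (it need not even be torsion-free, its torsion being $\tor_1^R(M,R/\m^n)$). More fundamentally, $\widehat M$ simply is not a $\widehat{B(\m)}$-module unless $M$ is already Ulrich---which is precisely what you are trying to arrange.

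Second, the idempotent-lifting direction is backwards. Descending a direct summand of $\widehat L$ to a summand of $L$ (for $L$ a finite $B(\m)$-module) means lifting an idempotent of $\widehat{\End_{B(\m)}(L)}$ back to $\End_{B(\m)}(L)$; this requires $B(\m)$ (or that endomorphism ring) to be Henselian, not $\widehat{B(\m)}$. As $B(\m)$ is only semilocal, the descent is unavailable.

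The paper sidesteps all of this with a one-line manoeuvre: from $X\oplus Y\cong(\widehat M)^{\oplus l}$, apply $\widehat\m^n(-)$ to both sides. Since $X$ is Ulrich one has $\widehat\m^n X\cong X$; and $\widehat\m^n\widehat M\cong\widehat{\m^n M}$ with $\m^n M\in\Ul(R)$ for $n\gg0$ because $M$ is MCM over a one-dimensional ring. Thus $X$ is a summand of $(\widehat{\m^n M})^{\oplus l}$, and $N=\m^n M$ works. In hindsight, the correct $B(\m)$-module to build from $M$ was $\m^n M$, not $M\otimes_R B(\m)$---and once one has it, neither lifting nor Krull--Schmidt is needed.
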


\begin{proof} Let $X\oplus Y\cong (\widehat M)^{\oplus l}$ for some $Y\in \mod(\widehat R)$ and for some integer $l\ge 1$. First, recall that for any $R$-modules $A,B$ and ideal $I$ of $R$, it holds that $I(A\oplus B)=IA \oplus IB,$ and also, $A\cong B \implies IA\cong IB$. So, $\widehat \m^n X \oplus \widehat \m^n Y\cong \widehat \m^n(\widehat M)^{\oplus l}\cong (\widehat \m^n \widehat M)^{\oplus l}\cong (\widehat{\m^n M})^{\oplus l}$ for every integer $n\ge 1$. Since $M\in \cm(R)$, so $\m^n M\in \Ul(R)$ for all $n\gg 0$, and moreover $\widehat \m^n X\cong X$ since $X\in \Ul(\widehat R)$. So, this completes the proof.     
\end{proof}  

For $1$-dimensional local Cohen--Macaulay rings, we can improve  Corollary \ref{hens} as follows.

\begin{cor}\label{complete} Let $(R,\m)$ be a local Cohen--Macaulay ring of dimension $1$. Then $R$ is $\us$ if and only if $\widehat R$ is $\us$. 
\end{cor}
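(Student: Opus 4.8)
The plan is to characterize the $\us$ property in dimension one via regularity of the blow-up, and then to show that this regularity is insensitive to completion. By Proposition \ref{dim1uss} (the equivalence $(1)\iff(2)$, taken with $I=\m$, where $\Ul_\m(R)=\Ul(R)$ so that condition $(1)$ is precisely the statement that $R$ is $\us$), $R$ is $\us$ if and only if the blow-up $B_R(\m)$ is a regular ring. Applying the same to the ring $\widehat R$, which is again a one-dimensional local Cohen--Macaulay ring, $\widehat R$ is $\us$ if and only if $B_{\widehat R}(\widehat{\m})$ is regular. So it suffices to prove that $B_R(\m)$ is regular if and only if $B_{\widehat R}(\widehat{\m})$ is regular.

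The first key step is the isomorphism $\widehat{B_R(\m)}\cong B_{\widehat R}(\widehat{\m})$. Fix $n\gg 0$ large enough that $B_R(\m)=(\m^n:_{Q(R)}\m^n)\cong\End_R(\m^n)$ and $B_{\widehat R}(\widehat{\m})=(\widehat{\m}^n:_{Q(\widehat R)}\widehat{\m}^n)\cong\End_{\widehat R}(\widehat{\m}^n)$: this is possible by Lemma \ref{blow} (the union defining the blow-up is eventually constant) together with the standard identification $\End_R(J)\cong(J:J)$ for an ideal $J$ containing a nonzerodivisor, noting that $\m^n$ and $\widehat{\m}^n$ both contain nonzerodivisors. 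Since $R\to\widehat R$ is flat and $\m^n$ is finitely presented, flat base change for $\Hom$ gives $\widehat R\otimes_R\End_R(\m^n)\cong\End_{\widehat R}(\widehat R\otimes_R\m^n)=\End_{\widehat R}(\widehat{\m}^n)$, using $\widehat R\otimes_R\m^n=\m^n\widehat R=(\m\widehat R)^n=\widehat{\m}^n$. As $B_R(\m)$ is module-finite over $R$ (Lemma \ref{blow}), its completion with respect to its Jacobson radical — which defines the same topology as the $\m$-adic one — equals $B_R(\m)\otimes_R\widehat R=\widehat{B_R(\m)}$. Hence $\widehat{B_R(\m)}\cong B_{\widehat R}(\widehat{\m})$.

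The second key step is that regularity of $S:=B_R(\m)$ is detected by $\widehat S$. Being module-finite over $R$, $S$ is a semilocal Noetherian ring, and $\widehat S\cong\prod_i\widehat{S_{\mathfrak{n}_i}}$ over the maximal ideals $\mathfrak{n}_i$ of $S$; thus $S$ is regular if and only if every $S_{\mathfrak{n}_i}$ is a regular local ring, if and only if every $\widehat{S_{\mathfrak{n}_i}}$ is regular (a Noetherian local ring being regular exactly when its completion is), if and only if $\widehat S$ is regular. Combining this with the previous step and Proposition \ref{dim1uss} applied to $R$ and to $\widehat R$:
$$R\text{ is }\us\iff B_R(\m)\text{ regular}\iff\widehat{B_R(\m)}\text{ regular}\iff B_{\widehat R}(\widehat{\m})\text{ regular}\iff\widehat R\text{ is }\us,$$
which is the assertion. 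There is no essential obstacle: the proof is an assembly of flat base change for $\Hom$, the structure of completions of module-finite algebras, and the ascent and descent of regularity along completion of Noetherian local rings. The one genuine point is to track the blow-up $B(\m)$ rather than $\overline R$ or the conductor $\mathfrak{c}$ — since $B(\m)$ is always module-finite over $R$, the argument needs no a priori knowledge that $R$ or $\widehat R$ is analytically unramified, which is exactly what allows one to drop the minimal-multiplicity hypothesis of Corollary \ref{hens}.
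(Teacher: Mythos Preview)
Your proof is correct and takes a genuinely different route from the paper's. The paper argues the two directions separately: the descent ($\widehat R$ $\us$ $\Rightarrow$ $R$ $\us$) is immediate from faithful flatness, while for the ascent the paper first deduces that $\widehat R$ has isolated singularity (via Proposition~\ref{dim1uss}), then invokes \cite[Corollary 3.6]{stable} to realize every $X\in\Ul(\widehat R)$ as a summand of some $\widehat M$ with $M\in\cm(R)$, upgrades this to $M\in\Ul(R)$ via the ad~hoc Lemma~\ref{addcomplete}, and finally transfers the vanishing $\m\Ext^1_R(\Ul(R),\Ul(R))=0$ to $\widehat R$ and concludes by \cite[Proposition 5.2.6]{ddd}.

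Your argument is cleaner: you reduce everything to the single equivalence $R$ $\us$ $\iff$ $B_R(\m)$ regular (Proposition~\ref{dim1uss}), and then check that $B_{\widehat R}(\widehat\m)\cong\widehat{B_R(\m)}$ via flat base change for $\End$, so that regularity is a question about completion of a semilocal ring, which is standard. This bypasses Lemma~\ref{addcomplete} and the $\Ext$-annihilator machinery entirely. What the paper's approach buys is that its intermediate steps (lifting Ulrich modules from the completion, control of $\Ext$-annihilators) are of independent interest and fit the broader narrative of the paper; what your approach buys is a short, symmetric proof that handles both directions at once and needs nothing beyond Proposition~\ref{dim1uss} and elementary completion facts.
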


\begin{proof} Notice that, for an MCM $R$-module $M$ we have,  $M\in \Ul(R)$ if and only if $\widehat M\in \Ul(\widehat R)$ by Remark \ref{faithflat}. Now we know that completion passes along short exact sequences and $\widehat M \cong \widehat N$ if and only if $M\cong N$ (see \cite[Corollary 1.15(ii)]{lw}). Hence if $\widehat R$ is $\us$, then so is $R$.  

Now suppose $R$ is $\us$. Then $\widehat R$ has isolated singularity by Theorem \ref{dim1uss}. Moreover, $0=\widehat{\m \Ext^1_R(M,N)}\cong \widehat \m \Ext^1_{\widehat R}(\widehat M, \widehat N)$ for all $M,N\in \Ul(R)$ by \cite[Proposition 5.2.2]{ddd}. Also, by \cite[Corollary 3.6]{stable} every $X\in \Ul(\widehat R)$ is the direct summand of $\widehat M$ for some $M\in \cm(R)$. Hence for every $X\in \Ul(\widehat R)$, there is $M\in \Ul(R)$ such that $X\in \add_{\widehat R}(\widehat M)$ (by Lemma \ref{addcomplete}). So, for every $X,Y\in \Ul(\widehat R)$ we have, $\Ext^1_{\widehat R}(X,Y)\in \add_{\widehat R} (\Ext^1_{\widehat R}(\widehat M, \widehat N))$ for some $M,N\in \Ul(R).$ Hence $\widehat \m \Ext^1_{\widehat R}(X,Y)=0$ for all $X,Y\in \Ul(\widehat R)$. Thus $\widehat R$ is $\us$ by \cite[Proposition 5.2.6]{ddd}.   
\end{proof}  

In the following, for a local Cohen--Macaulay ring $(R,\m, k)$ with $\m$ admitting a principal reduction (for example, when $k$ is infinite), let $rn(\m)$ denote the reduction number of $\m$, i.e., this is the smallest integer $n\ge 0$ such that $\m^{n+1}=x\m^n$ for some $x\in \m$. In particular, $\m^n \cong \m^{rn(\m)}$ for all $n\geq rn(\m)$.   

\begin{cor}\label{ulcmann1dim} Let $(R,\m, k)$ be a local Cohen--Macaulay ring of dimension $1$ and assume $k$ is infinite.  If $\m \Ext^1_R(\Ul(R),\Ul(R))=0$, then $\m^{rn(\m)}\Ext^1_R(\cm(R),\mod(R))=0$. 
\end{cor}  

\begin{proof} First, we deal with the case when $R$ is complete. By \cite[Proposition 5.2.6]{ddd}, $\m \Ext^1_R(\Ul(R),\Ul(R))=0$ implies $R$ is $\us$. Hence $R$ is analytically unramified and $B(\m)\cong \mathfrak c$ by Theorem \ref{dim1uss}.  By Lemma \ref{blow}, we get $\m^{rn(\m)}\cong \mathfrak c$. Hence $\m^{rn(\m)}\subseteq \tr_R (\m^{rn(\m)})= \tr_R(\mathfrak c)=\mathfrak c$. Since $R$ is complete and reduced, so by \cite[Proposition 3.1]{W} we get $\m^{rn(\m)}\Ext^1_R(\cm(R),\mod(R))\subseteq \mathfrak c \Ext^1_R(\cm(R),\mod(R))=0$.    

Now for the general case, $\m \Ext^1_R(\Ul(R),\Ul(R))=0$ again implies $R$ is $\us$ by \cite[Proposition 5.2.6]{ddd}. Then Corollary \ref{complete} implies $\widehat R$ is $\us$, and hence by \cite[Proposition 5.2.2]{ddd} we get, $\widehat {\m}\Ext^1_{\widehat R}(\Ul(\widehat R), \Ul(\widehat R))=0$. So, now the complete case that we have already proved implies $\widehat \m^{rn(\widehat \m)}\Ext^1_{\widehat R}(\cm(\widehat R),\mod(\widehat R))=0$. Since $rn(\widehat \m)\le rn(\m)$, so $\reallywidehat {\m^{rn(\m)}\Ext^1_R(M,N)}\cong \widehat \m^{rn(\m)} \Ext^1_{\widehat R}(\widehat M, \widehat N)=0$ for all $M\in \cm(R),N\in \mod(R)$. Hence $\m^{rn(\m)}\Ext^1_R(\cm(R),\mod(R))=0$.  
\end{proof} 

Finally, for one-dimensional local Cohen--Macaulay rings of minimal multiplicity, combining many of the previous results, we get the following characterization.  

\begin{cor}\label{dimsyzann} Let $(R,\m)$ be a singular local Cohen--Macaulay ring of dimension 1. Then the following are equivalent:
    
    \begin{enumerate}[\rm(1)]
    \item $R$ is analytically unramified and $\m\Ext^1_R(\Ul(R)\cap \syz \cm^{\times}(R), \syz \cm^{\times}(R))=0$.
        \item $R$ is analytically unramified and $\m \Ext^1_R(\c, \syz_R \c)=0$. 
        
        \item  $\m=\c$. 

        \item $(\m :\m)=\overline R$.  

        \item $\End_R(\m)\cong \overline R$.
        
        \item $\m \cong \c$.
        
        \item $R$ is $\us$ and has minimal multiplicity.
        
        \item  $R$ is analytically unramified and $\syz \cm^{\times}(R)=\add_R(\m)$.  
    \end{enumerate}   
    
    \end{cor}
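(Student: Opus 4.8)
The plan is to prove all eight equivalences through a web of implications organized around condition $(3)$, $\m=\c$, as the hub, rather than through a single long cycle. The first block to settle is the purely ideal-theoretic one, $(3)\iff(4)\iff(5)\iff(6)$. Note first that each of these forces $R$ analytically unramified: $(3)$ because then $\c=\m$ contains a non-zerodivisor, so $\overline R$ is module finite by Remark \ref{conductor}; $(4)$ and $(5)$ because $(\m:\m)\cong\End_R(\m)$ is then module finite. For $(3)\Rightarrow(4)$, $\m=\c$ gives $\overline R\m\subseteq\c=\m$, so $\overline R\subseteq(\m:\m)$, while $(\m:\m)\subseteq\overline R$ by the determinant trick; hence $(\m:\m)=\overline R$. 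For $(4)\Rightarrow(3)$, $(\m:\m)=\overline R$ gives $\overline R\m\subseteq\m\subseteq R$, so $\m\subseteq\c$, and $\c\subseteq\m$ since $R$ is singular. The equivalence $(4)\iff(5)$ rests on the canonical identification $(\m:\m)\cong\End_R(\m)$ valid for any regular ideal; $(4)\Rightarrow(5)$ is then immediate, and for $(5)\Rightarrow(4)$ one has $\overline R$ reflexive (Lemma \ref{normal}), hence $(\m:\m)\cong\overline R$ is reflexive, hence $(\m:\m)=R\!:\!(R\!:\!(\m:\m))$; since $R\!:\!(\m:\m)$ is a trace ideal (Remark \ref{conductor}) isomorphic to the trace ideal $\c$, the two are equal, and dualizing once more gives $(\m:\m)=R\!:\!\c=\overline R$. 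Finally $(3)\Rightarrow(6)$ is trivial and $(6)\Rightarrow(3)$ uses the same principle: $\m$ is a trace ideal (as $\tr(\m)\ne R$, $\m$ having no free summand because $R$ is singular) and $\c$ is a trace ideal (Remark \ref{conductor}), so $\m\cong\c$ forces $\m=\tr(\m)=\tr(\c)=\c$.

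Next I would connect this block to $(7)$. For $(3)\Rightarrow(7)$: from $(3)\iff(4)$ we have $(\m:\m)=\overline R$, whence $\overline R=(\m:\m)\subseteq B(\m)\subseteq\overline R$, so $B(\m)=\overline R$, a regular ring; by Proposition \ref{dim1uss} ($(1)\iff(2)$ there) $R$ is $\us$. For minimal multiplicity, pass to infinite residue field, take a principal reduction $x$ of $\m$, and use Lemma \ref{blow}: $B(\m)=(\m^n:\m^n)$ and $\m^{n+1}=x\m^n$ for $n\gg0$, so $x^{-1}\m\cdot\m^{n+1}=\m^{n+1}$ shows $x^{-1}\m\subseteq B(\m)=(\m:\m)$, i.e.\ $\m^2\subseteq x\m$, hence $\m^2=x\m$. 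For $(7)\Rightarrow(3)$: being $\us$, $R$ is analytically unramified and $\c\cong\m^n$ for $n\gg0$ by Proposition \ref{dim1uss}; minimal multiplicity gives $\m^{n+1}=x\m^n$, so multiplication by $x$ makes $\m^n\cong\m$, whence $\c\cong\m$, which is $(6)$, hence $(3)$.

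It remains to bring in $(1)$, $(2)$ and $(8)$. For $(7)\iff(8)$: if $R$ is $\us$ with minimal multiplicity then $R$ is analytically unramified and $\Ul(R)=\add_R(\syz^1_R k)=\add_R(\m)$ by Theorem \ref{minuss}; minimal multiplicity also gives $\syz\cm^{\times}(R)\subseteq\Ul(R)$, and $\add_R(\m)=\add_R(\syz^1_R k)\subseteq\syz\cm^{\times}(R)$ by \cite[Lemma 3.7]{umm}, so $\syz\cm^{\times}(R)=\add_R(\m)$. Conversely, given $(8)$, one has $\overline R\cong\c\in\syz\cm^{\times}(R)=\add_R(\m)$ (Lemmas \ref{cond}, \ref{normal}), and from this, together with $\c\in\Ul(R)$ (Lemma \ref{dim1cond}), one deduces $B(\m)=\overline R$, so $R$ is $\us$ by Proposition \ref{dim1uss}, and then minimal multiplicity. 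For the Ext-conditions: $(2)\Rightarrow(3)$ because, $R$ being analytically unramified, $\c$ is a fractional ideal containing a non-zerodivisor, so Proposition \ref{traceann}(2) gives $\m\subseteq\ann_R\Ext^1_R(\c,\syz_R\c)\subseteq\tr(\c)=\c\subseteq\m$. And $(3)\Rightarrow(7)$ is done, while $(7)$ yields $\m\Ext^1_R(\Ul(R),\Ul(R))=0$ by \cite[Lemma 5.2.2]{ddd}; since $\c,\syz_R\c\in\Ul(R)$ (Lemma \ref{dim1cond} and minimal multiplicity) this gives $(2)$, and since $\syz\cm^{\times}(R)\subseteq\Ul(R)$ it gives $(1)$. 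Finally $(1)\Rightarrow(2)$ because, $R$ being analytically unramified, $\c\cong\overline R$ is reflexive so $\c\in\Ul(R)\cap\syz\cm^{\times}(R)$ and $\syz_R\c\in\syz\cm^{\times}(R)$ (Lemma \ref{traceann}(1) / \cite[Lemma 2.2]{del}), so the vanishing in $(1)$ specializes to that in $(2)$.

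The main obstacle I anticipate is the backward implication $(8)\Rightarrow(7)$: extracting ``$\us$ and minimal multiplicity'' from the mere equality $\syz\cm^{\times}(R)=\add_R(\m)$, where, unlike in every other implication, there is no Ext-vanishing to exploit and one must argue from the structure of $\add_R(\m)$ — together with $\c\cong\overline R$ lying in it and being Ulrich — that $B(\m)=\overline R$ and that $\m$ itself is Ulrich. Secondary technical points are the ``isomorphism-to-equality'' upgrades in the $(3)$--$(6)$ block, for which the systematic use of Remark \ref{conductor} (conductors are trace ideals) and Lemma \ref{normal} (reflexivity of $\overline R$) is exactly what makes them go through, and the verifications that $\c$ and $\syz_R\c$ lie in $\syz\cm^{\times}(R)$ whenever $R$ is analytically unramified.
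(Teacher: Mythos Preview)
Your hub-and-spoke organization around $(3)$ is a legitimate alternative to the paper's single cycle $(1)\Rightarrow(2)\Rightarrow\cdots\Rightarrow(8)\Rightarrow(1)$, and most of your individual implications are correct, with some pleasant variations (e.g.\ your $(5)\Rightarrow(4)$ via ``isomorphic trace ideals are equal'', and your direct minimal-multiplicity computation $x^{-1}\m\subseteq(\m:\m)$ in $(3)\Rightarrow(7)$, versus the paper's $(5)\Rightarrow(6)$ via $\m^*\cong\End_R(\m)$ and its use of $\m=\c\in\Ul(R)$ to get minimal multiplicity).

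The one genuine gap is exactly where you anticipated it: $(8)\Rightarrow(7)$. Your sketch --- from $\c\in\add_R(\m)$ and $\c\in\Ul(R)$ deduce $B(\m)=\overline R$, hence $\us$, hence minimal multiplicity --- is not filled in, and I do not see how to make it work directly. Knowing $\c\in\add_R(\m)$ gives no obvious control over $B(\m)$, and you cannot yet use that $\m$ is Ulrich. The paper sidesteps this completely by proving $(8)\Rightarrow(1)$ instead, which is essentially one line: for $M\in\Ul(R)\cap\syz\cm^{\times}(R)\subseteq\syz\cm^{\times}(R)=\add_R(\m)$ and any $N$, the module $\Ext^1_R(M,N)$ is a summand of copies of $\Ext^1_R(\m,N)\cong\Ext^2_R(k,N)$, which is annihilated by $\m$. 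This is the move you are missing; once you replace your $(8)\Rightarrow(7)$ by this $(8)\Rightarrow(1)$, your web closes, since you already have $(1)\Rightarrow(2)\Rightarrow(3)\Rightarrow(7)$.

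A minor remark: in $(1)\Rightarrow(2)$ your phrase ``$\c$ is reflexive so $\c\in\syz\cm^{\times}(R)$'' is a little loose; the paper's justification is that $\c\cong(\overline R)^*$ is a dual, hence lies in $\syz^2\mod R\subseteq\syz\cm(R)$, and has no free summand since it is Ulrich over a singular ring.
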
  
    
    \begin{proof}  
    By Lemma \ref{dim1cond} we have, $\c \in \Ul(R)$, and moreover, it is a trace ideal by  \ref{conductor}.     
    
    (1) $\implies$ (2): We have $\c=(R:\overline R)\cong \Hom_R(\overline R, R)$ (the last isomorphism follows from \cite[Proposition 2.4(1)]{trace}). So, $\c$ is the dual of a finitely generated $R$-module. Hence $\c\in \syz^2\mod R\subseteq \syz \cm(R)$.   Since $\c$ is an Ulrich module and $R$ is singular, so $\c$ has no free summand. Thus, $\c \in \Ul(R)\cap \syz \cm^{\times}(R)$. Also, $\syz_R \c\in \syz \cm^{\times}(R)$. Thus $\m \Ext^1_R(\c, \syz_R \c)=0$.  
    
    (2) $\implies $ (3): Since $\overline R$ is module finite over $R$, so $\c$ contains a non-zero-divisor. Hence by \cite[Theorem 1.1]{sde}, we have $\ann \Ext^1_R(\c,\syz_R \c) \subseteq \tr_R(\c)=\c$. Then by hypothesis of (2), we get $\m \subseteq \ann \Ext^1_R(\c,\syz_R \c)\subseteq\c$.  Hence $\c=\m$, or $\c=R$. But $R$ is singular, so $\c\ne R$. Hence $\c=\m$.  
    
    $(3) \implies (4)$: By hypothesis of (3), $\c$ contains a non-zero-divisor. Hence $R$ is analytically unramified by \cite[Theorem 4.6]{lw}.  Applying Lemma \ref{normal} we now get, $\overline R=(\c: \c)=(\m:\m)$. 

    $(4)\implies (5)$: This follows,  since $(\m :\m)\cong\End_R(\m)$ by \cite[Proposition 2.4(1)]{trace}.  

    $(5)\implies (6)$: By hypothesis of (5), $\overline R$ is module finite over $R$. Also, $\m^*\cong \End_R(\m)$ as $\tr_R(\m)=\m$. Hence $\m^*\cong \overline R$. Thus $\m \cong \m^{**}\cong (\overline R)^*\cong \c$, where the first isomorphism is by \cite[Theorem 4.1(2)]{restf} and the last isomorphism follows from \cite[Proposition 2.4(1)]{trace}. 
    
    $(6) \implies (7)$:  We have, $\m\subseteq \tr_R(\m)=\tr_R(\mathfrak c)=\mathfrak c\ne R$ (as $R$ is singular). Hence $\m=\c$ is Ulrich, so $R$ has minimal multiplicity by \cite[Proposition 2.5]{ul}. Hence $\mathfrak m^n \cong \mathfrak m \cong \mathfrak c$ for all $n\ge 1$. Thus by Theorem \ref{dim1uss} we get that, $R$ is $\us$. 
    
    $(7) \implies (8)$: By Theorem \ref{dim1uss} we  have, $R$ is analytically unramified and by Theorem \ref{minuss} we have, $\Ul(R)=\add_R(\m)$ . Since $\m \in \syz \cm^{\times}(R)$ and $\syz \cm^{\times}(R)$  is closed under direct summands of finite direct sums (see \cite[Lemma 5.5(2(b))]{umm}), so we have $\add_R(\m)\subseteq \syz \cm^{\times}(R)$. Now since $\syz \cm^{\times}(R) \subseteq \Ul(R)$, so we get the equality $\syz \cm^{\times}(R)=\add_R(\m)$. 
    
    $(8)\implies (1)$: Let $M\in \Ul(R)\cap \syz \cm^{\times}(R)\subseteq \syz \cm^{\times}(R)=\add_R(\m)$, so $M$ is a direct summand of direct sum of copies of $\m$. Hence for every $N\in\mod(R)$, we have $\Ext^1_R(M,N)$ is a direct summand of direct sum of copies of $\Ext^1_R(\m, N)\cong \Ext^2_R(k,N)$, which is annihilated by $\m$. Hence $\m\Ext^1_R(M,N)=0$ for all $M\in \Ul(R)\cap \syz \cm^{\times}(R)$ and $N\in\mod(R)$.
    \end{proof}

Next, we study the $\us$ property of fibre product of two rings. Let $R,S$ be two commutative noetherian rings, and let $I,J$ be proper ideals of $R,S$, respectively, such that $\frac{R}{I}\cong \frac{S}{J}(=:T)$. Then the fibre product $R\times_{T} S$ is defined by $R\times_{T} S:=\{(r,s)\in R\times S\text{ }:\text{ }r+I=s+J\}$, which is a subring of $R\times S$. In particular, if we take $R=S$ and $I=J$, then $R\times_{R/I} R:=\{(r_1,r_2)\in R\times R\text{ }:\text{ }r_1+I=r_2+I\}=\{(r_1,r_2)\in R\times R\text{ }:\text{ }r_1-r_2\in I\}=\{(r,r+i)\text{ }:\text{ }r\in R, i\in I\}$, which is just $R\bowtie I$ in the notation of \cite{anna}.
\\
First, we record the following two lemmas.

\begin{lem}\label{general fibre} 
Let $R,S$ be two commutative noetherian rings, and let $I,J$ be regular proper ideals of $R,S$, respectively, such that $\frac{R}{I}\cong \frac{S}{J}(=:T)$. Then the non-zero-divisors on $R\times_{T} S$ are of the form $(a,b)\in R\times_{T} S$ such that $a,b$ are non-zero-divisors on $R,S$ respectively. Moreover, $\dfrac{(r,s)}{(\sigma,\tau)}\mapsto\left(\dfrac{r}{\sigma},\dfrac{s}{\tau}\right)$ gives an isomorphism from $Q(R\times_{T} S)$ to $Q(R)\times Q(S)$.
\end{lem}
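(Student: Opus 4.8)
The plan is to first pin down the non-zero-divisors of $A := R\times_T S$ (a subring of $R\times S$; writing $\psi\colon R/I \xrightarrow{\sim} S/J$ for the isomorphism defining it, we have $(r,s)\in A$ iff $\psi(\bar r)=\bar s$, so in particular $(r,s)\in A$ whenever $r\in I$ and $s\in J$), and then feed this into the universal property of localization to identify $Q(A)=W^{-1}A$, where $W$ denotes the set of non-zero-divisors of $A$.

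For the non-zero-divisor statement, the ``if'' direction is immediate: a pair $(a,b)$ with $a$ a non-zero-divisor on $R$ and $b$ a non-zero-divisor on $S$ is already a non-zero-divisor on $R\times S$, hence on the subring $A$. For the ``only if'' direction I would argue by contraposition: if $(a,b)\in A$ and $ar=0$ for some $0\ne r\in R$, then, choosing a non-zero-divisor $x\in I$ (available since $I$ is regular), the element $(rx,0)$ is a nonzero member of $A$ — note $rx\ne 0$ and $rx\in I$, so $(rx,0)\in A$ — and it is annihilated by $(a,b)$, so $(a,b)$ is a zero-divisor on $A$. The symmetric argument, using a non-zero-divisor in $J$, handles $b$. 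Hence $W=\{(a,b)\in A : a$ a non-zero-divisor on $R$, $b$ a non-zero-divisor on $S\}$.

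Next I would observe that the composite $A\hookrightarrow R\times S\to Q(R)\times Q(S)$ carries every element of $W$ to a unit, so it factors uniquely through a ring homomorphism $\phi\colon Q(A)\to Q(R)\times Q(S)$ given by $\frac{(r,s)}{(\sigma,\tau)}\mapsto\bigl(\frac r\sigma,\frac s\tau\bigr)$. Injectivity is routine: if $\phi$ of a fraction vanishes, then dividing by the non-zero-divisors $\sigma,\tau$ forces $r=s=0$, so the fraction is $0$ in $Q(A)$.

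The substantive step is surjectivity, and this is where the only genuine idea enters. Given $(p/q,\,p'/q')\in Q(R)\times Q(S)$ with $q$ a non-zero-divisor on $R$ and $q'$ a non-zero-divisor on $S$, I would pick non-zero-divisors $x\in I$ and $y\in J$ and note that $px,qx\in I$ and $p'y,q'y\in J$, so $(px,p'y)$ and $(qx,q'y)$ both lie in $A$, with $(qx,q'y)\in W$; then $\phi\bigl(\frac{(px,p'y)}{(qx,q'y)}\bigr)=(p/q,\,p'/q')$. Thus $\phi$ is an isomorphism with the asserted formula. The main obstacle is exactly this ``clearing'' trick: one must multiply an arbitrary pair of fractions into the fiber product using regular elements of $I$ and $J$, and it is the regularity of $I$ and $J$ — used both here and to produce the witnesses in the non-zero-divisor argument — that makes everything go through.
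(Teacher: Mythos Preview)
Your proof is correct and follows essentially the same approach as the paper's. The only cosmetic difference is that the paper constructs an explicit inverse $\psi\bigl(\frac r\sigma,\frac s\tau\bigr)=\frac{(ri,sj)}{(\sigma i,\tau j)}$ (for fixed non-zero-divisors $i\in I$, $j\in J$) and checks both composites are identities, whereas you invoke the universal property of localization and verify injectivity and surjectivity separately; the key idea in both---using regular elements of $I$ and $J$ to produce zero-divisor witnesses and to clear arbitrary fractions into the fiber product---is identical.
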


\begin{proof}
Let $f:\frac{R}{I}\to \frac{S}{J}$ be an isomorphism. Then we have $R\times_{T} S=\{(r,s)\text{ }:\text{ }f(\overline{r})=\overline{s}\text{ in }\frac{S}{J}\}$. 
\\
Since $I,J$ are regular ideals of $R,S$ respectively, so there exist $i\in I$ and $j\in J$ such that $i,j$ are non-zero-divisors on $R,S$ respectively. Let $(a,b)\in R\times_{T} S$ such that $a,b$ are non-zero-divisors on $R,S$ respectively. Now if $(r,s)\in R\times_{T} S$ with $(0,0)=(r,s)(a,b)=(ra,sb)$, then $ra=0,sb=0$, which implies $r=0$, $s=0$. So, $(r,s)=(0,0)$, which means $(a,b)$ is a non-zero-divisor on $R\times_{T} S$. Next let, $(a,b)$ be a non-zero-divisor on $R\times_{T} S$. First, we will show that $a$ is a non-zero-divisor on $R$. Let $c\in R$ be such that $ac=0$. Then $(ic,0)\in R\times_{T} S$ and $(a,b)(ic,0)=(0,0)$. Since $(a,b)$ is a non-zero-divisor on $R\times_{T} S$, hence $(ic,0)=(0,0)$, so $ic=0$. Since $i$ is a non-zero-divisor on $R$, so $c=0$. This implies that $a$ is a non-zero-divisor on $R$. Similarly, $b$ is a non-zero-divisor on $S$. Thus the non-zero-divisors on $R\times_{T} S$ are of the form $(a,b)\in R\times_{T} S$ such that $a,b$ are non-zero-divisors on $R,S$ respectively.
\\
Now consider the map $\phi:Q(R\times_{T} S)\to Q(R)\times Q(S)$ defined by $\phi\left(\dfrac{(r,s)}{(\sigma,\tau)}\right)=\left(\dfrac{r}{\sigma},\dfrac{s}{\tau}\right)$. Also, define $\psi:Q(R)\times Q(S)\to Q(R\times_{T} S)$ by $\psi\left(\dfrac{r}{\sigma},\dfrac{s}{\tau}\right)=\dfrac{(ri,sj)}{(\sigma i,\tau j)}$ $\Big($Here $\dfrac{(ri,sj)}{(\sigma i,\tau j)}\in Q(R\times_{T} S)$, since $f(\overline{ri})=f(\overline{0})=\overline{0}=\overline{sj}$, $f(\overline{\sigma i})=f(\overline{0})=\overline{0}=\overline{\tau j}$ in $\frac{S}{J}$ and $\sigma i,\tau j$ are non-zero-divisors on $R,S$ respectively as $\sigma, i$ are non-zero-divisors on $R$ and $\tau, j$ are non-zero-divisors on $S\Big)$. It is easy to check that, $\phi,\psi$ are well-defined ring homomorphisms. Now $(\phi\circ\psi)\left(\dfrac{r}{\sigma},\dfrac{s}{\tau}\right)=\phi\left(\dfrac{(ri,sj)}{(\sigma i,\tau j)}\right)=\left(\dfrac{ri}{\sigma i},\dfrac{sj}{\tau j}\right)=\left(\dfrac{r}{\sigma },\dfrac{s}{\tau}\right)$ and $(\psi\circ\phi)\left(\dfrac{(r,s)}{(\sigma,\tau)}\right)=\psi\left(\dfrac{r}{\sigma},\dfrac{s}{\tau}\right)=\dfrac{(ri,sj)}{(\sigma i,\tau j)}=\dfrac{(r,s)}{(\sigma,\tau)}\dfrac{(i,j)}{(i,j)}=\dfrac{(r,s)}{(\sigma,\tau)}$. Thus $\phi\circ\psi=\text{id}_{Q(R)\times Q(S)}$ and $\psi\circ\phi=\text{id}_{Q(R\times_{T} S)}$. Hence $\phi$ is an isomorphism, so $Q(R\times_{T} S)\cong Q(R)\times Q(S)$.
\end{proof}

\begin{lem}\label{fibre}
Let $R,S$ be two commutative noetherian rings and let $I,J$ be regular proper ideals of $R,S$ respectively such that $\frac{R}{I}\cong \frac{S}{J}(=:T)$. Then $I\times J$ is a regular ideal of $R\times_T S$. Moreover, we have isomorphism of rings $((I\times J)^n:_{Q(R\times_T S)}(I\times J)^n)\cong(I^n:_{Q(R)} I^n) \times(J^n:_{Q(S)}J^n)$ for all $n\ge 1$.
\end{lem}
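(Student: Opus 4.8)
The plan is to reduce the statement to a description of the fiber product's submodules of the total ring of fractions, and then unwind both sides as explicit subsets of $Q(R) \times Q(S)$. First I would set $A := R \times_T S$, and use Lemma \ref{general fibre} to identify $Q(A)$ with $Q(R) \times Q(S)$ via the isomorphism $\phi$ sending $\tfrac{(r,s)}{(\sigma,\tau)} \mapsto (\tfrac{r}{\sigma}, \tfrac{s}{\tau})$. Under this identification, $A \subseteq Q(A)$ sits inside $R \times S \subseteq Q(R) \times Q(S)$. The first real step is to check that $(I \times J)^n = I^n \times J^n$ as ideals of $A$; this is not quite formal because $I \times J$ is only an ideal of $A$ (not of $R \times S$), but since $I, J$ are regular it is a regular ideal of $A$ by the first part of Lemma \ref{fibre}, and one checks $I^n \times J^n \subseteq A$ (both coordinates reduce to $0$ in $T$) together with the reverse containment $(I\times J)^n \supseteq I^n \times J^n$ coming from products of $n$ generators of the form $(x,0)$ and $(0,y)$ with $x \in I$, $y \in J$; the containment $(I \times J)^n \subseteq I^n \times J^n$ is immediate since each factor lands in $I \times J \subseteq I \times S$ and $I \times S \cap$ (image) forces the first coordinate into $I$, etc.

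Next I would compute, inside $Q(A) = Q(R) \times Q(S)$, the colon module
$$
\big((I\times J)^n :_{Q(A)} (I \times J)^n\big) = \{(u,v) \in Q(R) \times Q(S) : (u,v)(I^n \times J^n) \subseteq I^n \times J^n\}.
$$
Because the ideal $I^n \times J^n$ splits as a product across the two factors, the condition $(u,v)(I^n \times J^n) \subseteq I^n \times J^n$ decouples completely: it holds if and only if $u I^n \subseteq I^n$ and $v J^n \subseteq J^n$, i.e. if and only if $u \in (I^n :_{Q(R)} I^n)$ and $v \in (J^n :_{Q(S)} J^n)$. This is the heart of the argument, and it is essentially a one-line check once the product decomposition of $(I\times J)^n$ is in hand: one only needs that $I^n \times 0$ and $0 \times J^n$ are both contained in $I^n \times J^n$, so that testing against them separately is legitimate, and that the ambient $Q(A)$ really is the full product $Q(R) \times Q(S)$ so that no ``mixed'' fractions are lost — both supplied by Lemma \ref{general fibre}.

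Finally I would observe that this set-theoretic equality is compatible with the ring structures: $\phi$ is a ring isomorphism $Q(A) \to Q(R) \times Q(S)$, and under it the subring $\big((I\times J)^n : (I\times J)^n\big)$ of $Q(A)$ maps isomorphically onto the subring $(I^n : I^n) \times (J^n : J^n)$ of $Q(R) \times Q(S)$; multiplication and addition are coordinatewise on the right, matching the ring operations on the left transported through $\phi$. Hence the displayed ring isomorphism follows for all $n \ge 1$. The main obstacle I anticipate is purely bookkeeping: being careful that $I \times J$ is handled as an ideal of $A$ rather than of $R \times S$, and verifying the identity $(I\times J)^n = I^n \times J^n$ cleanly; once that is settled, the decoupling of the colon condition across the two factors makes the rest routine.
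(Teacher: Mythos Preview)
Your proposal is correct and follows essentially the same approach as the paper: both use Lemma \ref{general fibre} to identify $Q(R\times_T S)$ with $Q(R)\times Q(S)$, verify $(I\times J)^n = I^n\times J^n$, and then check that the colon condition decouples across the two factors. The paper carries out this last step by explicit fraction chasing (testing against elements $(a,0)$ and $(0,b)$ and constructing preimages of the form $\tfrac{(ri,sj)}{(\sigma i,\tau j)}$), while you phrase it more conceptually as a coordinatewise computation inside $Q(R)\times Q(S)$; these are the same argument, and your version is arguably cleaner. One small presentational slip: you justify ``$I\times J$ is a regular ideal of $A$'' by citing ``the first part of Lemma \ref{fibre}'', which is the very claim being proved --- you should instead just observe directly that $(i,j)\in I\times J$ is a non-zerodivisor on $A$ by Lemma \ref{general fibre}.
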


\begin{proof}
Let $f:\frac{R}{I}\to \frac{S}{J}$ be an isomorphism. Then we have $R\times_{T} S=\{(r,s)\text{ }:\text{ }f(\overline{r})=\overline{s}\text{ in }\frac{S}{J}\}$. 
\\
Since for any $(i,j)\in I\times J$ we have $f(\overline{i})=f(\overline{0})=\overline{0}=\overline{j}$ in $\frac{S}{J}$, so $I\times J\subseteq R\times_T S$. Thus $I\times J$ is an ideal of $R\times_T S$, as $I\times J$ is an ideal of $R\times S$. Since $I,J$ are regular ideals of $R,S$ respectively, so there exist $i\in I$ and $j\in J$ such that $i,j$ are non-zero-divisors on $R,S$ respectively. Hence $(i,j)\in I\times J$ is a non-zero-divisor on $R\times_T S$, so $I\times J$ is a regular ideal of $R\times_T S$. Now from Lemma \ref{general fibre} we get that, the map $\phi:Q(R\times_{T} S)\to Q(R)\times Q(S)$ given by $\phi\left(\dfrac{(r,s)}{(\sigma,\tau)}\right)=\left(\dfrac{r}{\sigma},\dfrac{s}{\tau}\right)$ is a ring isomorphism. Consider the restriction of $\phi$ to $((I\times J)^n:_{Q(R\times_T S)}(I\times J)^n)$ and denote it by $\tilde{\phi}$. Since $\phi$ is a well-defined injective ring homomorphism, so $\tilde{\phi}$ is also a well-defined injective ring homomorphism.
\\
Next, we will show that $\tilde{\phi}(((I\times J)^n:_{Q(R\times_T S)}(I\times J)^n))=(I^n:_{Q(R)} I^n) \times(J^n:_{Q(S)}J^n)$. Let $\frac{(r,s)}{(\sigma,\tau)}\in ((I\times J)^n:_{Q(R\times_T S)}(I\times J)^n)$. Now to show $\frac{r}{\sigma}\in (I^n:_{Q(R)} I^n)$, we pick $a\in I^n$. Since $(I\times J)^n=I^n\times J^n$, so $(a,0)\in (I\times J)^n$. Then $(a,0)\cdot\left(\frac{(r,s)}{(\sigma,\tau)}\right)=\frac{(ar,0)}{(\sigma,\tau)}\in (I\times J)^n$. So, $\frac{(ar,0)}{(\sigma,\tau)}=\frac{(c,d)}{(1,1)}$ in $Q(R\times_T S)$ for some $(c,d)\in(I\times J)^n=I^n\times J^n$. Since $Q(R\times_T S)$ is obtained from $R\times_T S$ by inverting the non-zero-divisors of $R\times_T S$, so $(ar,0)=(\sigma c,\tau d)$. Hence $ar=\sigma c$, so $a\cdot \left(\frac{r}{\sigma}\right)=\frac{ar}{\sigma}=c\in I^n$ in $Q(R)$. Thus $\frac{r}{\sigma}\in (I^n:_{Q(R)} I^n)$. Similarly, $\frac{s}{\tau}\in (J^n:_{Q(S)}J^n)$. So, $\tilde{\phi}\left(\frac{(r,s)}{(\sigma,\tau)}\right)\in(I^n:_{Q(R)} I^n) \times(J^n:_{Q(S)}J^n)$, which implies $\tilde{\phi}(((I\times J)^n:_{Q(R\times_T S)}(I\times J)^n))\subseteq(I^n:_{Q(R)} I^n) \times(J^n:_{Q(S)}J^n)$.
\\
Now let, $\left(\frac{r}{\sigma}, \frac{s}{\tau}\right)\in (I^n:_{Q(R)} I^n) \times(J^n:_{Q(S)}J^n)$. Then from the proof of Lemma \ref{general fibre} we get that, $\frac{(ri,sj)}{(\sigma i,\tau j)}\in Q(R\times_T S)$ and  $\phi\left(\frac{(ri,sj)}{(\sigma i,\tau j)}\right)=\left(\frac{r}{\sigma}, \frac{s}{\tau}\right)$. Now to show that, actually $\frac{(ri,sj)}{(\sigma i,\tau j)}\in((I\times J)^n:_{Q(R\times_T S)}(I\times J)^n)$, we pick $(a,b)\in (I\times J)^n=I^n\times J^n$. Then $(a,b)\cdot \left(\frac{(ri,sj)}{(\sigma i,\tau j)}\right)=\frac{(ari,bsj)}{(\sigma i,\tau j)}$. Since $\frac{r}{\sigma}\in (I^n:_{Q(R)} I^n)$, so $a\cdot \left(\frac{r}{\sigma}\right)=\frac{ar}{\sigma}\in I^n$. Then $ar=a'\sigma$ for some $a'\in I^n$. Similarly, $bs=b'\tau$ for some $b'\in J^n$. This implies $\frac{(ari,bsj)}{(\sigma i,\tau j)}=\frac{(a'\sigma i,b'\tau j)}{(\sigma i,\tau j)}=(a',b')\cdot\frac{(\sigma i,\tau j)}{(\sigma i,\tau j)}=(a',b')\in I^n\times J^n=(I\times J)^n$. Thus $\frac{(ri,sj)}{(\sigma i,\tau j)}\in ((I\times J)^n:_{Q(R\times_T S)}(I\times J)^n)$. So, $\left(\frac{r}{\sigma}, \frac{s}{\tau}\right)\in \tilde{\phi}(((I\times J)^n:_{Q(R\times_T S)}(I\times J)^n))$. Thus $(I^n:_{Q(R)} I^n) \times(J^n:_{Q(S)}J^n)\subseteq\tilde{\phi}(((I\times J)^n:_{Q(R\times_T S)}(I\times J)^n))$. Hence $\tilde{\phi}(((I\times J)^n:_{Q(R\times_T S)}(I\times J)^n))=(I^n:_{Q(R)} I^n) \times(J^n:_{Q(S)}J^n)$, which implies the map $\tilde{\phi}:((I\times J)^n:_{Q(R\times_T S)}(I\times J)^n)\to (I^n:_{Q(R)} I^n) \times(J^n:_{Q(S)}J^n)$ is a ring isomorphism.
\end{proof}


\begin{prop}\label{fibbl}
Let $R,S$ be two local Cohen--Macaulay rings with 
$\dim(R)=\dim(S)=1$. Also, let $I,J$ be regular proper ideals of $R,S$ respectively such that $\frac{R}{I}\cong \frac{S}{J}(=:T)$. Then every short exact sequence in $\Ul_{I\times J}(R\times_T S)$ splits if and only if every short exact sequence in both $\Ul_{I}(R)$ and $\Ul_{J}(S)$ splits.
\end{prop}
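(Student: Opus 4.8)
The plan is to reduce the whole statement to the regularity of blow-up rings, using Proposition~\ref{dim1uss} as a black box, and to identify the blow-up of the fibre product by means of Lemma~\ref{fibre}. Throughout, set $A:=R\times_T S$.

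First I would check that Proposition~\ref{dim1uss} is applicable to the pair $(A, I\times J)$, i.e.\ that $A$ is a one-dimensional local Cohen--Macaulay ring and $I\times J$ is a regular proper ideal of $A$. The latter is exactly Lemma~\ref{fibre}. For the former, note that $R\times S$ is module-finite over $A$ (it is generated over $A$ by $(1,0)$ and $(0,1)$, since for every $r\in R$ some $(r,s)$ lies in $A$); hence $A$ is Noetherian by Eakin--Nagata, and, since $A\hookrightarrow R\times S$ is integral, $\dim A=\dim(R\times S)=\max(\dim R,\dim S)=1$. Moreover $R$ and $S$ are local with $R/\m_R\cong T/\m_T\cong S/\m_S$, so $A$ is local with maximal ideal $\{(r,s)\in A:\ r\in\m_R\}$, and this ideal contains the non-zerodivisor of $A$ exhibited in Lemma~\ref{fibre}; thus $\depth A\ge 1=\dim A$ and $A$ is Cohen--Macaulay. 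The pairs $(R,I)$ and $(S,J)$ are one-dimensional local Cohen--Macaulay with regular ideals by hypothesis, so Proposition~\ref{dim1uss} applies to them as well.

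Next I would establish a ring isomorphism $B(I\times J)\cong B(I)\times B(J)$. By Lemma~\ref{blow} each of $B(I)$, $B(J)$, $B(I\times J)$ is a finite birational extension of $R$, $S$, $A$ respectively, and therefore (being the increasing union $\bigcup_n(\,\cdot^{\,n}:\cdot^{\,n}\,)$ of a finitely generated module, which must stabilize) each equals $(\,\cdot^{\,n}:\cdot^{\,n}\,)$ for all $n\gg 0$. Fixing a single $n$ that works for all three and applying Lemma~\ref{fibre} with that $n$ gives $B(I\times J)=((I\times J)^n:(I\times J)^n)\cong (I^n:I^n)\times(J^n:J^n)=B(I)\times B(J)$ as rings. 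Since regularity is a local property and the primes of $B(I)\times B(J)$ are precisely the primes of $B(I)$ together with those of $B(J)$, with localizations being localizations of the respective factors, the ring $B(I)\times B(J)$ is regular if and only if both $B(I)$ and $B(J)$ are regular.

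It then remains only to chain these facts. By Proposition~\ref{dim1uss} applied to $(A,I\times J)$, every short exact sequence in $\Ul_{I\times J}(A)$ splits iff $B(I\times J)$ is a regular ring; by the previous paragraph this holds iff $B(I)$ and $B(J)$ are both regular; and by Proposition~\ref{dim1uss} applied to $(R,I)$ and to $(S,J)$, this is in turn equivalent to every short exact sequence in $\Ul_I(R)$ and every short exact sequence in $\Ul_J(S)$ being split. That is the assertion. I do not expect a serious obstacle; the point requiring the most care is the preliminary verification that $R\times_T S$ is again a one-dimensional local Cohen--Macaulay ring (so that the machinery of Section~5 is available), together with confirming that the isomorphisms of Lemma~\ref{fibre} for varying $n$ are compatible enough to descend to the blow-up --- both of which are routine.
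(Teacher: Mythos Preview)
Your proposal is correct and follows essentially the same route as the paper: reduce splitting in $\Ul_{I\times J}(A)$ to regularity of $B(I\times J)$ via Proposition~\ref{dim1uss}, identify $B(I\times J)\cong B(I)\times B(J)$ using Lemma~\ref{fibre} at a single $n$ large enough to stabilize all three blow-ups, and conclude. The only real difference is that the paper dispatches the verification that $R\times_T S$ is a one-dimensional local Cohen--Macaulay ring by citing \cite[Lemma~1.2, Lemma~1.5]{avra}, whereas you supply a direct Eakin--Nagata/integrality/depth argument; your worry about ``compatibility for varying $n$'' is unnecessary, since choosing one common $n$ suffices.
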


\begin{proof}
Since $R,S$ are both local Cohen--Macaulay rings  and $\dim(R)=\dim(S)=1$, so $R\times_T S$ is also a local Cohen--Macaulay ring with $\dim(R\times_T S)=1$ (see \cite[Lemma 1.2, Lemma 1.5]{avra}). Now from Lemma \ref{blow} we have, $B(I)=(I^n:_{Q(R)} I^n)$, $B(J)=(J^n:_{Q(S)}J^n)$ and $B_{R\times_T S}(I\times J)=((I\times J)^n:_{Q(R\times_T S)}(I\times J)^n)$ for $n \gg 0$. Then from Theorem \ref{dim1uss} we get that, every short exact sequence in both $\Ul_{I}(R)$ and $\Ul_{J}(S)$ splits if and only if $B(I)$ and $B(J)$ are both regular rings if and only if $(I^n:_{Q(R)} I^n)$ and $(J^n:_{Q(S)}J^n)$ are both regular rings for $n \gg 0$ if and only if $(I^n:_{Q(R)} I^n)\times(J^n:_{Q(S)}J^n)$ is a regular ring for $n \gg 0$ if and only if $((I\times J)^n:_{Q(R\times_T S)}(I\times J)^n)$ is a regular ring for $n \gg 0$ by Lemma \ref{fibre} if and only if $B_{R\times_T S}(I\times J)$ is a regular ring if and only if every short exact sequence in $\Ul_{I\times J}(R\times_T S)$ splits.
\end{proof}

Now specialising the Proposition \ref{fibbl} by taking $I$, and $J$ to be the maximal ideals of $R$, and $S$ respectively, we obtain the following corollary.

\begin{cor}
Let $(R,\m_R,k)$ and $(S,\m_S, k)$ be two local Cohen--Macaulay rings with $\dim(R)=\dim(S)=1$. 
Then $R\times_k S$ is $\us$ if and only if both $R$ and $S$ are $\us$. 
\end{cor}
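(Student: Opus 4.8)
The plan is to read this off Proposition \ref{fibbl} by specializing $I=\m_R$, $J=\m_S$, $T=k$. The only work is matching the hypotheses and translating between $\Ul_\m$ and $\Ul$.

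First I would record the bookkeeping facts needed for the translation. Over any one-dimensional Cohen--Macaulay local ring $(A,\m_A)$, the maximal ideal $\m_A$ is $\m_A$-primary and contains a parameter, hence is a regular proper ideal, and moreover $\Ul_{\m_A}(A)=\Ul(A)$: one direction is clear since an Ulrich $M$ satisfies $\m_A M = xM\cong M$ for a principal reduction $x$ (using Proposition \ref{uldef} after the faithfully flat base change $A\to A[X]_{\m_A[X]}$ of Remark \ref{faithflat}, which preserves both ``Ulrich'' and ``$M\cong \m_A M$''), and the other direction follows because $M\cong\m_A M$ forces $\m_A^nM\cong M$ for all $n$, whence $\mu(M)=\mu(\m_A^nM)=\ell(\m_A^nM/\m_A^{n+1}M)=e(M)$ for $n\gg 0$. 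Next, the fiber product $P:=R\times_k S$ is local with maximal ideal exactly $\m_R\times\m_S$: a pair $(r,s)\in P$ is a unit iff $r\notin\m_R$ and $s\notin\m_S$, and the condition $\bar r=\bar s$ in $k$ forces $r\in\m_R\iff s\in\m_S$, so the non-units form the ideal $\m_R\times\m_S$ (which lies in $P$ since $\bar i=0=\bar j$ for $(i,j)\in\m_R\times\m_S$); also $P$ is one-dimensional Cohen--Macaulay, as already used in Proposition \ref{fibbl}. Finally $(\m_R\times\m_S)^n=\m_R^n\times\m_S^n$.

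Then the corollary is a chain of equivalences. By the two facts above, ``$P$ is $\us$'' means every short exact sequence in $\Ul(P)=\Ul_{\m_R\times\m_S}(P)$ splits. Applying Proposition \ref{fibbl} with $I=\m_R$ and $J=\m_S$ (whose hypotheses — regular proper ideals with $R/\m_R\cong k\cong S/\m_S$ — are exactly the standing assumptions), this holds iff every short exact sequence in both $\Ul_{\m_R}(R)$ and $\Ul_{\m_S}(S)$ splits. Using $\Ul_{\m_R}(R)=\Ul(R)$ and $\Ul_{\m_S}(S)=\Ul(S)$ once more, this is precisely the statement that both $R$ and $S$ are $\us$, completing the proof.

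There is no genuinely hard step: the substantive content — identifying the blow-up of the fiber product with the product of the blow-ups, and descending regularity through that product decomposition — is already contained in Lemma \ref{fibre} and Proposition \ref{fibbl}. The only thing requiring care is the elementary verification that $\m_R\times\m_S$ is the maximal ideal of $P$, so that $\Ul_{\m_R\times\m_S}(P)$ really coincides with $\Ul(P)$, and the routine check that all hypotheses of Proposition \ref{fibbl} are met in this special case.
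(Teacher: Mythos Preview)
Your proposal is correct and follows exactly the approach indicated by the paper, which simply says the corollary is obtained by specializing Proposition \ref{fibbl} with $I=\m_R$ and $J=\m_S$. You have filled in the bookkeeping the paper leaves implicit—namely that $\m_R\times\m_S$ is the maximal ideal of the fiber product and that $\Ul_{\m_A}(A)=\Ul(A)$ for a one-dimensional Cohen--Macaulay local ring—so your argument is, if anything, more complete than the one in the paper.
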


\section{Some applications for detecting finite projective or injective dimension}  
Let us recall that an $R$-module $M$ is said to satisfy $(S_n)$ if $\depth_{R_{\p}} M_{\p}\geq \inf\{n,\dim R_{\p}\}$ for all $\p \in \spec(R)$. 
The following lemma, which will be used subsequently, is perhaps well-known, but we  include a proof for the sake of completeness. 

\begin{lem}\label{codim1} Let $M,N$ be finitely generated modules over a Noetherian ring $R$. Then the following holds:         

\begin{enumerate}[\rm(1)]
    \item If $M$ satisfies $(S_1)$ and $f:M\to N$ is a map that is locally injective at minimal primes of $R$, then $f$ is injective.
    
    \item If $M,N$ satisfy $(S_2)$ and $f:M\to N$ is a map that is locally an isomorphism in co-dimension $1$ (i.e., at prime ideals of height at most $1$), then $f$ is an isomorphism. 
    
    \item If $M,N$ satisfy $(S_2)$, $N$ is locally free of rank $1$ in co-dimension $1$, and  moreover $M$ is locally reflexive in co-dimension $1$, then $\Hom_R(\Hom_R(M,N),N)\cong M$. So, in particular, if $R,N$ satisfy $(S_2)$, and $N$ is locally free of rank $1$ in co-dimension $1$, then $\Hom_R(N,N)\cong R$. 
     \item If $N$ satisfy $(S_2)$ and $M$ is locally free in co-dimension $1$, then $\Hom_R(M^*,\Hom_R(M,N))\cong \Hom_R(\Hom_R(M,M),N)$, and moreover, this is isomorphic to $N$ if $R,M$ also satisfy $(S_2)$. 
\end{enumerate}    
\end{lem}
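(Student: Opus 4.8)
The plan is to prove the four parts in order, since each builds on its predecessors, and to reduce all the content to short bookkeeping with depth and the Serre conditions. For (1), put $K=\ker f$. Because $M$ satisfies $(S_1)$, every $\mathfrak p\in\Ass M$ has $\depth M_{\mathfrak p}=0$ and hence lies in $\Min R$; the same then holds for $\Ass K\subseteq\Ass M$. By hypothesis $K_{\mathfrak p}=0$ for every $\mathfrak p\in\Min R$, so $K$ has no associated prime, forcing $K=0$. For (2), note $(S_2)\Rightarrow(S_1)$ and that $f$ being an isomorphism in codimension $1$ is in particular injective at the minimal primes, so (1) gives that $f$ is injective; let $C=\operatorname{coker}f$, so $C_{\mathfrak p}=0$ whenever $\operatorname{ht}\mathfrak p\le 1$. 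If $C\ne 0$, choose $\mathfrak p$ minimal in $\supp C$; then $\operatorname{ht}\mathfrak p\ge 2$ and $C_{\mathfrak p}$ is a nonzero finite-length module, so $\depth C_{\mathfrak p}=0$. But localizing $0\to M\to N\to C\to 0$ at $\mathfrak p$ and applying the depth lemma, together with $\depth M_{\mathfrak p}\ge 2$ and $\depth N_{\mathfrak p}\ge 2$ (valid by $(S_2)$ since $\dim R_{\mathfrak p}\ge 2$), gives $\depth C_{\mathfrak p}\ge 1$, a contradiction; hence $C=0$.

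For (3), the key preliminary is the standard fact that $\Hom_R(L,N)$ satisfies $(S_2)$ for every finitely generated $L$ whenever $N$ does: a finite presentation of $L$ realizes $\Hom_R(L,N)$ as the kernel of a map $N^a\to N^b$, whose image is a submodule of $N^b$ and hence again satisfies $(S_1)$, and a routine depth-lemma computation then upgrades $\Hom_R(L,N)$ itself to $(S_2)$. Applying this twice, $\Hom_R(\Hom_R(M,N),N)$ satisfies $(S_2)$, and so does $M$. At a prime $\mathfrak p$ of height at most $1$ we have $N_{\mathfrak p}\cong R_{\mathfrak p}$, so the canonical biduality map $M\to\Hom_R(\Hom_R(M,N),N)$ localizes to the ordinary biduality map $M_{\mathfrak p}\to M_{\mathfrak p}^{**}$, which is an isomorphism since $M$ is reflexive in codimension $1$. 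Part (2) then gives $\Hom_R(\Hom_R(M,N),N)\cong M$, and the ``in particular'' statement is just the case $M=R$ (where $\Hom_R(\Hom_R(R,N),N)=\Hom_R(N,N)$).

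For (4), consider the natural map $\theta\colon M^{*}\otimes_R M\to\Hom_R(M,M)$, $\phi\otimes m\mapsto(x\mapsto\phi(x)m)$. Since $M$ is locally free in codimension $1$, $\theta$ localizes to the usual isomorphism at every prime of height at most $1$, so $\ker\theta$ and $\operatorname{coker}\theta$ are supported in codimension $\ge 2$. Because $N$ satisfies $(S_2)$, any finitely generated $C$ with $\supp C$ of codimension $\ge 2$ has $\operatorname{grade}(\ann C,N)\ge 2$, hence $\Hom_R(C,N)=\Ext^1_R(C,N)=0$; feeding this into the sequences $0\to\ker\theta\to M^{*}\otimes_R M\to\operatorname{im}\theta\to 0$ and $0\to\operatorname{im}\theta\to\Hom_R(M,M)\to\operatorname{coker}\theta\to 0$ shows $\Hom_R(\theta,N)$ is an isomorphism. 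Combining with the tensor--hom adjunction $\Hom_R(M^{*}\otimes_R M,N)\cong\Hom_R(M^{*},\Hom_R(M,N))$ yields $\Hom_R(\Hom_R(M,M),N)\cong\Hom_R(M^{*},\Hom_R(M,N))$. For the final clause, the extra hypotheses let us apply the special case of (3) with $M$ playing the role of $N$ to get $\Hom_R(M,M)\cong R$, whence $\Hom_R(\Hom_R(M,M),N)\cong\Hom_R(R,N)=N$.

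The step I expect to be the main obstacle is the vanishing of $\Ext^1_R(-,N)$ on modules supported in codimension $\ge 2$ in (4): this is precisely where the $(S_2)$ hypothesis on $N$ is genuinely used, via the identity $\operatorname{grade}(I,N)=\inf\{\depth N_{\mathfrak p}:\mathfrak p\supseteq I\}$ and the vanishing of $\Ext^{<\operatorname{grade}}_R(-,N)$. The other point requiring care (more bookkeeping than difficulty) is the precise meaning of ``reflexive in codimension $1$'' in (3): it must be biduality with respect to $N$, which in codimension $1$ coincides with biduality over $R$ exactly because $N$ is locally free of rank $1$ there, so that the reduction to (2) is legitimate.
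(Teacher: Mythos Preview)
Your proof is correct. Parts (1)--(3) follow essentially the same route as the paper: for (1) and (2) the arguments are identical up to cosmetic choices (you take $\mathfrak p$ minimal in $\supp C$, the paper takes $\mathfrak p\in\Ass C$; both yield $\depth C_{\mathfrak p}=0$), and for (3) the only difference is that the paper cites a Stacks Project lemma for the fact that $\Hom_R(L,N)$ is $(S_2)$ whenever $N$ is, whereas you supply a short direct argument via a finite presentation and the depth lemma.

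The genuine divergence is in (4). The paper observes that the induced map $\Hom_R(\theta,N)\colon \Hom_R(\Hom_R(M,M),N)\to\Hom_R(M^{*}\otimes_R M,N)$ is an isomorphism in codimension~$1$, notes both sides are $(S_2)$ by the preliminary in (3), and then simply invokes part (2). You instead argue that $\ker\theta$ and $\operatorname{coker}\theta$ are supported in codimension $\ge 2$, use $(S_2)$ on $N$ to force $\operatorname{grade}(\ann(-),N)\ge 2$ and hence $\Hom_R(-,N)=\Ext^1_R(-,N)=0$ on such modules, and chase the two short exact sequences to conclude. Both are valid; the paper's route is more self-contained (it reuses (2) and the $(S_2)$ preliminary already established), while yours isolates exactly where the $(S_2)$ hypothesis on $N$ enters via the grade identity, at the cost of invoking the standard Ext-vanishing-below-grade fact. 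Your closing remarks correctly identify this step and the meaning of ``reflexive in codimension $1$'' as the only places requiring care.
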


\begin{proof} (1) Let $K:=\ker f$. If possible let, $K\ne 0$. Take $\p \in \Ass K \subseteq \Ass M$. Then $0=\depth M_{\p}\ge \inf \{1,\text{ht } \p\}$, so $\text{ht } \p=0$. Then by assumption, $f_{\p}$ is injective. Hence $K_{\p}=\ker f_{\p}=0$, contradicting $\p\in \Ass K\subseteq \supp K$. Thus $K=0$, i.e., $f$ is injective. 

(2)  By (1) we get that, $f$ is injective. Now let, $L=\text{coker} f$. If possible let, $L\neq 0$. Let $\p \in \Ass L$. Now the exact sequence $0\to M\to N\to L\to 0$ gives rise to the exact sequence $0\to M_{\p}\to N_{\p}\to L_{\p}\to 0$. Then the depth lemma gives $0=\depth L_{\p}\ge \inf \{\depth N_{\p}, \depth M_{\p}-1\}$. Now if ht $\p\ge 2$, then $\depth M_{\p}, \depth N_{\p}$ are at least $2$. Hence $0\ge \inf \{2,2-1\}=1$, impossible! Thus ht $\p\le 1$, and then by assumption $f_{\p}$ is an isomorphism, and so $L_{\p}=\text{coker} f_{\p}=0$, contradicting $\p\in \supp L$. Thus $\text{coker} f=0$, i.e., $f$ is also surjective.    

(3) First we show that, if $N$ satisfies $(S_2)$, then $\Hom_R(L,N)$ also satisfies $(S_2)$ for any finitely generated $R$-module $L$. Indeed, first assume that, ht $\p\ge 1$. Then $\depth N_{\p}\ge 1$, and hence $\depth (\Hom_R(L,N)_{\p}) \ge 1$ by \cite[Tag 0AV5]{st}. Similarly, if ht $\p \ge 2$, then $\depth N_{\p}\ge 2$, and so, \cite[Tag 0AV5]{st} implies $\depth (\Hom_R(L,N)_{\p}) \ge 2$. Thus $\Hom_R(L,N)$ satisfies $(S_2)$ for any finitely generated $R$-module $L$. So, in particular, $\Hom_R(\Hom_R(M,N),N)$ satisfies $(S_2)$. Now we have a natural map $M\to \Hom_R(\Hom_R(M,N),N)$ defined by $m\mapsto (f\mapsto f(m))$. This map is an isomorphism in co-dimension $1$, because in co-dimension $1$, $M$ is reflexive and $N$ is free of rank $1$. Hence by (2) we get that, this natural map is an isomorphism.  

(4) First note that, by Tensor-Hom adjunction we have, $\Hom_R(M^*\otimes_R M,N)\cong \Hom_R(M^*,\Hom_R(M,N))$. Also, there is a natural tensor evaluation map $M^*\otimes_R M\to \Hom_R(M,M)$ given by
$f\otimes m \mapsto (x\mapsto f(m)x)$ and this map is an isomorphism in co-dimension $1$, because $M$ is locally free in co-dimension $1$. Hence we have a natural map $\Hom_R(\Hom_R(M,M),N) \to \Hom_R(M^*\otimes_R M,N)$, which is an isomorphism in co-dimension $1$. Since $N$ satisfies $(S_2)$, so by the argument in (3) we know that, $\Hom_R(M^*\otimes_R M,N)$ and $\Hom_R(\Hom_R(M,M),N)$ also satisfy $(S_2)$. Hence by (2) we get that, $\Hom_R(M^*\otimes_R M,N) \cong \Hom_R(\Hom_R(M,M),N)$, and this proves the first isomorphism. Now if, moreover, $R,M$ satisfy $(S_2)$ and $M$ is locally free of rank $1$ in co-dimension $1$, then by (3) we know that $\Hom_R(M,M)\cong R$ . Hence we get, $\Hom_R(M^*\otimes_R M,N) \cong \Hom_R(\Hom_R(M,M),N)\cong N$.  
\end{proof} 

The first part of the following proposition refines part of \cite[Proposition 4.1]{umm} in a sense that, we do not assume $M$ is maximal Cohen--Macaulay, and moreover, we are also able to  say $\Hom_R(\syz_R M, N)$ is Ulrich.   

\begin{prop}\label{ulgen} Let $R$ be a local Cohen--Macaulay ring of dimension $d\ge 2$ and $M,N$ be $R$-modules.

\begin{enumerate}[\rm(1)] 

\item Assume $N$ is Ulrich. Let $\Ext^i_R(M,N)=0$ for all $1\le i\le d-1$. Then $\hom_R(M,N)$ and $\hom_R(\syz_R M,N)$ are Ulrich modules.  

\item Assume $M$ is Ulrich and $N$ is maximal Cohen--Macaulay, and $R$ admits a canonical module. Let $\Ext^i_R(M,N)=0$ for all $1\le i\le d-1$. Then $\hom_R(M,N)$ and $\hom_R( M, (\syz_R N^{\dagger })^{\dagger})$ are Ulrich modules. 
\end{enumerate}  
\end{prop}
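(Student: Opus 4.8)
The plan is to deduce (1) from a single depth count on the $N$-dual of a free resolution of $M$, and then to obtain (2) from (1) by canonical duality.

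For (1), fix a minimal free resolution $\cdots\to F_1\to F_0\to M\to 0$, let $b_j=\operatorname{rank}F_j$, write $\syz^0_RM=M$, and set $Z^j:=\ker\bigl(\Hom_R(F_j,N)\to\Hom_R(F_{j+1},N)\bigr)$. Truncating the resolution at spot $j$ identifies $Z^j\cong\Hom_R(\syz^j_RM,N)$, while $\Hom_R(F_j,N)\cong N^{\oplus b_j}$ is maximal Cohen--Macaulay since $N$ is. The cohomology of $\Hom_R(F_\bullet,N)$ is $\Ext^i_R(M,N)$, which vanishes for $1\le i\le d-1$, so for $0\le j\le d-2$ there are short exact sequences $0\to Z^j\to N^{\oplus b_j}\to Z^{j+1}\to 0$. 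The one point needing care is that the bottom term $Z^{d-1}=\ker(N^{\oplus b_{d-1}}\to N^{\oplus b_d})$ has depth at least $2$: either it equals $N^{\oplus b_{d-1}}$, or its image in $N^{\oplus b_d}$ is a nonzero submodule of a module of depth $d\ge 2$, hence of positive depth, and the depth lemma applied to $0\to Z^{d-1}\to N^{\oplus b_{d-1}}\to\Im\to 0$ gives $\depth Z^{d-1}\ge\min(d,2)=2$. Feeding this estimate up the short exact sequences gives $\depth Z^j\ge\min(d,\,d+1-j)$ for $0\le j\le d-1$, so $Z^0=\Hom_R(M,N)$ and $Z^1=\Hom_R(\syz_RM,N)$ are both maximal Cohen--Macaulay. (If one merely used that $Z^{d-1}$ is a submodule of a maximal Cohen--Macaulay module, hence of depth $\ge1$, the chase would give $\Hom_R(\syz_RM,N)$ depth only $d-1$; supplying this extra unit of depth is the step I expect to be the crux.)

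To see that these two modules are moreover Ulrich, I count multiplicities along $0\to Z^0\to N^{\oplus b_0}\to Z^1\to 0$. The Hilbert--Samuel multiplicity $e(-)$ is additive on this sequence (all three modules have dimension $d$), the minimal number of generators $\mu(-)$ is subadditive, and $\mu(X)\le e(X)$ for every maximal Cohen--Macaulay $X$. Since $N$ is Ulrich, $e(N^{\oplus b_0})=b_0e(N)=b_0\mu(N)=\mu(N^{\oplus b_0})$, whence $e(Z^0)+e(Z^1)=e(N^{\oplus b_0})=\mu(N^{\oplus b_0})\le\mu(Z^0)+\mu(Z^1)\le e(Z^0)+e(Z^1)$. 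Thus all inequalities are equalities, and since $\mu(Z^i)\le e(Z^i)$ term by term we get $\mu(Z^0)=e(Z^0)$ and $\mu(Z^1)=e(Z^1)$; hence $\Hom_R(M,N)$ and $\Hom_R(\syz_RM,N)$ are Ulrich. (Alternatively, after passing to $R[X]_{\m[X]}$ to make the residue field infinite, one can argue via $\m N^{\oplus b_0}=\underline x N^{\oplus b_0}$ for a minimal reduction $\underline x$ of $\m$ together with the fact that $Z^1$ is a quotient of $N^{\oplus b_0}$; the multiplicity count bypasses this reduction.)

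For (2), write $(-)^\dagger=\Hom_R(-,\omega)$. Since $M$ is Ulrich, $M^\dagger$ is Ulrich by \cite[Corollary 2.2]{umm}, while $N^\dagger$ and $\syz_RN^\dagger$ are maximal Cohen--Macaulay. For maximal Cohen--Macaulay $X,Y$ one has the standard duality $R\Hom_R(X,Y)\simeq R\Hom_R(X\otimes^{\mathbf L}_RY^\dagger,\omega)\simeq R\Hom_R(Y^\dagger,X^\dagger)$, using $R\Hom_R(Y^\dagger,\omega)\simeq Y$ (as $Y^\dagger$ is maximal Cohen--Macaulay) and derived tensor--hom adjunction; in particular $\Hom_R(X,Y)\cong\Hom_R(Y^\dagger,X^\dagger)$ and $\Ext^i_R(X,Y)\cong\Ext^i_R(Y^\dagger,X^\dagger)$ for all $i$. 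Applying this with $(X,Y)=(N^\dagger,M^\dagger)$ gives $\Ext^i_R(N^\dagger,M^\dagger)\cong\Ext^i_R(M,N)=0$ for $1\le i\le d-1$, so part (1), applied with $(M,N)$ replaced by $(N^\dagger,M^\dagger)$ (note $M^\dagger$ is Ulrich), shows $\Hom_R(N^\dagger,M^\dagger)$ and $\Hom_R(\syz_R(N^\dagger),M^\dagger)$ are Ulrich. Finally $\Hom_R(N^\dagger,M^\dagger)\cong\Hom_R(M,N)$ by the degree-zero case of the duality, and $\Hom_R(\syz_R(N^\dagger),M^\dagger)\cong\Hom_R\bigl(M\otimes_R\syz_RN^\dagger,\omega\bigr)\cong\Hom_R\bigl(M,(\syz_RN^\dagger)^\dagger\bigr)$ by tensor--hom adjunction; this completes the proof.
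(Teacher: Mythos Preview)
Your proof is correct. For part (2) it is essentially the same as the paper's: both dualize to reduce to part (1) applied to $(N^\dagger,M^\dagger)$, using that $M^\dagger$ is Ulrich and that $\Ext^i_R(M,N)\cong\Ext^i_R(N^\dagger,M^\dagger)$ for maximal Cohen--Macaulay modules; the only cosmetic difference is that the paper cites \cite[Proposition 2.1]{umm} for $\Hom_R(M,N)$ being Ulrich rather than obtaining it from the duality.

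For part (1) your route differs from the paper's. The paper works only with the single short exact sequence $0\to\Hom_R(M,N)\to N^{\oplus n}\to\Hom_R(\syz_RM,N)\to 0$ and, to see the right-hand term is maximal Cohen--Macaulay, separates into $d=2$ (direct depth argument) and $d\ge 3$ (invoking \cite[Lemma 2.3]{dao}, which gives that $\Hom_R(\syz_RM,N)$ satisfies $(S_d)$ from the vanishing $\Ext^{1\le i\le d-2}_R(\syz_RM,N)=0$). You instead run the entire dualized resolution, establish $\depth Z^{d-1}\ge 2$ by hand, and chase depth upward through the chain of short exact sequences; this is more elementary and avoids the external reference, at the cost of a longer argument. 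Your inline multiplicity count $e(Z^0)+e(Z^1)=\mu(N^{\oplus b_0})\le\mu(Z^0)+\mu(Z^1)\le e(Z^0)+e(Z^1)$ is exactly the content of \cite[Proposition 1.4]{ul}, which the paper simply cites.
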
  

\begin{proof} (1) First, we prove the $d=2$ case. In this case, the assumption yields $\Ext^1_R(M,N)=0$. Consider the short exact sequence $0\to \syz_R M\to R^{\oplus n}\to M \to 0$ for some integer $n\ge 0$. Applying $\hom_R(-,N)$ we get a short exact sequence $0\to \hom_R(M,N) \to N^{\oplus n}\to \hom_R(\syz_R M,N)\to 0$. Since $N$ is Ulrich, so it is maximal Cohen--Macaulay. Hence $\depth N=2$. So, $\hom_R(M,N)$ and $\hom_R(\syz_R M,N)$ are maximal Cohen--Macaulay by \cite[Tag 0AV5]{st}. Now since $N^{\oplus n}$ is Ulrich, so $\hom_R(M,N)$ and $\hom_R(\syz_R M,N)$ are also Ulrich by \cite[Proposition 1.4]{ul}.    

Next, we assume that $d\ge 3$. Since $\Ext^1_R(M,N)=0$, so similar to above we get a short exact sequence $0\to \hom_R(M,N) \to N^{\oplus n}\to \hom_R(\syz_R M,N)\to 0$. Since $N$ is maximal Cohen--Macaulay and

$\Ext^i_R(\syz_R M,N)=\Ext^{i+1}_R(M,N)=0$ for all $1\le i\le d-2=(d-1)-1$, where $d-1>1$, so \cite[Lemma 2.3]{dao} implies that $\hom_R(\syz_R M,N)$ satisfies $(S_d)$, i.e., it is a maximal Cohen--Macaulay module. Now since $N$ is Ulrich, so from the short exact sequence $0\to \hom_R(M,N) \to N^{\oplus n}\to \hom_R(\syz_R M,N)\to 0$ we get that $\hom_R(M,N)$ and $\hom_R(\syz_R M,N)$ are Ulrich by \cite[Proposition 1.4]{ul}. 

(2) That $\Hom_R(M,N)$ is Ulrich follows from \cite[Proposition 4.1]{umm}. To prove $\hom_R( M, (\syz_R N^{\dagger })^{\dagger})$ is Ulrich, note that since $M,N$ are maximal Cohen--Macaulay, so $0=\Ext^{i}_R(M,N)\cong \Ext^{i}_R(N^{\dagger},M^{\dagger})$ for all $1\le i\le d-1$. Since $M^{\dagger}$ is Ulrich (\cite[Corollary 4.2]{umm}), so by part (1) we get that, $\Hom_R(\syz_R N^{\dagger}, M^{\dagger})$ is Ulrich. But $\Hom_R(\syz_R N^{\dagger}, M^{\dagger})\cong \hom_R( M, (\syz_R N^{\dagger })^{\dagger})$, hence the claim.  
\end{proof}  

Now we prove that, for rings which are $\us$, small number of vanishing of some particular Ext modules implies finite projective dimension. 

\begin{prop}\label{rigid}  Let $(R,\m)$ be a local Cohen--Macaulay ring of dimension $d\ge 2$, and assume $R$ is $\us$. Let $N$ be an Ulrich $R$-module, which is locally free of rank $1$ in co-dimension $1$. Let $M$ be an $R$-module such that $\Ext^i_R(M,N)=0$ for all $1\le i\le d-1$. If $\syz_R M$ is reflexive, then $\operatorname{pd} M\le 1$. If $M$ is reflexive, then $M$ is free. 
\end{prop}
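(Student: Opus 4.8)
The plan is to establish a single isomorphism $R^{\oplus n}\cong M^{\vee\vee}\oplus(\syz_R M)^{\vee\vee}$, where $(-)^{\vee}:=\Hom_R(-,N)$ and $X^{\vee\vee}:=\Hom_R(\Hom_R(X,N),N)$, and then to read off both assertions by recognizing a reflexive direct summand sitting inside a free module.

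First I would take the exact sequence $0\to\syz_R M\to R^{\oplus n}\to M\to 0$ coming from a (minimal) free cover of $M$ and apply $(-)^{\vee}$. Since $\Ext^1_R(M,N)=0$, this produces an exact sequence $0\to M^{\vee}\to N^{\oplus n}\to(\syz_R M)^{\vee}\to 0$, and by Proposition \ref{ulgen}(1) --- which applies because $d\ge 2$, $N$ is Ulrich, and $\Ext^i_R(M,N)=0$ for $1\le i\le d-1$ --- all three terms of this sequence are Ulrich. As $R$ is $\us$, the sequence splits, so $N^{\oplus n}\cong M^{\vee}\oplus(\syz_R M)^{\vee}$. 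Applying $(-)^{\vee}$ again and using that $\Hom$ distributes over finite direct sums gives $\Hom_R(N,N)^{\oplus n}\cong M^{\vee\vee}\oplus(\syz_R M)^{\vee\vee}$. Now $R$ is Cohen--Macaulay, hence satisfies $(S_2)$, and $N$, being Ulrich, is maximal Cohen--Macaulay, hence also satisfies $(S_2)$; combined with the hypothesis that $N$ is locally free of rank one in codimension one, Lemma \ref{codim1}(3) yields $\Hom_R(N,N)\cong R$, and the claimed decomposition $R^{\oplus n}\cong M^{\vee\vee}\oplus(\syz_R M)^{\vee\vee}$ follows.

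To conclude, I would invoke Lemma \ref{codim1}(3) once more, now observing that any finitely generated reflexive $R$-module $X$ satisfies the hypotheses needed there for $X^{\vee\vee}\cong X$: it is locally reflexive everywhere (reflexivity of a finitely generated module localizes), and it satisfies $(S_2)$ because $X\cong\Hom_R(\Hom_R(X,R),R)$ and $\Hom_R(-,R)$ of any finitely generated module satisfies $(S_2)$ over an $(S_2)$ ring --- this is precisely the first step in the proof of Lemma \ref{codim1}(3), applied with target $R$. Hence, if $\syz_R M$ is reflexive then $(\syz_R M)^{\vee\vee}\cong\syz_R M$, so $\syz_R M$ is a direct summand of $R^{\oplus n}$, thus projective, thus free, giving $\operatorname{pd} M\le 1$; and if $M$ is reflexive then $M^{\vee\vee}\cong M$, so $M$ is a direct summand of $R^{\oplus n}$, hence free. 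The step most likely to want spelling out is the implication ``reflexive $\Rightarrow (S_2)$'' over a Cohen--Macaulay ring, which I expect to be the only (minor) obstacle and which is handled as indicated; note also that the two cases must be argued in parallel rather than deducing the second from the first, since $M$ reflexive need not force $\syz_R M$ reflexive when $R$ is not normal in codimension one.
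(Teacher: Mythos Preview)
Your proof is correct and follows essentially the same approach as the paper's own proof: take the syzygy sequence, apply $\Hom_R(-,N)$, use Proposition \ref{ulgen}(1) and the $\us$ hypothesis to split, then apply $\Hom_R(-,N)$ again and invoke Lemma \ref{codim1}(3) together with reflexivity to recover the module as a summand of a free module. The only cosmetic difference is that you package both conclusions via the single decomposition $R^{\oplus n}\cong M^{\vee\vee}\oplus(\syz_R M)^{\vee\vee}$ before specializing, whereas the paper treats the two cases in parallel from the outset; and you spell out the ``reflexive $\Rightarrow (S_2)$'' step, which the paper asserts without further comment.
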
 

\begin{proof} First, consider the short exact sequence $0\to \syz_R M\to R^{\oplus n}\to M\to 0$ for some integer $n\ge 0$. Since $d-1\ge 1$, so $\Ext^1_R(M,N)=0$. Hence by taking $\Hom_R(-,N)$ we get a short exact sequence $0\to \hom_R(M,N) \to N^{\oplus n}\to \hom_R(\syz_R M,N)\to 0$, where all the modules are Ulrich by Proposition \ref{ulgen}(1). Since $R$ is $\us$, so this exact sequence splits. Hence $\Hom_R(M,N)$ and $\Hom_R(\syz_R M,N)$ are direct summand of $N^{\oplus n}$. Since $R$ is Cohen--Macaulay and $\syz_R M$ (resp. $M$) is reflexive, so $\syz_R M$ (resp. $M$) satisfies $(S_2)$. Hence by Lemma \ref{codim1}(3) we get, $\syz_R M\cong \Hom_R(\Hom_R (\syz_R M,N),N)$ (resp. $M\cong \Hom_R(\Hom_R (M,N),N)$), which is a direct summand of $\Hom_R(N,N)^{\oplus n}\cong R^{\oplus n}$. Thus $\syz_R M$ (resp. $M$) is free.
\end{proof}  

Since over generically Gorenstein Cohen--Macaulay rings second syzygy modules are always reflexive (see \cite[Theorem 2.3]{matsui}), so we get the following corollary.  

\begin{cor} Let $(R,\m)$ be a local Cohen--Macaulay ring of dimension $d\ge 2$, and let $R$ be $\us$. Assume $R$ is generically Gorenstein. Let $N$ be an Ulrich $R$-module, which is locally free of rank $1$ in co-dimension $1$. Let $M$ be an $R$-module. If $\Ext^{2\le i\le d}_R(M,N)=0$, then $\operatorname{pd} M\le 2$. If $\Ext^{i\le j\le d+i-2}_R(M,N)=0$ for some $i\ge 3$, then $\operatorname{pd } M\le i-1.$   
\end{cor}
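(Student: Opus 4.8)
The plan is to reduce both statements to Proposition \ref{rigid} by dimension shifting in $\Ext$, together with the fact recalled just above that over a generically Gorenstein Cohen--Macaulay ring every second syzygy module is reflexive (\cite[Theorem 2.3]{matsui}). Throughout, $R$ is $\us$, $d\ge 2$, and $N$ is Ulrich and locally free of rank $1$ in codimension $1$, so the hypotheses of Proposition \ref{rigid} are in force and only need to be checked for the auxiliary module obtained by taking syzygies of $M$.

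For the first statement I would set $L:=\syz_R M$. Dimension shifting gives $\Ext^j_R(L,N)\cong\Ext^{j+1}_R(M,N)=0$ for all $1\le j\le d-1$, since then $2\le j+1\le d$. Moreover $\syz_R L\cong\syz^2_R M$ is a second syzygy, hence reflexive because $R$ is generically Gorenstein. Thus $L$ satisfies the hypotheses of Proposition \ref{rigid}, and its first conclusion yields $\operatorname{pd}_R L\le 1$, whence $\operatorname{pd}_R M\le 2$.

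For the second statement, with $i\ge 3$, I would instead pass to $L:=\syz^{i-1}_R M$. Dimension shifting gives $\Ext^j_R(L,N)\cong\Ext^{j+i-1}_R(M,N)=0$ for all $1\le j\le d-1$, since then $i\le j+i-1\le d+i-2$. Because $i-1\ge 2$, the module $L=\syz^2_R(\syz^{i-3}_R M)$ is itself a second syzygy, hence reflexive; now the second conclusion of Proposition \ref{rigid} (the ``$M$ reflexive $\Rightarrow$ $M$ free'' clause) applies to $L$ and shows $L$ is free, whence $\operatorname{pd}_R M\le i-1$.

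The only delicate point is the index bookkeeping, and in particular choosing which syzygy of $M$ to pass to: one passes to $\syz^{i-1}_R M$ rather than $\syz^{i-2}_R M$ precisely so that $L$ is a genuine second syzygy, which lets the sharper ``reflexive $\Rightarrow$ free'' conclusion of Proposition \ref{rigid} produce the bound $i-1$ rather than $i$; similarly in the first statement one must invoke the ``$\syz_R(-)$ reflexive'' clause (applied to $L=\syz_R M$) rather than hope that $\syz_R M$ itself is reflexive. Beyond aligning these ranges and syzygy shifts there is no substantive obstacle.
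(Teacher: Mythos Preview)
Your proof is correct and follows essentially the same approach as the paper: shift to $\syz_R M$ (resp.\ $\syz^{i-1}_R M$), use the generically Gorenstein hypothesis to get the relevant second syzygy reflexive, and apply the appropriate clause of Proposition~\ref{rigid}. Your closing paragraph explaining the choice of syzygy index is a helpful gloss but does not depart from the paper's argument.
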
   

\begin{proof}  If $\Ext^{2\le i\le d}_R(M,N)=0$, then $\Ext^{1\le i\le d-1}_R(\syz_R M,N)=0$. Now $\syz_R(\syz_R M)$ is reflexive, hence applying Proposition \ref{rigid} we get that, $\operatorname{pd}(\syz_R M)\le 1$, so $\operatorname{pd} M\le 2$. Next, if $\Ext^{i\le j\le d+i-2}_R(M,N)=0$ for some $i\ge 3$, then $\Ext^{1\le j\le d-1}_R(\syz^{i-1}_R M,N)=0$. Now $\syz^{i-1}_R M$ is a second syzygy as $i\ge 3$, hence it is reflexive. Then Proposition \ref{rigid} gives $\syz^{i-1}_R M$ is free, so $\operatorname{pd } M\le i-1$.  
\end{proof}

\begin{prop} Let $(R,\m)$ be a local Cohen--Macaulay ring of dimension $d\ge 2$, admitting a canonical module $\omega$, and assume $R$ is $\us$.  
Let $M$ be an Ulrich $R$-module that is locally free of rank $1$ in co-dimension $1$, and let $N\in \cm(R)$. If $\Ext^i_R(M,N)=0$ for all $1\le i\le d-1$, then $N$ is isomorphic to a direct sum of copies of the canonical module.  
\end{prop}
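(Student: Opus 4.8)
The plan is to manufacture a short exact sequence $0\to N\to \omega^{\oplus m}\to L\to 0$ of maximal Cohen--Macaulay modules whose image under $\Hom_R(M,-)$ is a short exact sequence of Ulrich modules, and then to combine the hypothesis that $R$ is $\us$ with the observation that $\Hom_R(M,-)$ is a \emph{fully faithful} functor on $\cm(R)$.

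First I would build the sequence. Write $N^\dagger:=\Hom_R(N,\omega)$; since $N\in\cm(R)$ and $R$ admits a canonical module, $N^\dagger\in\cm(R)$, $\Ext^{>0}_R(N^\dagger,\omega)=0$, and $N^{\dagger\dagger}\cong N$. Take the beginning $0\to \syz_R N^\dagger\to R^{\oplus m}\to N^\dagger\to 0$ of a minimal free resolution of $N^\dagger$ and apply $\Hom_R(-,\omega)$; using $\Ext^{>0}_R(N^\dagger,\omega)=0$ this gives a short exact sequence $0\to N\to\omega^{\oplus m}\to L\to 0$ with $L:=(\syz_R N^\dagger)^\dagger\in\cm(R)$. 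Because $d\ge 2$, the hypothesis includes $\Ext^1_R(M,N)=0$, so applying $\Hom_R(M,-)$ yields a short exact sequence $0\to\Hom_R(M,N)\to (M^\dagger)^{\oplus m}\to\Hom_R(M,L)\to 0$. Here $\Hom_R(M,N)$ and $\Hom_R(M,L)=\Hom_R(M,(\syz_R N^\dagger)^\dagger)$ are Ulrich by Proposition \ref{ulgen}(2), and $M^\dagger=\Hom_R(M,\omega)$ is Ulrich by \cite[Corollary 2.2]{umm}; thus this is a short exact sequence of Ulrich modules, hence it splits since $R$ is $\us$.

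Next I would prove that $\Hom_R(M,-)\colon\cm(R)\to\mod R$ is fully faithful. For $X,Y\in\cm(R)$ consider the natural map $\theta\colon\Hom_R(X,Y)\to\Hom_R(\Hom_R(M,X),\Hom_R(M,Y))$, $f\mapsto(g\mapsto f\circ g)$. Both its source and target satisfy $(S_2)$: by the argument in the proof of Lemma \ref{codim1}(3), a $\Hom$-module into an $(S_2)$-module is $(S_2)$, and $Y$ as well as $\Hom_R(M,Y)$ are $(S_2)$. Moreover $\theta$ is an isomorphism at every prime $\p$ of height at most $1$, because there $M_\p\cong R_\p$, so $\Hom_{R_\p}(M_\p,-)$ is naturally the identity functor and $\theta_\p$ is, up to these identifications, the identity map. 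Hence $\theta$ is an isomorphism by Lemma \ref{codim1}(2), i.e.\ $\Hom_R(M,-)$ is fully faithful on $\cm(R)$.

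Finally, a fully faithful functor reflects split exactness: a retraction of $\Hom_R(M,N)\hookrightarrow(M^\dagger)^{\oplus m}=\Hom_R(M,\omega^{\oplus m})$ is, by fullness, of the form $\Hom_R(M,r)$ for some $r\colon\omega^{\oplus m}\to N$, and by faithfulness $r$ is then a retraction of $N\hookrightarrow\omega^{\oplus m}$. Therefore $0\to N\to\omega^{\oplus m}\to L\to 0$ splits and $N$ is a direct summand of $\omega^{\oplus m}$. Since $\End_R(\omega)\cong R$ is local, $\omega$ has local endomorphism ring, so by Azumaya's form of the Krull--Remak--Schmidt theorem every direct summand of $\omega^{\oplus m}$ is isomorphic to $\omega^{\oplus t}$ for some $t\ge 0$, giving $N\cong\omega^{\oplus t}$. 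The main obstacle is establishing full faithfulness of $\Hom_R(M,-)$, but after localizing this reduces cleanly to the $(S_2)$-descent principle of Lemma \ref{codim1}(2); a minor point to watch is invoking Azumaya's theorem (rather than ordinary Krull--Remak--Schmidt) so that no Henselian hypothesis is needed in the last step.
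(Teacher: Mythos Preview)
Your proof is correct, and the first half (constructing the sequence $0\to N\to\omega^{\oplus m}\to L\to 0$ and applying $\Hom_R(M,-)$ to get a split short exact sequence of Ulrich modules) is exactly what the paper does. The second half, however, takes a genuinely different route. The paper does not prove that $\Hom_R(M,-)$ is fully faithful; instead it applies the further functor $\Hom_R(M^*,-)$ to the split inclusion $\Hom_R(M,N)\hookrightarrow\Hom_R(M,\omega)^{\oplus m}$ and invokes Lemma~\ref{codim1}(4), which gives $\Hom_R(M^*,\Hom_R(M,N))\cong N$ and $\Hom_R(M^*,\Hom_R(M,\omega))\cong\omega$ directly, so $N$ is a summand of $\omega^{\oplus m}$; it then dualizes to see $N^\dagger$ is a summand of $R^{\oplus m}$, hence free, so $N\cong N^{\dagger\dagger}\cong\omega^{\oplus t}$. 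Your full-faithfulness argument is a clean categorical repackaging of the same underlying $(S_2)$-descent principle (Lemma~\ref{codim1}(2)): rather than exhibiting an explicit left inverse to $\Hom_R(M,-)$, you show the functor reflects retractions. Both approaches are equally short; yours has the mild advantage of isolating a reusable statement (full faithfulness on $\cm(R)$), while the paper's avoids any appeal to Azumaya by passing through the canonical dual and using only that projectives over a local ring are free.
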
         

\begin{proof} First, consider the short exact sequence $0\to \syz_R N^{\dagger}\to R^{\oplus n}\to N^{\dagger}\to 0$ for some integer $n\ge 0$. Now by taking $\Hom_R(-,\omega)$ we get the short exact sequence $0\to N \to \omega^{\oplus n}\to (\syz_R N^{\dagger})^{\dagger}\to 0$. Since $d-1\ge 1$, so $\Ext^1_R(M,N)=0$. Hence by taking $\Hom_R(M,-)$ we get the short exact sequence
$0\to \Hom_R(M,N)\to (M^{\dagger})^{\oplus n}\to \Hom_R(M,(\syz_R N^{\dagger})^{\dagger})\to 0$, where all the modules are Ulrich by Proposition \ref{ulgen}(2). Since $R$ is $\us$, so the sequence $0\to \Hom_R(M,N)\to (M^{\dagger})^{\oplus n}\to \Hom_R(M,(\syz_R N^{\dagger})^{\dagger})\to 0$ splits. Hence $\Hom_R(M,N)$ is a direct summand of $\Hom_R(M,\omega)^{\oplus n}$, and hence $\Hom_R(M^*,\Hom_R(M,N))$ is a direct summand of $\Hom_R(M^*,\Hom_R(M,\omega))^{\oplus n}$. So, by Lemma \ref{codim1}(4) we get that, $N\cong \Hom_R(M^*,\Hom_R(M,N))$ is a direct summand of $\omega^{\oplus n}\cong \Hom_R(M^*,\Hom_R(M,\omega))^{\oplus n}$. Hence $N^{\dagger}$ is a direct summand of $(\omega^{\dagger})^{\oplus n}\cong R^{\oplus n}$, and thus $N^{\dagger}$ is free. Hence $N\cong N^{\dagger \dagger}$ is isomorphic to a direct sum of copies of $\omega$. 
\end{proof}


\begin{thebibliography}{1}   


\bibitem{avra}
{\sc H. Ananthnarayan, L. L. Avramov, W. Frank Moore}, Connected sums of Gorenstein local rings, Journal für die reine und angewandte Mathematik (Crelles Journal) 667 (2012), 149--176.  

\bibitem{auac}
{\sc A. Bahlekeh, E. Hakimian, S. Salarian, R. Takahashi}, Annihilation of cohomology, generation of modules and finiteness of derived dimension, Q. J. Math. 67 (2016), 387--404.

\bibitem{surveyB} 
{\sc A. Beauville}, An introduction to Ulrich bundles, Eur. J. Math. 4 (2018), 26--36.

\bibitem{ul}
{\sc J. P. Brennan, J. Herzog, B. Ulrich}, Maximally generated Cohen--Macaulay modules, Math. Scand. 61 (1987), 181--203.

\bibitem{bc}
{\sc H. Brenner, A. Caminata}, The symmetric signature, Communications in Algebra 45(9), 3730--3756. 

\bibitem{bries}
{\sc E. Brieskorn}, Rationale Singularitäten komplexer Flächen., Inventiones mathematicae 4 (1967), 336--358. 

\bibitem{bh}
{\sc W. Bruns, J. Herzog}, Cohen–-Macaulay Rings, Revised Edition, Cambridge Studies in Advanced Mathematics, Cambridge University Press, Cambridge 39 (1998).

\bibitem{coskun}
{\sc E. Coskun}, A Survey of Ulrich Bundles, In Analytic and algebraic geometry, Hindustan Book Agency, New Delhi (2017), 85--106.

\bibitem{anna}
{\sc M. D'Anna}, A construction of Gorenstein rings, Journal of Algebra 306 (2006), 507--519.

\bibitem{dao}
{\sc H. Dao}, Remarks on non-commutative crepant resolutions of complete intersections, Adv. Math. 224 (2010), 1021--1030.

\bibitem{ddd}
{\sc H. Dao, S. Dey, M. Dutta}, Exact subcategories, subfunctors of $\Ext$, and some applications, Nagoya Mathematical Journal , First View , 1--41, DOI: https://doi.org/10.1017/nmj.2023.29.

\bibitem{ncr}
{\sc H. Dao, O. Iyama, R. Takahashi, C. Vial}, Non-commutative resolutions and Grothendieck groups, J. Noncommut. Geom. 9 (2015), 21--34.    

\bibitem{dms}
{\sc H. Dao, S. Maitra, P. Sridhar}, On reflexive and $I$-Ulrich modules over curves, Trans. of Amer. Math. Soc. Ser. B, 10 (2023), 355--380.

\bibitem{sde}{\sc S. Dey}, Trace ideal and annihilator of Ext and Tor of regular fractional ideals, and some applications, arXiv:2210.03891. 


\bibitem{deyT}
{\sc S. Dey, R. Takahashi}, Comparisons between annihilators of Tor and Ext, Acta Math. Vietnam. 47 (2022), 123–139.


\bibitem{restf} {\sc S. Dey, R. Takahashi}, On the Subcategories of -Torsionfree Modules and Related Modules, Collect. Math. 74 (2023), 113--132.  

\bibitem{branched} 
{\sc A. S. Dugas, G. J. Leuschke}, Some extensions of theorems of Knörrer and Herzog–Popescu, Journal of Algebra
571 (2021), 94--120. 

\bibitem{goto}
{\sc S. Goto, R. Isobe, S. Kumashiro}, Correspondence between trace ideals and birational extensions with application to the analysis of the Gorenstein property of rings, Journal of Pure and Applied Algebra 224 (2020), 747--767.

\bibitem{gotal}
{\sc S. Goto, N. Matsuoka, T. T. Phuong}, Almost Gorenstein rings, Journal of Algebra, 379 (2013), 355--381.

\bibitem{gotww1}
{\sc S. Goto, K. Ozeki, R. Takahashi, K. Watanabe, K. Yoshida}, Ulrich ideals and modules. In Mathematical Proceedings of the Cambridge Philo- sophical Society, Cambridge University Press 156 (2014), 137--166.

\bibitem{gotww2}
{\sc S. Goto, K. Ozeki, R. Takahashi, K. Watanabe, K. Yoshida}, Ulrich ideals and modules over two-dimensional rational singularities, Nagoya Mathematical Journal, 221 (2016), 69--110.

\bibitem{Gro}
{\sc A. Grothendieck, J. Dieudonné}, Étude locale des schémas et des morphismes de schémas (Seconde partie) Inst. Hautes Études Sci. Publ. Math. 24 (1965), 5--231. Ch. IV2.§2–7. 


\bibitem{sh}
{\sc C. Huneke, I. Swanson}, Integral Closure of Ideals, Rings, and Modules.  London Mathematical Society Lecture Note Series, Cambridge University Press, Cambridge, MA 336 (2006).

\bibitem{scm}
{\sc O. Iyama, M. Wemyss}, The Classification of Special Cohen--Macaulay Modules, Math. Z. 265 (2010), 41--83.

\bibitem{imk}
{\sc S. B. Iyengar, L. Ma, M. Walker}, Lim Ulrich sequences and Boij-Söderberg cones, arXiv:2209.03498.

\bibitem{iy}
{\sc S. B. Iyengar, R. Takahashi}, Annihilation of Cohomology and strong generation of module categories, International Mathematics Research Notices  2016 (2016), 499--535.
 

\bibitem{del}
{\sc T. Kobayashi}, On delta invariants and indices of ideals, J. Math. Soc. Japan 71 (2019), 589--597.


\bibitem{umm}
{\sc T. Kobayashi, R. Takahashi}, Ulrich modules over Cohen--Macaulay local rings with minimal multiplicity, Q. J. Math. 70 (2019), 487--507. 


\bibitem{trace}
{\sc T. Kobayashi, R. Takahashi}, Rings whose ideals are isomorphic to trace ideals, Mathematische Nachrichten 292 (2019), 2252--2261.


\bibitem{lw}
{\sc G. J. Leuschke, R. Wiegand}, Cohen–-Macaulay Representations, Mathematical Surveys and Monographs, Amer. Math. Soc., Providence, RI 181 (2012).


\bibitem{lindo}
{\sc H. Lindo}, Trace ideals and centers of endomorphism rings of modules over commutative rings, Journal of Algebra 482 (2017), 102--130. 

\bibitem{lip} {\sc J. Lipman}, Stable ideals and Arf rings, American Journal of Mathematics, The Johns Hopkins University Press 93 (1971), 649--685.

\bibitem{MA}
{\sc L. Ma}, Lim Ulrich sequences and Lech's conjecture, Invent. math. 231 (2023), 407–-429. 


\bibitem{matsui}
{\sc H. Matsui, R. Takahashi, Y. Tsuchiya}, When are $n$-syzygy modules $n$-torsionfree?, Archiv der Mathematik 108 (2017), 351--355.


\bibitem{cyc}
{\sc Y. Nakajima, K. Yoshida}, Ulrich modules over cyclic quotient surface singularities, Journal of Algebra 482 (2017), 224--247.

\bibitem{decom}  
{\sc S. Nasseh, S. Sather-Wagstaff, R. Takahashi, K. VandeBogert}, Applications and homological properties of local rings with decomposable maximal ideals, J. Pure Appl. Algebra 223 (2019), 1272--1287. 

\bibitem{pr}{\sc F. P´erez, R. G. Rebecca}, Characteristic-free test ideals, Trans. Amer. Math. Soc. Ser. B 8 (2021), 754--787.


\bibitem{sg}{\sc S. Sather-Wagstaff}, Semidualizing modules and the divisor class group. Ill. J. Math. 51 (2007), 255--285.  


\bibitem{jan}
{\sc J. Striuli}, On extensions of modules, Journal of Algebra 285 (2005), 383--398. 


\bibitem{TaG}
{\sc R. Takahashi}, Syzygy modules with semidualizing or G-projective summands, Journal of Algebra 295 (2006), 179--194.


\bibitem{stable}
{\sc R. Takahashi}, Classifying thick subcategories of the stable category of Cohen--Macaulay modules, Adv. Math. 225 (2010), 2076--2116.  

\bibitem{dom}
{\sc  R. Takahashi}, Classification of dominant resolving subcategories by moderate functions, Illinois J. Math. 65 (2021), 597--618.

\bibitem{st} The Stacks Project Authors, Stacks project, http://stacks.math.columbia.edu, 2023. 

\bibitem{W}
{\sc H. J. Wang}, On the Fitting ideals in free resolutions, Michigan Math. J. 41 (1994), 587--608. 

\bibitem{wunram}
{\sc J. Wunram}, Reflexive modules on quotient surface singularities, Mathematische Annalen 279 (1988), 583--598.

\bibitem{yoshino}   
{\sc Y. Yoshino}, Cohen–Macaulay modules over Cohen–Macaulay rings, London Mathematical Society Lecture Note Series, Cambridge University Press, Cambridge 146 (1990). 












\end{thebibliography}
\end{document}